\documentclass[a4paper, 12pt, reqno,final]{amsart}
\usepackage[foot]{amsaddr} %Adresse auf erste Seite
\usepackage[utf8x]{inputenc}
\usepackage[T1]{fontenc}
\usepackage[english]{babel}
\usepackage{amsmath, amsfonts, amsthm, amssymb, amscd}
\usepackage[final]{graphicx}
\usepackage[below]{placeins}
\usepackage{subfig}
\usepackage{multirow}
\usepackage{mathtools}

\graphicspath{{}}

\usepackage[hidelinks]{hyperref}
\hypersetup{
  pdfauthor = {Stefan Metzger},
  pdftitle = {doubleObstacle}
}

\usepackage{showkeys} 
\usepackage{esint}
\usepackage{caption}
\usepackage{dsfont}

\usepackage{tikz}
\usetikzlibrary{patterns}
\usetikzlibrary{shapes.geometric}

\usepackage[]{todonotes}
\usepackage{diagbox}

\newtheorem{theorem}{Theorem}[section]
\newtheorem{lemma}[theorem]{Lemma}
\newtheorem*{lemma*}{Lemma}
\newtheorem{corollary}[theorem]{Corollary}

\numberwithin{equation}{section}

\newcommand{\comment}[1]{}


\newcounter{IMASTYLE}
\makeatletter
\newcommand{\labitem}[2]{%
\def\@itemlabel{\textbf{#1}}
\item
\def\@currentlabel{#1}\label{#2}}
\makeatother
\newcommand{\norm}[1]{\left\|{#1}\right\|}
\newcommand{\abs}[1]{\left|{#1}\right|}

\newcommand{\rkla}[1]{{\left(#1\right)}}
\newcommand{\trkla}[1]{{(#1)}}
\newcommand{\gkla}[1]{{\left\{#1\right\}}}
\newcommand{\tgkla}[1]{{\{#1\}}}
\newcommand{\skla}[1]{{\left\langle#1\right\rangle}}
\newcommand{\ekla}[1]{{\left[#1\right]}}
\newcommand{\tekla}[1]{{[#1]}}
\newcommand{\tabs}[1]{|{#1}|}

\newcommand{\bs}[1]{\boldsymbol{#1}}

\newcommand{\D}{\mathcal{D}}
\newcommand{\iD}{\int_{\D}}

\newcommand{\jnorm}[1]{\left|\left|\left| #1 \right|\right|\right|}

\newcommand{\dx}{\, \mathrm{d}{x}}
\newcommand{\dy}{\, \mathrm{d}{y}}
\newcommand{\dt}{\, \mathrm{d}t}
\newcommand{\ds}{\, \mathrm{d}s}

\renewcommand{\div}{\operatorname{div}}

\newcommand{\ids}{I}

\newcommand{\nn}{^{n}}
\newcommand{\no}{^{n-1}}

\newcommand{\tl}{^{\tau}}
\newcommand{\tp}{^{\tau,+}}
\newcommand{\tm}{^{\tau,-}}
\newcommand{\tpm}{^{\tau,(\pm)}}

\newcommand{\h}{_{h}}

\newcommand{\per}{{\mathrm{per}}}

\newcommand{\Ihop}{\mathcal{I}_h}
\newcommand{\Ih}[1]{\Ihop\gkla{#1}}

\newcommand{\diam}{\operatorname{diam}}

\newcommand{\g}[1]{\mathfrak{g}_{#1}}

\newcommand{\Zh}{\mathds{Z}\h}

\newcommand{\dbeta}[1]{\,\operatorname{d}\!\beta_{#1}}
\newcommand{\W}{\mathcal{W}}
\newcommand{\dW}{\operatorname{d}\!\bs{\W}}
\newcommand{\sinc}[1]{\bs{\delta}^{#1}\bs{\W}}
\newcommand{\sincb}[3]{\delta^{#1}\beta_{#2}^{#3}}

\newcommand{\p}{{\mathfrak{p}}}

\newcommand{\expected}[1]{\mathds{E}\ekla{#1}}
\newcommand{\Prob}{\mathds{P}}

\DeclareMathOperator*{\esssup}{ess\,sup}

\newcommand{\Ito}{It\^{o}}

\newcommand{\mysigma}{\sigma}

\begin{document}
\title[Approximation of the non-smooth stochastic Cahn--Hilliard equation]{Numerical Approximation of the stochastic Cahn--Hilliard equation with singular potential}
\date{\today}
\author[\v{L}.~Ba\v{n}as,~S.~Metzger]{\v{L}ubom\'ir Ba\v{n}as,~Stefan Metzger}
\address[\v{L}.~Ba\v{n}as]{Department of Mathematics, Bielefeld University, 33501 Bielefeld, Germany}
\email{banas@math.uni-bielefeld.de}
\address[S.~Metzger]{Friedrich--Alexander--Universität Erlangen--Nürnberg,~Cauerstraße 11,~91058~Erlangen,~Germany}
\email{stefan.metzger@fau.de}

%\author[S.~Metzger,~G.~Gr\"un]{Stefan Metzger and G\"unther Gr\"un}
%\address{Friedrich--Alexander Universit\"at Erlangen--N\"urnberg --- Cauerstrasse 11 --- 91058 Erlangen --- Germany}
%\email{stefan.metzger@fau.de, gruen@math.fau.de}

\keywords{stochastic Cahn--Hilliard equation, double-obstacle potential, conservative noise, finite element approximation, convergence analysis,}
\subjclass[2010]{60H35, 65M60, 60H15,65M12}
%Primary

%60H35 Computational methods for stochastic equations (aspects of stochastic analysis)
%65M60   	Finite elements, Rayleigh-Ritz and Galerkin methods, finite methods

%Secondary
% 	60H15   	Stochastic partial differential equations (aspects of stochastic analysis)
%   65M12   	Stability and convergence of numerical methods for initial value and initial-boundary value problems involving PDEs
%

%
\selectlanguage{english}

\allowdisplaybreaks

\begin{abstract}
We discuss the numerical approximation of the stochastic Cahn--Hilliard equation with a singular double-obstacle potential and multiplicative conservative noise.
We propose a regularised fully discrete finite element approximation scheme for the problem and show that it satisfies
stability estimates which are uniform with respect to the discretization parameters.
We show convergence of the approximation for vanishing discretization parameters towards a regularised version of the singular stochastic Cahn--Hilliard equation by monotonicity arguments.
Hence, thanks to a uniform $\mathds{H}^1$-estimate for the regularised problem we show that the regularised solution converges towards the pathwise unique probabilistically strong solution of the original singular stochastic Cahn--Hilliard equation.
We conclude by presenting numerical simulations where we compare the regularised numerical approximation to its unregularised counterpart and illustrate the effect of the conservative noise.
\end{abstract}
\maketitle

\section{Introduction}
The deterministic Cahn--Hilliard equation
\begin{align}\label{eq:CHgeneral}
\partial_t u=\Delta \rkla{\varepsilon^{-1}\Psi_{\log}^\prime\trkla{u}-\varepsilon\Delta u},
\end{align}
was originally introduced in \cite{CahnAndHilliard} to model phase separation and de-mixing processes in binary alloys, which occur if the alloys are rapidly cooled below a critical temperature $\theta_c$.
The solution $u$, the so-called phase-field or order parameter,
describes the local composition of the binary alloy (the concentration or volume fraction of its respective components) and
$\Psi_{\log}:\mathds{R}\rightarrow\mathds{R}$ is a double well-potential of the form
\begin{align}
\Psi_{\log}\trkla{u}=\frac{\theta}2\ekla{\trkla{1+u}\ln\trkla{1+u}+\trkla{1-u}\ln\trkla{1-u}}-\frac{\theta_c}2u^2,
\end{align}
which restricts the solution to the physically meaningful range $(-1,+1)$ (here $\theta<\theta_c$ denotes the temperature of the alloy).
If the quench is shallow, i.e., the temperature $\theta$ is still close to the critical temperature $\theta_c$ the logarithmic double-well potential can be approximated by a regular polynomial double-well potential of the form
\begin{align}
\Psi_{\mathrm{pol}}\trkla{u}=\tfrac14\trkla{u^2-1}^2\,.
\end{align}
The Cahn--Hilliard equation with polynomial nonlinearities is well-studied and can be approximated by standard numerical techniques, see for instance \cite{phasefield_review}.
However, this model has the drawback that the phase-field parameter is not restricted to the physically relevant interval $\tekla{-1,+1}$.

In the deep quench limit, i.e., when the temperature is significantly smaller than the critical temperature, an approximation with a singular double-obstacle potential of the form
\begin{align*}
\Psi_{\mathrm{obst}}\trkla{u}:=\left\{\begin{array}{cl}
\tfrac{\theta_c}2\trkla{1-u^2}&\text{if~}\tabs{u}\leq1\,,\\
\infty&\text{if~}\tabs{u}>1\,,
\end{array}\right.
\end{align*}
is the more appropriate choice, cf. \cite{betti} and the references therein. \\
It is also well-known that in particular the early stages of the separation process are heavily influenced by thermal fluctuations which are not included in the deterministic description.
Thus, a stochastic variant of \eqref{eq:CHgeneral} with conservative noise was proposed already in \cite{Cook1970}.

In this work, we consider the numerical approximation of the (singular) stochastic Cahn--Hilliard equation with double-obstacle potential and multiplicative conservative noise.
In particular, we consider the following stochastic variational inclusion
\begin{subequations}\label{eq:model:inequality}
\begin{align}
\mathrm{d}u-\Delta w\dt&=\Phi\trkla{u}\dW\,,\\
w&\in-\varepsilon\Delta u+\frac1\varepsilon \partial\Psi\trkla{u}\,,
\end{align}
\end{subequations}
where $\D$ is a periodic domain (a $d$-dimensional torus) and
and $\partial\Psi$ denotes the subdifferential of the singular energy
\begin{align}\label{eq:obst}
\Psi\trkla{s}:=\left\{\begin{array}{cl}
\tfrac12\trkla{1-s^2}&\text{if~}\tabs{s}\leq1\,,\\
\infty&\text{if~}\tabs{s}>1\,.
\end{array}\right.
\end{align}
We take $\bs{\W}$ to be a sufficiently regular $\mathcal{Q}$-Wiener process (see \eqref{eq:defWienerProcess} below) and
the operator $\Phi$ maps the stochastic process $u$ into the space of Hilbert--Schmidt operators from $\rkla{\mathcal{Q}^{1/2} \mathds{L}^2}^d$ to $\trkla{\mathds{L}^2}^d$.
We consider \eqref{eq:model:inequality} with multiplicative divergence-type noise of the form
%Hence, we consider \eqref{eq:model:inequality} with multiplicative noise of the form
\begin{align}\label{noise}
\Phi\trkla{u}\dW:=\sum_{i=1}^d\sum_{k\in\mathds{Z}}\partial_i\gkla{\mysigma\trkla{u}\lambda_k\g{k}}\dbeta{k}^i\,,
\end{align}
where $\mysigma\,:\,\mathds{R}\rightarrow\mathds{R}$ is a sufficiently regular function (cf.~assumption \ref{assumption:C} below) which satisfies $\sigma(s) = 0$ for $|s| \geq 1$.
The choice of the multiplicative noise \eqref{noise} is motivated by the fact that it preserves the mean value of the solution $u$ in \eqref{eq:model:inequality},
which is also an intrinsic property of the deterministic model \eqref{eq:CHgeneral}.

An equivalent formulation of \eqref{eq:model:inequality} is given as
\begin{subequations}\label{eq:model:Lagrange}
\begin{align}
\mathrm{d}u-\Delta w\dt&=\Phi\trkla{u}\dW\,,\\
w&=-\varepsilon\Delta u-\frac{1}{\varepsilon}u+ \frac{1}{\varepsilon} p\,,
\end{align}
\end{subequations}
where $p$ can be interpreted as a Lagrange multiplier for the constraint $\tabs{u}\leq1$ and $p - u \in\partial\Psi\trkla{u}$.

Due to the versatility of the phase-field modeling framework, the numerical approximation of deterministic and stochastic phase-field models has attracted a lot of attention,
we refer to \cite{phasefield_review} for an overview.
Regarding the recent developments on the numerical approximation of stochastic phase-field models with smooth nonlinearities we mention \cite{larsson18}, \cite{sac_brehier}, \cite{stochch_sharp21}, \cite{stochch_aposter23}, \cite{bp24}, \cite{stochch_sharp25}, \cite{Metzger2024}, \cite{Metzger2025b}, and \cite{Metzger2025}.

One of the earliest results on the numerical approximation of the deterministic Cahn--Hilliard models with obstacle potential (also referred to as the Cahn--Hilliard variational inequality) is \cite{BloweyAndElliott92}; for further developments we refer to
\cite{BarrettAndBlowey97}, \cite{BarrettAndBlowey99}, \cite{voids}, \cite{voids3d}, \cite{aposter_ch}, \cite{gk09}, \cite{chnsmulti} and the references therein.

Analysis of singular stochastic phase-field models has been considered, e.g., in \cite{debussche11}, \cite{Bauzetetal2017}, \cite{scarpa18}.
We note that the analysis of \cite{scarpa18}, which considers the stochastic Cahn--Hilliard equation with singular energy and possibly non-conservative noise, can not be applied in the case of the obstacle potential \eqref{eq:obst}.
In particular, the Lagrange multiplier $p$ in \eqref{eq:model:Lagrange}
can not be obtained as a weak limit of $\mathds{L}^1$-bounded sequences due to the lack of equiintegrability properties in the case of the singular potential \eqref{eq:obst}. 

As far as we are aware, the only result on the numerical approximation of stochastic singular phase-field problems is the recent work \cite{bauzet25}, which considers finite-volume based numerical approximation of the stochastic Allen--Cahn equation with double-obstacle energy.
%\sm{The analysis in \cite{bauzet25} relies on a $\mathds{L}^2$-estimate which is obtained using a regularisation procedure along with monotonicity arguments.
%Since the Cahn--Hilliard equation is an $\mathds{H}^{-1}$-gradient flow, the corresponding monotonicity arguments can only be used to derive an $\mathds{H}^{-1}$-estimate.
%The derivation of the $\mathds{H}^{-1}$-estimate relies on the preservation of the mean value of the solution and therefore requires the (multiplicative) mean-value preserving stochastic forcing \eqref{noise} of divergence-type.}
%\sm{In turn, the divergence-type noise complicated the derivation of the $\mathds{H}^1$-estimate, which is crucial for establishing the limit of the regularised problem and is only obtained only for moments strictly below order $2$ (cf. Lemma~\ref{lem:H1} below).}
The analysis in \cite{bauzet25} utilizes an intrinsic $\mathds{L}^2$-estimate obtained through a regularization procedure and monotonicity arguments. Since the Cahn–Hilliard equation is an $\mathds{H}^{-1}$-gradient flow, monotonicity arguments
naturally yield an $\mathds{H}^{-1}$-estimate. Deriving this estimate relies on the mean value preservation property of the solution,
which motivates the use of the (multiplicative) mean-value preserving stochastic forcing \eqref{noise}.
However, this divergence-type noise complicates the derivation of an $\mathds{H}^1$-estimate that is needed to pass to the limit in the regularized problem;
consequently, the estimate is valid only for moments strictly below order $2$ (cf. Lemma 6.1 below).

%\footnote{to be discussed: in \cite{DiPrimioGrasselliScarpa2024} they seem remove the degeneracy of the noise condition}

In this work we propose an implementable numerical approximation of the singular stochastic Cahn--Hilliard equation with conservative noise \eqref{eq:model:inequality}.
The numerical approximation employs a regularisation of the singular energy, backward Euler discretization in time and a finite element approximation in space.
We establish convergence of the approximation towards the probabilistically strong (and pathwise unique) solution of \eqref{eq:model:Lagrange} by first passing to the limit with respect
to the time and space discretization parameters and then with respect to the regularisation.
The convergence analysis relies on uniform a priori estimates for the approximate solution and monotonicity arguments. % and does not employ any Skorokhod-type compactness arguments.
%Thus, we obtain convergence towards probabilistically strong solutions, which can be shown to be pathwise unique.

The remainder of this paper is structured as follows:
In Section \ref{sec:notation}, we introduce the relevant finite and infinite dimensional spaces, state the necessary assumptions, and propose a regularised fully discrete scheme.
In Section \ref{sec:boundsht}, we prove the existence of solutions to this scheme and derive $\trkla{h,\tau}$-independent estimates which we use in Section \ref{sec:limitht} to pass to the limit $\trkla{h,\tau}\rightarrow0$.
The pathwise uniqueness of the limit process is established in Section \ref{sec:uniquenessdelta}. 
In Sections \ref{sec:est} and \ref{sec:limitdelta}, we derive uniform estimates which are necessary to pass to the limit with respect to the regularisation parameter and establish the existence of probabilistically strong pathwise unique solutions to \eqref{eq:model:Lagrange}.
We conclude by presenting numerical simulations in Section \ref{sec:numerics}.

\section{Notation and preliminaries}\label{sec:notation}
We discuss the numerical approximation of the {singular stochastic Cahn--Hilliard equation} on the $d$-dimensional torus $\D\equiv\trkla{0,1}^d\subset\mathds{R}^d$ with $d\in\tgkla{1,2,3}$.
We denote the space of $k$-times weakly differentiable periodic functions with weak derivatives in $\mathds{L}^p\equiv L^p\trkla{\D}$ by $\mathds{W}^{k,p}_{\per}$. 
For $p=2$, we denote the Hilbert spaces $\mathds{W}^{k,2}_{\per}$ by $\mathds{H}^k_{\per}$.
We will denote the $\mathds{L}^2$-inner product by $\trkla{\cdot,\cdot}$ and the corresponding norm by $\norm{\cdot}_{\mathds{L}^2}\equiv\norm{\cdot}$.
The symbol $\skla{\cdot,\cdot}$ stands for the duality pairing between $\mathds{H}^1_{\per}$ and its dual space which is denoted by $\mathds{H}^{-1}_{\per}$.
For $u\in\mathds{H}^{-1}_{\per}$ we define a generalized mean value by
\begin{align}
\trkla{u}_{\D}:=\abs{\D}^{-1}\skla{u,1}\,.
\end{align}
The inner product in $\mathds{H}^{-1}_{\per}$ is defined as
\begin{align}
\trkla{a,b}_{-1}:=\rkla{\nabla\trkla{-\Delta^{-1}}\trkla{a-\trkla{a}_{\D}},\nabla\trkla{-\Delta^{-1}}\trkla{b-\trkla{b}_{\D}}}+\trkla{a}_{\D}\trkla{b}_{\D}
\end{align}
and the associated norm is defined via $\norm{a}_{-1}:=\sqrt{\trkla{a,a}}$.
Given a Banach space $X$ and a set $I$, the symbol $L^p\trkla{I;X}$ ($p\in\tekla{1,\infty}$) stands for the Bochner space of strongly measurable $L^p$-integrable functions on $I$ with values in $X$.
If $X$ is only separable (and not reflexive), we follow the notation used in \cite[Chapter 0.3]{FeireislNovotny17} and denote the dual space of $L^{p/\trkla{p-1}}\trkla{I;X}$ ($p\in\trkla{1,\infty}$) by $L^p_{\operatorname{weak-(*)}}\trkla{I,X^\prime}$.
The symbol $C^{k,\alpha}\trkla{I;X}$ with $k\in\mathds{N}_0$ and $\alpha\in(0,1]$ denotes the space of $k$-times continuously differentiable functions from $I$ to $X$ whose $k$-th derivatives are Hölder continuous with Hölder exponent $\alpha$. If $I=X=\mathds{R}$, we shall write $C^{k,\alpha}\trkla{\mathds{R}}$.\\

The stochastic source term on the right-hand side of \eqref{eq:model:inequality} is governed by a $\mathcal{Q}$-Wiener process $\bs{\W}$ defined on a filtered probability space $\trkla{\Omega,\mathcal{A},\mathcal{F},\mathds{P}}$ with a complete right-continuous filtration $\mathcal{F}$ and an operator $\Phi$ mapping into the space of Hilbert--Schmidt operators.
We shall assume that
\begin{itemize}
\labitem{\textbf{(W1)}}{assumption:W1} $\mathcal{Q}\,:\,\mathds{L}^2\rightarrow\mathds{L}^2$ is a trace class operator with the representation
\begin{align}
\mathcal{Q} g:=\sum_{k\in\mathds{Z}}\abs{\lambda_k}^2\trkla{g,\g{k}}\g{k}&&\text{for~}g\in\mathds{L}^2\,,
\end{align}
where $\trkla{\lambda_k}_{k\in\mathds{Z}}$ are given positive real numbers and $\trkla{\g{k}}_{k\in\mathds{Z}}$ is an orthonormal basis of $\mathds{L}^2$ consisting of smooth periodic functions.
\end{itemize}
Thus, $\mathcal{Q}^{1/2}$ is a Hilbert--Schmidt operator from $\mathds{L}^2$ to $\mathds{L}^2$.
Denoting the Cartesian basis vectors of $\mathds{R}^d$ by $\trkla{\mathfrak{e}_i}_{i=1,\ldots,d}$, we can write the vector-valued $\mathcal{Q}$-Wiener process $\bs{\W}$ as
\begin{align}\label{eq:defWienerProcess}
\bs{\W}\trkla{t}:=\sum_{i=1}^d\sum_{k\in\mathds{Z}}\lambda_k\g{k}\mathfrak{e}_i\beta_k^i\trkla{t}\,,
\end{align}
where $\trkla{\beta_k^i}_{k,i}$ is a family of independent Brownian motions.
We shall assume that this process is colored in the sense that
\begin{itemize}
\labitem{\textbf{(W2)}}{assumption:W2} There exists a positive constant $\widehat{C}$ such that
\begin{align}
\sum_{k\in\mathds{Z}}\abs{\lambda_k}^2\norm{\g{k}}_{\mathds{W}^{1,\infty}}^2+\sum_{k\in\mathds{Z}}\abs{\lambda_k}^2\norm{\g{k}}_{\mathds{H}^{2}}^2\leq\widehat{C}\,.
\end{align}
\end{itemize}
The operator $\Phi$ maps a stochastic process $u$ into the space of Hilbert--Schmidt operators from $\trkla{\mathcal{Q}^{1/2}\mathds{L}^2}^d$ to $\mathds{L}^2$ and is defined via
\begin{align}
\Phi\trkla{u}\bs{g}:=\sum_{i=1}^d\sum_{k\in\mathds{Z}}\partial_i\tgkla{\mysigma\trkla{u}\trkla{\bs{g}\cdot\mathfrak{e}_i,\g{k}}\g{k}}
\end{align}
for all $\bs{g}\in\rkla{\mathcal{Q}^{1/2}\mathds{L}^2}^d$.
Here, $\mysigma$ satisfies
\begin{itemize}
\labitem{\textbf{(C)}}{assumption:C} $\mysigma\in W^{2,\infty}\trkla{\mathds{R}}\cap C_0^{1}\trkla{\trkla{-1,1}}$. 
\end{itemize}

We shall now collect our assumptions on the discrete approximation.
Concerning the discretization with respect to time, we shall assume that
\begin{itemize}
\labitem{\textbf{(T)}}{assumption:T} the time interval $I:=\tekla{0,T}$ is subdivided into $N$ equidistant intervals $I_n$ given by nodes $\trkla{t\nn}_{n=0,\ldots,N}$ with $t^0=0$, $t^N=T$, and $t\nn-t\no=:\tau=\tfrac{T}N$.
\end{itemize}
Our spacial discretization is based on partitions $\mathcal{T}\h$ of $\D$ depending on a discretization parameter $h>0$ satisfying the following assumptions:
\begin{itemize}
\labitem{\textbf{(S1)}}{assumption:S1} The family of $\tgkla{\mathcal{T}\h}_{h>0}$ is a quasi-uniform family of partitions of $\D$ into disjoint, open simplices $K$ which satisfy
\begin{align*}
\overline{\D}=\bigcup_{K\in\mathcal{T}\h}\overline{K}&&\text{width~}\max_{K\in\mathcal{T}\h}\diam\trkla{K}\leq h\,.
\end{align*}
\labitem{\textbf{(S2)}}{assumption:S2} The subdivisions are right-angled in the sense that for each simplicial element $K\in\mathcal{T}\h$, there exists a vertex $\bs{x}_0\trkla{K}$ such that all edges connecting $\bs{x}_0\trkla{K}$ with the remaining vertices of $K$ are perpendicular to each other.
\end{itemize}
Using these subdivisions of $\D$, we define the spaces $\trkla{\mathds{V}\h}_{h>0}$ of periodic, continuous, piecewise linear finite element functions on $\trkla{\mathcal{T}\h}\h$, which are spanned by functions $\tgkla{\chi_{h,i}}_{i\in J\h}$ ($J\h:=\tgkla{1,\ldots,\dim\mathds{V}\h}$) forming a dual basis to the vertices $\tgkla{\bs{x}_{h,i}}_{i\in J\h}$.
For each $h>0$, we introduce the nodal interpolation operator $\Ihop\,:\,C^0_{\mathrm{per}}\trkla{\overline{\D}}\rightarrow\mathds{V}\h$ which is defined by
\begin{align}
\Ih{a}:=\sum_{k=1}^{\dim\mathds{V}\h}a\trkla{\bs{x}_{h,k}}\chi_{h,k}\,.
\end{align}
The interpolation operator allows us to define discrete versions of the standard $\mathds{L}^p$-norms, which are equivalent to their continuous counterparts, i.e., for $p\in[1,\infty)$ there exist $h$-independent constants $c,C>0$ such that
\begin{align}
c\norm{U}_{\mathds{L}^p}^p\leq \iD\Ih{\abs{U}^p}\dx\leq C\norm{U}_{\mathds{L}^p}^p
\end{align}
for all $U\in\mathds{V}\h$.
We also consider the discrete $L^2$-inner product $\trkla{U,V}\h:=\iD\Ih{UV}\dx$ and its induced norm $\norm{\cdot}\h$.
For future reference, we recall the following error estimates which were proven e.g.~in Lemma 2.1 in \cite{Metzger2020}.
\begin{lemma}\label{lem:interpolation}
Let $\mathcal{T}\h$ satisfy \ref{assumption:S1} and let $p\in[1,\infty)$ and $1\leq q\leq\infty$.
Then, for $q^*=\tfrac{q}{q-1}$, if $q<\infty$, and $q^*=1$, if $q=\infty$, the estimates
\begin{subequations}
\begin{align*}
\norm{\trkla{I-\Ihop}\tgkla{U V}}_{\mathds{L}^p}&\leq Ch^2\norm{\nabla U}_{\mathds{L}^{pq}}\norm{\nabla V}_{\mathds{L}^{pq^*}}\,,\\
\norm{\trkla{I-\Ihop}\tgkla{UV}}_{\mathds{W}^{1,p}}&\leq Ch\norm{\nabla U}_{\mathds{L}^{pq}}\norm{\nabla V}_{\mathds{L}^{pq^*}}
\end{align*}
\end{subequations}
hold true for all $U,V\in\mathds{V}\h$.
\end{lemma}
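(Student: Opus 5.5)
The proof is elementwise: both estimates are local, so I will prove them on a single simplex $K\in\mathcal{T}\h$ and then sum over the elements. On $K$ the product $UV$ is a quadratic polynomial, and $\Ihop$ restricted to $K$ is the linear Lagrange interpolant. Since $U$ and $V$ are affine on $K$, $\nabla U$ and $\nabla V$ are constant there. Hence $D^2(UV)=\nabla U\otimes\nabla V+\nabla V\otimes\nabla U$ is also constant on $K$, with $|D^2(UV)|\le 2|\nabla U||\nabla V|$.

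\emph{Step 1 (local interpolation bound).} Take the standard Bramble--Hilbert estimate for linear interpolation of a $W^{2,p}(K)$ function. For $p>d/2$ this gives $\norm{(I-\Ihop)f}_{\mathds{L}^p(K)}+h\norm{\nabla(I-\Ihop)f}_{\mathds{L}^p(K)}\le Ch^2\norm{D^2f}_{\mathds{L}^p(K)}$. For $f$ quadratic the estimate holds for every $p\in[1,\infty)$: on the reference element all norms on the finite-dimensional space of quadratics are equivalent, and $(I-\Ihop)$ annihilates affine functions, so $\|(I-\Ihop)f\|$ is controlled by the seminorm $|D^2 f|$. The affine scaling to $K$, together with the shape regularity implied by quasi-uniformity in \ref{assumption:S1}, then produces the factors $h^2$ and $h$.

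\emph{Step 2 (Hölder on each element).} Because $D^2(UV)$ is constant on $K$,
\begin{align*}
\norm{D^2(UV)}_{\mathds{L}^p(K)}^p\le 2^p\int_K|\nabla U|^p|\nabla V|^p\dx\le 2^p\norm{\nabla U}_{\mathds{L}^{pq}(K)}^p\norm{\nabla V}_{\mathds{L}^{pq^*}(K)}^p,
\end{align*}
by Hölder's inequality with exponents $q$ and $q^*$. For $q=\infty$ the first factor becomes $\norm{\nabla U}_{\mathds{L}^\infty(K)}$.

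\emph{Step 3 (summation over elements).} Sum the $p$-th powers over $K\in\mathcal{T}\h$. We need $\sum_K a_K^pb_K^p\le(\sum_K a_K^{pq})^{1/q}(\sum_K b_K^{pq^*})^{1/q^*}$, which is the discrete Hölder inequality with the same exponents. This yields the global bound $\norm{D^2_h(UV)}_{\mathds{L}^p}\le 2\norm{\nabla U}_{\mathds{L}^{pq}}\norm{\nabla V}_{\mathds{L}^{pq^*}}$, where $D^2_h$ denotes the elementwise Hessian. For the $\mathds{W}^{1,p}$ estimate, $(I-\Ihop)(UV)$ is continuous and piecewise smooth, so its weak gradient equals the elementwise gradient. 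The $\mathds{L}^p$ part contributes $h^2\le h$ (assuming $h\le1$), and the gradient part contributes $h$. Periodicity plays no role beyond the fact that elements tile the torus.

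The only slightly delicate point is Step 1 for small $p$ (for instance $p=1$ when $d\ge2$), where the general Sobolev interpolation theory does not directly apply. This is exactly why the finite-dimensionality and equivalence-of-norms argument for quadratics is used instead of a general $W^{2,p}$ result.
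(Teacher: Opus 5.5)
Your proof is correct: on each element $UV$ is quadratic with constant Hessian $\nabla U\otimes\nabla V+\nabla V\otimes\nabla U$, so the local interpolation estimate followed by H\"older's inequality and summation over elements gives both bounds. You rightly justify the local estimate for every $p\in[1,\infty)$ by finite-dimensionality on the reference element rather than by $\mathds{W}^{2,p}\hookrightarrow C^0$. The paper does not prove this lemma itself but quotes it from Lemma~2.1 of \cite{Metzger2020}, which I expect uses this same standard elementwise interpolation-plus-H\"older argument.
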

We define the discrete Laplacian $\Delta\h\,:\,\mathds{V}\h\rightarrow\mathds{V}\h$ as
\begin{align}
\trkla{\Delta\h U, V}\h =-\iD\nabla U\cdot\nabla V\dx&&\text{for~}U,V\in\mathds{V}\h\,.
\end{align}
On the subset of finite element functions with mean-value zero $\mathds{V}_h^0\subset\mathds{V}\h$ we can define the inverse of the discrete Laplacian as
\begin{align}\label{eq:invLap}
\rkla{\nabla\trkla{-\Delta\h^{-1}}U,\nabla V}=\trkla{U,V}\h\qquad\forall V\in\mathds{V}\h\,,U\in\mathds{V}\h^0\,,
\end{align}
and a discrete version of the $\mathds{H}^{-1}$-inner product via
\begin{align}
\trkla{U,V}_*=\rkla{\nabla\trkla{-\Delta\h}^{-1}\trkla{U-\trkla{U}_{\D}},\nabla\trkla{-\Delta\h}^{-1}\trkla{V-\trkla{V}_{\D}}} +\trkla{U}_{\D}\trkla{V}_{\D}\,.
\end{align}
The induced norm will be denoted by $\norm{U}_*:=\sqrt{\trkla{U,U}_*}$.
Note that 
\begin{align*}
\norm{U}_{-1}^2 & =\norm{\nabla\trkla{-\Delta^{-1}}\trkla{U-\trkla{U}_{\D}}}^2+\trkla{U}_{\D}^2\\
 & \leq C\norm{\nabla\trkla{-\Delta\h^{-1}}\trkla{U-\trkla{U}_{\D}}}^2+\trkla{U}_{\D}^2\,.
\end{align*}
Below we denote $\norm{U}_*^2 := \norm{\nabla\trkla{-\Delta\h^{-1}}\trkla{U-\trkla{U}_{\D}}}^2+\trkla{U}_{\D}^2$.

We shall denote the standard $\mathds{H}^1$-stable $\mathds{L}^2$-projection onto $\mathds{V}\h$ as $\mathcal{P}:\mathds{L}^2\rightarrow \mathds{V}\h$ and the Ritz projection $\mathcal{R}:\mathds{W}^{1,p}_{\per} \rightarrow \mathds{V}\h$ as
\begin{subequations}
\begin{align}
\iD\nabla \mathcal{R}v\cdot\nabla U\dx&\,=\iD\nabla v\cdot\nabla U\dx&&\forall U\in\mathds{V}\h\text{~and~}v\in\mathds{W}^{1,p}_{\per}\,,\\
\iD\mathcal{R}v\dx&=\iD v\dx\,.
\end{align}
\end{subequations}
The Ritz projection satisfies (cf.~\cite[Theorem A.2]{GiraultAndRaviart})
\begin{align}\label{eq:convergenceRitz}
\norm{v-\mathcal{R}v}_{\mathds{L}^p} +h\norm{\nabla v-\nabla\mathcal{R}v}_{\mathds{L}^p}\leq Ch^{s+1}\norm{v}_{\mathds{W}^{s+1,p}},
\end{align}
for all $v\in\mathds{W}^{s+1,p}_{\per}$, $s\in\tgkla{0,1}$, and $p\geq2$.
For simplicity, we assume that the continuous and discrete initial data are deterministic and satisfy
\begin{itemize}
\labitem{\textbf{(I)}}{assumption:I} $u_0\in\mathds{H}^1_{\per}$ with $\abs{u_0}\leq 1$ almost everywhere in $\D$, $U^0:=\mathcal{P}{u_0}\in\mathds{V}\h$ with $\mathfrak{m}:=\trkla{u_0}_{\D}=\trkla{U^0}_{\D}\in\trkla{-1,1}$.
\end{itemize}
In our discrete scheme, we shall only consider a finite amount of modes of the $\mathcal{Q}$-Wiener process. Thus, we approximate $\Phi$ by
\begin{align}\label{eq:defPhih}
\Phi\h\trkla{U}\bs{g}:=\sum_{i=1}^d\sum_{k\in\mathds{Z}\h}\partial_i\Ih{\mysigma\trkla{U}\trkla{\bs{g}\cdot\mathfrak{e}_i,\g{k}}\g{k}},
\end{align}
for all $\bs{g}\in\trkla{\mathcal{Q}^{1/2}\mathds{L}^2}^d$.
Here, $\tgkla{\mathds{Z}\h}\h$ is an $h$-dependent family of finite subsets of $\mathds{Z}$ satisfying
\begin{itemize}
\labitem{\textbf{(Z)}}{assumption:Z} $\displaystyle\lim_{h\searrow0}\mathds{Z}\h=\mathds{Z}$.
\end{itemize}
The fully discrete finite element approximation of \eqref{eq:model:inequality} reads:
for a given $\mathds{V}\h$-valued random variable $U\no$, find a $\mathds{V}\h\times\mathds{V}\h$-valued random variable $\trkla{U\nn,W\nn}$ such that
\begin{subequations}\label{eq:model:disc:inequality}
\begin{align}
\trkla{U\nn-U\no,\varphi\h}\h+\tau\trkla{\nabla W\nn,\nabla\varphi\h}&=\trkla{\Phi\h\trkla{U\no}\sinc{n},\varphi\h}\,,\\
\varepsilon\rkla{\nabla U\nn,\nabla\tekla{\psi\h-U\nn}}&\geq \rkla{W\nn+\frac1\varepsilon U\no,\psi\h-U\nn}\h\,,
\end{align}
for all $\varphi\h,\psi\h\in\mathds{V}\h$.
Here, we denoted the increment of the $\mathcal{Q}$-Wiener process by $\displaystyle \sinc{n}:=\bs{\W}\trkla{t\nn}-\bs{\W}\trkla{t\no}=:\sum_{i=1}^d\sum_{k\in\mathds{Z}}\lambda_k\g{k}\sincb{n}{k}{i}$.
\end{subequations}

Following the approach in \cite{BloweyAndElliott91}, for $\delta > 0$ we approximate the singular part of the potential $\Psi$ by a convex function $F_\delta$
\begin{align}\label{eq:def:Fdelta}
F_\delta\trkla{r}:=\left\{\begin{array}{cl}
\tfrac1{2\delta}\trkla{r-\trkla{1+\tfrac\delta2}}^2+\tfrac\delta{24} &\text{if~}r\geq1+\delta\,,\\
\tfrac{1}{6\delta^2}\trkla{r-1}^3&\text{if~}1<r<1+\delta\,,\\
0&\text{if~}-1\leq r\leq 1\,,\\
-\tfrac1{6\delta^2}\trkla{r+1}^3&\text{if~}-1-\delta<r<-1\,,\\
\tfrac1{2\delta}\trkla{r+\trkla{1+\tfrac\delta2}}^2+\tfrac\delta{24}&\text{if~}r\leq -1-\delta\,.
\end{array}\right.
\end{align}

In the subsequent sections, we shall analyze the following regularised version of \eqref{eq:model:disc:inequality}:
\begin{subequations}\label{eq:model:disc:regularised}
\begin{align}
\rkla{U\nn-U\no,\varphi\h}\h+\tau\rkla{\nabla W\nn,\nabla\varphi\h}&=\rkla{\Phi\h\trkla{U\no}\sinc{n},\varphi\h}&&\forall\varphi\h\in\mathds{V}\h\,,\label{eq:model:disc:regularised:1}\\
\varepsilon\rkla{\nabla U\nn,\nabla\psi\h}+\frac1\varepsilon\rkla{f_\delta\trkla{U\nn}-U\no,\psi\h}\h&=\rkla{W\nn,\psi\h}\h&&\forall\psi\h\in\mathds{V}\h\,,\label{eq:model:disc:regularised:2}
\end{align}
\end{subequations}
where $f_\delta=F^\prime_\delta$ is the derivative of the convex function $F_\delta$. 
Recalling the definition of $\Phi\h$ in \eqref{eq:defPhih} provides $\trkla{U\nn}_{\D}=\trkla{U^0}_{\D}=\mathfrak{m}$ for all $n$.

The existence of solutions to \eqref{eq:model:disc:regularised} follows by an energy-type argument along the lines of \cite[Lemmas 5.4]{stochch_sharp25}.

\section{$\trkla{h,\tau}$-independent bounds}\label{sec:boundsht}
In this section, we study the problem for fixed $\delta>0$ and derive $\trkla{h,\tau}$-independent bounds which will be used in Section \ref{sec:limitht} to pass to the limit $\trkla{h,\tau}\searrow0$.
\begin{lemma}\label{lem:h-1estimate}
Let the assumptions \ref{assumption:T}, \ref{assumption:S1}, \ref{assumption:I}, \ref{assumption:W1}, \ref{assumption:W2}, \ref{assumption:C} hold true and let $0<\delta\leq1/8$.
Then, for every $1\leq \p<\infty$, there exists a constant $C\equiv C\trkla{T,\p}$ which is independent of $h$, $\tau$, and $\delta$ such that
\begin{multline}
\expected{\max_{0\leq\tilde{N}\leq N}\norm{U^{\tilde{N}}-\mathfrak{m}}_*^{2\p}}+\expected{\rkla{\sum_{n=1}^N\norm{U\nn-U\no}_*^2}^\p} +\expected{\rkla{\sum_{n=1}^N\tau\varepsilon\norm{\nabla U\nn}_{\mathds{L}^2}^2}^\p}\\
+\expected{\rkla{\sum_{n=1}^N\frac\tau\varepsilon\norm{U\nn-U\no}\h^2}^\p} +\expected{\rkla{\sum_{n=1}^N\frac\tau\varepsilon \rkla{f_\delta\trkla{U\nn},U\nn-\mathfrak{m} }\h}^\p} \leq C
\end{multline}
for all $\p\in[1,\infty)$.
Furthermore, the estimates $\rkla{f_\delta\trkla{U\nn},U\nn-\mathfrak{m} }\h\geq \rkla{F_\delta\trkla{U\nn},1}\h\geq0$ and $\trkla{F_\delta\trkla{U\nn},1}\h-2\norm{U\nn}\h^2\geq -\mathcal{C}$ with $\mathcal{C}$ independent of $h$, $\tau$, and $\delta$ hold true.
\end{lemma}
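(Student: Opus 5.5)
Our plan is to test \eqref{eq:model:disc:regularised:1} with $\varphi\h=(-\Delta\h)^{-1}(U\nn-\mathfrak{m})$, which is admissible because mean preservation gives $U\nn-\mathfrak{m}\in\mathds{V}\h^0$, and to test \eqref{eq:model:disc:regularised:2} with $\psi\h=U\nn-\mathfrak{m}$. Combining the two relations through \eqref{eq:invLap} eliminates $W\nn$ and gives
\begin{multline*}
\tfrac12\norm{U\nn-\mathfrak{m}}_*^2-\tfrac12\norm{U\no-\mathfrak{m}}_*^2+\tfrac12\norm{U\nn-U\no}_*^2+\tau\varepsilon\norm{\nabla U\nn}^2+\tfrac\tau\varepsilon\rkla{f_\delta(U\nn),U\nn-\mathfrak{m}}\h\\
=\tfrac\tau\varepsilon\rkla{U\no,U\nn-\mathfrak{m}}\h+\rkla{\Phi\h(U\no)\sinc{n},(-\Delta\h)^{-1}(U\nn-\mathfrak{m})}.
\end{multline*}
For the explicit term we use $(U\no,U\nn-\mathfrak{m})\h=\tfrac12\norm{U\nn}\h^2+\tfrac12\norm{U\no}\h^2-\tfrac12\norm{U\nn-U\no}\h^2-\mathfrak{m}(U\nn,1)\h$. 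This produces the dissipative term $\tfrac{\tau}{2\varepsilon}\norm{U\nn-U\no}\h^2$ on the left, together with $\tfrac{\tau}{2\varepsilon}(\norm{U\nn}\h^2+\norm{U\no}\h^2)$ on the right. We keep half of the $f_\delta$ term to absorb the latter, using the pointwise inequalities below.

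\textbf{Pointwise inequalities for $F_\delta$.} Convexity and $F_\delta=0$ on $[-1,1]\ni\mathfrak{m}$ give $f_\delta(r)(r-\mathfrak{m})\geq F_\delta(r)-F_\delta(\mathfrak{m})=F_\delta(r)\geq0$. Nodal interpolation preserves this, so $(f_\delta(U\nn),U\nn-\mathfrak{m})\h\geq(F_\delta(U\nn),1)\h\geq0$. For the lower bound $F_\delta(r)-2r^2\geq-\mathcal{C}$ we check the cases of \eqref{eq:def:Fdelta} directly. On $|r|\le1+\delta$ the left side is at least $-2(9/8)^2$. For $|r|\geq1+\delta$ we have $F_\delta(r)\ge\frac1{2\delta}(|r|-1-\delta/2)^2\geq 4(|r|-1-\delta/2)^2$ because $\delta\le1/8$, and $4(|r|-c)^2-2r^2$ is bounded below uniformly; this is where $\delta\le 1/8$ enters. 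What we actually need is the strengthened form $\tfrac12 f_\delta(r)(r-\mathfrak{m})\ge r^2-C$. This also follows from the explicit form, since $f_\delta(r)(r-\mathfrak{m})\ge\frac1\delta(|r|-1-\delta/2)(|r|-|\mathfrak m|)$ in the quadratic regime and $\frac1\delta\ge8$. It lets us bound the $\tfrac{\tau}{2\varepsilon}\norm{U\nn}\h^2$ term by half of the $f_\delta$ term plus $C\tau$. The $\norm{U\no}\h^2$ term is handled the same way at level $n-1$ after summation, with $n=0$ controlled by $\norm{u_0}$. The remaining term $\mathfrak{m}(U\nn,1)\h$ is a constant.

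\textbf{Noise term.} We split it as $(\Phi\h(U\no)\sinc{n},(-\Delta\h)^{-1}(U\no-\mathfrak{m}))+(\Phi\h(U\no)\sinc{n},(-\Delta\h)^{-1}(U\nn-U\no))$. The first part is a martingale increment. The second is bounded by $\tfrac14\norm{U\nn-U\no}_*^2+C\norm{\Phi\h(U\no)\sinc{n}}_*^2$. Since $\Phi\h$ is a divergence, we have $\norm{\Phi\h(U\no)\sinc{n}}_*\le C\norm{\Ihop\{\sigma(U\no)\g{k}\}\sincb{n}{k}{i}}$, and by \ref{assumption:C}, \ref{assumption:W2} (boundedness of $\sigma$) its moments are $\le C\tau^{\p}$ uniformly, independently of $U$. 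Summing over $n$ and taking the maximum over $\tilde N$ leaves the maximum of the martingale. We raise everything to the power $\p$ and apply the discrete Burkholder--Davis--Gundy inequality. The quadratic variation is bounded by $\sum_n\tau\norm{U\no-\mathfrak{m}}_*^2$, again because the noise coefficient is uniformly bounded in $\mathds{L}^2$. Young's inequality absorbs $\tfrac12\expected{\max\norm{U^{\tilde N}-\mathfrak m}_*^{2\p}}$, and the discrete Gronwall lemma closes the estimate. Finally, we check that $C$ does not depend on $\delta$: this holds because $F_\delta\ge0$, the pointwise bounds are uniform for $\delta\le1/8$, and $\norm{U^0}\le\norm{u_0}\le1$.

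\textbf{Expected main obstacle.} The delicate step is absorbing the anti-dissipative explicit term $\tfrac\tau\varepsilon(U\no,U\nn-\mathfrak m)\h$ uniformly in $\delta$. We plan to combine the pointwise inequality $\tfrac12 f_\delta(r)(r-\mathfrak m)\ge r^2-C$ with an index shift that moves the $\norm{U\no}\h^2$ contribution to the previous step.
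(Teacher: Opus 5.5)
Your proposal is correct and follows essentially the same route as the paper. You use the same test functions, the same rewriting of $-(U^{n-1},U^n-\mathfrak m)_h$, and you keep half of the $f_\delta$ term to absorb $\tfrac{\tau}{2\varepsilon}\big(\|U^n\|_h^2+\|U^{n-1}\|_h^2\big)$ via the index shift, then finish with BDG and a discrete Gronwall argument for the martingale part. Your ``strengthened'' bound $\tfrac12 f_\delta(r)(r-\mathfrak m)\ge r^2-C$ needs no separate case analysis: it follows directly by chaining the two inequalities in the statement, $\tfrac12 f_\delta(r)(r-\mathfrak m)\ge \tfrac12 F_\delta(r)\ge r^2-C$, which is exactly how the paper combines them.
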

\begin{proof}
As $\trkla{U\nn-\mathfrak{m}}\in\mathds{V}\h^0$, we can choose $\varphi\h=-\Delta\h^{-1}\rkla{U\nn-\mathfrak{m}}$ and $\psi\h=U\nn$ in \eqref{eq:model:disc:regularised} and obtain using \eqref{eq:invLap} and Young's inequality
\begin{align}\label{eq:h-1estimate}
\begin{split}
\tfrac12&\,\norm{U\nn-\mathfrak{m}}_*^2 +\tfrac14\norm{U\nn-U\no}_*^2-\tfrac12\norm{U\no-\mathfrak{m}}_*^2 +\tau\varepsilon\norm{\nabla U\nn}_{\mathds{L}^2}^2\\
&\,+\frac\tau\varepsilon\rkla{f_\delta\trkla{U\nn},U\nn-\mathfrak{m}}\h-\frac\tau\varepsilon\rkla{U\no,U\nn-\mathfrak{m}}\h\\
\leq&\,\rkla{\Phi\h\trkla{U\no}\sinc{n},-\Delta\h^{-1}\rkla{U\no-\mathfrak{m}}} +\norm{\sum_{i=1}^d\sum_{k\in\Zh}\Ih{\mysigma\trkla{U\no}\g{k}\lambda_k}\sincb{n}{k}{i}}_{\mathds{L}^2}^2\,.
\end{split}
\end{align}
Using the convexity of $F_\delta$, we immediately obtain
\begin{align}
\rkla{f_\delta\trkla{U\nn},U\nn-\mathfrak{m}}\h\geq \rkla{F_\delta\trkla{U\nn},1}\h-\rkla{F_\delta\trkla{\mathfrak{m}},1}\h=\rkla{F_\delta\trkla{U\nn},1}\h\geq0\,,
\end{align}
as $\mathfrak{m}\in\rkla{-1,1}$ (cf.~Assumption \ref{assumption:I}).
For future reference, we shall also show that $\rkla{F_\delta\trkla{U\nn},1}\h-2\norm{U\nn}\h^2$ is bounded from below if $\delta\leq 1/8$:
Denoting the dual basis to the nodes $\trkla{\bs{x}_{h,i}}_{i\in J\h}$ by $\trkla{\chi_{h,i}}_{i\in J\h}$, we introduce the subsets $J\h^+:=\gkla{i\in J\h\,:\,U\nn\trkla{\bs{x}_{h,i}}\geq8}$, $J\h^-:=\gkla{i\in J\h\,:\,U\trkla{\bs{x}_{h,i}}\leq -8}$, and $J\h^b:= J\h\setminus\trkla{J\h^+\cup J\h^-}$ and compute
\begin{align}
\begin{split}
\trkla{F_\delta\trkla{U\nn},1}\h&-2\norm{U\nn}\h^2=\sum_{j\in J\h}\rkla{F_\delta\trkla{U\nn\trkla{\bs{x}_{h,i}}}-2\abs{U\nn\trkla{\bs{x}_{h,i}}}^2}\iD\chi_{h,i}\dx\\
=&\,\sum_{i\in J\h^+} \rkla{\tfrac1{2\delta}\rkla{U\nn\trkla{\bs{x}_{h,i}}-\trkla{1+\tfrac\delta2}}^2+\tfrac\delta{24}-2\abs{U\nn\trkla{\bs{x}_{h,i}}}^2}\iD\chi_{h,i}\dx\\
&\,+\sum_{i\in J\h^+} \rkla{\tfrac1{2\delta}\rkla{U\nn\trkla{\bs{x}_{h,i}}+\trkla{1+\tfrac\delta2}}^2+\tfrac\delta{24}-2\abs{U\nn\trkla{\bs{x}_{h,i}}}^2}\iD\chi_{h,i}\dx\\
&\,+\sum_{i\in J\h^b}\rkla{F_\delta\trkla{U\nn\trkla{\bs{x}_{h,i}}}-2\abs{U\nn\trkla{\bs{x}_{h,i}}}^2}\iD\chi_{h,i}\dx\,.
\end{split}
\end{align}
As $\iD\chi_{h,i}\dx\geq0$ for all $i\in J\h$, we can estimate the first term using $\trkla{1+\tfrac\delta2}\leq 2$ and $U\nn\geq8$ to deduce
\begin{multline}
\tfrac1{2\delta}\abs{U\nn\trkla{\bs{x}_{h,i}}}^2-\tfrac1\delta U\nn\trkla{\bs{x}_{h,i}}\trkla{1+\tfrac\delta2}+\tfrac1{2\delta}\trkla{1+\tfrac\delta2}^2+\tfrac\delta{24}-2\abs{U\nn\trkla{\bs{x}_{h,i}}}^2\\
\geq\tfrac1{2\delta}\abs{U\nn\trkla{\bs{x}_{h,i}}}^2-\tfrac1{4\delta}\abs{U\nn\trkla{\bs{x}_{h,i}}}^2-2\abs{U\nn\trkla{\bs{x}_{h,i}}}^2\geq 0
\end{multline}
for all $i\in J\h^+$ and $\delta\leq \tfrac18$.
The second term can be dealt with analogously.
To estimate the last term, we use the nonnegativity of $F_\delta$ and obtain
\begin{align}
\sum_{i\in J\h^b}\rkla{F_\delta\trkla{U\nn\trkla{\bs{x}_{h,i}}}-2\abs{U\nn\trkla{\bs{x}_{h,i}}}^2}\iD\chi_{h,i}\dx\geq -128\abs{\D}\,,
\end{align}
which shows $\displaystyle \trkla{F_\delta\trkla{U\nn},1}\h-2\norm{U\nn}\h^2\geq-128\tabs{\D}=:\mathcal{C}$.\\
We continue by rewriting $-\trkla{U\no,U\nn-\mathfrak{m}}_h$ using the well-known identity $2\trkla{a-b,a}=a^2-b^2+\trkla{a-b}^2$ which provides
\begin{align}
\begin{split}
-&\trkla{U\no,U\nn-\mathfrak{m}}\h=\trkla{U\no,U\no-U\nn}\h-\norm{U\no}\h^2+\tabs{\D}\mathfrak{m}^2\\
&=\,\tfrac12\norm{U\no}\h^2 +\tfrac12\norm{U\nn-U\no}\h^2-\tfrac12\norm{U\nn}\h^2-\norm{U\no}\h^2 +\tabs{\D}\mathfrak{m}^2\\
&=\,-\tfrac12\norm{U\no}\h^2+\tfrac12\norm{U\nn-U\no}\h^2-\tfrac12\norm{U\nn}\h^2+\tabs{\D}\mathfrak{m}^2\,.
\end{split}
\end{align}
Hence, summing \eqref{eq:h-1estimate} from $n=1$ to $\tilde{N}\leq N$ provides
\begin{align}
\begin{split}
\tfrac12&\norm{U^{\tilde{N}}-\mathfrak{m}}_*^2+\tfrac14\sum_{n=1}^{\tilde{N}}\norm{U\nn-U\no}_*^2 +\varepsilon\sum_{n=1}^{\tilde{N}}\tau\norm{\nabla U\nn}_{\mathds{L}^2}^2\\
&+\frac\tau{2\varepsilon}\sum_{n=1}^{\tilde{N}}\rkla{f_\delta\trkla{U\nn},U\nn-\mathfrak{m}}\h +\frac\tau{2\varepsilon}\sum_{n=1}^{\tilde{N}}\rkla{\trkla{F_\delta\trkla{U\nn},1}\h-2\norm{U\nn}\h^2}\\
&+\frac\tau{2\varepsilon}\sum_{n=1}^{\tilde{N}}\norm{U\nn-U\no}\h^2\\
\leq&\,\tfrac12\norm{U^0-\mathfrak{m}}_*^2 +\sum_{n=1}^{\tilde{N}}\rkla{\Phi\h\trkla{U\no}\sinc{n},-\Delta\h^{-1}\trkla{U\no-\mathfrak{m}}}\\
&+\sum_{n=1}^{\tilde{N}}\norm{\sum_{i=1}^d\sum_{k\in\Zh} \Ih{\mysigma\trkla{U\no}\g{k}\lambda_k}\sincb{n}{k}{i}}_{\mathds{L}^2}^2\,.
\end{split}
\end{align}
Taking the $\p$-th power on both sides and recalling the lower bound on $\rkla{F_\delta\trkla{U\nn},1}\h-2\norm{U\nn}\h^2$, we obtain
\begin{align}\label{eq:h-1estimate:p}
\begin{split}
&\norm{U^{\tilde{N}}-\mathfrak{m}}_*^{2\p}+\rkla{\sum_{n=1}^{\tilde{N}}\norm{U\nn-U\no}_*^2}^{\p} +\rkla{\varepsilon\sum_{n=1}\tau\norm{\nabla U\nn}_{\mathds{L}^2}^2}^{\p}\\
&+\rkla{\frac{\tau}{\varepsilon}\sum_{n=1}^{\widetilde{N}}\rkla{f_\delta\trkla{U\nn},U\nn-\mathfrak{m}}\h}^{\p} +\rkla{\frac{\tau}{\varepsilon}\sum_{n=1}^{\tilde{N}}\norm{U\nn-U\no}\h^2}^\p\\
\leq&\,C\norm{U^0-\mathfrak{m}}_*^{2\p} + \rkla{\sum_{n=1}^{\tilde{N}}\rkla{\Phi\h\trkla{U\no}\sinc{n},-\Delta\h^{-1}\trkla{U\no-\mathfrak{m}}}}^\p\\
&\,+C\rkla{\sum_{n=1}^{\tilde{N}}\norm{\sum_{i=1}^d\sum_{k\in\Zh} \Ih{\mysigma\trkla{U\no}\g{k}\lambda_k}\sincb{n}{k}{i}}_{\mathds{L}^2}^2}^\p +C\\
=:&\,C\norm{U^0-\mathfrak{m}}_*^{2\p} +R_1 +R_2 +C\,,
\end{split}
\end{align}
where the constants depend on $\p$ but not on $h$, $\tau$, or $\delta$.
To estimate the moments of the martingales by their quadratic variation processes, we use the Burkholder--Davis--Gundy inequality and deduce
\begin{align}
\begin{split}
\expected{R_1}\leq&\,C\expected{\max_{1\leq l\leq\tilde{N}} \abs{\sum_{n=1}^l\sum_{i=1}^d\sum_{k\in\Zh}\rkla{ \Ih{\mysigma\trkla{U\no}\lambda_k\g{k}}\sincb{n}{k}{i},\partial_i\trkla{-\Delta\h^{-1}}\rkla{U\no-\mathfrak{m}}}}^\p}\\
\leq&\,C\sum_{n=1}^{\tilde{N}}\tau\expected{\rkla{\sum_{k\in\Zh}\sum_{i=1}^d\abs{\rkla{\Ih{\mysigma\trkla{U\no}\lambda_k\g{k}},\partial_i\trkla{-\Delta\h^{-1}}\rkla{U\no-\mathfrak{m}}}}^2 }^{\p/2}}\\
\leq&\, C\sum_{n=1}^{\tilde{N}}\tau\expected{\norm{U\no-\mathfrak{m}}_*^{2\p}}+ C\tilde{N}\tau\,,
\end{split}\\
\begin{split}
\expected{R_2}\leq&\,C\sum_{n=1}^{\tilde{N}}\tau\expected{\rkla{\sum_{i=1}^d\sum_{k\in\Zh}\norm{\Ih{\mysigma\trkla{U\no}\lambda_k\g{k}}}_{\mathds{L}^2}^2}^\p}\leq C\tilde{N}\tau\,.
\end{split}
\end{align}
Hence, taking the expected value in \eqref{eq:h-1estimate:p} provides
\begin{align}
\expected{\norm{U^{\tilde{N}}-\mathfrak{m}}_*^{2\p}}\leq C\norm{U^0-\mathfrak{m}}_*^{2\p} +C\sum_{n=1}^{\tilde{N}}\tau\expected{\norm{U\no-\mathfrak{m}}_*^{2\p}} +C\,.
\end{align}
Thus, an application of (a discrete version of) Gronwall's inequality yields
\begin{align}
\max_{0\leq\tilde{N}\leq N}\expected{\norm{U^{\tilde{N}}-\mathfrak{m}}_*^{2\p}}\leq C\,.
\end{align}
We conclude the proof by taking the maximum with respect to $\tilde{N}$ in \eqref{eq:h-1estimate:p} before taking the expected value, which provides
\begin{align}
\begin{split}
&\expected{\max_{0\leq\tilde{N}\leq N}\norm{U^{\tilde{N}}-\mathfrak{m}}_*^{2\p}}+\expected{\rkla{\sum_{n=1}^{{N}}\norm{U\nn-U\no}_*^2}^{\p}} +\expected{\rkla{\varepsilon\sum_{n=1}^N\tau\norm{\nabla U\nn}_{\mathds{L}^2}^2}^{\p}}\\
&\qquad\quad+\expected{\rkla{\frac{\tau}{\varepsilon}\sum_{n=1}^{N}\rkla{f_\delta\trkla{U\nn},U\nn-\mathfrak{m}}\h}^{\p}} +\expected{\rkla{\frac{\tau}{\varepsilon}\sum_{n=1}^{N}\norm{U\nn-U\no}\h^2}^\p}\\
&\quad\leq\, C\norm{U^0-\mathfrak{m}}_*^{2\p} +C\sum_{n=1}^{N}\tau\expected{\norm{U\no-\mathfrak{m}}_*^{2\p}} +C\leq C\,.
\end{split}
\end{align}
\end{proof}
Combining the results from Lemma \ref{lem:h-1estimate} with a straightforward interpolation argument shows that the solutions are also uniformly bounded with respect to a discrete $L^q\trkla{L^4\trkla{L^2}}$-norm:
\begin{corollary}\label{cor:lql4l2}
Let the assumptions of Lemma \ref{lem:h-1estimate} hold true. Then
\begin{align}
\expected{\rkla{\sum_{n=1}^N\tau\norm{U\nn}\h^4}^q}\leq C
\end{align}
for all $q\in[1,\infty)$ with $C$ depending on $q$ but not on $h$, $\tau$, or $\delta$.
\end{corollary}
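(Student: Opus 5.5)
The plan is to interpolate $\mathds{L}^2$ between $\mathds{H}^{-1}$ and $\mathds{H}^1$ on the discrete level. Then the two bounds of Lemma~\ref{lem:h-1estimate} can be combined: the $\max_n\norm{U\nn-\mathfrak{m}}_*$ bound and the $\sum_n\tau\norm{\nabla U\nn}^2$ bound.

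For $V:=U\nn-\mathfrak{m}\in\mathds{V}\h^0$ I would test \eqref{eq:invLap} with $V$ itself:
\begin{align*}
\norm{V}\h^2=\rkla{\nabla\trkla{-\Delta\h^{-1}}V,\nabla V}\leq \norm{V}_*\norm{\nabla V}_{\mathds{L}^2}=\norm{V}_*\norm{\nabla U\nn}_{\mathds{L}^2}\,.
\end{align*}
This is the only nontrivial ingredient. It is a purely discrete identity combined with Cauchy--Schwarz, so no inverse estimates or comparison between continuous and discrete $\mathds{H}^{-1}$ norms are needed. Since $\mathfrak{m}\in(-1,1)$ is constant, we have $\norm{U\nn}\h^2\leq 2\norm{V}\h^2+2\abs{\D}\mathfrak{m}^2$. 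Hence
\begin{align*}
\norm{U\nn}\h^4\leq C\norm{U\nn-\mathfrak{m}}_*^2\norm{\nabla U\nn}_{\mathds{L}^2}^2+C\,.
\end{align*}

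Next I would multiply by $\tau$ and sum over $n$. Pulling out the maximum in time gives
\begin{align*}
\sum_{n=1}^N\tau\norm{U\nn}\h^4\leq C\max_{0\leq\tilde N\leq N}\norm{U^{\tilde N}-\mathfrak{m}}_*^2\,\sum_{n=1}^N\tau\norm{\nabla U\nn}_{\mathds{L}^2}^2+CT\,.
\end{align*}
Raising this to the power $q$ and using $(a+b)^q\leq C(a^q+b^q)$, I would then apply Cauchy--Schwarz (or Young's inequality $ab\leq\tfrac12a^2+\tfrac12b^2$) in expectation:
\begin{align*}
\expected{\rkla{\max_{\tilde N}\norm{U^{\tilde N}-\mathfrak{m}}_*^2\sum_{n}\tau\norm{\nabla U\nn}^2}^q}\leq \expected{\max_{\tilde N}\norm{U^{\tilde N}-\mathfrak{m}}_*^{4q}}^{1/2}\expected{\rkla{\sum_n\tau\norm{\nabla U\nn}^2}^{2q}}^{1/2}\,.
\end{align*}
Both factors are bounded by Lemma~\ref{lem:h-1estimate}, taking $\p=2q$ there. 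This works because the lemma holds for all finite moments. The factor $\varepsilon^{-1}$ coming from the gradient term is harmless, since $\varepsilon$ is fixed.

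There is no real obstacle here. The only step requiring care is the discrete interpolation inequality, specifically confirming that $\norm{\cdot}_*$ as defined controls $\norm{\nabla(-\Delta\h^{-1})V}$ for mean-free $V$. This holds directly by the definition following \eqref{eq:invLap}.
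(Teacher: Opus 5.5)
Your proposal is correct and follows the paper's proof essentially step by step. Both arguments use the discrete interpolation $\|U^n-\mathfrak{m}\|_h^2\le\|U^n-\mathfrak{m}\|_*\|\nabla U^n\|_{\mathds{L}^2}$, pull out the maximum in time, apply Cauchy--Schwarz in expectation, and invoke Lemma~\ref{lem:h-1estimate} with $\mathfrak{p}=2q$. The only difference is that you explicitly derive the interpolation inequality from \eqref{eq:invLap}, which the paper uses without comment.
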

\begin{proof}
The claim follows by the straightforward computation
\begin{align}
\begin{split}
&\expected{\rkla{\sum_{n=1}^N\tau\norm{U\nn}\h^4}^q}\leq \expected{\rkla{\sum_{n=1}^N\tau\norm{U\nn-\mathfrak{m}}\h^4}^q}+C\\
&\qquad\leq \expected{\rkla{\sum_{n=1}^N\tau\norm{\nabla U\nn}_{\mathds{L}^2}^2\norm{U\nn-\mathfrak{m}}_*^2}^q}+C\\
&\qquad\leq \expected{\max_{1\leq n\leq N}\norm{U\nn-\mathfrak{m}}_*^{4q}}^{1/2}\expected{\rkla{\sum_{n=1}^N\tau\norm{\nabla U\nn}_{\mathds{L}^2}^2}^{2q}}^{1/2}\leq C\,.
\end{split}
\end{align}
\end{proof}
Next, we improve the $\mathds{H}^{-1}$-estimate from Lemma \ref{lem:h-1estimate} and deduce the following $\mathds{L}^2$-estimate:
\begin{lemma}\label{lem:l2estimate}
Let the assumptions \ref{assumption:T}, \ref{assumption:S1}, \ref{assumption:S2}, \ref{assumption:I}, \ref{assumption:W1}, \ref{assumption:W2}, \ref{assumption:C} hold true and let $\delta\leq 1/8$.
Then, there exists a constant $C$ which is independent of $h$ and $\tau$ such that
\begin{align}
\expected{\max_{1\leq\tilde{N}\leq N}\norm{U^{\tilde{N}}}\h^{3}}+\expected{\rkla{\sum_{n=1}^N\norm{U\nn-U\no}\h^2}^{3/2}} +\expected{\rkla{\sum_{n=1}^N\tau\varepsilon\norm{\Delta\h U\nn}\h^2}^{3/2}}\leq C\,.
\end{align}
\end{lemma}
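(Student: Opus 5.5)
We test \eqref{eq:model:disc:regularised:1} with $\varphi\h=-\varepsilon\Delta\h U\nn$, which is admissible since $\Delta\h U\nn\in\mathds{V}\h$, and \eqref{eq:model:disc:regularised:2} with $\psi\h=\Delta\h U\nn$. Subtracting the two identities eliminates $W\nn$, because $\tau\rkla{\nabla W\nn,\nabla\trkla{-\Delta\h U\nn}}=\tau\rkla{\Delta\h W\nn,\Delta\h U\nn}\h$ coincides with $-\tau\rkla{\nabla W\nn,\nabla\Delta\h U\nn}$. Writing $\varepsilon\rkla{U\nn-U\no,-\Delta\h U\nn}\h=\varepsilon\rkla{\nabla\trkla{U\nn-U\no},\nabla U\nn}$, this naturally gives an $\mathds{H}^1$-type energy. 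The stated estimate, however, is in $\norm{\cdot}\h$ with $\norm{\Delta\h U\nn}\h$ dissipation, so the relevant test is instead $\varphi\h=U\nn$ in \eqref{eq:model:disc:regularised:1} and $\psi\h=-\tau\Delta\h U\nn$ in \eqref{eq:model:disc:regularised:2}. Combining them gives
\begin{align*}
\tfrac12\norm{U\nn}\h^2-\tfrac12\norm{U\no}\h^2+\tfrac12\norm{U\nn-U\no}\h^2+\tau\varepsilon\norm{\Delta\h U\nn}\h^2+\frac\tau\varepsilon\rkla{\nabla\Ih{f_\delta\trkla{U\nn}},\nabla U\nn}\\
=\frac\tau\varepsilon\rkla{\nabla U\no,\nabla U\nn}+\rkla{\Phi\h\trkla{U\no}\sinc{n},U\nn}.
\end{align*}
Here I use that $\rkla{f_\delta\trkla{U\nn},-\Delta\h U\nn}\h=\rkla{\nabla\Ih{f_\delta\trkla{U\nn}},\nabla U\nn}$.

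The first key step is monotonicity. Under the right-angled assumption \ref{assumption:S2}, the stiffness matrix has nonpositive off-diagonal entries, and the standard argument of Barrett--Blowey/Elliott yields $\rkla{\nabla\Ih{f_\delta\trkla{U}},\nabla U}\geq0$ for monotone $f_\delta$. The right angles make the element stiffness contributions reduce to edge differences $\trkla{f_\delta(a)-f_\delta(b)}(a-b)\geq0$. So this term is dropped. The term $\frac\tau\varepsilon\rkla{\nabla U\no,\nabla U\nn}$ is bounded by $\frac\tau{2\varepsilon}\trkla{\norm{\nabla U\no}^2+\norm{\nabla U\nn}^2}$. Its sum is controlled in every moment by Lemma~\ref{lem:h-1estimate}, at the price of $\varepsilon^{-2}$ factors, which is fine since the constant here may depend on $\varepsilon$.

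The stochastic term is split as $\rkla{\Phi\h\trkla{U\no}\sinc{n},U\no}+\rkla{\Phi\h\trkla{U\no}\sinc{n},U\nn-U\no}$. The second part is absorbed via Young into $\tfrac14\norm{U\nn-U\no}\h^2$ (using norm equivalence on $\mathds{V}\h$) plus $C\norm{\Phi\h\trkla{U\no}\sinc{n}}^2$. This is the obstacle: $\Phi\h$ contains a derivative, so $\norm{\Phi\h(U\no)\sinc{n}}^2\lesssim \norm{\nabla\Ih{\sigma(U\no)\g{k}}}^2|\sincb{n}{k}{i}|^2$, requiring $\norm{\nabla U\no}^2\tau$-type control. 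By \ref{assumption:C} and \ref{assumption:W2}, $\norm{\nabla\Ih{\sigma(U)\g{k}}}\leq C\trkla{\norm{\nabla U}+1}\norm{\g{k}}_{\mathds{W}^{1,\infty}}$, since the interpolant of a Lipschitz composition is controlled elementwise. Its expectation is then $\tau\expected{\norm{\nabla U\no}^2}$-like, which is summable by Lemma~\ref{lem:h-1estimate}. The first part is a martingale. Alternatively, after integrating by parts, it equals $-\sum\rkla{\Ih{\sigma\g{k}}\lambda_k\sincb{n}{k}{i},\partial_iU\no}$, whose quadratic variation is bounded by $C\sum\tau\norm{\nabla U\no}^2$.

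Finally, after summing to $\tilde N$, taking the maximum and then the $3/2$-power, BDG gives $\expected{\rkla{\sum\tau\norm{\nabla U\no}^2}^{3/4}}$ for the martingale term. The product terms $\rkla{\sum_n \norm{\Phi\h\sinc{n}}^2}^{3/2}$ are handled via BDG/Hölder with higher moments from Lemma~\ref{lem:h-1estimate}. The exponent $3/2$ (any finite exponent would in fact work) appears because the noise-squared contributions involve $\norm{\nabla U\no}^2|\sincb{n}{k}{i}|^2$. Bounding their $\p$-th moment needs a Hölder split between the gradient sums and the Gaussian increments, and the authors evidently chose a convenient exponent. I would first try the Hölder split with exponents $(2,2)$ and the moment bounds of Lemma~\ref{lem:h-1estimate} for $\p=3$, and expect that to close without Gronwall.
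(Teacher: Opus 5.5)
Your skeleton is the paper's:
\begin{itemize}
\item the same test functions $\varphi\h=U\nn$ and $\psi\h=-\tau\Delta\h U\nn$;
\item the same use of \ref{assumption:S2} to discard $\rkla{\nabla\Ih{f_\delta\trkla{U\nn}},\nabla U\nn}$;
\item the same splitting of the noise term, and BDG for the martingale $\sum_n\rkla{\Phi\h\trkla{U\no}\sinc{n},U\no}$.
\end{itemize}
You bound $\frac\tau\varepsilon\rkla{\nabla U\no,\nabla U\nn}$ directly by Lemma~\ref{lem:h-1estimate}. The paper instead uses a Young step that produces $\tau\varepsilon^{-3}\norm{U\no}\h^2$ and then needs Gronwall, so yours is a legitimate simplification.

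The gap is the step you yourself call the obstacle: the $3/2$-moment of $\sum_nX_n$, where $X_n:=\norm{\Phi\h\trkla{U\no}\sinc{n}}_{\mathds{L}^2}^2$. Knowing that $\expected{X_n}$ is of order $\tau\expected{1+\norm{\nabla U\no}^2}$ only controls the first moment. Your concrete plan is a pathwise Hölder split $X_n\lesssim a_n\xi_n$, with $a_n:=1+\norm{\nabla U\no}^2$ and $\xi_n$ quadratic in the increments $\sincb{n}{k}{i}$, followed by Cauchy--Schwarz in $\Omega$ and Lemma~\ref{lem:h-1estimate} with $\p=3$. This does not give a bound uniform in $\tau$:
\begin{itemize}
\item If you pull out $\max_n\xi_n/\tau$, then $\expected{\max_{n\le N}\trkla{\xi_n/\tau}^3}$ grows like $\trkla{\log N}^3$.
\item If you split $\ell^2$--$\ell^2$ in $n$, you need $\sum_n\tau\norm{\nabla U\no}^4$, i.e.\ an $L^4\trkla{0,T;\mathds{H}^1}$-bound that Lemma~\ref{lem:h-1estimate} does not provide. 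With only the $L^2\trkla{0,T;\mathds{H}^1}$-bound you lose a factor $\tau^{-1/2}$.
\end{itemize}
Any pathwise separation throws away the independence of $\sinc{n}$ from $\mathcal{F}_{t\no}$. That independence is the only reason $\sum_nX_n$ behaves like $\sum_n\tau a_n$.

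The paper closes this step as follows. It uses the crude bound $\expected{\rkla{\sum_nX_n}^{3/2}}\le N^{1/2}\sum_n\expected{X_n^{3/2}}\le C\sum_n\tau\expected{1+\norm{\nabla U\no}^3}$. It combines this with the discrete interpolation inequality $\norm{\nabla U}^3\le\norm{U-\mathfrak{m}}_*\norm{\Delta\h U}\h^2$, which gives $\expected{\sum_n\tau\norm{\nabla U\no}^3}\le C_\alpha\expected{\max_n\norm{U\no-\mathfrak{m}}_*^3}+\alpha\expected{\rkla{\sum_n\tau\norm{\Delta\h U\no}\h^2}^{3/2}}$. The last term is then absorbed into the left-hand side. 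This is exactly where the exponent $3/2$ comes from, so it is not merely a convenient choice: a $\p$-th moment would produce $\norm{\Delta\h U}\h^{4\p/3}$, which can be absorbed only if $\p\le3/2$.

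An alternative closer to your plan is a compensator decomposition. Conditionally on $\mathcal{F}_{t\no}$, $\Phi\h\trkla{U\no}\sinc{n}$ is a centred Gaussian, so $\expected{X_n|\mathcal{F}_{t\no}}\le C\tau a_n$ and $\expected{X_n^2|\mathcal{F}_{t\no}}\le C\tau^2a_n^2$. Hence $\sum_nX_n\le C\sum_n\tau a_n+M_N$ with a martingale $M_N$ satisfying $\expected{M_N^2}\le C\expected{\rkla{\sum_n\tau a_n}^2}$. Then $\expected{\tabs{M_N}^{3/2}}\le\expected{M_N^2}^{3/4}$ is bounded by Lemma~\ref{lem:h-1estimate}. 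This route, with BDG for higher powers, is what would support your remark that other exponents work. As written, however, your proposal supplies neither argument, so the central estimate of the lemma is missing.
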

\begin{proof}
We choose $\varphi\h=U\nn$ and $\psi\h=-\Delta\h U\nn$ in \eqref{eq:model:disc:regularised} and obtain by using \eqref{eq:invLap}
\begin{multline}\label{eq:tmp:l2:1}
\rkla{U\nn-U\no,U\nn}\h+\varepsilon\tau\norm{\Delta\h U\nn}\h^2 +\frac{\tau}\varepsilon\rkla{\nabla\Ih{f_\delta\trkla{U\nn}},\nabla U\nn}\\
= \rkla{\Phi\h\trkla{U\no}\sinc{n},U\nn}+\frac{\tau}\varepsilon\rkla{U\no,-\Delta\h U\nn}\h\,.
\end{multline}
Due to assumption \ref{assumption:S2} and the convexity of $F_\delta$, we get that
\begin{align}
\rkla{\nabla\Ih{f_\delta\trkla{U\nn}},\nabla U\nn}\geq0\,.
\end{align}
Hence, estimating the first term in \eqref{eq:tmp:l2:1} in the usual way, applying Young's inequality, and using the equivalence of $\norm{.}_{\mathds{L}^2}$ and $\norm{.}\h$, we obtain
\begin{multline}
\tfrac12\norm{U\nn}\h^2+\tfrac14\norm{U\nn-U\no}\h^2-\tfrac12\norm{U\no}\h^2+\tfrac34\tau\varepsilon\norm{\Delta\h U\nn}\h^2\\
\leq \rkla{\Phi\h\trkla{U\no}\sinc{n},U\no}+C\norm{\Phi\h\trkla{U\no}\sinc{n}}_{\mathds{L}^2}^2+\tau\tfrac1{\varepsilon^3}\norm{U\no}\h^2\,.
\end{multline}
Summing from $n=1$ to $\tilde{N}\leq N$ and taking the $3/2$-th power, we obtain
\begin{align}\label{eq:tmp:L3L2}
\begin{split}
\norm{U^{\tilde{N}}}\h^{3}&\,+\rkla{\sum_{n=1}^{\tilde{N}}\norm{U\nn-U\no}\h^2}^{3/2}+\rkla{\sum_{n=1}^{\tilde{N}}\tau\varepsilon\norm{\Delta\h U\nn}\h^2}^{3/2}\\
\leq &\,C\rkla{\sum_{n=1}^{\tilde{N}} \rkla{\Phi\h\trkla{U\no}\sinc{n},U\no}}^{3/2} +C\rkla{\sum_{n=1}^{\tilde{N}}\norm{\Phi\h\trkla{U\no}\sinc{n}}_{\mathds{L}^2}^2}^{3/2}\\
&\,+C\rkla{\sum_{n=1}^{\tilde{N}}\tau\tfrac{1}{\varepsilon^3}\norm{U\no}\h^2}^{3/2} + C\norm{U^0}\h^{3}\\
=:&\,R_1+R_2+R_3+R_4\,.
\end{split}
\end{align}
Using the Burkholder--Davis--Gundy inequality we deduce
\begin{align}
\begin{split}
\expected{R_2}\leq&\,C\tilde{N}^{1/2}\sum_{n=1}^{\tilde{N}}\expected{\norm{\Phi\h\trkla{U\no}\sinc{n}}_{\mathds{L}^2}^{3}}\\
\leq&\,C\tilde{N}^{1/2}\sum_{n=1}^{\tilde{N}}\expected{\rkla{\tau\sum_{i=1}^{d}\sum_{k\in\Zh}\norm{\partial_i\Ih{\mysigma\trkla{U\no}\lambda_k\g{k}}}_{\mathds{L}^2}^2}^{3/2}}\\
\leq &\,C\sum_{n=1}^{\tilde{N}}\tau\expected{\rkla{1+\norm{\nabla U\no}_{\mathds{L}^2}^2}^{3/2}}\leq C+C\sum_{n=1}^{\tilde{N}}\tau\expected{\norm{\nabla U\no}_{\mathds{L}^2}^{3}}\,.
\end{split}
\end{align}
We continue by applying Hölder's inequality, which yields
\begin{align}
\begin{split}
\norm{\nabla U\no}_{\mathds{L}^2}^4&\leq \norm{U\no-\mathfrak{m}}\h^{2}\norm{\Delta\h U\no}\h^2\\
&\leq \norm{U\no-\mathfrak{m}}_*\norm{\nabla U\no}_{\mathds{L}^2}\norm{\Delta\h U\no}\h^2\,.
\end{split}
\end{align}
Hence, we obtain by applying Young's inequality
\begin{multline}
\expected{\sum_{n=2}^{\tilde{N}}\tau\norm{\nabla U\no}_{\mathds{L}^2}^3}\leq \expected{\rkla{\max_{n=0,\ldots,N}\norm{U\no-\mathfrak{m}}_*}\sum_{n=2}^{\tilde{N}}\tau\norm{\Delta\h U\no}\h^2}\\
\leq C_\alpha\expected{\max_{n=0,...,N}\norm{U\no-\mathfrak{m}}_*^3} +\alpha\expected{\rkla{\sum_{n=2}^{\tilde{N}}\tau\norm{\Delta\h U\no}\h^2 }^{3/2}}\,.
\end{multline}
This implies
\begin{align}
\expected{R_2}\leq C +\alpha\expected{\rkla{\sum_{n=2}^{\tilde{N}}\tau\norm{\Delta\h U\no}\h^2}^{3/2}}+C\tau\expected{\norm{\nabla U^0}_{\mathds{L}^2}^3}\,,
\end{align}
due to Lemma \ref{lem:h-1estimate}.
%{\color{orange} I kept the $\nabla U^0$-term. Otherwise, we need uniform bound on $\Delta\h U^0$.}
Using Young's inequality, the Burkholder--Davis--Gundy inequality, an integration by parts, and assumptions \ref{assumption:W2} and \ref{assumption:C}, we compute for $R_1$
\begin{align}
\begin{split}
\expected{R_1}\leq&\,C\expected{\max_{1\leq\tilde{N}\leq N}\rkla{\sum_{n=1}^{\tilde{N}}\rkla{\Phi\h\trkla{U\no}\sinc{n},U\no}}^2}+C\\
=&\,C\sum_{n=1}^{N}\tau\expected{\sum_{i=1}^d\sum_{k\in\Zh}\abs{\iD\partial_i\Ih{\mysigma\trkla{U\no}\lambda_k\g{k}}U\no\dx}^2}  +C\\
\leq&\,C\sum_{n=1}^{N}\tau\expected{\norm{\nabla U\no}_{\mathds{L}^2}^2} +C\,.
\end{split}
\end{align}
An application of Gronwall's lemma provides 
\begin{align}
\max_{1\leq\tilde{N}\leq N}\expected{\norm{U^{\tilde{N}}}\h^{3}}+\expected{\rkla{\sum_{n=1}^N\norm{U\nn-U\no}\h^2}^{3/2}} +\expected{\rkla{\sum_{n=1}^N\tau\varepsilon\norm{\Delta\h U\nn}\h^2}^{3/2}}\leq C\,.
\end{align}
Thus, we can go back to \eqref{eq:tmp:L3L2}, take the maximum with respect to $\tilde{N}$ before taking the expected value, and repeat the estimates for $R_1$ and $R_2$ to obtain
\begin{align}
\begin{split}
&\expected{\max_{1\leq\tilde{N}\leq N}\norm{U^{\tilde{N}}}\h^3}+\expected{\rkla{\sum_{n=1}^N\norm{U\nn-U\no}\h^2}^{3/2}} +\expected{\rkla{\sum_{n=1}^N\tau\varepsilon\norm{\Delta\h U\nn}\h^2}^{3/2}}\\
&\quad\leq C\sum_{n=1}^N\tau\expected{\norm{\nabla U\no}_{\mathds{L}^2}^2} +\alpha\expected{\rkla{\sum_{n=2}^N\tau\norm{\Delta\h U\no}\h^2}^{3/2}} +C\tau\norm{\nabla U^0}_{\mathds{L}^2}^3\\
&\qquad\qquad+C\expected{\rkla{\sum_{n=1}^N\tau\varepsilon^{-3}\norm{U\no}\h^2}^{3/2}}+C\norm{U^0}\h^3+C\\
&\quad\leq C\,.
\end{split}
\end{align}
\end{proof}

With the next lemma we establish $\delta$-dependent time regularity results that guarantees that linear time-interpolants of the discrete solutions are uniformly Hölder continuous with exponent $1/8$.
\begin{lemma}\label{lem:nikolskii}
Let the assumptions \ref{assumption:T}, \ref{assumption:S1}, \ref{assumption:S2}, \ref{assumption:I}, \ref{assumption:W1}, \ref{assumption:W2}, \ref{assumption:C} hold true and let $\delta\leq 1/8$.
Then, there exists a positive constant $C\trkla{\delta^{-1}}$ such that
\begin{align}
\expected{\sum_{m=0}^{N-l}\tau\norm{U^{m+l}-U^m}_*^{4}}\leq C\trkla{\delta^{-1}}\trkla{l\tau}^{3/2}
\end{align}
for all $l\in\tgkla{1,\ldots,N}$.
%{\color{orange} This estimate provides compactness with respect to time. We also obtain that the linear time interpolation is uniformly Hölder continuous with exponent 1/8}
\end{lemma}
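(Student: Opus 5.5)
Summing \eqref{eq:model:disc:regularised:1} from $n=m+1$ to $m+l$ gives, for every $\varphi\h\in\mathds{V}\h$,
\begin{align*}
\rkla{U^{m+l}-U^m,\varphi\h}\h=-\sum_{n=m+1}^{m+l}\tau\rkla{\nabla W\nn,\nabla\varphi\h}+\sum_{n=m+1}^{m+l}\rkla{\Phi\h\trkla{U\no}\sinc{n},\varphi\h}\,.
\end{align*}
Since the mean value is preserved, $U^{m+l}-U^m\in\mathds{V}\h^0$. We therefore take $\varphi\h=-\Delta\h^{-1}\trkla{U^{m+l}-U^m}$ and use \eqref{eq:invLap}. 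With Young's inequality this yields
\begin{align*}
\norm{U^{m+l}-U^m}_*^2\leq C\Big(\sum_{n=m+1}^{m+l}\tau\norm{\nabla W\nn}_{\mathds{L}^2}\Big)^2+C\norm{\sum_{n=m+1}^{m+l}\sum_{i,k}\Ih{\mysigma\trkla{U\no}\lambda_k\g{k}}\sincb{n}{k}{i}}_{\mathds{L}^2}^2\,.
\end{align*}
The last term is the $\mathds{H}^{-1}$-norm of the stochastic increment; the divergence structure of the noise cancels one derivative. Squaring gives $\norm{\cdot}_*^4$, so we must bound the fourth power of each piece.

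\textbf{Stochastic term.} The fourth moment of the discrete stochastic integral over $l$ steps is bounded via the Burkholder--Davis--Gundy inequality by $C\expected{(\sum_{n}\tau\sum_{i,k}\norm{\Ih{\mysigma(U\no)\lambda_k\g{k}}}^2)^2}$. Assumptions \ref{assumption:C} and \ref{assumption:W2} give $\abs{\mysigma}\leq C$ and $\sum_k\lambda_k^2\norm{\g{k}}_{\mathds{L}^\infty}^2\leq\widehat{C}$, so this is $\leq C(l\tau)^2\leq C(l\tau)^{3/2}$, using $l\tau\leq T$. Multiplying by $\tau$ and summing over $m$ costs only a factor $T$.

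\textbf{Deterministic term and main obstacle.} The deterministic term is the crux, because we have no $(h,\tau)$-uniform bound on $\nabla W\nn$ in $L^2(L^2)$ with high moments. We only have information through the energy: a discrete energy estimate for the regularised Ginzburg--Landau energy $\tfrac\varepsilon2\norm{\nabla U}^2+\tfrac1\varepsilon(F_\delta(U)-\tfrac12U^2,1)\h$. This estimate is obtained by testing \eqref{eq:model:disc:regularised:1} with $W\nn$ and \eqref{eq:model:disc:regularised:2} with $U\nn-U\no$. It gives $\expected{\sum_n\tau\norm{\nabla W\nn}^2}\leq C(\delta^{-1})$, where the $\delta$-dependence enters through the Itô correction. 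That correction involves $f_\delta'\leq\delta^{-1}$ times the squared noise, and also the gradient terms of $\Ih{\mysigma(U\no)\g{k}}$, which we control using Lemma~\ref{lem:l2estimate} (only $3/2$ moments of $\sum\tau\norm{\Delta\h U}^2$ are available). I expect this step to be the hardest, precisely because only these restricted moments are available.

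Given the energy bound, Cauchy--Schwarz gives
\begin{align*}
\Big(\sum_{n=m+1}^{m+l}\tau\norm{\nabla W\nn}\Big)^4\leq(l\tau)^2\Big(\sum_{n=m+1}^{m+l}\tau\norm{\nabla W\nn}^2\Big)^2\,.
\end{align*}
Since we lack second moments of $\sum\tau\norm{\nabla W\nn}^2$, we avoid the full fourth power. Instead we bound $\norm{U^{m+l}-U^m}_*^4\leq\norm{U^{m+l}-U^m}_*^2\cdot 4\max_n\norm{U\nn-\mathfrak{m}}_*^2$. We handle the first factor by the $\norm{\cdot}_*^2$ inequality above, bounding its deterministic part by $l\tau\sum_{n=m+1}^{m+l}\tau\norm{\nabla W\nn}^2$. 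The second factor is controlled with arbitrary moments by Lemma~\ref{lem:h-1estimate}. Applying Hölder in $\omega$ with exponents $(1+\theta)$ and its conjugate, and using the $\mathds{E}$-bound of the energy estimate with a slightly higher moment ($3/2$-type, as in Lemma~\ref{lem:l2estimate}), we sum over $m$: each $n$ appears in at most $l$ windows. This yields $\expected{\sum_m\tau(l\tau)\sum_{n}\tau\norm{\nabla W\nn}^2\max\norm{\cdot}_*^2}\leq C(\delta^{-1})(l\tau)\cdot l\tau$ up to the Hölder loss. Choosing the exponents so that the loss costs at most a factor $(l\tau)^{-1/2}$ leaves $C(\delta^{-1})(l\tau)^{3/2}$, which is the claim.
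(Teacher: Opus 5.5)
\textbf{Gap in the deterministic term.} Your treatment of the stochastic increment agrees with the paper's. The deterministic part, however, rests on a discrete energy bound $\expected{\sum_n\tau\norm{\nabla W\nn}_{\mathds{L}^2}^2}\leq C\trkla{\delta^{-1}}$, and you even want a moment slightly above one. The paper never proves this bound, and the route you sketch cannot deliver it.

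Testing \eqref{eq:model:disc:regularised:1} with $W\nn$ produces $\rkla{\Phi\h\trkla{U\no}\sinc{n},W\nn}$. This term does not have zero mean, because $W\nn$ depends on $\sinc{n}$. Absorbing it into $\tfrac\tau2\norm{\nabla W\nn}_{\mathds{L}^2}^2$ by Young's inequality leaves a remainder of expected size $O(1)$ per step, i.e.\ $O(\tau^{-1})$ in total.

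To isolate a martingale increment you must insert \eqref{eq:model:disc:regularised:2}. This produces the cross term $\varepsilon\rkla{\nabla\trkla{U\nn-U\no},\nabla\xi\h}$, where $\xi\h\in\mathds{V}\h$ is the mass-lumped representative of the noise increment. That term needs $h$-uniform control of discrete second derivatives of $\Ih{\mysigma\trkla{U\no}\lambda_k\g{k}}$. Nothing in Section~\ref{sec:boundsht} provides this, since there is no discrete chain rule for the nodal interpolant of $\mysigma\trkla{U\no}$.

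Even for the continuous problem, this \Ito{} correction contains, through $\mysigma''$, a quartic gradient term. After interpolation it becomes the energy multiplied by the random coefficient $\norm{u}_{\mathds{H}^2}^2$, and only $3/2$-moments of the time integral of that coefficient are available. This is why the paper proves an $\mathds{H}^1$-estimate only for the limit $u_\delta$ (Lemma~\ref{lem:H1}). It needs the weight $e^{-\Lambda(t)}$, the cut-offs $\mathcal{J}_\delta$, $\mathds{W}^{2,6}$-bounds and exponential moments. Even then it controls only $\expected{\rkla{\int_0^T\norm{w_\delta}_{\mathds{H}^1}^2\dt}^{s/2}}$ with $s/2<1$, which is weaker than the first moment your argument requires.

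The factor $\delta^{-1}$ coming from $f_\delta'$ does not help, because the obstruction sits in the gradient part of the energy. So the step you flag as hardest is a genuine gap, and everything after it depends on it.

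\textbf{How the paper avoids it.} The paper never estimates $\nabla W\nn$. By \eqref{eq:invLap}, $\rkla{\nabla W\nn,\nabla\trkla{-\Delta\h^{-1}}\trkla{U^{m+l}-U^m}}=\rkla{W\nn,U^{m+l}-U^m}\h$. Testing \eqref{eq:model:disc:regularised:2} with $\psi\h=U^{m+l}-U^m$ then replaces $W\nn$ by quantities in $U$ alone.

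The potential part is bounded by $\varepsilon^{-1}\norm{\nabla\Ih{f_\delta\trkla{U\nn}-U\no}}_{\mathds{L}^2}\norm{U^{m+l}-U^m}_*$. Here $\norm{\nabla\Ih{f_\delta\trkla{U\nn}}}_{\mathds{L}^2}\leq C\delta^{-1}\norm{\nabla U\nn}_{\mathds{L}^2}$ by the Lipschitz continuity of $f_\delta$. This, not an \Ito{} correction, is the only source of $\delta$-dependence, and Lemma~\ref{lem:h-1estimate} turns it into $C\trkla{\delta^{-4}+1}\trkla{l\tau}^2$.

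The exponent $3/2$ comes from the gradient part, in three steps:
\begin{itemize}
\item write $\rkla{\nabla U\nn,\nabla\trkla{U^{m+l}-U^m}}=-\rkla{U\nn,\Delta\h\trkla{U^{m+l}-U^m}}\h$;
\item use $\rkla{\sum_{n=m+1}^{m+l}\tau\norm{U\nn}\h}^2\leq\trkla{l\tau}^{3/2}\rkla{\sum_{n}\tau\norm{U\nn}\h^4}^{1/2}$;
\item apply H\"older's inequality in $\omega$ with exponents $4/3$ and $4$, combining the $3/2$-moment of $\sum_n\tau\norm{\Delta\h U\nn}\h^2$ from Lemma~\ref{lem:l2estimate} with Corollary~\ref{cor:lql4l2}.
\end{itemize}
The window $m=0$ has to be treated separately, because $\Delta\h U^0$ is not bounded uniformly in $h$.
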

\begin{proof}
Summing \eqref{eq:model:disc:regularised:1} from $n=m+1$ to $m+l$ and choosing $\varphi\h=\trkla{-\Delta\h^{-1}}\trkla{U^{m+l}-U^m}$, we obtain
\begin{align}
\begin{split}\label{eq:nikolskii:1}
\norm{U^{m+l}-U^m}_*^{2} \leq&\,-\rkla{\sum_{n=m+1}^{m+l}\tau W\nn,U^{m+l}-U^m}\h +\tfrac14\norm{U^{m+l}-U^m}_*^{2}\\
&\,+\norm{\sum_{n=m+1}^{m+l} \sum_{i=1}^d\sum_{k\in\Zh} \Ih{\mysigma\trkla{U\no}\lambda_k\g{k}}\sincb{n}{k}{i}}_{\mathds{L}^2}^2 
\end{split}
\end{align}
due to Young's inequality.
Choosing $\psi\h=U^{m+l}-U^m$ in \eqref{eq:model:disc:regularised:2}, we deduce
\begin{align}
\begin{split}
&\,-\rkla{\sum_{n=m+1}^{m+l}\tau W\nn,U^{m+l}-U^m}\h=-\varepsilon\rkla{\sum_{n=m+1}^{m+l}\tau\nabla U\nn,\nabla U^{m+l}-\nabla U^m}\\
&\qquad\quad-\tfrac1\varepsilon\rkla{\sum_{n=m+1}^{m+l}\tau\rkla{f_\delta\trkla{U\nn}-U\no},U^{m+l}-U^m}\h\\
&\quad\leq\,\varepsilon\sum_{n=m+1}^{m+l}\tau\abs{\rkla{\nabla U\nn,\nabla U^{m+l}-\nabla U^m}} +\tfrac14\norm{U^{m+l}-U^m}_*^2\\
&\qquad\quad +C\frac1{\varepsilon^2}\rkla{\sum_{n=m+1}^{m+l}\tau\norm{\nabla\Ih{f_\delta\trkla{U\nn} -U\no}}_{\mathds{L}^2}}^2\,.
\end{split}
\end{align}
Hence, by taking the 2nd power in \eqref{eq:nikolskii:1}, multiplying by $\tau$, summing with respect to $m$, and taking the expectation, we obtain
\begin{align}
\begin{split}
&\expected{\sum_{m=0}^{N-l}\tau\norm{U^{m+l}-U^m}_*^{4}}\leq C\expected{\sum_{m=0}^{N-l}\tau\rkla{\sum_{n=m+1}^{m+l}\tau\abs{\rkla{\nabla U\nn,\nabla U^{m+l}-\nabla U^m}}}^2 }\\
&\qquad\quad+C\expected{\sum_{m=0}^{N-l}\tau\rkla{\sum_{n=m+1}^{m+l}\tau\norm{\nabla\Ih{f_\delta\trkla{U\nn}-U\no}}_{\mathds{L}^2}}^4}\\
&\qquad\quad+C\expected{\sum_{m=0}^{N-l}\tau\norm{\sum_{n=m+1}^{m+l}\sum_{i=1}^d\sum_{k\in\Zh}\Ih{\mysigma\trkla{U\no}\lambda_k\g{k}}\sincb{n}{k}{i}}_{\mathds{L}^2}^4}\\
&\quad=:A_1+A_2+A_3\,.
\end{split}
\end{align}
For $A_3$, we compute by using the Burkholder--Davis--Gundy inequality
\begin{align}
\begin{split}
A_3\leq&\,C\sum_{m=0}^{N-l}\tau\trkla{l\tau}\sum_{n=m+1}^{m+l}\tau\expected{\rkla{\sum_{i=1}^d\sum_{k\in\Zh}\norm{\lambda_k\Ih{\mysigma\trkla{U\no}\g{k}}}_{\mathds{L}^2}^2}^2}\leq C\trkla{l\tau}^2\,.
\end{split}
\end{align}
For $A_2$ we obtain
\begin{align}
\begin{split}
A_2\leq&\,C\expected{\sum_{m=0}^{N-l}\tau\rkla{\sum_{n=m+1}^{m+l}\tau\delta^{-1}\norm{\nabla U\nn}_{\mathds{L}^2}}^4}+C\expected{\sum_{m=0}^{N-l}\tau\rkla{\sum_{n=m+1}^{m+l}\tau\norm{\nabla U\no}_{\mathds{L}^2}}^4}\\
\leq&\,C\expected{\sum_{m=0}^{N-l}\tau\rkla{\sum_{n=m+1}^{m+l}\tau\delta^{-2}\norm{\nabla U\nn}_{\mathds{L}^2}^2}^2\trkla{l\tau}^2}\\
&\,+C\expected{\sum_{m=0}^{N-l}\tau\rkla{\sum_{n=m+1}^{m+l}\tau\norm{\nabla U\no}_{\mathds{L}^2}^2}^2\trkla{l\tau}^2}\\
\leq&\, C\trkla{\delta^{-4}+1}\trkla{l\tau}^2
\end{split}
\end{align}
due to Lemma \ref{lem:h-1estimate}. 
As we do not assume that $\Delta\h U^0$ is bounded independently of $h$ and $\tau$, we need to treat the case $m=0$ separately when deriving the bound for $A_1$.
In particular, integrating by parts and using Young's inequality shows
\begin{align}
\begin{split}
&\expected{\tau\rkla{\sum_{n=1}^{l}\tau\abs{\nabla U\nn,\nabla U^l-\nabla U^0}}^2}\leq \trkla{l\tau}\tau^{1/2}\expected{\tau^{1/2}\norm{U^l-U^0}\h^2\sum_{n=1}^{l}\tau\norm{\Delta\h U\nn}\h^2}\\
&\quad\leq C\trkla{l\tau}^{3/2}\expected{\rkla{\tau^{1/2}\norm{U^l-U^0}\h^2}^3 +\rkla{\sum_{n=1}^l\tau\norm{\Delta\h U\nn}\h^2}^{3/2}}\\
&\quad\leq C\trkla{l\tau}^{3/2}\expected{\rkla{\sum_{n=0}^N\tau\norm{U\nn}\h^4}^{3/2} +\rkla{\sum_{n=1}^N\tau\norm{\Delta\h U\nn}\h^2}^{3/2}} \leq C\trkla{l\tau}^{3/2}\,,
\end{split}
\end{align}
due to Corollary \ref{cor:lql4l2} and Lemma \ref{lem:l2estimate}.
Thus, integrating by parts and using Hölder's inequality, we obtain for $A_1$
\begin{align}
\begin{split}
A_1\leq&\,C\expected{\sum_{m=1}^{N-l}\tau\norm{\Delta\h\rkla{U^{m+l}-U^m}}\h^2 \rkla{\sum_{n=m+1}^{m+l}\tau\norm{U\nn}\h}^2} +C\trkla{l\tau}^{3/2}\\
\leq&\,C\expected{\rkla{\sum_{m=1}^N\tau\norm{\Delta\h U^m}\h^2}^{4/3}}^{3/4}\expected{\rkla{\sum_{n=1}^{N}\tau\norm{ U\nn}\h^4}^2}^{1/4}\trkla{l\tau}^{3/2} +C\trkla{l\tau}^{3/2}\,.
\end{split}
\end{align}
Here, we again made use of the regularity results from Corollary \ref{cor:lql4l2} and Lemma \ref{lem:l2estimate}.

\end{proof}

\section{Limit $\trkla{h,\tau}\searrow 0$}\label{sec:limitht}
Before passing to the limit, we introduce the following time-interpolants of a time-discrete function $\trkla{a\nn}_{n=0,\ldots,N}$:
\begin{subequations}
\begin{align}
a\tl\trkla{.,t}&:=\tfrac{t-t\no}\tau a\nn\trkla{.}+\tfrac{t\nn-t}\tau a\no\trkla{.}&&t\in\tekla{t\no,t\nn}\,,~n\geq1\,,\\
a\tm\trkla{.,t}&:=a\no\trkla{.}&&t\in[t\no,t\nn)\,,~n\geq1\,,\\
a\tp\trkla{.,t}&:=a\nn\trkla{.}&&t\in[t\no,t\nn)\,,~n\geq1\,.
\end{align}
\end{subequations}
If a result is valid for all three interpolations, we will write $a\tpm$.
Using this notation, we collect the results established so far.
As $\norm{U^0}_{\mathds{H}^1}\leq C\norm{u_0}_{\mathds{H}^1}$, we have the $\trkla{h,\tau,\delta}$-independent estimate
\begin{align}\label{eq:boundhtau:independent}
\begin{split}
&\norm{U\tpm}_{L^p\trkla{\Omega;L^\infty\trkla{0,T;\mathds{H}^{-1}}}}+\norm{U\tpm}_{L^p\trkla{\Omega;L^4\trkla{0,T;\mathds{L}^2}}}+\norm{U\tpm}_{L^3\trkla{\Omega;L^\infty\trkla{0,T;\mathds{L}^2}}}\\
&\quad +\norm{U\tpm}_{L^p\trkla{\Omega;L^2\trkla{0,T;\mathds{H}^1}}}  +\norm{\Delta\h U\tp}_{L^3\trkla{\Omega;L^2\trkla{0,T;\mathds{L}^2}}}\\
&\quad +\norm{F_\delta\trkla{U\tp}}_{L^p\trkla{\Omega;L^1\trkla{0,T;\mathds{L}^1}}} +\norm{\Ih{\mysigma\trkla{U\tpm}}}_{L^\infty\trkla{\Omega;L^\infty\trkla{0,T;\mathds{L}^\infty}}}\\
&\quad+\norm{\Ih{\mysigma\trkla{U\tpm}}}_{L^p\trkla{\Omega;L^2\trkla{0,T;\mathds{H}^1}}}\leq C
 \end{split}
\end{align}
for all $p\in[1,\infty)$. %{\color{orange} I removed the bound on $F_\delta\trkla{U\tm}$ as we do not guarantee $\abs{U^0}\leq1$.}
In addition, we have the following estimate, which depends on $\delta$ but is independent of $\trkla{h,\tau}$:
\begin{multline}\label{eq:boundhtau:deltadependent}
\norm{U\tl}_{L^4\trkla{\Omega;C^{0,1/8}\trkla{\tekla{0,T};\mathds{H}^{-1}}}} +\norm{\Ih{f_\delta\trkla{U\tpm}}}_{L^p\trkla{\Omega;L^4\trkla{0,T;\mathds{L}^2}}}\\
+\norm{W\tp}_{L^3\trkla{\Omega;L^2\trkla{0,T;\mathds{L}^2}}}\leq C\trkla{\delta^{-1}}\,.
\end{multline}
The bound for the first term on the left-hand side in \eqref{eq:boundhtau:deltadependent} is a consequence of Lemma \ref{lem:nikolskii}, \cite[Lemma 3.2]{Banas2013}, and the embedding results in \cite{Simon1990},
while the bound for the second term is a straightforward consequence of the definition of $F_\delta$ and the bounds on $U\tpm$.
{To obtain the estimate for $W\tp$ in \eqref{eq:boundhtau:deltadependent}, we write \eqref{eq:model:disc:regularised:2} as
\begin{align}
W\nn=-\varepsilon\Delta\h U\nn+\varepsilon^{-1}\Ih{f_\delta\trkla{U\nn}-U\no}\,.
\end{align}
Thus, combining the $L^3\trkla{\Omega;L^2\trkla{0,T;\mathds{L}^2}}$-bound on $\Delta\h U\tp$ and the $\delta$-dependent bound on $\Ih{f_\delta\trkla{U\tp}}$, we obtain the desired result.
}

To pass to the limit on the right-hand side, we introduce a piecewise linear interpolation of the stochastic integral
\begin{align}
&&S\tl\trkla{t}&:=\trkla{S\nn-S\no}\frac{t-t\no}\tau +S\no&&\text{for~}t\in\tekla{t\no,t\nn}\,,\\
\text{with}&&S^m&:=\sum_{n=1}^m\sum_{i=1}^d\sum_{k\in\mathds{Z}\h}\partial_i\Ih{\mysigma\trkla{U\no}\lambda_k\g{k}}\sincb{n}{k}{i}&&\text{for~}m\in\tgkla{1,\ldots,N}\,.
\end{align}
This family of approximations is uniformly bounded in $L^2\trkla{\Omega;L^2\trkla{0,T;\mathds{L}^2}}$:
\begin{lemma}\label{lem:linearnoise}
Let the assumptions \ref{assumption:T}, \ref{assumption:S1}, \ref{assumption:I}, \ref{assumption:W1}, \ref{assumption:W2}, and \ref{assumption:C} hold true and let $0<\delta\leq1/8$.
Then, there exists a constant $C>0$ which is independent of $h$ and $\tau$ such that
\begin{align}
\expected{\int_0^T\norm{S\tl\trkla{t}}_{\mathds{L}^2}^2\dt}\leq C\sum_{n=1}^N\tau\expected{1+\norm{\nabla U\no}_{\mathds{L}^2}^2}\leq C\,.
\end{align}
\end{lemma}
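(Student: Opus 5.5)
The plan is to reduce the bound on the piecewise linear interpolant to bounds on the nodal values $S^m$, and then to use the discrete \Ito{} isometry. Since $S\tl$ is linear between $S\no$ and $S\nn$ on each $[t\no,t\nn]$, convexity of $\norm{\cdot}_{\mathds{L}^2}^2$ gives
\begin{align*}
\int_0^T\norm{S\tl\trkla{t}}_{\mathds{L}^2}^2\dt\leq \sum_{n=1}^N\frac\tau2\rkla{\norm{S\no}_{\mathds{L}^2}^2+\norm{S\nn}_{\mathds{L}^2}^2}\leq \sum_{m=0}^N\tau\norm{S^m}_{\mathds{L}^2}^2\,,
\end{align*}
with $S^0=0$. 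It therefore suffices to bound $\expected{\norm{S^m}_{\mathds{L}^2}^2}$ uniformly in $m$.

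For fixed $m$, the quantity $S^m$ is a sum of martingale increments. The summand with index $n$ is built from $U\no$, which is $\mathcal{F}_{t\no}$-measurable, multiplied by the increments $\sincb{n}{k}{i}$. These increments are independent of $\mathcal{F}_{t\no}$, are mutually independent across $(k,i)$, and have variance $\tau$. Expanding the square and taking expectations, all cross terms vanish, both across different $n$ and across different pairs $(k,i)$. This leaves
\begin{align*}
\expected{\norm{S^m}_{\mathds{L}^2}^2}=\sum_{n=1}^m\tau\,\expected{\sum_{i=1}^d\sum_{k\in\Zh}\abs{\lambda_k}^2\norm{\partial_i\Ih{\mysigma\trkla{U\no}\g{k}}}_{\mathds{L}^2}^2}\,.
\end{align*}
This is the same type of computation already used for $R_2$ in the proof of Lemma \ref{lem:l2estimate}.

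The main step is to bound $\norm{\nabla\Ih{\mysigma\trkla{U\no}\g{k}}}_{\mathds{L}^2}$ uniformly in $h$, using Lemma \ref{lem:interpolation}. I write $\Ih{\mysigma\trkla{U\no}\g{k}}=\Ih{\Ih{\mysigma\trkla{U\no}}\Ih{\g{k}}}$ and split it as
\begin{align*}
\Ih{\mysigma\trkla{U\no}}\Ih{\g{k}}-\trkla{I-\Ihop}\gkla{\Ih{\mysigma\trkla{U\no}}\Ih{\g{k}}}\,.
\end{align*}

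For the product term I use the product rule, the bound $\norm{\Ih{\mysigma\trkla{U\no}}}_{\mathds{L}^\infty}\leq\norm{\mysigma}_{L^\infty}$, and the elementwise estimate $\norm{\nabla\Ih{\mysigma\trkla{U\no}}}_{\mathds{L}^2}\leq\norm{\mysigma^\prime}_{L^\infty}\norm{\nabla U\no}_{\mathds{L}^2}$. The latter holds because $\Ih{\mysigma\trkla{U\no}}$ is piecewise linear and its nodal differences are controlled by the mean value theorem. Together with $\norm{\Ih{\g{k}}}_{\mathds{W}^{1,\infty}}\leq C\norm{\g{k}}_{\mathds{W}^{1,\infty}}$, this gives a bound by $C\norm{\g{k}}_{\mathds{W}^{1,\infty}}\rkla{1+\norm{\nabla U\no}_{\mathds{L}^2}}$.

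For the interpolation error I apply Lemma \ref{lem:interpolation} with $p=2$ and $q=1$. This gives $Ch\norm{\nabla\Ih{\mysigma\trkla{U\no}}}_{\mathds{L}^2}\norm{\nabla\Ih{\g{k}}}_{\mathds{L}^\infty}$, which is again of the same form, with an extra factor $h\leq C$.

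Summing over $k$ and $i$ with assumption \ref{assumption:W2} then yields
\begin{align*}
\expected{\norm{S^m}_{\mathds{L}^2}^2}\leq C\sum_{n=1}^N\tau\expected{1+\norm{\nabla U\no}_{\mathds{L}^2}^2}\,.
\end{align*}
Multiplying by $\tau$ and summing over $m\leq N$ contributes only a factor $T$, which gives the first inequality of the lemma.

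For the final bound, the $n=1$ term involves $\norm{\nabla U^0}_{\mathds{L}^2}\leq C\norm{u_0}_{\mathds{H}^1}$, using the $\mathds{H}^1$-stability of $\mathcal{P}$ and assumption \ref{assumption:I}. The remaining terms are bounded by $C$ by Lemma \ref{lem:h-1estimate} with $\p=1$.

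I expect the only delicate point to be the $h$-uniform gradient bound on the interpolated product. The martingale orthogonality is routine.
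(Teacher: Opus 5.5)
Your proposal is correct and follows essentially the paper's route. You reduce $S\tl$ to its nodal values and use the discrete \Ito{} isometry. You then split $\Ih{\mysigma\trkla{U\no}\g{k}}$ into $\Ih{\mysigma\trkla{U\no}}\Ih{\g{k}}$ minus an interpolation error handled by Lemma \ref{lem:interpolation}. Finally you conclude via the Lipschitz continuity of $\mysigma$, \ref{assumption:W2}, and Lemma \ref{lem:h-1estimate}.

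One small precision concerns the elementwise bound $\norm{\nabla\Ih{\mysigma\trkla{U\no}}}_{\mathds{L}^2}\leq\norm{\mysigma^\prime}_{L^\infty}\norm{\nabla U\no}_{\mathds{L}^2}$. It holds with constant one on right-angled elements as in \ref{assumption:S2}. Under \ref{assumption:S1} alone it only holds up to a shape-regularity constant, because on obtuse simplices the mean value theorem on edge differences does not transfer to the gradient with constant one. This extra constant is harmless for the lemma.
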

\begin{proof}
Using \Ito's isometry, we derive
\begin{align}
\begin{split}
\expected{\int_0^T\norm{S\tl\trkla{t}}_{\mathds{L}^2}^2\dt}=&\,\sum_{n=1}^N\int_{t\no}^{t\nn}\expected{\norm{\trkla{S\nn-S\no}\frac{t-t\no}\tau+S\no}_{\mathds{L}^2}^2}\dt\\
\leq&\,C\sum_{n=1}^N\int_{t\no}^{t\nn}\expected{\norm{S\nn-S\no}_{\mathds{L}^2}^2}+\expected{\norm{S\no}_{\mathds{L}^2}^2}\dt\\
=&\,C\sum_{n=1}^N\tau\expected{\norm{\sum_{i=1}^d\sum_{k\in\mathds{Z}\h}\partial_i\Ih{\mysigma\trkla{U\no}\lambda_k\g{k}}\sincb{n}{k}{i}}_{\mathds{L}^2}^2} \\
&\,+C\sum_{n=1}^N\tau\expected{\norm{\sum_{m=1}^{n-1}\sum_{i=1}^d\sum_{k\in\mathds{Z}\h}\partial_i\Ih{\mysigma\trkla{U^{m-1}}\lambda_k\g{k}}\sincb{m}{k}{i}}_{\mathds{L}^2}^2}\\
\leq&\,C\sum_{n=1}^N\tau^2\sum_{m=1}^n\expected{\sum_{i=1}^d\sum_{k\in\mathds{Z}\h}\norm{\partial_i\Ih{\mysigma\trkla{U^{m-1}}\lambda_k\g{k}}}_{\mathds{L}^2}^2}\,.
\end{split}
\end{align}
Recalling Lemma \ref{lem:interpolation}, we deduce
\begin{align}
\begin{split}
\norm{\partial_i\Ih{\mysigma\trkla{U\no}\lambda_k\g{k}}}_{\mathds{L}^2}^2\leq &\,\norm{\partial_i\rkla{\Ih{\mysigma\trkla{U\no}}\Ih{\lambda_k\g{k}}}}_{\mathds{L}^2}^2\\
&\,+\norm{\partial_i\trkla{1-\Ihop}\gkla{\Ih{\mysigma\trkla{U\no}}\Ih{\lambda_k\g{k}}}}_{\mathds{L}^2}^2\\
\leq&\,\norm{\Ih{\mysigma\trkla{U\no}}\partial_i\Ih{\lambda_k\g{k}}}_{\mathds{L}^2}^2+\norm{\partial_i\Ih{\mysigma\trkla{U\no}}\Ih{\lambda_k\g{k}}}_{\mathds{L}^2}^2\\
&\,+Ch\norm{\nabla\Ih{\mysigma\trkla{U\no}}}_{\mathds{L}^2}^2\norm{\lambda_k\g{k}}_{\mathds{L}^\infty}^2\,.
\end{split}
\end{align}
Thus, using the Lipschitz continuity of $\mysigma$, we conclude 
\begin{align}
\expected{\int_0^T\norm{S\tl\trkla{t}}_{\mathds{L}^2}^2\dt}\leq C\sum_{n=1}^N\tau\expected{1+\norm{\nabla U\no}_{\mathds{L}^2}^2}\leq C\,.
\end{align}
\end{proof}

With these uniform bounds, we can pass to the limit $\trkla{h,\tau}\rightarrow0$ using weak and weak* convergence for subsequences:
\begin{theorem}\label{thm:weakht}
Let the assumptions \ref{assumption:T}, \ref{assumption:S1}, \ref{assumption:S2}, \ref{assumption:I}, \ref{assumption:W1}, \ref{assumption:W2}, and \ref{assumption:C} hold true and let $\delta$ be sufficiently small. Then, there exist
\begin{align*}
u_\delta\in &\,L^p_{\operatorname{weak-(*)}}\trkla{\Omega;L^\infty\trkla{0,T;\mathds{H}^{-1}_{\per}}}\cap L^3_{\operatorname{weak-(*)}}\trkla{\Omega;L^\infty\trkla{0,T;\mathds{L}^{2}}}\cap L^p\trkla{\Omega;L^4\trkla{0,T;\mathds{L}^2}}\\
&\,\cap L^p\trkla{\Omega;L^2\trkla{0,T;\mathds{H}^1_{\per}}}\cap L^3\trkla{\Omega;L^2\trkla{0,T;\mathds{H}^2_{\per}}}\cap L^4\trkla{\Omega;C^{0,1/8}\trkla{\tekla{0,T};\mathds{H}^{-1}_{\per}}}\,,\\
w_\delta\in&\,L^3\trkla{\Omega;L^2\trkla{0,T;\mathds{L}^2}}\,,\\
p_\delta\in&\,L^p\trkla{\Omega;L^4\trkla{0,T;\mathds{L}^2}}\,,\\
\mysigma_\delta\in&\, L^p\trkla{\Omega;L^2\trkla{0,T;\mathds{H}^1_{\per}}}\,
\end{align*}
satisfying
\begin{subequations}\label{eq:limitht}
\begin{align}\label{eq:limitht:u}
\mathrm{d} u_\delta-\Delta w_\delta\dt= \div\gkla{\mysigma_\delta\dW}&&\text{in~} \mathds{H}^{-2}_{\per}
\end{align}
$\Prob$-almost surely and for all $t\in\trkla{0,T}$, where $w_\delta$ is given as
\begin{align}\label{eq:limitht:w}
w_\delta = -\varepsilon \Delta u_\delta +\tfrac1\varepsilon p_\delta -\tfrac1\varepsilon u_\delta&&\text{in~}\mathds{L}^2\,
\end{align}
$\mathds{P}$-almost surely and for almost all $t\in\trkla{0,T}$.
\end{subequations}
Furthermore, for $\trkla{h,\tau}\rightarrow0$ we have for (not relabeled) subsequences
\begin{subequations}\label{eq:weakhtau}
\begin{align}
U\tpm\stackrel{*}{\rightharpoonup}&\,u_\delta&&\text{in~}L^p_{\operatorname{weak-(*)}}\trkla{\Omega;L^\infty\trkla{0,T;\mathds{H}^{-1}_{\per}}}\cap L^3_{\operatorname{weak-(*)}}\trkla{\Omega;L^\infty\trkla{0,T;\mathds{L}^{2}}}\,,\\
U\tpm\rightharpoonup &\,u_\delta&&\text{in~}L^p\trkla{\Omega;L^4\trkla{0,T;\mathds{L}^2}}\cap L^p\trkla{\Omega;L^2\trkla{0,T;\mathds{H}^1_{\per}}}\,,\\
\Delta\h U\tp\rightharpoonup &\,\Delta u_\delta&&\text{in~}L^3\trkla{\Omega;L^2\trkla{0,T;\mathds{L}^2}}\,,\\
W\tp \rightharpoonup&\,w_\delta&&\text{in~}L^3\trkla{\Omega;L^2\trkla{0,T;\mathds{L}^2}}\,,\\
\Ih{f_\delta\trkla{U\tpm}}\rightharpoonup&\,p_\delta&&\text{in~}L^p\trkla{\Omega;L^4\trkla{0,T;\mathds{L}^2}}\,,\\
\Ih{\mysigma\trkla{U\tpm}}\rightharpoonup&\,\mysigma_\delta&&\text{in~}L^p\trkla{\Omega;L^2\trkla{0,T;\mathds{H}^1_{\per}}}\,,
\end{align}
\end{subequations}
and
\begin{align}\label{eq:weakhtau:noise}
U\tl-S\tl\rightharpoonup&\,u_\delta-\sum_{i=1}^d\sum_{k\in\mathds{Z}}\int_0^\cdot \partial_i\rkla{\mysigma_\delta\lambda_k\g{k}}\dbeta{k}^i &&\text{in~}L^p\trkla{\Omega;L^2\trkla{0,T;\mathds{L}^2}}\,.
\end{align}

\end{theorem}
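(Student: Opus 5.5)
The argument combines the uniform bounds \eqref{eq:boundhtau:independent}, \eqref{eq:boundhtau:deltadependent} and Lemma~\ref{lem:linearnoise} with weak compactness in the reflexive Bochner spaces $L^p(\Omega;L^q(0,T;X))$, and uses Banach--Alaoglu in the $L^p_{\operatorname{weak-(*)}}$ spaces for the $L^\infty$-in-time bounds. These bounds give (not relabeled) subsequences and limits for $U\tpm$, $\Delta\h U\tp$, $W\tp$, $\Ih{f_\delta(U\tpm)}$, $\Ih{\mysigma(U\tpm)}$ and $U\tl-S\tl$. The first task is to identify the limits.

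\textbf{Step 1 (the three interpolants share one limit).} Let $u_\delta$ be the weak limit of $U\tp$. Then
\begin{align*}
\norm{U\tp-U\tm}_{L^2(0,T;\mathds{L}^2)}^2=\tau\sum_n\norm{U\nn-U\no}^2 ,
\end{align*}
and by Lemma~\ref{lem:l2estimate} this tends to zero in $L^{3/2}(\Omega)$; the same holds for $U\tl-U\tp$. Hence all three interpolants have the limit $u_\delta$. The same bound combined with the Lipschitz continuity of $f_\delta$ and of $\mysigma$ (with Lipschitz constant $\delta^{-1}$ for $f_\delta$) shows that the limits of $\Ih{f_\delta(U\tpm)}$ coincide, and likewise for $\Ih{\mysigma(U\tpm)}$; call them $p_\delta$ and $\mysigma_\delta$. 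The H\"older bound in \eqref{eq:boundhtau:deltadependent} passes to $u_\delta$ by weak lower semicontinuity of the H\"older norm, so $u_\delta\in L^4(\Omega;C^{0,1/8}([0,T];\mathds{H}^{-1}_{\per}))$.

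\textbf{Step 2 (identifying $\Delta u_\delta$).} For smooth periodic $\phi$ and a bounded $\zeta\in L^\infty(\Omega\times(0,T))$, I test $\Delta\h U\tp$ against $\zeta\,\mathcal{R}\phi$ in $(\cdot,\cdot)\h$. This gives $-(\nabla U\tp,\nabla\phi)\zeta$ plus interpolation-error terms, which vanish by Lemma~\ref{lem:interpolation}, \eqref{eq:convergenceRitz}, and an inverse estimate. Passing to the limit yields $(\Delta u_\delta,\phi)=-(\nabla u_\delta,\nabla\phi)$, hence $u_\delta\in L^3(\Omega;L^2(0,T;\mathds{H}^2_{\per}))$.

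\textbf{Step 3 (the equation for $w_\delta$).} I write \eqref{eq:model:disc:regularised:2} in the form $W\tp=-\varepsilon\Delta\h U\tp+\varepsilon^{-1}\Ih{f_\delta(U\tp)-U\tm}$ and pass to the weak limit term by term. This gives \eqref{eq:limitht:w}, because the difference between $(\cdot,\cdot)\h$ and $(\cdot,\cdot)$ tested with $\mathcal{P}\phi$ is $O(h)$ in the relevant norms.

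\textbf{Step 4 (the noise term).} For \eqref{eq:weakhtau:noise} I would show that $S\tl$, which is bounded by Lemma~\ref{lem:linearnoise}, converges weakly to the It\^o integral $\int_0^\cdot\div\{\mysigma_\delta\,\mathrm{d}\bs{\W}\}$. I use that the It\^o integral $\mathcal{I}:\ G\mapsto\int_0^\cdot G\,\mathrm{d}\bs{\W}$ is a bounded linear map from progressively measurable $L^2(\Omega\times(0,T);\mathrm{HS})$ into $L^2(\Omega;L^2(0,T;\mathds{H}^{-1}))$, and therefore weakly continuous. The piecewise-constant predictable integrand $\partial_i\Ih{\mysigma(U\tm)\lambda_k\g{k}}$ with $k\in\Zh$ converges weakly to $\partial_i(\mysigma_\delta\lambda_k\g{k})$. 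The ingredients are assumption~\ref{assumption:Z}, the $\mathds{H}^1$-bound on $\Ih{\mysigma(U\tm)}$, assumption~\ref{assumption:W2}, and Lemma~\ref{lem:interpolation} for the interpolation error. The limit $\mysigma_\delta$ is progressively measurable, because the set of progressively measurable processes is a closed subspace and hence weakly closed. The difference between $S\tl$ and the It\^o integral of the piecewise-constant integrand is the linear-in-time interpolation error, and it vanishes in $L^2(\Omega\times(0,T);\mathds{L}^2)$ by It\^o's isometry, at order $\tau^{1/2}$ times the bound of Lemma~\ref{lem:linearnoise}.

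\textbf{Step 5 (the evolution equation).} Summing \eqref{eq:model:disc:regularised:1} gives, for $t=t^m$,
\begin{align*}
(U\tl(t)-U^0,\varphi\h)\h+\int_0^t(\nabla W\tp,\nabla\varphi\h)\ds=(S\tl(t),\varphi\h),
\end{align*}
and for general $t$ the left-hand side is affine between nodes. I test with $\varphi\h=\mathcal{R}\phi$, multiply by $\zeta\in L^\infty(\Omega\times(0,T))$, integrate over $\Omega\times(0,T)$, and pass to the limit using Steps 1--4. This gives \eqref{eq:limitht:u} for almost every $t$. It then holds for all $t$ because both sides are continuous in $\mathds{H}^{-2}$: the left-hand side by the H\"older regularity of $u_\delta$, the right-hand side by continuity of the stochastic integral.

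I expect Step 4 to be the main obstacle. One difficulty is justifying that weak limits of adapted integrands remain admissible. The other is controlling the mixture of interpolation error and truncation $\Zh$ uniformly in $k$, which is where assumption~\ref{assumption:W2} enters.
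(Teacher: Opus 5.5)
Your proposal is correct and follows essentially the same route as the paper. Both arguments take weak compactness from the uniform bounds, identify the noise limit by comparing $S^{\tau}$ with the It\^o integral of the piecewise-constant predictable integrand and using weak continuity of the bounded linear It\^o integral, and pass to the limit in the time-interpolated scheme tested with $\psi\,\mathcal{R}\eta$. Your Steps 1--3 (the three interpolants sharing one limit, identifying the weak limit of $\Delta_h U^{\tau,+}$ as $\Delta u_\delta$, and the exact nodal identity yielding \eqref{eq:limitht:w}) supply details the paper's proof leaves implicit. Folding the interpolation and $\mathds{Z}_h$-truncation errors into the weak convergence of the integrands is only a bookkeeping variant of the paper's error terms $II$--$IV$.
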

An alternative formulation for \eqref{eq:limitht:u} reads
\begin{align}
\rkla{\mathrm{d}u_\delta,\psi}_{-1}+\rkla{w_\delta,\psi}\dt=\rkla{\div\tgkla{\mysigma_\delta\dW},\psi}_{-1}
\end{align}
for all $\psi\in \mathds{L}^2$.

\begin{proof}
The weak and weak* convergence expressed in \eqref{eq:weakhtau} is a direct consequence of the uniform bounds \eqref{eq:boundhtau:independent} and \eqref{eq:boundhtau:deltadependent}.
We shall now establish \eqref{eq:weakhtau:noise}. 
First, the bound from Lemma \ref{lem:linearnoise} guarantees the existence of subsequence converging weakly towards some limit $\zeta_\delta$. 
To identify this limit, we proceed as follows.
First, we compare $S\tl$ with a suitable \Ito-integral:
\begin{align}
\begin{split}
&\expected{\norm{S\tl\trkla{t}-\sum_{i=1}^d\sum_{k\in\mathds{Z}}\int_0^t\partial_i\rkla{\Ih{\mysigma\trkla{U\tm}}\lambda_k\g{k}}\dbeta{k}^i}_{-1}^2}\\
&\quad\leq  C\expected{\norm{S\tl\trkla{t}-\sum_{i=1}^d\sum_{k\in\mathds{Z}\h}\int_0^t\partial_i\Ih{\mysigma\trkla{U\tm}\lambda_k\g{k}}\dbeta{k}^i}_{-1}^2}\\
&\qquad\quad+C\expected{\norm{\sum_{i=1}^d\sum_{k\in\mathds{Z}\h}\int_0^t\partial_i\trkla{I-\Ihop}\gkla{\Ih{\mysigma\trkla{U\tm}}\Ih{\lambda_k\g{k}}}\dbeta{k}^i}_{-1}^2}\\
&\qquad\quad+C\expected{\norm{\sum_{i=1}^d\sum_{k\in\mathds{Z}\h}\int_0^t\partial_i\rkla{\Ih{\mysigma\trkla{U\tm}}\trkla{I-\Ihop}\gkla{\lambda_k\g{k}}}\dbeta{k}^i}_{-1}^2}\\
&\qquad\quad+C\expected{\norm{\sum_{i=1}^d\sum_{k\in\mathds{Z}\setminus\mathds{Z}\h}\int_0^t\partial_i\rkla{\Ih{\mysigma\trkla{u\tm}}\lambda_k\g{k}}\dbeta{k}^i}_{-1}^2}\\
&\quad=: I+II+III+IV\,.
\end{split}
\end{align}
For $t\in[t\no,t\nn)$, we deduce using \Ito's isometry, \ref{assumption:W2}, and \ref{assumption:C}
\begin{align}
\begin{split}
I=&\,C\mathds{E}\left[\left\|\sum_{i=1}^d\sum_{k\in\mathds{Z}\h} \frac{t-t\no}{\tau}\int_{t\no}^{t\nn}\partial_i\Ih{\mysigma\trkla{U\tm}\lambda_k\g{k}}\dbeta{k}^i \right.\right.\\
&\qquad\qquad\left.\left.- \sum_{i=1}^d\sum_{k\in\mathds{Z}\h}\int_{t\no}^t\partial_i\Ih{\mysigma\trkla{U\tm}\lambda_k\g{k}}\dbeta{k}^i\right\|_{-1}^2\right]\\
=&\,C\mathds{E}\left[\left(\sup_{0\neq\psi\in\mathds{H}^1_{\per}}\left(\left[\sum_{i=1}^d\sum_{k\in\mathds{Z}\h}\frac{t-t\no}\tau\int_{t\no}^{t\nn}\Ih{\mysigma\trkla{U\tm}\lambda_k\g{k}}\dbeta{k}^i\right.\right.\right.\right.\\
&\qquad\qquad \left.\left.\left.\left.-\int_{t\no}^t\Ih{\mysigma\trkla{U\tm}\lambda_k\g{k}}\dbeta{k}^i\right],\partial_i\psi\right)\norm{\psi}_{\mathds{H}^1}^{-1}\right)^2\right]\\
\leq&\,C\mathds{E}\left[\left\|\sum_{i=1}^d\sum_{k\in\mathds{Z}\h}\left(\frac{t-t\no}\tau\int_{t\no}^{t\nn}\Ih{\mysigma\trkla{U\tm}\lambda_k\g{k}}\dbeta{k}^i\right.\right.\right. \\
&\qquad\qquad \left.\left.\left.-\int_{t\no}^t\Ih{\mysigma\trkla{U\tm}\lambda_k\g{k}}\dbeta{k}^i\right)\right\|_{\mathds{L}^2}^2\right]\\
\leq&\,C\expected{\norm{\sum_{i=1}^d\sum_{k\in\mathds{Z}\h}\int_{t\no}^{t\nn}\Ih{\mysigma\trkla{U\tm}\lambda_k\g{k}}\dbeta{k}^i}_{\mathds{L}^2}^2} \\
&\,+C\expected{\norm{\sum_{i=1}^d\sum_{k\in\mathds{Z}\h}\int_{t\no}^{t}\Ih{\mysigma\trkla{U\tm}\lambda_k\g{k}}\dbeta{k}^i}_{\mathds{L}^2}^2}\\
\leq&\,C\expected{\sum_{i=1}^d\sum_{k\in\mathds{Z}\h} \tau\norm{\Ih{\mysigma\trkla{U\tm}\lambda_k\g{k}}}_{\mathds{L}^2}^2}\leq C\tau\,.
\end{split}
\end{align}
For the second term, we obtain by using \Ito's isometry, the definition of the $\mathds{H}^{-1}_{\per}$-norm, Lemma \ref{lem:interpolation}, and inverse estimate (cf.~\cite[Theorem 4.5.11]{BS}), \ref{assumption:W2}, and \ref{assumption:C}
\begin{align}
\begin{split}
II\leq&\,C\expected{\sum_{i=1}^d\sum_{k\in\mathds{Z}\h}\int_0^t\norm{\trkla{I-\Ihop}\gkla{\Ih{\mysigma\trkla{U\tm}}\Ih{\lambda_k\g{k}}}}_{\mathds{L}^2}^2\ds}\\
\leq&\,C h^2\expected{\sum_{i=1}^d\sum_{k\in\mathds{Z}\h}\int_0^t\norm{\Ih{\mysigma\trkla{U\tm}}}_{\mathds{L}^2}^2\norm{\lambda_k\g{k}}_{\mathds{W}^{1,\infty}}^2\ds}\leq Ch^2\,.
\end{split}
\end{align}
Similarly, we compute
\begin{align}
\begin{split}
III\leq&\,C\expected{\sum_{i=1}^d\sum_{k\in\mathds{Z}\h}\int_0^t\norm{\Ih{\mysigma\trkla{U\tm}}\trkla{I-\Ihop}\gkla{\lambda_k\g{k}}}_{\mathds{L}^2}^2\ds}\\
\leq&\,C\expected{\sum_{i=1}^d\sum_{k\in\mathds{Z}\h}\int_0^t\norm{\Ih{\mysigma\trkla{U\tm}}}_{\mathds{L}^2}^2\norm{\trkla{I-\Ihop}\gkla{\lambda_k\g{k}}}_{\mathds{L}^\infty}^2\ds}\leq Ch^2\,.
\end{split}
\end{align}
For $IV$, we again use \Ito's isometry and the definition of the $\mathds{H}^{-1}_{\per}$-norm to deduce
\begin{align}
\begin{split}
IV\leq&\,C\expected{\sum_{i=1}^d\sum_{k\in\mathds{Z}\setminus\mathds{Z}\h}\int_0^t\norm{\Ih{\mysigma\trkla{U\tm}}}_{\mathds{L}^2}^2\norm{\lambda_k\g{k}}_{\mathds{L}^\infty}^2\ds}\\
\leq&\,C\sum_{k\in\mathds{Z}\setminus\mathds{Z}\h}\abs{\lambda_k}^2\norm{\g{k}}_{\mathds{L}^\infty}^2\,.
\end{split}
\end{align}
Recalling \ref{assumption:W2} and \ref{assumption:Z}, we obtain $IV\rightarrow0$ for $h\searrow0$. Thus, we have
\begin{align}
\expected{\norm{S\tl\trkla{t}-\sum_{i=1}^d\sum_{k\in\mathds{Z}}\int_0^t\partial_i\rkla{\Ih{\mysigma\trkla{U\tm}}\lambda_k\g{k}}\dbeta{}^i}_{-1}^2}\rightarrow 0
\end{align}
for $\trkla{h,\tau}\rightarrow0$.
Further, as $\Ih{\mysigma\trkla{U\tm}}\rightharpoonup\mysigma_\delta$ in $L^p\trkla{\Omega;L^2\trkla{0,T;\mathds{H}^1_{\per}}}$, we obtain
\begin{align}
\sum_{i=1}^d\sum_{k\in\mathds{Z}}\int_0^{\cdot}\partial_i\rkla{\Ih{\mysigma\trkla{U\tm}}\lambda_k\g{k}}\dbeta{k}^i\rightharpoonup \sum_{i=1}^d\sum_{k\in\mathds{Z}}\int_0^{\cdot}\partial_i\rkla{\mysigma_\delta\lambda_k\g{k}}\dbeta{k}^i
\end{align}
in $C\trkla{0,T;L^2\trkla{\Omega;\mathds{L}^2}}$. %{\color{red} needs to be double checked. I just took the argument from Bauzet et al}
Thus, we can conclude
\begin{align}
S\tl\rightharpoonup \sum_{i=1}^d\sum_{k\in\mathds{Z}}\int_0^{\cdot}\partial_i\rkla{\mysigma_\delta\lambda_k\g{k}}\dbeta{k}^i
\end{align}
in $L^2\trkla{\Omega;C\trkla{0,T;\mathds{H}^{-1}_{\per}}}$, which allows to identify the limit $\zeta_\delta$ and therefore establish \eqref{eq:weakhtau:noise}.\\
To establish \eqref{eq:limitht}, we interpolate \eqref{eq:model:disc:regularised:1} piecewise linearly in time and obtain
\begin{multline}
\rkla{U\tl\trkla{t}-S\tl\trkla{t},\varphi\h}-\iD\trkla{\ids-\Ihop}\gkla{U\tl\trkla{t}\varphi\h}\dx +\trkla{\int_0^t\nabla W\tp\ds,\nabla\varphi\h}\\
= \trkla{U^0,\varphi\h}-\iD\trkla{\ids-\Ihop}\gkla{U^0\varphi\h}\dx\,.
\end{multline}
Choosing $\varphi\h=\psi\mathcal{R}\eta\in L^2\trkla{\Omega\times\trkla{0,T};\mathds{V}\h}$ with $\psi\in L^2\trkla{\Omega\times\trkla{0,T}}$ and $\eta\in\mathds{H}^2_{\per}$ and integrating with respect to time and probability, we obtain
\begin{align}
\begin{split}
&\expected{\int_0^T\trkla{U\tl\trkla{t}-S\tl\trkla{t},\psi\mathcal{R}\eta}\dt} +\expected{\int_0^T\rkla{\int_0^tW\tp\trkla{s}\ds,-\Delta\eta}\psi\dt}\\
&~=\expected{\int_0^T\rkla{u_0,\mathcal{R}\eta}\psi\dt} +\expected{\int_0^T\iD\trkla{\ids-\Ihop}\gkla{U\tl\trkla{t}\mathcal{R}\eta}\dx\,\psi\dt}\\
&\qquad-\expected{\int_0^T\iD\trkla{\ids-\Ihop}\gkla{U^0\mathcal{R}\eta}\dx\,\psi\dt}\,,
\end{split}
\end{align}
due to Assumption \ref{assumption:I}.
Recalling the strong convergence of the Ritz projection $\mathcal{R}\eta\rightarrow\eta$ in $\mathds{H}^1_{\per}$ (cf.~\eqref{eq:convergenceRitz}) and the error estimate
\begin{align}
\iD\trkla{\ids-\Ihop}\gkla{U\tl\trkla{t}\mathcal{R}\eta}\dx\leq Ch^2\norm{\nabla U\tl\trkla{t}}_{\mathds{L}^2}\norm{\nabla \mathcal{R}\eta}_{\mathds{L}^2}\,,
\end{align}
we can pass to the limit and obtain
\begin{multline}
\expected{\int_0^T\rkla{u_\delta\trkla{t}-\sum_{i=1}^d\sum_{k\in\mathds{Z}}\int_0^t\partial_i\rkla{\mysigma_\delta\lambda_k\g{k}}\dbeta{k}^i,\eta}\psi\dt} \\
+\expected{\int_0^T\rkla{\int_0^t w_\delta\trkla{s}\ds,-\Delta\eta}\psi\dt}=\expected{\int_0^T\rkla{u_0,\eta}\psi\dt}\,
\end{multline}
for all $\eta\in\mathds{H}^2_{\per}$ and $\psi\in L^2\trkla{\Omega\times\trkla{0,T}}$.
This provides \eqref{eq:limitht:u}.
\end{proof}
The convergence results presented in Theorem \ref{thm:weakht} relied solely on weak and weak* convergence and thus prevented an identification of all terms occurring in the limit equation \eqref{eq:limitht}.
To identify these terms, we apply a monotonicity argument to deduce strong convergence of $U\tpm$ in $L^2\trkla{\Omega;L^2\trkla{0,T;\mathds{L}^2}}$:

\begin{lemma}\label{lem:strongConvergenceht}
Let the assumptions of Theorem \ref{thm:weakht} hold true.
For subsequences, we have the following (strong) convergence properties:
\begin{subequations}
\begin{align}
U\tpm&\rightarrow u_\delta&&\text{in~}L^2\trkla{\Omega;L^2\trkla{0,T;\mathds{L}^2}}\,,\label{eq:strongconvud}\\
\mysigma\trkla{U\tpm}&\rightarrow \mysigma\trkla{u_\delta}&&\text{in~}L^p\trkla{\Omega;L^p\trkla{0,T;\mathds{L}^p}}\text{~for~}p\in[1,\infty)\,,\\
\Ih{f_\delta\trkla{U\tpm}}&\rightarrow f_\delta\trkla{u_\delta}&&\text{in~}L^2\trkla{\Omega;L^2\trkla{0,T;\mathds{L}^2}}\,.\label{eq:strongconvfd}
\end{align}
\end{subequations}
\end{lemma}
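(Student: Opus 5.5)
Since the limits are obtained only weakly, strong convergence has to come from a monotonicity (Minty-type) argument in the $\mathds{H}^{-1}$-setting, used as a Cauchy-type estimate. The plan is to use the \Ito{} formula for $\norm{\cdot}_{-1}^2$ twice: once for the continuous limit equation \eqref{eq:limitht} applied to $u_\delta-\mathfrak{m}$, and once in its discrete form, as in Lemma \ref{lem:h-1estimate}, for $U\nn-\mathfrak{m}$. Comparing the two energy identities then gives
\begin{align*}
\limsup_{(h,\tau)\to0}\expected{\int_0^T e^{-\gamma t}\Big[\varepsilon\norm{\nabla U\tp}^2+\tfrac1\varepsilon\trkla{\Ih{f_\delta(U\tp)},U\tp-\mathfrak{m}}\h\Big]\dt}\leq \expected{\int_0^T e^{-\gamma t}\Big[\varepsilon\norm{\nabla u_\delta}^2+\tfrac1\varepsilon\trkla{p_\delta,u_\delta-\mathfrak{m}}\Big]\dt}+\text{(noise, }U\no\text{ terms)},
\end{align*}
where the weight $e^{-\gamma t}$ with $\gamma$ large absorbs the Lipschitz error produced by the noise. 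The argument proceeds in four steps.

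First, I write the discrete \Ito{}-type identity, which is the summed version of \eqref{eq:h-1estimate} with exact equality instead of Young's inequality, weighted by $e^{-\gamma t\nn}$, and I take expectations so that the martingale terms vanish. What remains on the right-hand side are the initial datum and the quadratic variation $\sum_n\norm{\sum_{i,k}\Ih{\mysigma(U\no)\lambda_k\g{k}}\sincb{n}{k}{i}}^2$, whose expectation equals $\tau\sum_{i,k}\norm{\Ih{\mysigma(U\no)\lambda_k\g{k}}}^2$ (this is the $\mathds{H}^{-1}$ norm of a divergence, bounded by the $\mathds{L}^2$ norm of its argument). Next, I subtract the same identity for the difference with a fixed test process $v\in L^2(\Omega;L^2(0,T;\mathds{H}^1_{\per}))$, and I use the monotonicity of $f_\delta$ together with the Lipschitz bound $\norm{\mysigma(a)-\mysigma(b)}\leq C\norm{a-b}$. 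The term $-\tfrac1\varepsilon(U\no,\cdot)$ is anti-monotone, but it is controlled through the interpolation $\norm{a}^2\leq \eta\norm{\nabla a}^2+C_\eta\norm{a}_{-1}^2$, with the $\norm{\cdot}_{-1}^2$ part absorbed by $\gamma$. The $\mysigma$ mismatch in $\mathds{L}^2$ is treated the same way.

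Second, I pass to the limit $(h,\tau)\to0$ using the weak convergences \eqref{eq:weakhtau} and weak lower semicontinuity. The discrete $\mathds{H}^{-1}$ norm is compared with $\norm{\cdot}_{-1}$ via the Ritz and $\mathcal{P}$ approximation estimates, and the difference between the discrete inner product $(\cdot,\cdot)\h$ and the $\mathds{L}^2$ inner product is handled by Lemma \ref{lem:interpolation}; both differences are $O(h)$ under the bounds \eqref{eq:boundhtau:independent}. The noise term requires care, since $\mysigma(U\tm)$ converges only weakly. I therefore keep it on the ``good'' side as $\norm{\mysigma(U\tm)-\mysigma(v)}^2\le C\norm{U\tm-v}^2$, which is again interpolated and absorbed. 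This yields a Minty inequality with limit $\sigma_\delta$ and $p_\delta$. I then choose $v=u_\delta$. Comparing with the \Ito{} formula for the limit equation \eqref{eq:limitht:u} (which is legitimate because $u_\delta\in L^2(\mathds{H}^2)$ and $w_\delta\in L^2(\mathds{L}^2)$) gives
\begin{align*}
\limsup\expected{\int_0^T e^{-\gamma t}\norm{\nabla(U\tp-u_\delta)}^2\dt}=0 .
\end{align*}
Together with the Poincaré inequality, using that the mean value is fixed at $\mathfrak{m}$, this proves \eqref{eq:strongconvud} for $U\tp$. The other interpolants then follow from $\expected{\sum_n\tau\norm{U\nn-U\no}\h^2}\le C\tau$, a consequence of Lemma \ref{lem:h-1estimate}.

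Third, I derive the remaining convergences. For $\mysigma$, the Lipschitz property gives convergence in $L^2$. Since $\mysigma$ is bounded, interpolation upgrades this to every $L^p$, and the difference between $\Ihop$ and the identity is $O(h)$. For $f_\delta$, which is Lipschitz with constant $\delta^{-1}$, I estimate $\norm{\Ih{f_\delta(U)}-f_\delta(U)}\le Ch\norm{\nabla \Ih{f_\delta(U)}}\le Ch\delta^{-1}\norm{\nabla U}$, using \ref{assumption:S2}; this gives \eqref{eq:strongconvfd}. As a consequence, $p_\delta=f_\delta(u_\delta)$ and $\mysigma_\delta=\mysigma(u_\delta)$.

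The main obstacle is step two. The noise coefficient depends on $U\no$ and converges only weakly, and the concave term $-u/\varepsilon$ is not monotone. Both must be absorbed by the exponential weight via the $\mathds{H}^{-1}$--$\mathds{H}^1$ interpolation. If the direct comparison of the two energy identities turns out to be awkward, I would first try to establish the Cauchy property of $U\tp$ along two subsequences in the same way.
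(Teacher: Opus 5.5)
Your overall route is the paper's. You compare an exponentially weighted discrete $\mathds{H}^{-1}$ energy inequality with the \Ito{} formula for $\norm{u_\delta-\mathfrak{m}}_{-1}^2$, expand the quadratic terms around $u_\delta$, and drop the monotone part $(f_\delta(U^{\tau,+})-f_\delta(u_\delta),U^{\tau,+}-u_\delta)\geq0$. The anti-monotone term and the Lipschitz noise error are then absorbed by interpolation and the weight. The Minty detour with a general $v$ is superfluous, since you only use $v=u_\delta$, which is what the paper does directly.

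\textbf{The gap: the noise term in Step 1.} You replace the discrete quadratic variation by $\tau\sum_{i,k}\norm{\Ih{\mysigma(U^{n-1})\lambda_k\g{k}}}^2$, i.e.\ by the $\mathds{L}^2$-norm of the vector field, instead of the $\mathds{H}^{-1}$-norm of its divergence. In a limsup comparison this loss is fatal. The noise term in the \Ito{} formula for the limit is the smaller quadratic form $\sum_{i,k}\norm{\partial_i(\mysigma_\delta\lambda_k\g{k})}_{-1}^2=\sum_k\norm{\mysigma_\delta\lambda_k\g{k}-(\mysigma_\delta\lambda_k\g{k})_{\D}}^2$. Carry your bound through the expansion around $\mysigma(u_\delta)$, or through a Krylov--Rozovskii argument. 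Already in the expected case $\mysigma_\delta=\mysigma(u_\delta)$, the right-hand side then retains the term $\sum_k\bigl((d-1)\norm{\mysigma(u_\delta)\lambda_k\g{k}}^2+\abs{\D}\,\abs{(\mysigma(u_\delta)\lambda_k\g{k})_{\D}}^2\bigr)$. This term is generically positive, does not vanish as $(h,\tau)\to0$, and cannot be absorbed by $\gamma$ or by the dissipation. Hence neither the limsup inequality nor $\limsup\expected{\int_0^Te^{-\gamma t}\norm{\nabla(U^{\tau,+}-u_\delta)}^2\dt}=0$ follows. A lossy bound is admissible here only if it uses the same quadratic form that appears in the limit identity, as with $\norm{P_nB}\leq\norm{B}$ in Galerkin proofs.

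\textbf{The fix, as in the paper.} After splitting off the martingale increment, estimate the remaining noise term by $\frac12\norm{U^n-U^{n-1}}_*^2+\frac12\norm{\sum_{i,k}\partial_i\Ih{\mysigma(U^{n-1})\lambda_k\g{k}}\sincb{n}{k}{i}}_{-1}^2$; this uses that the discrete dual norm is dominated by the continuous one. Evaluate the expectation of the last term with the exact identity $\sum_{i=1}^d\norm{\partial_iv}_{-1}^2=\norm{v-(v)_{\D}}^2$. After reindexing $n-1\mapsto n$, and up to terms vanishing as $(h,\tau)\to0$, the discrete and continuous noise contributions are then the same quadratic form, evaluated at $\mysigma(U^{\tau,+})$ and at $\mysigma_\delta$ respectively.

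Now expand around $\mysigma(u_\delta)$ and discard the nonpositive term $-\sum_k\norm{(\mysigma_\delta-\mysigma(u_\delta))\lambda_k\g{k}-((\mysigma_\delta-\mysigma(u_\delta))\lambda_k\g{k})_{\D}}^2$. The difference then reduces to a cross term, which vanishes by weak convergence, plus $\sum_k\norm{(\mysigma(U^{\tau,+})-\mysigma(u_\delta))\lambda_k\g{k}}^2$, which you absorb as intended. With this correction your plan goes through.

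\textbf{Other remarks.} Your direct use of \Ito's formula in the triple $\mathds{L}^2\subset\mathds{H}^{-1}_\per\subset\mathds{H}^{-2}_\per$ is legitimate, since $u_\delta,w_\delta\in L^2(\Omega\times(0,T);\mathds{L}^2)$, and it spares the paper's mollification. Your Lipschitz argument for $f_\delta$ is shorter than the paper's Vitali argument. Finally, the $O(\tau)$ bound on $\sum_n\tau\norm{U^n-U^{n-1}}_h^2$ comes from Lemma~\ref{lem:l2estimate}, not from Lemma~\ref{lem:h-1estimate}.
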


\begin{proof}
To establish the strong convergence \eqref{eq:strongconvud}, we shall first prove 
\begin{align}\label{eq:convnormh-1}
\norm{U\tp-\mathfrak{m}}_{L^2\trkla{\Omega;L^2\trkla{0,T;\mathds{H}^{-1}_\per}}}\rightarrow \norm{u_\delta-\mathfrak{m}}_{L^2\trkla{\Omega;L^2\trkla{0,T;\mathds{H}^{-1}_\per}}}\,.
\end{align}
In combination with the already established weak convergence this provides strong convergence in $L^2\trkla{\Omega;L^2\trkla{0,T;\mathds{H}^{-1}_\per}}$, which we shall then improve to \eqref{eq:strongconvud}.
As the weak convergence already provides
\begin{align}
\norm{u_\delta-\mathfrak{m}}_{L^2\trkla{\Omega;L^2\trkla{0,T;\mathds{H}^{-1}_\per}}}\leq \liminf_{\trkla{h,\tau}\rightarrow0} \norm{U\tp-\mathfrak{m}}_{L^2\trkla{\Omega;L^2\trkla{0,T;\mathds{H}^{-1}_\per}}}\,,
\end{align}
it remains to verify the upper bound for the $\limsup$.

Recalling $\trkla{U\nn}_\D=\mathfrak{m}$ for all $n$, we use $\varphi\h=\trkla{-\Delta\h^{-1}}\trkla{U\nn-\mathfrak{m}}$ and $\psi\h=U\nn-\mathfrak{m}$ as test functions in \eqref{eq:model:disc:regularised} and obtain
\begin{align}
\begin{split}
&\tfrac12 \norm{U\nn-\mathfrak{m}}_*^2+\tfrac12\norm{U\nn-U\no}_*^2-\tfrac12\norm{U\no-\mathfrak{m}}_*^2+\tau\varepsilon \norm{\nabla U\nn}_{L^2\trkla{\D}}^2\\
&\qquad\quad+\tau\frac1\varepsilon\rkla{f_\delta\trkla{U\nn},U\nn-\mathfrak{m}}\h -\tau\frac1\varepsilon\rkla{U\no,U\nn-\mathfrak{m}}\h\\
&\quad\leq \rkla{\Phi\h\trkla{U\no}\sinc{n},-\Delta\h^{-1}\trkla{U\no-\mathfrak{m}}} +\tfrac12\norm{U\nn-U\no}_*^2\\
&\qquad\quad+\tfrac12\norm{\sum_{i=1}^d\sum_{k\in\Zh}\nabla\trkla{-\Delta^{-1}} \partial_i\Ih{\mysigma\trkla{U\no}\g{k}\lambda_k}\sincb{n}{k}{i}}_{\mathds{L}^2}^2\,.
\end{split}
\end{align}
Multiplying by $2e^{-\hat{c}t\no}$ with sufficiently large $\hat{c}$ which will be specified below, taking the expectation, and summing from $n=1$ to $m$, we obtain
\begin{align}
\begin{split}\label{eq:strongconv:discIto}
\mathds{E}&\ekla{e^{-\hat{c}t^{m-1}} \norm{U^m-\mathfrak{m}}_*^2}\\
&+\mathds{E}\left[2\sum_{n=1}^m\tau e^{-\hat{c}t\no}\rkla{\varepsilon\norm{\nabla U\nn}_{\mathds{L}^2}^2+\frac1\varepsilon\rkla{f_\delta\trkla{U\nn}, U\nn-\mathfrak{m}}\h  -\frac1\varepsilon \rkla{U\no,U\nn-\mathfrak{m} }\h}\right]\\
\leq&\,\mathds{E}\left[\sum_{n=2}^m \rkla{e^{-\hat{c}t\no}-e^{-\hat{c}t^{n-2}}}\norm{U\no-\mathfrak{m}}_*^2\right]+\expected{\norm{U^0-\mathfrak{m}}_*^2}\\
&+\expected{\sum_{n=1}^m\tau e^{-\hat{c}t\no} \sum_{i=1}^d\sum_{k\in\mathds{Z}\h}\norm{\nabla\trkla{-\Delta^{-1}} \partial_i\Ih{\mysigma\trkla{U\no}\g{k}\lambda_k}}_{\mathds{L}^2}^2}\,.
\end{split}
\end{align}
We continue by estimating the terms in \eqref{eq:strongconv:discIto}:
To deal with the first term on the left-hand side, we need to replace the $\norm{\cdot}_*$-norm with the $\norm{\cdot}_{-1}$-norm.
Hence, we compute
\begin{align}
\begin{split}
&\norm{U^m-\mathfrak{m}}_{-1} =\norm{\nabla\trkla{-\Delta^{-1}}\rkla{U^m-\mathfrak{m}}}_{\mathds{L}^2}\\
&\quad\leq \norm{\nabla\trkla{-\Delta\h^{-1}}\rkla{ U^m-\mathfrak{m}}}_{\mathds{L}^2} +\norm{\nabla\trkla{-\Delta\h^{-1}}\rkla{ U^m-\mathfrak{m}}-\nabla\trkla{-\Delta^{-1}}\rkla{ U^m-\mathfrak{m}}}_{\mathds{L}^2}\\
&\quad\leq \norm{U^m-\mathfrak{m}}_{*} +Ch^{1/2}\norm{U^m-\mathfrak{m}}_{\mathds{L}^2}\leq \norm{U^m-\mathfrak{m}}_{*} +Ch^{1/2}\norm{U^m}_{\mathds{L}^2}\,.
\end{split}
\end{align}
Taking squares on both sides and dividing by $\trkla{1+h^{1/2}}$ provides
\begin{align}\label{eq:tmp:*-1ineq}
\trkla{1+h^{1/2}}^{-1}\norm{U^m-\mathfrak{m}}_{-1}^2\leq \norm{U^m-\mathfrak{m}}_*^2+Ch^{1/2}\norm{U^m}_{\mathds{L}^2}^2\,.
\end{align}
Using the monotonicity of the exponential function and Lemma \ref{lem:l2estimate}, we obtain for $t\in[t^{m-1},t^m)$
\begin{align}
\begin{split}
\frac1{1+h^{1/2}}&\expected{e^{-\hat{c}t}\norm{U\tp\trkla{t}-\mathfrak{m}}_{-1}^2}\leq \frac1{1+h^{1/2}}\expected{e^{-\hat{c}t^{m-1}}\norm{U^m-\mathfrak{m}}_{-1}^2} \\
&\leq \expected{e^{-\hat{c}t^{m-1}}\norm{U^m-\mathfrak{m}}_*^2} +Ch^{1/2}\expected{e^{-\hat{c}t^{m-1}}\norm{U^m}_{\mathds{L}^2}^2}\\
&\leq \expected{e^{-\hat{c}t^{m-1}}\norm{U^m-\mathfrak{m}}_*^2} +Ch^{1/2}\,.
\end{split}
\end{align}
Similar arguments provide
\begin{align}
\norm{U^0-\mathfrak{m}}_*^2\leq \trkla{1+h^{1/2}}\norm{U^0-\mathfrak{m}}_{-1}^2+Ch^{1/2}\norm{U^0}_{\mathds{L}^2}^2\,.
\end{align}
We continue by discussing the second term on the left-hand side of \eqref{eq:strongconv:discIto}.
Using again the monotonicity of the exponential function, we obtain for $t\in[t^{m-1},t^m)$
\begin{multline}
\expected{2\varepsilon\sum_{n=1}^m\tau e^{-\hat{c}t\no} \norm{\nabla U\nn}_{\mathds{L}^2}^2}\geq \expected{2\varepsilon \int_0^t  e^{-\hat{c}s}\norm{\nabla U\tp}_{\mathds{L}^2}^2\ds}\\
= 2\varepsilon\expected{\int_0^te^{-\hat{c}s}\rkla{\norm{\nabla U\tp-\nabla u_\delta}_{\mathds{L}^2}^2 +2\rkla{\nabla U\tp,\nabla u_\delta}-\norm{\nabla u_\delta}_{\mathds{L}^2}^2}\ds}\,.
\end{multline}
To deal with the next term, we first note that for fixed $\delta>0$ the mapping $f_\delta$ is Lipschitz continuous.
Thus, the estimate
\begin{align}\label{eq:strongconv:Lipschitz}
\abs{\Ih{f_\delta\trkla{U\tpm}}-f_\delta\trkla{U\tpm}}^2\leq C_\delta h^2\abs{\nabla U\tpm}^2
\end{align}
holds true pointwise.
We continue with the decomposition
\begin{align}
\begin{split}
\trkla{f_\delta\trkla{U\tp},U\tp}\h=&\, \trkla{\Ih{f_\delta\trkla{U\tp}},U\tp}- \iD\trkla{1-\Ihop}\tgkla{\Ih{f_\delta\trkla{U\tp}}U\tp}\dx\\
=&\,\trkla{f_\delta\trkla{U\tp},U\tp} - \trkla{\trkla{1-\Ihop}\tgkla{f_\delta\trkla{U\tp}},U\tp} \\
&\,- \iD\trkla{1-\Ihop}\tgkla{\Ih{f_\delta\trkla{U\tp}}U\tp}\dx\\
=&\,\trkla{f_\delta\trkla{U\tp}-f_\delta\trkla{u_\delta},U\tp-u_\delta} + \trkla{f_\delta\trkla{u_\delta},U\tp-u_\delta}\\
&\,+\trkla{\Ih{f_\delta\trkla{U\tp}},u_\delta} +\trkla{\trkla{1-\Ihop}\tgkla{f_\delta\trkla{U\tp}},u_\delta}\\
&\,- \trkla{\trkla{1-\Ihop}\tgkla{f_\delta\trkla{U\tp}},U\tp}\\
&\,- \iD\trkla{1-\Ihop}\tgkla{\Ih{f_\delta\trkla{U\tp}}U\tp}\dx\,.
\end{split}
\end{align}
Here, the first term on the right-hand side is nonnegative.
Using Hölder's inequality, \eqref{eq:strongconv:Lipschitz}, and Lemma \ref{lem:interpolation}, we estimate the fourth, fifth, and sixth term on the right-hand side via
\begin{align}
\begin{split}
\abs{\trkla{\trkla{1-\Ihop}\tgkla{f_\delta\trkla{U\tp}},u_\delta}} \leq&\,\norm{\trkla{1-\Ihop}\tgkla{f_\delta\trkla{U\tp}}}_{\mathds{L}^2}\norm{u_\delta}_{\mathds{L}^2}\\
\leq&\, C_\delta h \norm{\nabla U\tp}_{\mathds{L}^2}\norm{u_\delta}_{\mathds{L}^2} \,,
\end{split}\\
\abs{\trkla{\Ih{f_\delta\trkla{U\tp}}-f_\delta\trkla{U\tp},U\tp}}\leq&\, C_\delta h \norm{\nabla U\tp}_{\mathds{L}^2}\norm{U\tp}_{\mathds{L}^2}\,,\\
\abs{\iD\trkla{1-\Ihop}\tgkla{\Ih{f_\delta\trkla{U\tp}}U\tp}\dx} \leq&\,C h \norm{\Ih{f_\delta\trkla{U\tp}}}_{\mathds{L}^2}\norm{\nabla U\tp}_{\mathds{L}^2}\,.
\end{align}
Hence, due to the already established uniform bounds, we deduce for $t\in[t^{m-1},t^m)$
\begin{multline}
\expected{\tau\sum_{n=1}^me^{-\hat{c}t\no}\frac1\varepsilon\trkla{f_\delta\trkla{U\nn}, U\nn}\h}\geq \expected{\int_0^t e^{-\hat{c}s}\frac1\varepsilon\trkla{f_\delta\trkla{U\tp},U\tp}\h\ds}\\
\geq \expected{\int_0^t e^{-\hat{c}s} \frac1\varepsilon\rkla{\trkla{f_\delta\trkla{u_\delta},U\tp-u_\delta} + \trkla{\Ih{f_\delta\trkla{U\tp}},u_\delta}}\ds} -Ch\,.
\end{multline}
To deal with the next part, we compute for $t\in[t^{m-1},t^m)$
\begin{align}
\begin{split}
&\expected{\sum_{n=1}^m\tau\frac1\varepsilon e^{-\hat{c}t\no} \trkla{f_\delta\trkla{U\nn},\mathfrak{m}}\h} = \expected{\int_0^{t^m}\frac1\varepsilon e^{-\hat{c}t\tm} \trkla{f_\delta\trkla{U\tp},\mathfrak{m}}\h\ds}\\
&\quad= \expected{\int_0^t \frac1\varepsilon e^{-\hat{c}s} \trkla{f_\delta\trkla{U\tp},\mathfrak{m}}\h\ds} +\expected{\int_t^{t^m}\frac1\varepsilon e^{-\hat{c}s}\rkla{f_\delta\trkla{U\tp},\mathfrak{m}}\h\ds}\\
&\qquad\quad +\expected{\int_0^{t^m}\frac1\varepsilon\rkla{e^{-\hat{c}t\tm}-e^{-\hat{c}s}}\rkla{f_\delta\trkla{U\tp},\mathfrak{m}}\h\ds}\,.
\end{split}
\end{align}
Here, the second term on the right-hand side is bounded by $C\tau$ and the last term vanishes for $\tau\searrow0$ due to the continuity of the exponential function.\\
For the last term on the left-hand side of \eqref{eq:strongconv:discIto}, we write
\begin{align}
\begin{split}
&\expected{2\sum_{n=1}^m\tau\frac1\varepsilon e^{-\hat{c}t\no}\rkla{-U\no,U\nn-\mathfrak{m}}\h}\\
&\quad\geq \expected{2\sum_{n=1}^m\tau\frac1\varepsilon e^{-\hat{c}t\no}\rkla{U\nn-U\no, U\nn-\mathfrak{m}}\h}\\
&\qquad\quad-2\expected{\int_0^{t^m}\frac1\varepsilon\rkla{e^{-\hat{c}t\tm}-e^{-\hat{c}s}}\norm{U\tp-\mathfrak{m}}\h^2\ds}\\
&\qquad\quad-2\expected{\int_0^{t}\frac{1}\varepsilon e^{-\hat{c}s}\norm{U\tp-\mathfrak{m}}_{\mathds{L}^2}^2\ds} -2\expected{\int_t^{t^m}\frac{1}\varepsilon e^{-\hat{c}s}\norm{U\tp-\mathfrak{m}}_{\mathds{L}^2}^2\ds}\\
&\qquad\quad-Ch^2\expected{\int_0^{t^m}e^{-\hat{c}s}\norm{\nabla U\tp}_{\mathds{L}^2}^2\ds}\,,
\end{split}
\end{align}
where we used Lemma \ref{lem:interpolation}.
Here, the first, second, and fourth term on the right-hand side will vanish for $\tau\searrow0$, while the last term vanishes for $h\searrow0$.
Concerning the first term on the right-hand side of \eqref{eq:strongconv:discIto}, we use
\begin{align}
e^{-\hat{c}t\no}-e^{-\hat{c}t^{n-2}}=\int_{t^{n-2}}^{t\no}  -\hat{c}e^{-\hat{c}s}\ds
\end{align}
and \eqref{eq:tmp:*-1ineq} to derive
\begin{align}
\begin{split}
&\expected{\sum_{n=2}^m\rkla{e^{-\hat{c}t\no}-e^{-\hat{c}t^{n-2}}}\norm{U\no-\mathfrak{m}}_*^2}\\
&\quad=\expected{\sum_{n=1}^{m-1}\int_{t\no}^{t\nn}-\hat{c}e^{-\hat{c}s}\norm{U\tp-\mathfrak{m}}_*^2\ds}=\expected{\int_0^{t^{m-1}}-\hat{c}e^{-\hat{c}s}\norm{U\tp-\mathfrak{m}}_*^2\ds}\\
&\quad\leq \trkla{1+h^{1/2}}^{-1}\expected{\int_0^{t}-\hat{c}e^{-\hat{c}s}\norm{U\tp-\mathfrak{m}}_{-1}^2\ds}\\
&\qquad\quad+Ch^{1/2}\expected{\int_0^{t^{m-1}}\hat{c}e^{-\hat{c}s}\norm{U\tp}_{\mathds{L}^2}^2\ds}+C\tau\,.
\end{split}
\end{align}
Here, we again estimated the integral from $t$ to $t^m$ by $C\tau$.
Due to the already established regularity results, the second term vanishes for $h\rightarrow0$.

To deal with the last term in \eqref{eq:strongconv:discIto}, we integrate by parts multiple times to derive the identity 
\begin{align}\label{eq:strongconv:hminus1}
\begin{split}
\sum_{i=1}^d&\, \trkla{\nabla\trkla{-\Delta^{-1}}\partial_i v,\nabla\trkla{-\Delta^{-1}}\partial_i v}=-\sum_{i=1}^d\trkla{ v-\trkla{v}_{\D},\partial_i\trkla{-\Delta^{-1}}\partial_i v}\\
&=-\sum_{i=1}^d\rkla{-\Delta\trkla{-\Delta^{-1}}\trkla{v-\trkla{v}_{\D}},\partial_i\trkla{-\Delta^{-1}}\partial_i v} \\
&=-\sum_{i=1}^d\trkla{\nabla\trkla{-\Delta^{-1}}\trkla{v-\trkla{v}_{\D}},\nabla\partial_i\trkla{-\Delta^{-1}}\partial_i v} \\
&=\sum_{i=1}^d\rkla{\nabla\partial_i\trkla{-\Delta^{-1}}\trkla{v-\trkla{v}_{\D}},\nabla\trkla{-\Delta^{-1}}\partial_i v}\\
&=\sum_{i=1}^d\trkla{\partial_i\trkla{-\Delta^{-1}}\trkla{v-\trkla{v}_{\D}},\partial_i v}
=\norm{v-\trkla{v}_{\D}}_{\mathds{L}^2}^2\,,
\end{split}
\end{align}
which holds true for all $v\in \mathds{H}^1$.
Here we used that $\trkla{-\Delta^{-1}}$ maps to $\mathds{H}^2_{\per}$ allowing us to interchange derivatives.
Hence, we have
\begin{multline}
\sum_{i=1}^d\sum_{k\in\mathds{Z}\h}\norm{\nabla\trkla{-\Delta^{-1}} \partial_i\Ih{\mysigma\trkla{U\no}\g{k}\lambda_k}}_{\mathds{L}^2}^2 \\
= \sum_{k\in\Zh} \norm{\Ih{\mysigma\trkla{U\no}\g{k}\lambda_k} -\trkla{\Ih{\mysigma\trkla{U\no}\g{k}\lambda_k}}_{\D}}_{\mathds{L}^2}^2\,.
\end{multline}
Furthermore, we use the decomposition
\begin{align}
\begin{split}
\Ih{\mysigma\trkla{U\no}\lambda_k\g{k}}&-\trkla{\Ih{\mysigma\trkla{U\no}\lambda_k\g{k}}}_{\D}\\
=&\,\mysigma\trkla{U\no}\lambda_k\g{k}-\trkla{\mysigma\trkla{U\no}\lambda_k\g{k}}_{\D}\\
&-\lambda_k\trkla{1-\Ihop}\gkla{\g{k}}\mysigma\trkla{U\no} -\lambda_k\Ih{\g{k}}\trkla{1-\Ihop}\gkla{\mysigma\trkla{U\no}}\\
&-\lambda_k\trkla{1-\Ihop}\gkla{\Ih{\g{k}}\Ih{\mysigma\trkla{U\no}}}\\
&+\rkla{\lambda_k\trkla{1-\Ihop}\gkla{\g{k}}\mysigma\trkla{U\no}}_{\D} +\rkla{\lambda_k\Ih{\g{k}}\trkla{1-\Ihop}\gkla{\mysigma\trkla{U\no}}}_{\D}\\
&+\rkla{\lambda_k\trkla{1-\Ihop}\gkla{\Ih{\g{k}}\Ih{\mysigma\trkla{U\no}}}}_{\D}\,.
\end{split}
\end{align}
Hence, using Hölder's inequality, standard error estimates for $\Ihop$, the assumption $\mysigma\in C^{0,1}\trkla{\mathds{R}}\cap L^\infty\trkla{\mathds{R}}$, Lemma \ref{lem:interpolation}, and Young's inequality, we deduce
\begin{align}
\begin{split}
&\norm{\Ih{\mysigma\trkla{U\no}\g{k}\lambda_k} -\trkla{\Ih{\mysigma\trkla{U\no}\g{k}\lambda_k}}_{\D}}_{\mathds{L}^2}^2\\
&\quad\leq\norm{\mysigma\trkla{U\no}\g{k}\lambda_k -\trkla{\mysigma\trkla{U\no}\lambda_k\g{k}}_{\D}}_{\mathds{L}^2}^2 \\
&\qquad+\norm{\mysigma\trkla{U\no}\g{k}\lambda_k -\trkla{\mysigma\trkla{U\no}\lambda_k\g{k}}_{\D}}_{\mathds{L}^2}C h\tabs{\lambda_k}\rkla{\norm{\g{k}}_{\mathds{W}^{1,\infty}} +\norm{\g{k}}_{\mathds{L}^\infty}\norm{\nabla U\no}_{\mathds{L}^2}}\\
&\qquad+{Ch^2\rkla{\tabs{\lambda_k}^2\norm{\g{k}}_{\mathds{W}^{1,\infty}}^2+ \tabs{\lambda_k}^2\norm{\g{k}}_{\mathds{L}^\infty}^2\norm{\nabla U\no}_{\mathds{L}^2}^2}}\\
&\quad\leq\trkla{1+h^{1/2}}\norm{\mysigma\trkla{U\no}\g{k}\lambda_k -\trkla{\mysigma\trkla{U\no}\lambda_k\g{k}}_{\D}}_{\mathds{L}^2}^2  \\
&\qquad+Ch^{1/2}\tabs{\lambda_k}^2\norm{\g{k}}_{\mathds{W}^{1,\infty}}^2 \rkla{1+\norm{\nabla U\no}_{\mathds{L}^2}^2}\,.
\end{split}
\end{align}
Thus,
\begin{align}
\begin{split}
&\expected{\sum_{n=1}^m \tau e^{-\hat{c}t\no}\sum_{i=1}^d\sum_{k\in\Zh} \norm{\nabla\trkla{-\Delta^{-1}}\partial_i\Ih{\mysigma\trkla{U\no}\g{k}\lambda_k}}_{\mathds{L}^2}^2}\\
&\quad\leq \trkla{1+h^{1/2}}\expected{\sum_{n=1}^m\tau e^{-\hat{c}t\no}\sum_{k\in\Zh}\norm{\mysigma\trkla{U\no}\g{k}\lambda_k-\trkla{\mysigma\trkla{U\no}\lambda_k\g{k}}_{\D}}_{\mathds{L}^2}^2}\\
&\quad\qquad+Ch^{1/2}\expected{\sum_{n=1}^m\tau e^{-\hat{c}t\no}\rkla{1+\norm{\nabla U\no}_{\mathds{L}^2}^2}}\,.
\end{split}
\end{align}

Due to the uniform bounds on $\nabla U\no$ established in Lemma \ref{lem:l2estimate}, the second term will vanish for $h\searrow0$.
We rewrite the first term using
\begin{align}
\begin{split}
&\expected{\sum_{n=1}^m\tau e^{-\hat{c}t\no}\sum_{k\in\Zh}\norm{\mysigma\trkla{U\no}\g{k}\lambda_k-\trkla{\mysigma\trkla{U\no}\lambda_k\g{k}}_{\D}}_{\mathds{L}^2}^2} \\
&\quad=\expected{\tau\sum_{k\in\mathds{Z}\h}\norm{\mysigma\trkla{U^0}\g{k}\lambda_k-\trkla{\mysigma\trkla{U^0}\lambda_k\g{k}}_{\D}}_{\mathds{L}^2}^2} \\
&\qquad\quad+ \expected{\sum_{n=1}^{m-1}\tau e^{-\hat{c}t\nn}\sum_{k\in\Zh}\norm{\mysigma\trkla{U\nn}\g{k}\lambda_k-\trkla{\mysigma\trkla{U\nn}\lambda_k\g{k}}_{\D}}_{\mathds{L}^2}^2}\\
&\quad\leq C \tau + \expected{\int_0^t e^{-\hat{c}s}\sum_{k\in\mathds{Z}\h}\norm{\mysigma\trkla{U\tp}\g{k}\lambda_k-\trkla{\mysigma\trkla{U\tp}\lambda_k\g{k}}_{\D}}_{\mathds{L}^2}^2\ds}
\end{split}
\end{align}
for $t\in[t^{m-1},t^m)$.
Collecting the above results, we obtain
\begin{align}
\begin{split}\label{eq:strongconvproof:*semi}
&\frac{1}{1+h^{1/2}}\expected{e^{-\hat{c}t}\norm{U\tp-\mathfrak{m}}_{-1}^2} \\
&\quad+2\varepsilon\expected{\int_0^te^{-\hat{c}s}\rkla{\norm{\nabla \rkla{U\tp-u_\delta}}_{\mathds{L}^2}^2 +2\rkla{\nabla U\tp,\nabla u_\delta}-\norm{\nabla u_\delta}_{\mathds{L}^2}^2}\ds}\\
&\quad+2\frac1\varepsilon\expected{\int_0^te^{-\hat{c}s}\rkla{\rkla{f_\delta\trkla{u_\delta},U\tp-u_\delta}+\rkla{\Ih{f_\delta\trkla{U\tp}},u_\delta-\mathfrak{m}}}\ds}\\
&\leq \trkla{1+h^{1/2}}\expected{\norm{U^0-\mathfrak{m}}_{-1}^2 }+ 2\frac1\varepsilon\expected{\int_0^t e^{-\hat{c}s}\norm{U\tp-\mathfrak{m}}_{\mathds{L}^2}^2\ds} \\
&\quad+\trkla{1+h^{1/2}}^{-1}\expected{\int_0^{t}-\hat{c}e^{-\hat{c}s}\norm{U\tp-\mathfrak{m}}_{-1}^2\ds}\\
&\quad +\trkla{1+h^{1/2}}\expected{\int_0^te^{-\hat{c}s}\sum_{k\in\mathds{Z}\h}\norm{\mysigma\trkla{U\tp}\lambda_k\g{k}-\trkla{\mysigma\trkla{U\tp}\lambda_k\g{k}}_{\D}}_{\mathds{L}^2}^2} +\mathfrak{R}\trkla{h+\tau}
\end{split}
\end{align}
with $\mathfrak{R}\trkla{s}\rightarrow 0$ for $s\rightarrow 0$.\\
Next, we want to apply \Ito's formula to $F\trkla{s,u_\delta\trkla{s}}:=e^{-\hat{c}s}\norm{u_\delta\trkla{s}-\mathfrak{m}}_{-1}^2$ to derive a continuous counterpart of \eqref{eq:strongconvproof:*semi}.
However, as $-\Delta w_\delta$ is only in $\mathds{H}^{-2}_\per$, a direct application of \Ito's formula to the $\mathds{H}^{-1}_\per$-norm is not possible. 
Thus, we introduce a family of regularised versions of the $\mathds{H}^{-1}_\per$-norm:
Starting from a nonnegative, rotationally symmetric function $\zeta\in C^\infty\trkla{\mathds{R}^d}$ with $\mathrm{supp}\,\zeta\subset\overline{B_1\trkla{0}}$, we define $\zeta_\varepsilon\trkla{x}:=\varepsilon^{-d}\zeta\trkla{\varepsilon^{-1}x}$ for $\varepsilon>0$. 
As the considered functions are periodic, they are well-defined on each $\widehat{\D}\supset\D$ with $\mathrm{dist}\trkla{\partial\widehat{\D},\D}\geq 1$.
For each $\varepsilon<1$, we can now define a mollifier
\begin{align}
\mathfrak{J}_\varepsilon\tgkla{f}\trkla{x}:=\int_{\widehat{\D}}\zeta_\varepsilon\trkla{x-y}f\trkla{y}\dy
\end{align}
for all $x\in\D$ and $f\in \mathds{L}^1$ that are extended periodically to $\widehat{\D}$.
This mollifier satisfies $\norm{\trkla{1-\mathfrak{J}_\varepsilon}\tgkla{f}}_{\mathds{L}^2}\rightarrow 0$ as $\varepsilon\searrow0$ for $f\in\mathds{L}^2$ and $\partial_i\mathfrak{J}_\varepsilon\tgkla{g}=\mathfrak{J}_\varepsilon\tgkla{\partial_i g}$ for all $g\in \mathds{H}^1_\per$.
Using $\mathfrak{J}_\varepsilon$, we define a regularised version of the $\mathds{H}^{-1}_\per$-norm as
\begin{align}\label{eq:jnorm}
\jnorm{a}^2:=\norm{\nabla\mathfrak{J}_\varepsilon\tgkla{\trkla{-\Delta^{-1}} a}}_{\mathds{L}^2}^2
\end{align}
for all $a\in\mathds{H}^{-2}_\per$.
Applying \Ito's formula to $F\trkla{s,u_\delta\trkla{s}}:=e^{-\hat{c}\trkla{s}}\jnorm{u_\delta\trkla{s}-\mathfrak{m}}^2$, we derive
\begin{align}\label{eq:itojnorm}
\begin{split}
\expected{e^{-\hat{c}t}\jnorm{u_\delta\trkla{t}-\mathfrak{m}}^2}=&\,\expected{\jnorm{u_\delta\trkla{0}-\mathfrak{m}}^2} +\expected{\int_0^t-\hat{c}e^{-\hat{c}s}\jnorm{u_\delta\trkla{s}-\mathfrak{m}}^2\ds}\\
&\,-2\expected{\int_0^te^{-\hat{c}s}\rkla{\nabla\mathfrak{J}_\varepsilon\gkla{\trkla{-\Delta^{-1}}u_\delta\trkla{s}} ,\nabla\mathfrak{J}_\varepsilon\tgkla{w_\delta\trkla{s}}}\ds}\\
&\,+\expected{\int_0^t e^{-\hat{c}s}\sum_{i=1}^d\sum_{k\in\mathds{Z}}\jnorm{\partial_i\tgkla{\mysigma_\delta\lambda_k\g{k}}}^2\ds}\,.
\end{split}
\end{align}
Using the properties of $\mathfrak{J}_\varepsilon$, we can pass to the limit $\varepsilon\searrow0$ and obtain after inserting \eqref{eq:limitht:w}
\begin{align}\label{eq:strongconvproof:ito}
\begin{split}
\expected{e^{-\hat{c}t}\norm{u_\delta\trkla{t}-\mathfrak{m}}_{-1}^2} =&\, \expected{\norm{u_\delta\trkla{0}-\mathfrak{m}}_{-1}^2 }\\
&+\expected{\int_0^t-\hat{c}e^{-\hat{c}s}\norm{u_\delta\trkla{s}-\mathfrak{m}}_{-1}^2\ds}\\
&+2\expected{\int_0^te^{-\hat{c}s}\iD-\varepsilon\tabs{\nabla u_\delta\trkla{s}}^2-\frac1\varepsilon p_\delta\trkla{s}\trkla{u_\delta\trkla{s}-\mathfrak{m}}\dx\ds}\\
&+2\frac1\varepsilon\expected{\int_0^te^{-\hat{c}s}\norm{u_\delta\trkla{s}-\mathfrak{m}}_{\mathds{L}^2}^2\ds}\\
&+\expected{\int_0^te^{-\hat{c}s}\sum_{i=1}^d\sum_{k\in\mathds{Z}}\norm{\partial_i\gkla{\mysigma_\delta\lambda_k\g{k}}}_{-1}^2\ds}\,.
\end{split}
\end{align}
Recalling \eqref{eq:strongconv:hminus1}, we can rewrite the last term as
\begin{align}
\expected{\int_0^te^{-\hat{c}s}\sum_{i=1}^d\sum_{k\in\mathds{Z}}\norm{\partial_i\gkla{\mysigma_\delta\lambda_k\g{k}}}_{-1}^2\ds}=\expected{\int_0^te^{-\hat{c}s}\sum_{k\in\mathds{Z}}\norm{\mysigma_\delta\lambda_k\g{k} -\trkla{\mysigma_\delta\lambda_k\g{k}}_{\D}}_{\mathds{L}^2}^2\ds}\,.
\end{align}
Hence, multiplying \eqref{eq:strongconvproof:*semi} by $\trkla{1+h^{1/2}}$, recalling the convergence 
\begin{align}
\norm{U^0-\mathfrak{m}}_{-1}\rightarrow \norm{u_\delta\trkla{0}-\mathfrak{m}}_{-1}=\norm{u\trkla{0}-\mathfrak{m}}_{-1} 
\end{align}
for $h\searrow0$ due to assumption \ref{assumption:I} and standard error estimates for the orthogonal $\mathds{L}^2$-projection $\mathcal{P}$, and combining the above estimates \eqref{eq:strongconvproof:*semi}--\eqref{eq:strongconvproof:ito}, integrating with respect to $t$ and redefining $\mathfrak{R}$ provides
\begin{align}\label{eq:tmp:intermediateestimate}
\begin{split}
\int_0^T&\,\expected{e^{-\hat{c}t}\norm{U\tp-\mathfrak{m}}_{-1}^2}\dt \\
&\quad+2\varepsilon\int_0^T\expected{\int_0^te^{-\hat{c}s}\rkla{\norm{\nabla U\tp-\nabla u_\delta}_{\mathds{L}^2}^2+2\trkla{\nabla U\tp,\nabla u_\delta} -\norm{\nabla u_\delta}_{\mathds{L}^2}^2}\ds}\dt\\
&\quad+2\frac{1}\varepsilon\int_0^T\expected{\int_0^te^{-\hat{c}s}\rkla{\rkla{f_\delta\trkla{u_\delta}, U\tp-u_\delta} +\rkla{\Ih{f_\delta\trkla{U\tp}},u_\delta-\mathfrak{m}}}\ds}\dt\\
&\leq \int_0^T\expected{e^{-\hat{c}t}\norm{u_\delta-\mathfrak{m}}_{-1}^2}\dt-\int_0^T\expected{\int_0^t-\hat{c}e^{-\hat{c}s}\norm{u_\delta-\mathfrak{m}}_{-1}^2\ds}\dt \\
&\quad+2\varepsilon \int_0^T\expected{\int_0^te^{-\hat{c}s}\norm{\nabla u_\delta}_{\mathds{L}^2}^2\ds}\dt\\
&\quad +2\frac1\varepsilon\int_0^T \expected{\int_0^te^{-\hat{c}s}\iD p_\delta\trkla{u_\delta-\mathfrak{m}}\dx\ds}\dt\\
&\quad-2\frac{1}\varepsilon\int_0^T\expected{\int_0^te^{-\hat{c}s}\norm{u_\delta-\mathfrak{m}}_{\mathds{L}^2}^2\ds}\dt \\
&\quad-\int_0^T\expected{\int_0^te^{-\hat{c}s}\sum_{k\in\mathds{Z}}\norm{\mysigma_\delta\lambda_k\g{k} -\trkla{\mysigma_\delta\lambda_k\g{k}}_{\D}}_{\mathds{L}^2}^2\ds}\dt\\
&\quad+2\frac1\varepsilon\int_0^T\expected{\int_0^te^{-\hat{c}s}\norm{U\tp-\mathfrak{m}}_{\mathds{L}^2}^2\ds}\dt\\
&\quad+\int_0^T\expected{\int_0^t-\hat{c}e^{-\hat{c}s}\norm{U\tp-\mathfrak{m}}_{-1}^2\ds}\dt\\
&\quad+\int_0^T\expected{\int_0^te^{-\hat{c}s}\sum_{k\in\mathds{Z}\h}\norm{\mysigma_\trkla{U\tp}\lambda_k\g{k}-\trkla{\mysigma\trkla{U\tp}\lambda_k\g{k}}_{\D}}_{\mathds{L}^2}^2\ds}\dt  +\mathfrak{R}\trkla{h+\tau}\,.
\end{split}
\end{align}
We continue by estimating
\begin{align}
\begin{split}
\norm{U\tp-\mathfrak{m}}_{-1}^2=&\,\norm{U\tp-u_\delta }_{-1}^2+2\rkla{U\tp-\mathfrak{m},u_\delta-\mathfrak{m}}_{-1} -\norm{u_\delta-\mathfrak{m}}_{-1}^2\,,
\end{split}
\end{align}
\begin{align}
\begin{split}
\norm{U\tp-\mathfrak{m}}_{\mathds{L}^2}^2=&\,\norm{U\tp -u_\delta }_{\mathds{L}^2}^2+2\rkla{U\tp-\mathfrak{m},u_\delta-\mathfrak{m}} -\norm{u_\delta-\mathfrak{m}}_{\mathds{L}^2}^2\\
\leq &\,\tilde{\alpha} \norm{\nabla U\tp -\nabla u_\delta}_{\mathds{L}^2}^2 +C_{\tilde{\alpha}}\norm{U\tp-u_\delta}_{-1}^2\\
&+2\rkla{U\tp-\mathfrak{m},u_\delta-\mathfrak{m}} -\norm{u_\delta-\mathfrak{m}}_{\mathds{L}^2}^2\,,
\end{split}
\end{align}
\begin{align}\label{eq:noise}
\begin{split}
&\norm{\mysigma\trkla{U\tp}\lambda_k\g{k}-\trkla{\mysigma\trkla{U\tp}\lambda_k\g{k}}_{\D}}_{\mathds{L}^2}^2=-\norm{\mysigma\trkla{u_\delta}\lambda_k\g{k}-\trkla{\mysigma\trkla{u_\delta}\lambda_k\g{k}}_{\D}}_{\mathds{L}^2}^2\\
&\qquad+2\trkla{\mysigma\trkla{U\tp}\lambda_k\g{k}-\trkla{\mysigma\trkla{U\tp}\lambda_k\g{k}}_{\D},\mysigma\trkla{u_\delta}\lambda_k\g{k}-\trkla{\mysigma\trkla{u_\delta}\lambda_k\g{k}}_{\D}}\\
&\qquad+\norm{\tekla{\mysigma\trkla{U\tp}\lambda_k\g{k}-\trkla{\mysigma\trkla{U\tp}\lambda_k\g{k}}_{\D}}-\tekla{\mysigma\trkla{u_\delta}\lambda_k\g{k}-\trkla{\mysigma\trkla{u_\delta}\lambda_k\g{k}}_{\D}}}_{\mathds{L}^2}^2\\
&\quad\leq \norm{\mysigma_\delta\lambda_k\g{k}-\trkla{\mysigma_\delta\lambda_k\g{k}}_{\D}}_{\mathds{L}^2}^2- 2\trkla{\mysigma_\delta\lambda_k\g{k}-\trkla{\mysigma_\delta\lambda_k\g{k}}_{\D},\mysigma\trkla{u_\delta}\lambda_k\g{k}-\trkla{\mysigma\trkla{u_\delta}\lambda_k\g{k}}_{\D}}\\
&\qquad+2\trkla{\mysigma\trkla{U\tp}\lambda_k\g{k}-\trkla{\mysigma\trkla{U\tp}\lambda_k\g{k}}_{\D},\mysigma\trkla{u_\delta}\lambda_k\g{k}-\trkla{\mysigma\trkla{u_\delta}\lambda_k\g{k}}_{\D}}\\
&\qquad+\norm{\tekla{\mysigma\trkla{U\tp}\lambda_k\g{k}-\trkla{\mysigma\trkla{U\tp}\lambda_k\g{k}}_{\D}}-\tekla{\mysigma\trkla{u_\delta}\lambda_k\g{k}-\trkla{\mysigma\trkla{u_\delta}\lambda_k\g{k}}_{\D}}}_{\mathds{L}^2}^2\,.
\end{split}
\end{align}

To estimate the last term on the right-hand side of \eqref{eq:noise}, we use Hölder's inequality, the Lipschitz continuity of $\mysigma$, and Young's inequality to compute
\begin{align}
\begin{split}
&\norm{\tekla{\mysigma\trkla{U\tp}\lambda_k\g{k}-\trkla{\mysigma\trkla{U\tp}\lambda_k\g{k}}_{\D}}-\tekla{\mysigma\trkla{u_\delta}\lambda_k\g{k}-\trkla{\mysigma\trkla{u_\delta}\lambda_k\g{k}}_{\D}}}_{\mathds{L}^2}^2\\
&\quad\leq C\norm{\trkla{\mysigma\trkla{U\tp}-\mysigma\trkla{u_\delta}}\lambda_k\g{k}}_{\mathds{L}^2}^2 \leq C\norm{U\tp- u_\delta}_{L^2\trkla{\D}}^2\tabs{\lambda_k}^2\norm{\g{k}}_{\mathds{L}^\infty}^2\\
&\quad\leq C\norm{\nabla U\tp-\nabla u_\delta}_{\mathds{L}^2}\norm{U\tp-u_\delta }_{-1} \tabs{\lambda_k}^2\norm{\g{k}}_{\mathds{L}^\infty}^2\\
&\quad\leq \rkla{\alpha\norm{\nabla U\tp-\nabla u_\delta}_{\mathds{L}^2}^2+C_\alpha\norm{U\tp-u_\delta }_{-1}^2}\tabs{\lambda_k}^2\norm{\g{k}}_{\mathds{L}^\infty}^2\,,
\end{split}
\end{align}
as $\trkla{U\tp-u_\delta}_{\D}=0$.
Therefore, we can write \eqref{eq:tmp:intermediateestimate} as
\begin{align}
\begin{split}
&\int_0^T\expected{e^{-\hat{c}t}\rkla{\norm{U\tp-\mathfrak{m}}_{-1}^2-\norm{u_\delta-\mathfrak{m}}_{-1}^2}}\dt\\
&\quad+\trkla{2\varepsilon-\frac{2\tilde{\alpha}}{\varepsilon}-\alpha\sum_{k\in\mathds{Z}}\tabs{\lambda_k}^2\norm{\g{k}}_{\mathds{L}^\infty}^2 }\int_0^T\expected{\int_0^t e^{-\hat{c}s}\norm{\nabla U\tp-\nabla u_\delta}_{\mathds{L}^2}^2\ds}\dt\\
&\quad+4\varepsilon\int_0^T\expected{\int_0^te^{-\hat{c}s}\rkla{\trkla{\nabla U\tp,\nabla u_\delta}-\norm{\nabla u_\delta}_{\mathds{L}^2}^2}\ds}\dt\\
&\quad+\frac2\varepsilon\int_0^T\expected{\int_0^t e^{-\hat{c}s}\rkla{\trkla{f_\delta\trkla{u_\delta},U\tp-u_\delta}+\trkla{\Ih{f_\delta\trkla{U\tp}}-p_\delta,u_\delta-\mathfrak{m}}  }\ds}\dt\\
&\quad+\trkla{\hat{c}-\frac2\varepsilon C_{\tilde{\alpha}} - C_\alpha\sum_{k\in\mathds{Z}}\tabs{\lambda_k}^2\norm{\g{k}}_{\mathds{L}^\infty}^2}\int_0^T\expected{\int_0^te^{-\hat{c}s}\norm{U\tp-u_\delta}_{-1}^2\ds}\dt\\
&\quad+2\hat{c}\int_0^T\expected{\int_0^te^{-\hat{c}s}\rkla{\trkla{U\tp-\mathfrak{m},u_\delta-\mathfrak{m}}_{-1}-\norm{u_\delta-\mathfrak{m}}_{-1}^2}\ds}\dt\\
&\leq \frac4\varepsilon \int_0^T\expected{\int_0^t e^{-\hat{c}s}\rkla{ \trkla{U\tp-\mathfrak{m},u_\delta-\mathfrak{m}}-\norm{u_\delta-\mathfrak{m}}_{\mathds{L}^2}^2}\ds}\dt\\
&\quad+2\int_0^T\mathds{E}\left[\int_0^te^{-\hat{c}s}\sum_{k\in\Zh}\iD\rkla{\tekla{\mysigma\trkla{U\tp}-\mysigma_\delta}\lambda_k\g{k}-\trkla{\tekla{\mysigma\trkla{U\tp}-\mysigma_\delta}\lambda_k\g{k}}_{\D}}\right. \\
&\qquad\qquad\qquad\left.\phantom{\int_0^T}\times\rkla{\mysigma\trkla{u_\delta}\lambda_k\g{k}-\trkla{\mysigma\trkla{u_\delta}\lambda_k\g{k}}_{\D}}\dx\ds\right]\dt\\
&\quad+\int_0^T\expected{\int_0^te^{-\hat{c}s}\sum_{k\in\mathds{Z}\setminus\Zh}\norm{\mysigma_\delta\lambda_k\g{k}-\trkla{\mysigma_\delta\lambda_k\g{k}}_{\D}}_{\mathds{L}^2}^2\ds}\dt
 +\mathfrak{R}\trkla{h+\tau}\,.
\end{split}
\end{align}
Choosing $\alpha$ and $\tilde{\alpha}$ sufficiently small and $\hat{c}$ large enough we obtain by passing to the superior limit
\begin{align}
\limsup_{\trkla{h,\tau}\rightarrow 0} \int_0^T\expected{e^{-\hat{c}t}\norm{U\tp-\mathfrak{m}}_{-1}^2}\dt\leq \int_0^T\expected{ e^{-\hat{c}t}\norm{u_\delta-\mathfrak{m}}_{-1}^2}\dt\,,
\end{align}
i.e., we have $U\tp\rightarrow u_\delta$ in $L^2\trkla{\Omega;L^2\trkla{0,T;\mathds{H}^{-1}_{\per}}}$.
Due to Lemma \ref{lem:h-1estimate}, this result is also valid for $U\tl$ and $U\tm$.
Applying Ehrling's lemma, we can improve this result to strong convergence in $L^2\trkla{\Omega;L^2\trkla{0,T;\mathds{L}^{2}}}$. 
As $\mysigma\in L^\infty\trkla{\mathds{R}}\cap C^{0,1}\trkla{\mathds{R}}$ (cf.~assumption \ref{assumption:C}), an application of Vitali's convergence theorem provides the strong convergence of $\mysigma\trkla{U\tpm}$.\\
Recalling \eqref{eq:strongconv:Lipschitz}, the strong convergence \eqref{eq:strongconvfd} of $\Ih{f_\delta\trkla{U\tpm}}$ follows from the uniform $L^p\trkla{\Omega;L^\infty\trkla{0,T;\mathds{H}^1_\per}}$-bound of $U\tpm$ in \eqref{eq:boundhtau:independent}, the pointwise almost everywhere convergence of $U\tpm$, and Vitali's theorem.
\end{proof}
Hence, our $\delta$-regularised problem reads as follows:
\begin{subequations}\label{eq:problemdelta}
\begin{align}\label{eq:problemdelta:u}
\mathrm{d} u_\delta-\Delta w_\delta\dt= \div\gkla{\mysigma\trkla{u_\delta}\dW}&&\text{in~} \mathds{H}^{-2}_\per
\end{align}
$\Prob$-almost surely and for almost all $t\in\trkla{0,T}$, where $w_\delta$ is given as
\begin{align}\label{eq:problemdelta:w}
w_\delta = -\varepsilon \Delta u_\delta +\tfrac1\varepsilon f_\delta\trkla{u_\delta} -\tfrac1\varepsilon u_\delta&&\text{in~}\mathds{L}^2\,.
\end{align}
\end{subequations}

\section{Pathwise uniqueness of the $\delta$-problem}\label{sec:uniquenessdelta}
In this section, we show pathwise uniqueness of the solution $u_\delta$ of \eqref{eq:problemdelta} obtained in the last section as the limit of the regularised numerical approximation \eqref{eq:model:disc:regularised}.
\begin{lemma}\label{lem:delta:unique}
The solution to problem \eqref{eq:problemdelta} is pathwise unique.
\end{lemma}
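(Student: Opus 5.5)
Take two solutions $u_\delta^{(1)},u_\delta^{(2)}$ of \eqref{eq:problemdelta} on the same stochastic basis with the same Wiener process $\bs{\W}$ and the same initial datum $u_0$. Both have mean $\mathfrak{m}$, so the difference $v:=u_\delta^{(1)}-u_\delta^{(2)}$ has zero mean, and $\norm{v}_{-1}$ is a genuine norm on it. The plan is an $\mathds{H}^{-1}$-energy estimate for $v$, mirroring the monotonicity computation in the proof of Lemma \ref{lem:strongConvergenceht}.

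Since $-\Delta w_\delta$ lies only in $\mathds{H}^{-2}_\per$, I would not apply \Ito's formula to $\norm{v}_{-1}^2$ directly. Instead I apply it to $\jnorm{v}^2$ from \eqref{eq:jnorm}, exactly as in \eqref{eq:itojnorm}, and let the mollification parameter tend to zero, using $w_\delta\in L^2(\Omega;L^2(0,T;\mathds{L}^2))$ and $u_\delta\in L^2(\Omega;L^2(0,T;\mathds{H}^1_\per))$. This should give, as in \eqref{eq:strongconvproof:ito} and together with \eqref{eq:strongconv:hminus1},
\begin{align*}
\expected{e^{-\hat{c}t}\norm{v(t)}_{-1}^2}&+\expected{\int_0^te^{-\hat{c}s}\rkla{\hat{c}\norm{v}_{-1}^2+2\varepsilon\norm{\nabla v}_{\mathds{L}^2}^2+\tfrac2\varepsilon\rkla{f_\delta(u_\delta^{(1)})-f_\delta(u_\delta^{(2)}),v}}\ds}\\
&\leq \expected{\int_0^te^{-\hat{c}s}\rkla{\tfrac2\varepsilon\norm{v}_{\mathds{L}^2}^2+\sum_{k\in\mathds{Z}}\tabs{\lambda_k}^2\norm{(\mysigma(u_\delta^{(1)})-\mysigma(u_\delta^{(2)}))\g{k}}_{\mathds{L}^2}^2}\ds}.
\end{align*}
The weight $e^{-\hat{c}t}$ is optional; one can instead close the estimate with Gronwall's inequality. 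The martingale term has zero expectation, which needs the integrand to be square integrable; this follows from the $L^p$ bounds on $u_\delta$ together with the boundedness of $\mysigma$. If one prefers, a stopping-time localisation handles this step.

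The term involving $f_\delta$ is nonnegative because $F_\delta$ is convex, so it can be dropped. The two remaining right-hand terms are treated with the interpolation inequality $\norm{v}_{\mathds{L}^2}^2\leq\norm{\nabla v}_{\mathds{L}^2}\norm{v}_{-1}$, which holds because $v$ has zero mean, followed by Young's inequality. For the noise term I also use the Lipschitz continuity of $\mysigma$ from \ref{assumption:C} and $\sum_k\tabs{\lambda_k}^2\norm{\g{k}}_{\mathds{L}^\infty}^2\leq\widehat{C}$ from \ref{assumption:W2}. Together these bound the right-hand side by $\varepsilon\norm{\nabla v}_{\mathds{L}^2}^2+C(\varepsilon,\widehat{C})\norm{v}_{-1}^2$. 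The gradient part is absorbed into the left-hand side, and choosing $\hat{c}$ large (or applying Gronwall) gives $\expected{\norm{v(t)}_{-1}^2}=0$ for all $t$. Hence $v=0$ $\Prob$-almost surely in $\mathds{H}^{-1}_\per$ for every $t$; continuity of the paths in $\mathds{H}^{-1}_\per$ upgrades this to indistinguishability.

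The main obstacle is justifying the \Ito\ formula for the $\mathds{H}^{-1}$-norm when the drift lies only in $\mathds{H}^{-2}_\per$. To handle it I would reuse the mollified norm $\jnorm{\cdot}$ and pass to the limit term by term:
\begin{itemize}
\item the drift term $\rkla{\nabla\mathfrak{J}_\varepsilon\tgkla{(-\Delta^{-1})v},\nabla\mathfrak{J}_\varepsilon\tgkla{w^{(1)}-w^{(2)}}}$ converges to $(v,w^{(1)}-w^{(2)})$ by dominated convergence;
\item the quadratic-variation term converges using $\mysigma(u_\delta^{(j)})\in L^2(\Omega;L^2(0,T;\mathds{H}^1_\per))$;
\item $(v,w^{(1)}-w^{(2)})$ is then expanded via \eqref{eq:problemdelta:w} into the gradient, $f_\delta$ and $-\tfrac1\varepsilon\norm{v}_{\mathds{L}^2}^2$ contributions used above.
\end{itemize}
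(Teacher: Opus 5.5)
Your proposal is correct and takes essentially the same route as the paper. Both apply \Ito's formula to $e^{-\hat{c}t}\norm{u_\delta^{(1)}-u_\delta^{(2)}}_{-1}^2$, justified via the mollified norm \eqref{eq:jnorm}, drop the nonnegative $f_\delta$-term by monotonicity, rewrite the noise correction using \eqref{eq:strongconv:hminus1} and the Lipschitz continuity of $\mysigma$, and absorb the $\mathds{L}^2$-contributions through $\mathds{H}^{-1}$--$\mathds{H}^1$ interpolation with $\hat{c}$ chosen large. Your remarks on why the martingale term has zero expectation, and on upgrading to indistinguishability via path continuity, are details the paper leaves implicit.
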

\begin{proof}
We assume that $u_\delta$ and $\widetilde{u}_\delta$ are two solutions to problem \eqref{eq:problemdelta} with the same initial condition (i.e., $\trkla{u_\delta-\widetilde{u}_\delta}_{\D}\equiv0$) and driven by the same $\mathcal{Q}$-Wiener process $\bs{\W}$.
For each of these solutions, there exists a unique chemical potential denoted by $w_\delta$ and $\widetilde{w}_\delta$ defined via \eqref{eq:limitht:w}. 
Hence, the difference $u_\delta-\widetilde{u}_\delta$ satisfies
\begin{align}
\mathrm{d}\trkla{u_\delta-\widetilde{u}_\delta}-\Delta\trkla{w_\delta-\widetilde{w}_\delta}\dt=\div\gkla{\rkla{\mysigma\trkla{u_\delta}-\mysigma\trkla{\widetilde{u}_\delta}}\dW} &&\text{in~}\mathds{H}^{-2}_\per\,.
\end{align}
For a sufficiently large constant $\hat{c}$, which will be specified below, we define $F\trkla{t,u}:=e^{-\hat{c}t}\norm{u}_{-1}^2$ and obtain via \Ito's formula (using the approximation \eqref{eq:jnorm} first and passing to the limit $\varepsilon\searrow0$), the monotonicity of $f_\delta$, and \eqref{eq:strongconv:hminus1}
\begin{align}
\begin{split}
&\expected{e^{-\hat{c}t}\norm{u_\delta-\widetilde{u}_\delta}_{-1}^2}=\expected{\int_0^t -\hat{c}e^{-\hat{c}s}\norm{u_\delta-\widetilde{u}_\delta}_{-1}^2\ds}\\
&\qquad-2\expected{\int_0^t e^{-\hat{c}s}\rkla{u_\delta-\widetilde{u}_\delta, -\varepsilon\Delta\trkla{u_\delta-\widetilde{u}_\delta} + \tfrac1\varepsilon\rkla{f_\delta\trkla{u_\delta}-f_\delta\trkla{\widetilde{u}_\delta}}-\tfrac1\varepsilon\trkla{u_\delta-\widetilde{u}_\delta}}\ds}\\
&\qquad+\expected{\int_0^te^{-\hat{c}s}\sum_{i=1}^d\sum_{k\in\mathds{Z}}\norm{\partial_i\ekla{\trkla{\mysigma\trkla{u_\delta}-\mysigma\trkla{\widetilde{u}_\delta}}\lambda_k\g{k}}}_{-1}^2\ds}\\
&\quad\leq\expected{\int_0^t -\hat{c}e^{-\hat{c}s}\norm{u_\delta-\widetilde{u}_\delta}_{-1}^2\ds} -2\varepsilon\expected{\int_0^te^{-\hat{c}s}\norm{\nabla\trkla{u_\delta-\widetilde{u}_\delta}}_{\mathds{L}^2}^2\ds}\\
&\qquad+\frac2\varepsilon\expected{\int_0^t e^{-\hat{c}s}\norm{u_\delta-\widetilde{u}_\delta}_{\mathds{L}^2}^2\ds} + C\expected{\int_0^t e^{-\hat{c}s}\norm{\mysigma\trkla{u_\delta}-\mysigma\trkla{\widetilde{u}_\delta}}_{\mathds{L}^2}^2\ds}\,.
\end{split}
\end{align}
Using the Lipschitz continuity of $\mysigma$ and
\begin{align}
\norm{u_\delta-\widetilde{u}_\delta}_{\mathds{L}^2}^2\leq \alpha\norm{\nabla\trkla{u_\delta-\widetilde{u}_\delta}}_{\mathds{L}^2}^2 +C_\alpha\norm{u_\delta-\widetilde{u}_\delta}_{-1}^2\,,
\end{align}
we deduce
\begin{align}
\begin{split}
\expected{e^{-\hat{c}t}\norm{u_\delta-\widetilde{u}_\delta}_{-1}^2}\leq&\,-\rkla{2\varepsilon-\frac{2\alpha}{\varepsilon}-\alpha}\expected{\int_0^te^{-\hat{c}s}\norm{\nabla\trkla{u_\delta-\widetilde{u}_\delta}}_{\mathds{L}^2}^2\ds}\\
&\,-\rkla{\hat{c}-C_\alpha\rkla{\frac2\varepsilon+1}}\expected{\int_0^te^{-\hat{c}s}\norm{u_\delta-\widetilde{u}_\delta}_{-1}^2\ds}\,.
\end{split}
\end{align}
Hence, choosing $\alpha$ sufficiently small and $\hat{c}$ sufficiently large provides the result.
\end{proof}

\section{$\delta$-independent estimates}\label{sec:est}

In this section we derive uniform estimates that are necessary to pass to the limit with respect to the regularisation parameter $\delta$.
We first establish a $\mathds{H}^1$-estimate in Section~\ref{sec:H1} and additional estimates in Section~\ref{sec:further} as its consequence.

\subsection{$\mathds{H}^1$-estimate}\label{sec:H1}
In this section we establish the following $\mathds{H}^1$-estimate:
\begin{lemma}\label{lem:H1}
Let $\tgkla{u_\delta}_{\delta}$ be the family of solutions obtained in Theorem \ref{thm:weakht}, and let the assumptions \ref{assumption:W1}, \ref{assumption:W2}, \ref{assumption:C}, and \ref{assumption:I} hold true.
Then, there exists a constant $C$ which may depend on $s$ but not on $\delta$ such that
\begin{align}
\expected{\esssup_{t\in\tekla{0,T}}\norm{u_\delta}_{\mathds{H}^1}^s}+\expected{\esssup_{t\in\tekla{0,T}}\norm{F_\delta\trkla{u_\delta}}_{\mathds{L}^1}^{s/2}} +\expected{\rkla{\int_0^T\norm{w_\delta}_{\mathds{H}^1}^2\dt}^{s/2}}\leq C
\end{align}
for $s\in[1,2)$.
\end{lemma}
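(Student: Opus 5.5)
We will apply It\^{o}'s formula to the Ginzburg--Landau type energy
\begin{align*}
E_\delta\trkla{u}:=\frac\varepsilon2\norm{\nabla u}_{\mathds{L}^2}^2+\frac1\varepsilon\iD F_\delta\trkla{u}\dx-\frac1{2\varepsilon}\norm{u}_{\mathds{L}^2}^2\,,
\end{align*}
whose derivative is $w_\delta$. Since $u_\delta$ is only known in $L^2\trkla{0,T;\mathds{H}^2_\per}$ and $\Delta w_\delta$ lives in $\mathds{H}^{-2}_\per$, we first argue at a regularised level. One option is to return to the discrete scheme \eqref{eq:model:disc:regularised}, test \eqref{eq:model:disc:regularised:1} with $W\nn$ and \eqref{eq:model:disc:regularised:2} with $U\nn-U\no$, and pass to the limit $\trkla{h,\tau}\to0$ by weak lower semicontinuity. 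The other option is to use the mollifier $\mathfrak{J}_\varepsilon$ from \eqref{eq:jnorm} and then let the mollification parameter tend to zero. In either case, formally we obtain
\begin{align*}
\mathrm{d}E_\delta\trkla{u_\delta}+\norm{\nabla w_\delta}_{\mathds{L}^2}^2\dt=\rkla{w_\delta,\div\tgkla{\mysigma\trkla{u_\delta}\dW}}+\frac12\sum_{i,k}\rkla{E_\delta^{\prime\prime}\trkla{u_\delta}\partial_i\tgkla{\mysigma\trkla{u_\delta}\lambda_k\g{k}},\partial_i\tgkla{\mysigma\trkla{u_\delta}\lambda_k\g{k}}}\dt\,.
\end{align*}
Here $E_\delta^{\prime\prime}=-\varepsilon\Delta+\varepsilon^{-1}f_\delta^\prime-\varepsilon^{-1}$.

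The Itô correction contains three pieces, which we treat as follows.
\begin{itemize}
\item The $-\varepsilon^{-1}$ part is nonpositive and can be dropped.
\item The $f_\delta^\prime$ part is the dangerous one, because $f_\delta^\prime\sim\delta^{-1}$. This is exactly where $\mysigma\in C_0^1\trkla{\trkla{-1,1}}$ enters: $f_\delta^\prime\trkla{u}=0$ for $\tabs{u}\leq1$, while $\mysigma\trkla{u}=\mysigma^\prime\trkla{u}=0$ for $\tabs{u}\geq1$. Hence $f_\delta^\prime\trkla{u_\delta}\tabs{\partial_i\tgkla{\mysigma\trkla{u_\delta}\lambda_k\g{k}}}^2=0$ pointwise, and this term vanishes identically.
\item The gradient part equals $\varepsilon\norm{\nabla\partial_i\tgkla{\mysigma\trkla{u_\delta}\lambda_k\g{k}}}^2$. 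It involves $\mysigma^\prime\trkla{u_\delta}\nabla^2u_\delta$ and $\mysigma^{\prime\prime}\trkla{u_\delta}\tabs{\nabla u_\delta}^2$. By \ref{assumption:W2} and \ref{assumption:C} it is bounded by $C\rkla{1+\norm{\nabla u_\delta}^2+\norm{\Delta u_\delta}^2+\norm{\nabla u_\delta}_{\mathds{L}^4}^4}$. This is the main obstacle, since it must be absorbed into the dissipation $\norm{\nabla w_\delta}^2$.
\end{itemize}

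To absorb the gradient part, we rewrite $\varepsilon\Delta u_\delta=-w_\delta+\varepsilon^{-1}\trkla{f_\delta\trkla{u_\delta}-u_\delta}$ and use that the relevant terms are localised where $\tabs{u_\delta}<1$, since they carry the factors $\mysigma^\prime,\mysigma^{\prime\prime}$. On that set $f_\delta\trkla{u_\delta}=0$. Alternatively, we integrate by parts using $\div$-structure so that $\nabla^2u_\delta$ appears only through $\nabla\trkla{\mysigma^\prime\trkla{u_\delta}}\cdot\nabla u_\delta$. Either way, we aim for a bound of the form $C\trkla{1+\norm{\nabla u_\delta}^2}+C\norm{w_\delta}^2$ plus a quartic term. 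The quartic term is handled through Gagliardo--Nirenberg together with the $\mathds{L}^\infty$-type control available on $\tgkla{\tabs{u}\leq1}$. The term $\norm{w_\delta}^2$ is controlled by $\norm{\nabla w_\delta}^2+\tabs{\trkla{w_\delta}_\D}^2$. The mean $\trkla{w_\delta}_\D=\varepsilon^{-1}\trkla{f_\delta\trkla{u_\delta}-u_\delta}_\D$ is estimated by $\norm{f_\delta\trkla{u_\delta}}_{\mathds{L}^1}$, in the standard way for $\mathfrak{m}\in\trkla{-1,1}$ (testing \eqref{eq:problemdelta:w} with $u_\delta-\mathfrak{m}$). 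This gives an estimate with a prefactor like $\norm{\nabla u_\delta}\norm{\nabla w_\delta}$, which Young's inequality splits. Here the time-integrated bound of Lemma \ref{lem:h-1estimate} on $\trkla{f_\delta\trkla{u_\delta},u_\delta-\mathfrak{m}}$ and on $\nabla u_\delta$ is used. By Lemma \ref{lem:h-1estimate}, $E_\delta\trkla{u_\delta}+C\geq c\trkla{\norm{\nabla u_\delta}^2+\norm{F_\delta\trkla{u_\delta}}_{\mathds{L}^1}}$, since $\norm{u_\delta}^2$ is dominated by $F_\delta$ plus a constant. Also $E_\delta\trkla{u_0}$ is $\delta$-independent because $\tabs{u_0}\leq1$.

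The martingale term $\int\trkla{\nabla w_\delta,\mysigma\trkla{u_\delta}\dW}$ has quadratic variation bounded by $C\int\norm{\nabla w_\delta}^2\dt$, since $\mysigma$ is bounded. Because the dissipation enters linearly, we take the $s/2$-th power with $s<2$ and apply the BDG inequality. This gives $\expected{\trkla{\int\norm{\nabla w_\delta}^2}^{s/4}}$, which Young's inequality absorbs against $\expected{\trkla{\int\norm{\nabla w_\delta}^2}^{s/2}}$. We expect this is precisely why only moments $s<2$ are reached: a stochastic Gronwall lemma (e.g.\ of Scheutzow type) yields bounds on $\expected{\sup_t\trkla{E_\delta+C}^{s/2}}$ only for exponents strictly below one. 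The final bounds on $\norm{u_\delta}_{\mathds{H}^1}^s$, on $\norm{F_\delta}_{\mathds{L}^1}^{s/2}$, and on $\norm{w_\delta}_{\mathds{H}^1}$ then follow, the last by adding the mean-value bound.
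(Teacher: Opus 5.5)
\textbf{There is a genuine gap at the step you yourself call the main obstacle.} The gradient part of the It\^{o} correction contains a term of size $\int_{\D}\sigma''(u_\delta)^2|\nabla u_\delta|^4\dx$. ``Gagliardo--Nirenberg together with $\mathds{L}^\infty$-control on $\{|u_\delta|\le 1\}$'' does not bound this term in an absorbable way.

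A truncation and integration-by-parts argument (the paper's, with a smooth $\mathcal{J}_\delta$, $|\mathcal{J}_\delta|\le 1+\delta/2$) gives only $\int_{\{|u_\delta|<1\}}|\nabla u_\delta|^4\dx\le C\int_{\D}|D^2u_\delta|\,|\nabla u_\delta|^2\dx$. The right-hand side is an integral over the whole torus, where $u_\delta$ exceeds $\pm1$ and no $\delta$-uniform $\mathds{L}^\infty$ bound is available. Applying Gagliardo--Nirenberg to it produces, for $d=3$, powers of $\norm{u_\delta}_{\mathds{H}^2}$ larger than two.

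The paper instead bounds $\norm{\nabla u_\delta}_{\mathds{L}^\infty}\le C\norm{u_\delta}_{\mathds{W}^{2,6}}\le C(\norm{w_\delta}_{\mathds{H}^1}+\norm{u_\delta}_{\mathds{H}^1})$. This is Lemma \ref{lem:W26}, proved by testing \eqref{eq:problemdelta:w} with $|f_\delta(u_\delta)|^4f_\delta(u_\delta)$, combined with Lemma \ref{lem:meanw} to get the full $\mathds{H}^1$-norm of $w_\delta$. The result is a bound $\alpha\norm{u_\delta}_{\mathds{W}^{2,6}}^2+C\norm{u_\delta}_{\mathds{H}^2}^2\norm{\nabla u_\delta}_{\mathds{L}^2}^2$. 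The first term can be absorbed. The second is the energy multiplied by a random coefficient that is only integrable in time, and no Young-type splitting removes it. In $d=2$, Ladyzhenskaya's inequality leads to the same structure. Your sketch never produces or treats such a term.

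This product is what forces the weighted functional $H(t,u_\delta)=e^{-\Lambda(t)}E_\delta(u_\delta)$ with $\Lambda(t)=\gamma\int_0^t\norm{u_\delta}_{\mathds{H}^2}^2\ds$ (Lemma \ref{lem:H1weighted}). It\^{o}'s formula then generates $-\gamma\norm{u_\delta}_{\mathds{H}^2}^2H$, which absorbs the product once $\gamma$ is large. The weight is removed afterwards by H\"older's inequality, using the exponential moments $\expected{\exp(\kappa\int_0^T\norm{\Delta u_\delta}_{\mathds{L}^2}^2\dt)}\le C$ for every $\kappa>0$. These are obtained from It\^{o}'s formula for $\norm{u_\delta}_{\mathds{L}^2}^2$ and an exponential supermartingale argument.

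That H\"older step is the actual source of the restriction $s<2$. Your explanation of it is not correct, since BDG plus Young absorbs the martingale term at every moment order. A Scheutzow-type stochastic Gronwall lemma with random coefficient could replace the weight, but it requires exactly these exponential moments of $\int_0^T\norm{u_\delta}_{\mathds{H}^2}^2\dt$, which your plan does not provide.

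Separately, rewriting $\Delta u_\delta$ through $w_\delta$ and absorbing $C\norm{w_\delta}^2$ into $\norm{\nabla w_\delta}^2$ fails because $C$ is not small. It is also unnecessary, since $\int_0^T\norm{\Delta u_\delta}^2\dt$ is already controlled by Lemma \ref{lem:l2estimate}.
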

To show Lemma~\ref{lem:H1} we adapt the ideas of \cite{DiPrimioGrasselliScarpa2024}. First we obtain a weighted version of the $\mathds{H}^1$-estimate in Lemma~\ref{lem:H1weighted} below,
where we establish control over the quantity
\begin{align}\label{eq:def:weightedEnergy}
H\trkla{t,u_\delta}:=e^{-\Lambda\trkla{t}}\rkla{\frac{\varepsilon}2\norm{\nabla u_\delta}_{\mathds{L}^2}^2+\frac1\varepsilon\iD F_\delta\trkla{u_\delta}\dx-\frac1{2\varepsilon}\norm{u_\delta}_{\mathds{L}^2}^2}\,.
\end{align}
Here, the weight $\Lambda$ is defined as
\begin{align}\label{eq:defLambda}
\Lambda\trkla{t}:=\gamma\int_0^t\norm{u_\delta\trkla{s}}_{\mathds{H}^2}^2\ds
\end{align}
with a suitably large positive constant $\gamma$ which will be specified later.
\begin{lemma}\label{lem:H1weighted}
Let $u_\delta$ be the the family of solutions obtained in Theorem \ref{thm:weakht}, and let the assumptions \ref{assumption:W1}, \ref{assumption:W2}, \ref{assumption:C}, and \ref{assumption:I} hold true. Then, the estimate
\begin{multline}
\esssup_{t\in\trkla{0,T}}\expected{H\trkla{t,u_\delta\trkla{t}}}+\expected{\int_0^Te^{-\Lambda\trkla{s}}\norm{w_\delta}_{\mathds{H}^1}^2\ds} +\expected{\int_0^Te^{-\Lambda\trkla{s}}\norm{u_\delta}_{\mathds{W}^{2,6}}^2\ds}\\
+\expected{\int_0^Te^{-\Lambda\trkla{s}}\norm{f_\delta\trkla{u_\delta}}_{\mathds{L}^6}^2\ds}+\expected{\int_0^Te^{-\Lambda\trkla{s}}\norm{u_\delta}_{\mathds{H}^2}^2\norm{\nabla u_\delta}_{\mathds{L}^2}^2\ds}\leq C\,
\end{multline}
holds true for $\gamma$ sufficiently large.
\end{lemma}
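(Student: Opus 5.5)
The plan is to apply It\^o's formula to the weighted energy $H(t,u_\delta(t))$ from \eqref{eq:def:weightedEnergy}. Write $E_\delta(u):=\frac\varepsilon2\norm{\nabla u}_{\mathds{L}^2}^2+\frac1\varepsilon\iD F_\delta(u)\dx-\frac1{2\varepsilon}\norm{u}_{\mathds{L}^2}^2$, so that $H=e^{-\Lambda}E_\delta$. A direct It\^o formula for $E_\delta$ is not justified at the regularity available from Theorem \ref{thm:weakht}. I would therefore first work on a Galerkin or mollified level, using for instance $\mathfrak{J}_\varepsilon$ as in \eqref{eq:jnorm}, or the discrete scheme \eqref{eq:model:disc:regularised}, and pass to the limit afterwards by lower semicontinuity.

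The first key step is the formal computation. The derivative of $E_\delta$ is $w_\delta$, by \eqref{eq:problemdelta:w}. Since $\mathrm{d}\Lambda=\gamma\norm{u_\delta}_{\mathds{H}^2}^2\dt$, It\^o's formula gives
\begin{align*}
\mathrm{d}H=e^{-\Lambda}\Big(-\norm{\nabla w_\delta}_{\mathds{L}^2}^2-\gamma\norm{u_\delta}_{\mathds{H}^2}^2E_\delta(u_\delta)+\tfrac12\sum_{i,k}\big\langle E_\delta''(u_\delta)\partial_i(\mysigma(u_\delta)\lambda_k\g{k}),\partial_i(\mysigma(u_\delta)\lambda_k\g{k})\big\rangle\Big)\dt+\mathrm{d}M,
\end{align*}
where $\mathrm{d}M=e^{-\Lambda}\big(w_\delta,\div\{\mysigma(u_\delta)\dW\}\big)$ is a martingale increment with zero mean. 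The It\^o correction splits into three pieces:
\begin{itemize}
\item the gradient part, $\frac\varepsilon2\sum\norm{\nabla\partial_i(\mysigma(u_\delta)\lambda_k\g{k})}^2$;
\item the potential part, $\frac1{2\varepsilon}\sum\iD f_\delta'(u_\delta)|\partial_i(\mysigma(u_\delta)\lambda_k\g{k})|^2$;
\item the concave part, $-\frac1{2\varepsilon}\sum\norm{\partial_i(\cdot)}^2$, which has a favourable sign.
\end{itemize}

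The potential part is where \ref{assumption:C} enters. Because $\mysigma$ and $\mysigma'$ vanish outside $(-1,1)$ while $f_\delta'$ vanishes on $[-1,1]$, the product $f_\delta'(u_\delta)\,|\partial_i(\mysigma(u_\delta)\lambda_k\g{k})|^2$ is identically zero. This is the reason the noise is degenerate at $\pm1$, and it removes the $\delta^{-1}$-singular contribution entirely. The gradient part is bounded, using \ref{assumption:W2} and $\mysigma\in W^{2,\infty}$, by
\begin{align*}
C\big(1+\norm{u_\delta}_{\mathds{H}^2}^2+\norm{\nabla u_\delta}_{\mathds{L}^4}^4\big)\le C\big(1+\norm{u_\delta}_{\mathds{H}^2}^2(1+\norm{\nabla u_\delta}_{\mathds{L}^2}^2)\big),
\end{align*}
via Gagliardo--Nirenberg with $d\le3$. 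This is exactly why the weight $e^{-\Lambda}$ is introduced. Using Lemma \ref{lem:h-1estimate}, namely $\iD F_\delta\ge 2\norm{u}^2-\mathcal{C}$, we have $E_\delta(u_\delta)\ge\frac\varepsilon2\norm{\nabla u_\delta}^2-C$. For $\gamma$ large the term $-\gamma\norm{u_\delta}_{\mathds{H}^2}^2E_\delta$ therefore absorbs $C\norm{u_\delta}_{\mathds{H}^2}^2\norm{\nabla u_\delta}^2$, and in fact produces the last term of the claimed estimate. What remains is $C\gamma\norm{u_\delta}_{\mathds{H}^2}^2e^{-\Lambda}$, which is integrable because
\begin{align*}
\int_0^T\gamma\norm{u_\delta}_{\mathds{H}^2}^2e^{-\Lambda}\ds=1-e^{-\Lambda(T)}\le1.
\end{align*}

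The dissipation term $e^{-\Lambda}\norm{\nabla w_\delta}^2$ controls $\nabla w_\delta$. To recover the full $\mathds{H}^1$-norm of $w_\delta$ I need its mean, $(w_\delta)_\D=\frac1\varepsilon(f_\delta(u_\delta)-u_\delta)_\D$. I would bound $\norm{f_\delta(u_\delta)}_{\mathds{L}^1}$ by the standard Miranville--Zelik argument: test \eqref{eq:problemdelta:w} with $u_\delta-\mathfrak{m}$, use $\mathfrak{m}\in(-1,1)$ to get $f_\delta(r)(r-\mathfrak{m})\ge c|f_\delta(r)|-C$, and control the right-hand side by $\norm{\nabla w_\delta}\,\norm{u_\delta-\mathfrak{m}}_{-1}$ together with the energy. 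Next, taking $\psi=f_\delta(u_\delta)|f_\delta(u_\delta)|^4$, or testing with $f_\delta(u_\delta)$ and using monotonicity, yields $\norm{f_\delta(u_\delta)}_{\mathds{L}^6}\le C\norm{w_\delta}_{\mathds{H}^1}+C\norm{u_\delta}_{\mathds{L}^6}$. Elliptic regularity for $-\varepsilon\Delta u_\delta=w_\delta-\frac1\varepsilon f_\delta(u_\delta)+\frac1\varepsilon u_\delta$ then gives the $\mathds{W}^{2,6}$ bound, all in the $e^{-\Lambda}$-weighted $L^2$ in time norms.

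Taking expectations and then the supremum in $t$ completes the argument, since the martingale has zero mean. The lower bound $H\ge e^{-\Lambda}(\frac\varepsilon2\norm{\nabla u_\delta}^2-C)$ makes the supremum meaningful, and the initial value $H(0)$ is finite by \ref{assumption:I}, because $F_\delta(u_0)=0$ when $|u_0|\le1$. The main obstacle I expect is the rigorous justification of It\^o's formula for $E_\delta$, combined with ensuring that the weighted bounds on $\norm{f_\delta}_{\mathds{L}^6}$ and the mean of $w_\delta$ are $\delta$-independent. For the justification I would first derive the same inequality at the discrete level \eqref{eq:model:disc:regularised}, testing with $W^n$ and using \ref{assumption:S2}. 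The discrete It\^o correction of $f_\delta$ does not vanish exactly there, so the alternative is to mollify $u_\delta$ and use that the correction vanishes in the limit.
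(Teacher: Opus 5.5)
There is a genuine gap in how you treat the gradient part of the It\^o correction, and a second one in the mean-value bound for $w_\delta$. The outer structure matches the paper, but the weighted estimate does not close as you set it up.

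\textbf{The quartic noise term.} Your inequality $\norm{\nabla u_\delta}_{\mathds{L}^4}^4\le C\norm{u_\delta}_{\mathds{H}^2}^2\big(1+\norm{\nabla u_\delta}_{\mathds{L}^2}^2\big)$ is Ladyzhenskaya's inequality, which holds only for $d\le 2$. For $d=3$, which the paper covers, interpolation gives only
\[
\norm{\nabla u}_{\mathds{L}^4}^4\le C\norm{\nabla u}_{\mathds{L}^2}\norm{u}_{\mathds{H}^2}^3 .
\]
Your bound is then false: take $u=A\phi(\lambda\,\cdot)$ with a small bump $\phi$ and $A=\lambda\to\infty$; the left side grows like $\lambda^5$ and the right side like $\lambda^4$. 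The $\gamma$-term cannot absorb $\norm{\nabla u_\delta}_{\mathds{L}^2}\norm{u_\delta}_{\mathds{H}^2}^3$. You also have no $\delta$-uniform $\mathds{L}^\infty$-bound on $u_\delta$ that would let you use $\norm{\nabla u}_{\mathds{L}^4}^2\le C\norm{u}_{\mathds{L}^\infty}\norm{u}_{\mathds{H}^2}$.

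The missing idea is to exploit the degeneracy of $\mysigma$ in this term too, not only in the $f_\delta'$-part. Since $\mysigma'(u_\delta)=\mysigma'(\mathcal{J}_\delta(u_\delta))$, the quartic contribution is $\mysigma''(\mathcal{J}_\delta(u_\delta))\,\partial_j\mathcal{J}_\delta(u_\delta)\,\partial_i u_\delta$. Using $0\le\mathcal{J}_\delta'\le 1$, $\abs{\mathcal{J}_\delta}\le 1+\delta/2$ and one integration by parts, the paper obtains
\[
\norm{\nabla\mathcal{J}_\delta(u_\delta)\otimes\nabla u_\delta}_{\mathds{L}^2}^2\le C\norm{u_\delta}_{\mathds{H}^2}\norm{\nabla u_\delta}_{\mathds{L}^2}\norm{\nabla u_\delta}_{\mathds{L}^\infty}\le \alpha\norm{u_\delta}_{\mathds{W}^{2,6}}^2+C\norm{u_\delta}_{\mathds{H}^2}^2\norm{\nabla u_\delta}_{\mathds{L}^2}^2 .
\]
So the $\mathds{W}^{2,6}$-bound is not a by-product computed afterwards, as in your plan. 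It must be fed back into the energy inequality through Lemma \ref{lem:W26}, so that $\alpha\norm{u_\delta}_{\mathds{W}^{2,6}}^2$ is absorbed by a dissipation term $c_0e^{-\Lambda}\norm{w_\delta}_{\mathds{H}^1}^2$.

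\textbf{The mean of $w_\delta$.} To have the full $\mathds{H}^1$-norm of $w_\delta$ as dissipation, you need $\abs{(w_\delta)_\D}\le\widehat{C}\big(\norm{\nabla w_\delta}_{\mathds{L}^2}+\norm{u_\delta}_{\mathds{L}^2}+1\big)$ with a deterministic constant. Then $\frac{1}{4\widehat{C}^2}e^{-\Lambda}(w_\delta)_\D^2$ can be added to both sides. This matters in every dimension, since it is also what gives the $\norm{w_\delta}_{\mathds{H}^1}$, $\mathds{W}^{2,6}$ and $\mathds{L}^6$ parts of the claim.

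Testing with $u_\delta-\mathfrak{m}$, as you propose, only yields $\norm{f_\delta(u_\delta)}_{\mathds{L}^1}\lesssim\norm{\nabla w_\delta}_{\mathds{L}^2}\norm{u_\delta-\mathfrak{m}}_{-1}+\dots$. Here $\norm{u_\delta-\mathfrak{m}}_{-1}$ is bounded in moments but not pathwise, unlike in the deterministic Miranville--Zelik setting. Hence $e^{-\Lambda}\norm{\nabla w_\delta}_{\mathds{L}^2}^2\norm{u_\delta-\mathfrak{m}}_{-1}^2$ is not controlled by the dissipation.

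The paper (Lemma \ref{lem:meanw}) instead tests with the uniformly bounded function $\mathcal{J}_\delta(u_\delta)-(\mathcal{J}_\delta(u_\delta))_\D$. It uses $(f_\delta(u_\delta),\mathcal{J}_\delta(u_\delta))\ge\norm{f_\delta(u_\delta)}_{\mathds{L}^1}$, $\nabla\mathcal{J}_\delta(u_\delta)\cdot\nabla u_\delta\ge 0$ and \eqref{eq:J-f}, which makes the bound linear in $\norm{\nabla w_\delta}_{\mathds{L}^2}$.

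The remaining steps are fine:
\begin{itemize}
\item the vanishing of the $f_\delta'$-correction;
\item the sign of the concave correction;
\item the $\gamma$-absorption of $\norm{u_\delta}_{\mathds{H}^2}^2\norm{\nabla u_\delta}_{\mathds{L}^2}^2$;
\item your observation $\int_0^T\gamma\norm{u_\delta}_{\mathds{H}^2}^2e^{-\Lambda}\ds\le 1$, a neat substitute for the paper's use of the $L^2(\Omega;L^2(0,T;\mathds{H}^2))$-bound.
\end{itemize}
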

To prove Lemma \ref{lem:H1weighted}, we need additional tools and auxiliary results.
First, we approximate $u_\delta -\delta f_\delta\trkla{u_\delta}$ by a family of smooth, monotonously increasing functions
\begin{align}\label{eq:defJ}
\mathcal{J}_\delta\,:\, \mathds{R}\rightarrow \tekla{-1-\delta/2,1+\delta/2}
\end{align}
satisfying $\mathcal{J}_\delta\trkla{\varphi}=\varphi$ for $\varphi\in\tekla{-1,1}$, $\abs{\mathcal{J}_\delta\trkla{\varphi}}\leq \abs{\varphi}$ for all $\varphi\in\mathds{R}$, $0\leq \mathcal{J}_\delta^\prime\trkla{\varphi}\leq 1$, and $\mathcal{J}_\delta\trkla{1+\delta}=-\mathcal{J}_\delta\trkla{-1-\delta}=1+\delta/2$.
This implies
\begin{subequations}\label{eq:propJ}
\begin{align}
\mathcal{J}_\delta\trkla{\varphi}\varphi \geq 0\,,\\
\nabla\mathcal{J}_\delta\trkla{\varphi}\cdot\nabla\varphi\geq 0\,,\label{eq:propJ:b}\\
%0\leq \mathcal{J}_\delta^\prime\trkla{\varphi}\leq 1\,,\\
\mysigma^\prime\trkla{\varphi}=\mysigma^\prime\trkla{\mathcal{J}_\delta\trkla{\varphi}} 
\end{align}
\end{subequations}
$\forall\varphi\in\mathds{R}$ due to assumption \ref{assumption:C}.
As $\rkla{u_\delta-\mathcal{J}_\delta\trkla{u_\delta}}\equiv \delta f_\delta\trkla{u_\delta}$ in $\tgkla{\tabs{u_\delta}\leq 1}\cup \tgkla{\tabs{u_\delta}\geq 1+\delta}$ and a straightforward estimate provides
\begin{align}
\abs{u_\delta-\mathcal{J}_\delta\trkla{u_\delta}-\delta f_\delta\trkla{u_\delta}}\leq C\delta
\end{align}
for $u_\delta\in\tgkla{1<u_\delta<1+\delta}\cup\tgkla{-1-\delta<u_\delta<-1}$, we obtain the estimate
\begin{align}\label{eq:J-f}
\abs{\trkla{u_\delta-\mathcal{J}_\delta\trkla{u_\delta}}_{\D} -\delta\trkla{f_\delta\trkla{u_\delta}}_{\D}}\leq C\delta\,.
\end{align}
Next, we derive an estimate for the mean-value of $w_\delta$.
\begin{lemma}\label{lem:meanw}
Let $u_\delta$ be the the family of solutions obtained in Theorem \ref{thm:weakht}, and let the assumptions \ref{assumption:W1}, \ref{assumption:W2}, \ref{assumption:C}, and \ref{assumption:I} hold true. Then, 
\begin{align}
\abs{\trkla{w_\delta}_{\D}}\leq \widehat{C}\rkla{\norm{\nabla w_\delta}_{\mathds{L}^2} +\norm{u_\delta}_{\mathds{L}^2} +1}
\end{align}
holds true with a constant $\widehat{C}$ depending on $\mathfrak{m}$ and $\varepsilon$ but not on $\delta$.
\end{lemma}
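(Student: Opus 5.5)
The plan is to test the chemical potential equation \eqref{eq:problemdelta:w} with a function that isolates the mean of $f_\delta(u_\delta)$. Mean-value preservation, $(u_\delta)_\D=\mathfrak{m}\in(-1,1)$, then gives a sign that controls $|(f_\delta(u_\delta))_\D|$. Integrating \eqref{eq:problemdelta:w} over $\D$ and using periodicity, which kills $\int_\D\Delta u_\delta$, gives
\begin{align*}
(w_\delta)_\D=\tfrac1\varepsilon\rkla{f_\delta(u_\delta)}_\D-\tfrac1\varepsilon\mathfrak{m}\,.
\end{align*}
So it suffices to bound $\abs{(f_\delta(u_\delta))_\D}$ by $C(\norm{\nabla w_\delta}_{\mathds{L}^2}+\norm{u_\delta}_{\mathds{L}^2}+1)$.

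For that I would use the classical Miranville--Zelik-type inequality for convex potentials. Since $F_\delta$ is convex, vanishes on $[-1,1]$, and $f_\delta$ has the sign of its argument outside $[-1,1]$, for every $m$ with $|m|\le 1-\kappa$ ($\kappa:=(1-|\mathfrak{m}|)/2>0$, independent of $\delta$) one has $f_\delta(r)(r-\mathfrak{m})\ge \kappa\abs{f_\delta(r)}$ for all $r$. Indeed, $f_\delta(r)=0$ on $[-1,1]$, while for $r>1$ we have $f_\delta(r)\ge0$ and $r-\mathfrak{m}\ge 1-\mathfrak{m}\ge\kappa$, and symmetrically for $r<-1$. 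Hence
\begin{align*}
\kappa\norm{f_\delta(u_\delta)}_{\mathds{L}^1}\le \rkla{f_\delta(u_\delta),u_\delta-\mathfrak{m}}\,.
\end{align*}

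Next I would test \eqref{eq:problemdelta:w} with $u_\delta-\mathfrak{m}$, which has zero mean:
\begin{align*}
\tfrac1\varepsilon\rkla{f_\delta(u_\delta),u_\delta-\mathfrak{m}}=\rkla{w_\delta-(w_\delta)_\D,u_\delta-\mathfrak{m}}-\varepsilon\norm{\nabla u_\delta}_{\mathds{L}^2}^2+\tfrac1\varepsilon\rkla{u_\delta,u_\delta-\mathfrak{m}}\,.
\end{align*}
The gradient term is dropped because it has a favourable sign. By Poincaré's inequality, $\abs{\rkla{w_\delta-(w_\delta)_\D,u_\delta-\mathfrak{m}}}\le C\norm{\nabla w_\delta}_{\mathds{L}^2}\norm{u_\delta-\mathfrak{m}}_{\mathds{L}^2}$, and the last term is bounded by $C\norm{u_\delta}_{\mathds{L}^2}^2+C$. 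Combining this with $\abs{(f_\delta)_\D}\le\abs{\D}^{-1}\norm{f_\delta(u_\delta)}_{\mathds{L}^1}$ yields
\begin{align*}
\abs{(w_\delta)_\D}\le C\rkla{\norm{\nabla w_\delta}_{\mathds{L}^2}\norm{u_\delta}_{\mathds{L}^2}+\norm{u_\delta}_{\mathds{L}^2}^2+1}\,.
\end{align*}

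The main obstacle is that this is quadratic, whereas the statement is linear in $\norm{u_\delta}_{\mathds{L}^2}$ and in $\norm{\nabla w_\delta}_{\mathds{L}^2}$. To get linearity, I would replace the test function $u_\delta-\mathfrak{m}$ by a bounded one, namely $\mathcal{J}_\delta(u_\delta)-(\mathcal{J}_\delta(u_\delta))_\D$ with $\mathcal{J}_\delta$ from \eqref{eq:defJ}, so that its $\mathds{L}^2$-norm is bounded by a constant.

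\begin{enumerate}
\item The gradient term becomes $\varepsilon(\nabla u_\delta,\nabla\mathcal{J}_\delta(u_\delta))\ge0$ by \eqref{eq:propJ:b}, so it can again be dropped.
\item The term $\tfrac1\varepsilon(u_\delta,\mathcal{J}_\delta(u_\delta)-\ldots)$ is bounded by $C\norm{u_\delta}_{\mathds{L}^2}$.
\item The $w_\delta$-term is bounded by $C\norm{\nabla w_\delta}_{\mathds{L}^2}$, since the test function is bounded and has zero mean.
\item For the $f_\delta$-term, I need $(\mathcal{J}_\delta(u_\delta))_\D$ to stay away from $\pm1$ uniformly in $\delta$, so that the sign argument above still applies with $\mathfrak{m}$ replaced by this mean, and I need $\mathcal{J}_\delta(r)$ to be close to $r$ wherever $f_\delta(r)\neq0$. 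Here \eqref{eq:J-f} enters: $(\mathcal{J}_\delta(u_\delta))_\D=\mathfrak{m}-\delta(f_\delta(u_\delta))_\D+O(\delta)$. If $\abs{(\mathcal{J}_\delta)_\D}>1-\kappa$, then $\delta\abs{(f_\delta)_\D}\gtrsim\kappa$, so $(f_\delta)_\D$ itself carries the sign of $-(\mathcal{J}_\delta)_\D$. This case I would handle by a separate direct argument that again uses monotonicity.
\end{enumerate}

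The case distinction in step 4 is the delicate step. Alternatively, I would first try the simpler truncation $\max(-1,\min(1,u_\delta))$, combined with $f_\delta(r)(T(r)-m)\ge(1-|m|)|f_\delta(r)|$ for $|m|<1$, where the mean of the truncation is controlled via $\norm{u_\delta-T(u_\delta)}_{\mathds{L}^1}\le\delta\norm{f_\delta(u_\delta)}_{\mathds{L}^1}$. For $\delta$ small, this term is absorbed into the left-hand side.
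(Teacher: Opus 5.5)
Your steps 1--3 match the paper's proof: the same test function $\mathcal{J}_\delta(u_\delta)-(\mathcal{J}_\delta(u_\delta))_{\D}$, and the same treatment of the gradient, $u_\delta$- and $w_\delta$-terms. However, step 4, the only nontrivial point, is left as a placeholder (``a separate direct argument''), and the concrete fallback you offer does not work. With the truncation $T$ (and equally with $\mathcal{J}_\delta$), the mean shift contributes the spatially constant term $\abs{\D}\,(f_\delta(u_\delta))_{\D}\,(u_\delta-T(u_\delta))_{\D}$. Estimating it in absolute value gives a term of order $\delta\norm{f_\delta(u_\delta)}_{\mathds{L}^1}^2$. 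This cannot be absorbed by $(1-\abs{\mathfrak{m}})\norm{f_\delta(u_\delta)}_{\mathds{L}^1}$ just because $\delta$ is small. Absorption would need $\delta\norm{f_\delta(u_\delta)}_{\mathds{L}^1}\ll 1-\abs{\mathfrak{m}}$ pointwise in $\Omega\times(0,T)$, and that is not available: one only has $\delta\abs{f_\delta(r)}\le\abs{r}$, and the smallness in Lemma~\ref{lem:bound1} is itself a consequence of the present lemma. The inequality $\norm{u_\delta-T(u_\delta)}_{\mathds{L}^1}\le\delta\norm{f_\delta(u_\delta)}_{\mathds{L}^1}$ is also false as written. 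On $\{u_\delta\ge1+\delta\}$ one has $u_\delta-1=\delta f_\delta(u_\delta)+\delta/2$, so an additive $O(\delta)$ term is missing.

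The missing idea is to use the \emph{sign} of this term rather than its size. Using $(u_\delta)_{\D}=\mathfrak{m}$, split
\begin{align*}
\rkla{f_\delta(u_\delta),\mathcal{J}_\delta(u_\delta)-(\mathcal{J}_\delta(u_\delta))_{\D}}
=\rkla{f_\delta(u_\delta),\mathcal{J}_\delta(u_\delta)-\mathfrak{m}}+\abs{\D}\,(f_\delta(u_\delta))_{\D}\,\rkla{u_\delta-\mathcal{J}_\delta(u_\delta)}_{\D}\,.
\end{align*}
The first term is $\ge(1-\abs{\mathfrak{m}})\norm{f_\delta(u_\delta)}_{\mathds{L}^1}$ by your pointwise inequality. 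That inequality only needs $\mathcal{J}_\delta(r)\ge1$ for $r\ge1$, not $\mathcal{J}_\delta(r)\approx r$. By \eqref{eq:J-f}, the second term equals $\delta\abs{\D}(f_\delta(u_\delta))_{\D}^2$ up to an error of size $C\delta\abs{\D}\abs{(f_\delta(u_\delta))_{\D}}$. Hence it is $\ge\frac\delta2\abs{\D}(f_\delta(u_\delta))_{\D}^2-C\delta$: the mean of $\mathcal{J}_\delta(u_\delta)$ can only drift away from $\mathfrak{m}$ in the direction that helps.

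This is exactly the paper's argument, and it needs neither a case distinction nor smallness of $\delta$. It is also what your own observation in step 4 gives once the constant part is split off. In the bad case $(f_\delta)_{\D}$ and $\mathfrak{m}-(\mathcal{J}_\delta)_{\D}$ have the same sign, so their product is nonnegative. The same splitting repairs your truncation variant too, since $u_\delta-T(u_\delta)=\delta f_\delta(u_\delta)+O(\delta)$ pointwise.
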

\begin{proof}
Using $\rkla{f_\delta\trkla{u_\delta},\mathcal{J}_\delta\trkla{u_\delta}}\geq \norm{f_\delta\trkla{u_\delta}}_{\mathds{L}^1}$ and the the estimate \eqref{eq:J-f}, we deduce
\begin{align}
\begin{split}
&\rkla{f_\delta\trkla{u_\delta}, \mathcal{J}_\delta\trkla{u_\delta}-\rkla{\mathcal{J}_\delta\trkla{u_\delta}}_{\D}}= \rkla{f_\delta\trkla{u_\delta}, \mathcal{J}_\delta\trkla{u_\delta}-\mathfrak{m}} +\rkla{f_\delta\trkla{u_\delta}, \trkla{u_\delta-\mathcal{J}_\delta\trkla{u_\delta}}_{\D}}\\
&\geq\trkla{1-\tabs{\mathfrak{m}}}\norm{f_\delta\trkla{u_\delta}}_{\mathds{L}^1}+ \trkla{f_\delta\trkla{u_\delta},\trkla{\delta f_\delta\trkla{u_\delta}}_\D} +\rkla{f_\delta\trkla{u_\delta},\trkla{u_\delta-\mathcal{J}_\delta\trkla{u_\delta}-\delta f_\delta\trkla{u_\delta}}_\D} \\
&\geq \trkla{1-\tabs{\mathfrak{m}}}\norm{f_\delta\trkla{u_\delta}}_{\mathds{L}^1} +\tfrac\delta2\abs{\D}\trkla{f_\delta\trkla{u_\delta}}_{\D}^2-C\delta\,.
\end{split}
\end{align}
Due to Assumption \ref{assumption:I} the coefficient $\trkla{1-\tabs{\mathfrak{m}}}=:M$ is a given positive constant.
Hence, recalling \eqref{eq:problemdelta:w} and \eqref{eq:propJ:b}, we obtain using Poincar\'e's inequality and the boundedness of $\mathcal{J}_\delta$
\begin{align}
\begin{split}
\tfrac1\varepsilon\norm{f_\delta\trkla{u_\delta}}_{\mathds{L}^1}\leq&\,M^{-1}\tfrac1\varepsilon\ekla{\rkla{f_\delta\trkla{u_\delta},\mathcal{J}_\delta\trkla{u_\delta}-\rkla{\mathcal{J}_\delta\trkla{u_\delta}}_{\D}}+C\delta}\\
=&\,M^{-1}\rkla{\rkla{w_\delta,\mathcal{J}_\delta\trkla{u_\delta}-\rkla{\mathcal{J}_\delta\trkla{u_\delta}}_{\D}}+\tfrac1\varepsilon\rkla{u_\delta,\mathcal{J}_\delta\trkla{u_\delta}-\rkla{\mathcal{J}_\delta\trkla{u_\delta}}_{\D}}+C\tfrac\delta\varepsilon}\\
\leq&\, C\norm{\nabla w_\delta}_{\mathds{L}^2} +C\norm{u_\delta}_{\mathds{L}^2}+C\delta\,.
\end{split}
\end{align}
Using \eqref{eq:problemdelta:w} again, we deduce
\begin{align}
\begin{split}
\abs{\rkla{w_\delta}_{\D}}\leq&\, \tfrac1\varepsilon\abs{\rkla{f_\delta\trkla{u_\delta}}_{\D}}+\tfrac1\varepsilon\abs{\trkla{u_\delta}_{\D}}\leq \tfrac1\varepsilon \norm{f_\delta\trkla{u_\delta}}_{\mathds{L}^1} + C\\
\leq&\, C\norm{\nabla w_\delta}_{\mathds{L}^2} +C\norm{u_\delta}_{\mathds{L}^2} +C\,.
\end{split}
\end{align}
\end{proof}
As a final tool for the proof of Lemma \ref{lem:H1weighted}, we need to show that  control over $\norm{w_\delta}_{\mathds{H}^1}$ provides improved bounds for $u_\delta$:
\begin{lemma}\label{lem:W26}
Let $u_\delta$ be the the family of solutions obtained in Theorem \ref{thm:weakht}, and let the assumptions \ref{assumption:W1}, \ref{assumption:W2}, \ref{assumption:C}, and \ref{assumption:I} hold true.
Then, the estimates
\begin{subequations}
\begin{align}
\norm{f_\delta}_{\mathds{L}^6}\leq&\, C\trkla{\norm{w_\delta}_{\mathds{H}^1}+\norm{u_\delta}_{\mathds{H}^1}}\,,\label{eq:W26:1}\\
\norm{u_\delta}_{\mathds{W}^{2,6}}\leq&\, C\trkla{\norm{w_\delta}_{\mathds{H}^1}+\norm{u_\delta}_{\mathds{H}^1}}\label{eq:W26:2}
\end{align}
hold true almost everywhere in $\Omega\times\trkla{0,T}$ with a constant $C$ independent of $\delta$.
\end{subequations}
\end{lemma}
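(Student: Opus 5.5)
We deduce both bounds from the elliptic relation \eqref{eq:problemdelta:w}, tested with powers of $f_\delta(u_\delta)$. Monotonicity of $f_\delta$ makes the gradient term carry a good sign. By Theorem~\ref{thm:weakht}, $u_\delta\in\mathds{H}^2_\per$ almost everywhere in $\Omega\times(0,T)$, and we may assume $w_\delta\in\mathds{H}^1_\per$; otherwise the right-hand sides are infinite and there is nothing to prove. Write \eqref{eq:problemdelta:w} as
\begin{align*}
-\varepsilon\Delta u_\delta+\tfrac1\varepsilon f_\delta(u_\delta)=w_\delta+\tfrac1\varepsilon u_\delta=:g\,,
\end{align*}
so that $\norm{g}_{\mathds{H}^1}\leq C(\norm{w_\delta}_{\mathds{H}^1}+\norm{u_\delta}_{\mathds{H}^1})$. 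The Sobolev embedding $\mathds{H}^1\hookrightarrow\mathds{L}^6$ for $d\leq3$ then gives $\norm{g}_{\mathds{L}^6}\leq C\norm{g}_{\mathds{H}^1}$ with $C$ independent of $\delta$.

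For \eqref{eq:W26:1} we test with $\abs{f_\delta(u_\delta)}^4f_\delta(u_\delta)$. This is admissible because $f_\delta$ is globally Lipschitz for fixed $\delta$ and $u_\delta\in\mathds{H}^2\hookrightarrow\mathds{L}^\infty$, so the test function lies in $\mathds{H}^1_\per$. An integration by parts turns the Laplacian term into
\begin{align*}
5\varepsilon\iD\abs{f_\delta(u_\delta)}^4f_\delta^\prime(u_\delta)\abs{\nabla u_\delta}^2\dx\geq0\,,
\end{align*}
since $F_\delta$ is convex ($F_\delta\in C^{1,1}$ with $f_\delta$ absolutely continuous, so the chain rule applies). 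We are left with $\tfrac1\varepsilon\norm{f_\delta(u_\delta)}_{\mathds{L}^6}^6\leq\iD g\,\abs{f_\delta}^4f_\delta\dx\leq\norm{g}_{\mathds{L}^6}\norm{f_\delta}_{\mathds{L}^6}^5$. Dividing by $\norm{f_\delta}_{\mathds{L}^6}^5$ gives $\norm{f_\delta(u_\delta)}_{\mathds{L}^6}\leq\varepsilon\norm{g}_{\mathds{L}^6}$, which yields \eqref{eq:W26:1}. If one prefers to avoid the higher power directly, one can first test with the truncations $T_M(f_\delta)\abs{T_M(f_\delta)}^4$ and let $M\to\infty$, but the Lipschitz property already suffices.

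For \eqref{eq:W26:2} we use $-\varepsilon\Delta u_\delta=g-\tfrac1\varepsilon f_\delta(u_\delta)$, so $\norm{\Delta u_\delta}_{\mathds{L}^6}\leq C(\norm{w_\delta}_{\mathds{H}^1}+\norm{u_\delta}_{\mathds{H}^1})$ by the first step. Periodic $L^p$ elliptic regularity on the torus (Calder\'on--Zygmund for $p=6$, or Fourier multipliers) gives $\norm{u_\delta-(u_\delta)_\D}_{\mathds{W}^{2,6}}\leq C\norm{\Delta u_\delta}_{\mathds{L}^6}$. Since $(u_\delta)_\D=\mathfrak{m}$ is bounded, this proves \eqref{eq:W26:2}; every constant depends only on $\varepsilon$, $d$ and $\D$.

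The main obstacle is justifying the test function and the chain rule with constants independent of $\delta$. Only the sign $f_\delta^\prime\geq0$ is used, never its size, so no $\delta$-dependence enters the constants. The regularity needed to make sense of the integration by parts ($u_\delta\in\mathds{H}^2$, $f_\delta$ Lipschitz) may depend on $\delta$, but only qualitatively, which is harmless.
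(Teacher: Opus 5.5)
Your proposal is correct and follows the paper's proof. You use the same test function $\abs{f_\delta(u_\delta)}^4f_\delta(u_\delta)$, drop the gradient term by its sign (from convexity of $F_\delta$), and obtain the $\mathds{W}^{2,6}$-bound from elliptic regularity; replacing the paper's Young-plus-absorption step by H\"older's inequality and division is an inessential variation, and the mean value in the last step costs no additive constant, since $\abs{\mathfrak{m}}\abs{\D}^{1/2}\leq\norm{u_\delta}_{\mathds{L}^2}$ is already covered by $\norm{u_\delta}_{\mathds{H}^1}$.
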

\begin{proof}
Adapting the lines of \cite[Corollary 4.1]{GiorginiGrasselliMiranville2017}, we test \eqref{eq:problemdelta:w} for fixed $\delta$ by the function $\abs{f_\delta\trkla{u_\delta}}^4f_\delta\trkla{u_\delta}$, which is well-defined as $u_\delta\in\mathds{H}^2$ almost everywhere in $\Omega\times\trkla{0,T}$.
This provides after integrating by parts and applying Young's inequality and a standard Sobolev embedding
\begin{align}
\begin{split}
5\varepsilon &\rkla{\nabla u_\delta,\abs{f_\delta\trkla{u_\delta}}^4 f_\delta^\prime\trkla{u_\delta}\nabla u_\delta}+\frac1\varepsilon\norm{f_\delta\trkla{u_\delta}}_{\mathds{L}^6}^6\\
=&\,\trkla{w_\delta,\abs{f_\delta\trkla{u_\delta}}^4f_\delta\trkla{u_\delta}} +\frac1\varepsilon \rkla{u_\delta,\abs{f_\delta\trkla{u_\delta}}^4 f_\delta\trkla{u_\delta}}\\
\leq&\, \frac{1}{2\varepsilon}\norm{f_\delta\trkla{u_\delta}}_{\mathds{L}^6}^6 +C\norm{w_\delta}_{\mathds{H}^1}^6 +C\norm{u_\delta}_{\mathds{H}^1}^6\,.
\end{split}
\end{align}
Noting that the first term on the left-hand side is nonnegative, an absorption argument provides \eqref{eq:W26:1}.
The estimate \eqref{eq:W26:2} then follows from elliptic regularity theory and \eqref{eq:W26:1}.
\end{proof}
Having these tools at hand, we are now in the position to establish a weighted version of the $\mathds{H}^1$-estimate.
\begin{proof}[Proof of Lemma \ref{lem:H1weighted}]
The arguments below are formal but can be made rigorous by using a suitable approximation procedure, cf., for instance \cite{scarpa18}, \cite{DiPrimioGrasselliScarpa2024}.

After applying \Ito's formula for $H\trkla{t,u_\delta}$ defined in \eqref{eq:def:weightedEnergy}, we obtain
\begin{align}\label{eq:tmp:Ito}
\begin{split}
&\expected{H\trkla{t,u_\delta\trkla{t}}} +\gamma \expected{\int_0^t \norm{u_\delta\trkla{s}}_{\mathds{H}^2}^2 H\trkla{s,u_\delta\trkla{s}}\ds} +\expected{\int_0^t e^{-\Lambda\trkla{s}}\norm{\nabla w_\delta\trkla{s}}_{\mathds{L}^2}^2\ds}\\
&=\expected{H\trkla{0,u_\delta\trkla{0}}} +\expected{\frac\varepsilon2\sum_{i=1}^d\sum_{k\in\mathds{Z}} \int_0^te^{-\Lambda\trkla{s}}\norm{\nabla\partial_i\trkla{\lambda_k\g{k}\mysigma\trkla{u_\delta\trkla{s}}}}_{\mathds{L}^2}^2\ds}\\
&\quad+\expected{ \frac1{2\varepsilon} \sum_{i=1}^d\sum_{k\in\mathds{Z}}\int_0^t e^{-\Lambda\trkla{s}} \iD f_\delta^{\prime}\trkla{u_\delta\trkla{s}}\abs{\partial_i\trkla{\lambda_k\g{k}\mysigma\trkla{u_\delta\trkla{s}}}}^2\dx\ds}\\
&\quad-\expected{\frac1{2\varepsilon}\sum_{i=1}^d\sum_{k\in\mathds{Z}}\int_0^t e^{-\Lambda\trkla{s}}\norm{\partial_i\trkla{\lambda_k\g{k}\sigma\trkla{u_\delta\trkla{s}}}}_{\mathds{L}^2}^2\ds}\\
&=:\expected{H\trkla{0,u_\delta\trkla{0}}} + R_1 + R_2 +R_3 \,.
\end{split}
\end{align}
As $\sigma^\prime$ vanishes outside of $\tekla{-1,+1}$ and $f_\delta$ vanishes inside of $\tekla{-1,+1}$, $R_2$ vanishes. 
Noting $R_3\leq0$, it remains to control $R_1$.
Using $\mathcal{J}_\delta$, we deduce the estimate%\todo{Here, we use $\mysigma\in W^{2,\infty}$}
\begin{align}
\begin{split}
&\norm{\partial_j\partial_i\trkla{\lambda_k\g{k}\mysigma\trkla{u_\delta}}}_{\mathds{L}^2}^2= \norm{\partial_j\rkla{\lambda_k\partial_i\g{k}\mysigma\trkla{u_\delta}+\lambda_k\g{k}\sigma^\prime\trkla{u_\delta}\partial_i u_\delta}}_{\mathds{L}^2}^2\\
&\leq\tabs{\lambda_k}^2\left(\norm{\partial_j\partial_i\g{k}\mysigma\trkla{u_\delta}}_{\mathds{L}^2} +\norm{\partial_i\g{k}\mysigma^\prime\trkla{u_\delta}\partial_j u_\delta}_{\mathds{L}^2} +\norm{\partial_j\g{k}\mysigma^\prime\trkla{u_\delta}\partial_i u_\delta}_{\mathds{L}^2}\right.\\
&\qquad\left. +\norm{\g{k}\sigma^{\prime\prime}\trkla{\mathcal{J}_\delta\trkla{u_\delta}}\partial_j\mathcal{J}_\delta\trkla{u_\delta}\partial_i u_\delta }_{\mathds{L}^2} + \norm{\g{k}\mysigma^\prime\trkla{u_\delta}\partial_j\partial_i u_\delta}_{\mathds{L}^2}\right)^2\,.
\end{split}
\end{align}
Hence, due to \ref{assumption:C} and \ref{assumption:W2}, we deduce
\begin{align}
\begin{split}
R_1\leq&\, C\expected{ \int_0^t e^{-\Lambda\trkla{s}}\rkla{1+\norm{\nabla u_\delta}_{\mathds{L}^2}^2+\norm{u_\delta}_{\mathds{H}^2}^2}\ds} \\
&\,+ C\expected{\int_0^te^{-\Lambda\trkla{s}}\norm{\nabla\mathcal{J}_\delta\trkla{u_\delta}\otimes\nabla u_\delta}_{\mathds{L}^2}^2\ds}\\ 
\end{split}
\end{align}
While the first term on the right-hand side is uniformly bounded, the treatment of the second term requires special care.
Following the lines of \cite{DiPrimioGrasselliScarpa2024}, we compute using \eqref{eq:propJ}, the standard Sobolev embedding $\mathds{W}^{2,6}\hookrightarrow\mathds{W}^{1,\infty}$, and Young's inequality with $\alpha<\!\!<1$
\begin{align}
\begin{split}
\norm{\nabla\mathcal{J}_\delta\trkla{u_\delta}\otimes\nabla u_\delta}_{\mathds{L}^2}^2 =&\, \iD\abs{\nabla\mathcal{J}_\delta\trkla{u_\delta}}^2\abs{\nabla u_\delta}^2\dx \leq \iD\nabla\mathcal{J}_\delta\trkla{u_\delta}\cdot \nabla u_\delta \abs{\nabla u_\delta}^2\dx\\
=&\,-\iD\mathcal{J}_\delta\trkla{u_\delta}\Delta u_\delta\abs{\nabla u_\delta}^2\dx-\iD\mathcal{J}_\delta\trkla{u_\delta}\nabla u_\delta\cdot\nabla\abs{\nabla u_\delta}^2\dx\\
\leq&\,C \norm{u_\delta}_{\mathds{H}^2}\norm{\nabla u_\delta}_{\mathds{L}^2}\norm{\nabla u_\delta}_{\mathds{L}^\infty}\leq \alpha\norm{u_\delta}_{\mathds{W}^{2,6}}^2 +C\norm{u_\delta}_{\mathds{H}^2}^2\norm{\nabla u_\delta}_{\mathds{L}^2}^2\,.
\end{split}
\end{align}
We shall now reformulate the terms on the left-hand side of \eqref{eq:tmp:Ito}.
We start by replacing the $\mathds{H}^1$-semi-norm of $w_\delta$ by its full $\mathds{H}^1$-norm. 
For this reason, we add the expression $\frac{1}{4\widehat{C}^2}\expected{\int_0^te^{-\Lambda\trkla{s}}\trkla{w_\delta}_{\D}^2\ds}$ with $\widehat{C}$ being the constant from Lemma \ref{lem:meanw} on both sides and obtain
\begin{align}
\begin{split}
\expected{H\trkla{t,u_\delta\trkla{t}}}&\,+c_0\expected{\int_0^t e^{-\Lambda\trkla{s}}\norm{w_\delta}_{\mathds{H}^1}^2\ds} +{\frac{\gamma\varepsilon}2}\expected{\int_0^t e^{-\Lambda\trkla{s}}\norm{u_\delta}_{\mathds{H}^2}^2\norm{\nabla u_\delta}_{\mathds{L}^2}^2\ds} \\
&\,+ {\frac\gamma\varepsilon}\expected{\int_0^t e^{-\Lambda\trkla{s}}\norm{u_\delta}_{\mathds{H}^2}^2\rkla{\iD F_\delta\trkla{u_\delta}\dx-\frac12\norm{u_\delta}_{\mathds{L}^2}^2}\ds} \\
\leq&\,\expected{H\trkla{0,u_\delta\trkla{0}}} +C\expected{\int_0^t e^{-\Lambda\trkla{s}}\rkla{1+\norm{\nabla u_\delta}_{\mathds{L}^2}^2+\norm{u_\delta}_{\mathds{H}^2}^2}\ds}\\
&\, +\alpha \expected{\int_0^te^{-\Lambda\trkla{s}}\norm{u_\delta}_{\mathds{W}^{2,6}}^2\ds} +C\expected{\int_0^te^{-\Lambda\trkla{s}}\norm{u_\delta}_{\mathds{H}^2}^2\norm{\nabla u_\delta}_{\mathds{L}^2}^2\ds}\\
&\, +C \expected{\int_0^te^{-\Lambda\trkla{s}} \norm{u_\delta}_{\mathds{L}^2}^2\ds} +C
\end{split}
\end{align}
with a positive constant $c_0$ due to Lemma \ref{lem:meanw}.
Recalling Lemma \ref{lem:W26} and the estimate $\rkla{\iD F_\delta\trkla{u_\delta}\dx-\tfrac12\norm{u_\delta}_{\mathds{L}^2}^2}\geq -\mathcal{C}$, we obtain for sufficiently large $\gamma$ after rescaling
\begin{align}
\begin{split}
\esssup_{t\in\trkla{0,T}}&\,\expected{H\trkla{t,u_\delta\trkla{t}}}+\expected{\int_0^Te^{-\Lambda\trkla{s}}\norm{w_\delta}_{\mathds{H}^1}^2\ds} +\expected{\int_0^Te^{-\Lambda\trkla{s}}\norm{u_\delta}_{\mathds{W}^{2,6}}^2\ds}\\
&\,+\expected{\int_0^Te^{-\Lambda\trkla{s}}\norm{f_\delta\trkla{u_\delta}}_{\mathds{L}^6}^2\ds}+\expected{\int_0^Te^{-\Lambda\trkla{s}}\norm{u_\delta}_{\mathds{H}^2}^2\norm{\nabla u_\delta}_{\mathds{L}^2}^2\ds}\\
\leq&\,C\expected{H\trkla{0,u_\delta\trkla{0}}}+ C\expected{\int_0^T\norm{u_\delta}_{\mathds{H}^2}^2\ds} +C\leq C\,.
\end{split}
\end{align}
Here we used $e^{-\Lambda\trkla{s}}\leq 1$ and the already established $\delta$-independent $L^2\trkla{\Omega;L^2\trkla{0,T;\mathds{H}^2}}$-bound on $u_\delta$ from Lemma \ref{lem:l2estimate}.

\end{proof}

To remove the weights in the regularity result stated in Lemma \ref{lem:H1weighted}, we require an exponential version of the $\mathds{L}^2$-estimate from Lemma \ref{lem:l2estimate}:
\begin{lemma}
Let $u_\delta$ be the the family of solutions obtained in Theorem \ref{thm:weakht}, and let the assumptions \ref{assumption:W1}, \ref{assumption:W2}, \ref{assumption:C}, and \ref{assumption:I} hold true.
Then, for any $\kappa>0$ we have
\begin{multline}
\expected{\exp\rkla{\kappa\int_0^T\norm{u_\delta}_{\mathds{L}^2}^2\ds}}+\expected{\exp\rkla{\kappa\int_0^T \norm{\Delta u_\delta}_{\mathds{L}^2}^2\ds}}\\
+\expected{\exp\rkla{\kappa\int_0^T\rkla{f_\delta^\prime\trkla{u_\delta}\nabla u_\delta,\nabla u_\delta}\ds}}\leq C
\end{multline}
with a positive constant $C$ depending on $\kappa$ but not on $\delta$.
\end{lemma}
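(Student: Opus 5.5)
We apply \Ito's formula to $\norm{u_\delta}_{\mathds{L}^2}^2$ at the level of the $\delta$-problem \eqref{eq:problemdelta}, then use an exponential martingale inequality. Formally (or rigorously via the mollifier $\mathfrak{J}_\varepsilon$ as in Section~\ref{sec:limitht}), testing \eqref{eq:problemdelta:u} with $u_\delta$ and inserting \eqref{eq:problemdelta:w} gives
\begin{align*}
\norm{u_\delta\trkla{t}}_{\mathds{L}^2}^2&+2\varepsilon\int_0^t\norm{\Delta u_\delta}_{\mathds{L}^2}^2\ds+\frac2\varepsilon\int_0^t\rkla{f_\delta^\prime\trkla{u_\delta}\nabla u_\delta,\nabla u_\delta}\ds\\
&=\norm{u_0}_{\mathds{L}^2}^2-\frac2\varepsilon\int_0^t\rkla{\Delta u_\delta,u_\delta}\ds+\mathcal{M}\trkla{t}+\int_0^t\sum_{i,k}\norm{\partial_i\trkla{\mysigma\trkla{u_\delta}\lambda_k\g{k}}}_{\mathds{L}^2}^2\ds,
\end{align*}
where $\mathcal{M}\trkla{t}:=2\sum_{i,k}\int_0^t\rkla{\partial_i\trkla{\mysigma\trkla{u_\delta}\lambda_k\g{k}},u_\delta}\dbeta{k}^i$. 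Young's inequality bounds the term $\frac2\varepsilon\tabs{\rkla{\Delta u_\delta,u_\delta}}$ by $\frac\varepsilon2\norm{\Delta u_\delta}^2+C\norm{u_\delta}^2$. By \ref{assumption:C} and \ref{assumption:W2}, the \Ito{} correction is bounded by $C\trkla{1+\norm{\nabla u_\delta}^2}$. Since the mean value is fixed, $\norm{\nabla u_\delta}^2\leq\norm{u_\delta-\mathfrak{m}}\norm{\Delta u_\delta}\leq\alpha\norm{\Delta u_\delta}^2+C_\alpha\norm{u_\delta}^2$, so this term is also absorbed. What remains is
\begin{align*}
\norm{u_\delta\trkla{t}}^2+\varepsilon\int_0^t\norm{\Delta u_\delta}^2+\frac2\varepsilon\int_0^t\rkla{f_\delta^\prime\trkla{u_\delta}\nabla u_\delta,\nabla u_\delta}\leq C+C\int_0^t\norm{u_\delta}^2\ds+\mathcal{M}\trkla{t}.
\end{align*}

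The key observation is that the quadratic variation of $\mathcal{M}$ is controlled by the good terms, without a growing factor. After integrating by parts,
\[
\rkla{\partial_i\trkla{\mysigma\trkla{u_\delta}\lambda_k\g{k}},u_\delta}=-\rkla{\mysigma\trkla{u_\delta}\lambda_k\g{k},\partial_iu_\delta}.
\]
Since $\mysigma$ is bounded, this gives $\mathrm{d}\skla{\mathcal{M}}\leq C\norm{\nabla u_\delta}^2\dt\leq\trkla{\alpha\norm{\Delta u_\delta}^2+C_\alpha\norm{u_\delta}^2}\dt$. (This is better than $C\norm{u_\delta}^2\norm{\nabla u_\delta}^2$ because $\mysigma$ is bounded.) We then use the exponential martingale estimate
\[
\mathds{E}\ekla{\exp\trkla{\beta\mathcal{M}\trkla{t}-\tfrac{\beta^2}{2}\skla{\mathcal{M}}\trkla{t}}}\leq1 ,
\]
or equivalently $\exp\trkla{\beta\mathcal{M}-\beta^2\skla{\mathcal{M}}}$ with Cauchy--Schwarz. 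Write $X\trkla{t}$ for the left-hand side quantity, namely $\norm{u_\delta\trkla{t}}^2$ plus the two dissipation integrals. For a given $\kappa$, choose $\alpha$ small relative to $\kappa$ so that $\kappa\skla{\mathcal{M}}$ is dominated by $\frac12X+C\int_0^t\norm{u_\delta}^2$. This yields
\[
\mathds{E}\ekla{\exp\trkla{\tfrac\kappa2X\trkla{t}}}\leq C\,\mathds{E}\ekla{\exp\trkla{C\kappa\textstyle\int_0^t\norm{u_\delta}^2\ds}}^{1/2}.
\]

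The main obstacle is closing the loop in the $\int_0^t\norm{u_\delta}^2$ term. A Gronwall argument inside the exponential is not directly available. I would try to avoid it by using the $\mathds{H}^{-1}$ structure instead. By the interpolation $\norm{u_\delta-\mathfrak{m}}^2\leq\norm{u_\delta-\mathfrak{m}}_{-1}\norm{\nabla u_\delta}$, the term $C\norm{u_\delta}^2$ is absorbed into $\alpha\norm{\Delta u_\delta}^2+C_\alpha\norm{u_\delta-\mathfrak{m}}_{-1}^2+C$. So it suffices to have exponential moments of $\sup_t\norm{u_\delta-\mathfrak{m}}_{-1}^2$. These come from the analogous \Ito{} formula in $\mathds{H}^{-1}$ from Lemma~\ref{lem:h-1estimate} (its continuous version, as in the proof of Lemma~\ref{lem:strongConvergenceht}). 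There the noise quadratic variation is bounded by $C\norm{u_\delta-\mathfrak{m}}_{-1}^2$, by \eqref{eq:strongconv:hminus1} and boundedness of $\mysigma$. The only non-dissipative drift term is $\frac2\varepsilon\norm{u_\delta-\mathfrak{m}}^2$, which is absorbed by $\varepsilon\norm{\nabla u_\delta}^2$ plus $C\norm{u_\delta-\mathfrak{m}}_{-1}^2$. To close the estimate, I would use a weight $e^{-\hat{c}t}$ together with an exponential Gronwall/BDG argument: stopping times and $\mathds{E}\exp\trkla{\kappa\sup_{s\leq t}e^{-\hat ct}\norm{\cdot}_{-1}^2}$. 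If that fails for large $\kappa$, I would instead split $[0,T]$ into short subintervals on which the Gronwall factor is close to one and iterate via the Markov property. Combining the two steps gives all three exponential bounds, uniformly in $\delta$, since the only $\delta$-dependent term, $\rkla{f_\delta^\prime\nabla u_\delta,\nabla u_\delta}\geq0$, appears on the good side.
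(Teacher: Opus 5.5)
Your first step is the paper's proof: \Ito's formula for $\norm{u_\delta}_{\mathds{L}^2}^2$, Young's inequality, $\mathrm{d}\skla{\mathcal{M}}\le C\norm{\nabla u_\delta}_{\mathds{L}^2}^2\dt$, and the exponential supermartingale. The paper stops at the resulting bound in terms of $\mathds{E}\exp\bigl(C(\kappa)\int_0^t\norm{u_\delta}_{\mathds{L}^2}^2\ds\bigr)$ and refers to \cite[Section 4.3.6]{DiPrimioGrasselliScarpa2024} for the closure. Here $C(\kappa)$ grows superlinearly, since $C_\alpha\sim\kappa/\varepsilon$.

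\textbf{The gap is in your own closure.} After interpolation you need $\mathds{E}\exp\bigl(K\sup_t\norm{u_\delta-\mathfrak{m}}_{-1}^2\bigr)\le C$ for every $K$, uniformly in $\delta$. You discard $\frac2\varepsilon\rkla{f_\delta(u_\delta),u_\delta-\mathfrak{m}}\ge0$, so all you know about $Y:=\norm{u_\delta-\mathfrak{m}}_{-1}^2$ is
\begin{align*}
Y_t\le Y_0+Ct+C\int_0^tY\ds+M_t\,,\qquad \mathrm{d}\skla{M}\le CY\dt\,.
\end{align*}
The squared-Bessel-type process $\mathrm{d}Y=C\dt+\sqrt{CY}\,\mathrm{d}W$ satisfies all of this, yet $\mathds{E}e^{\kappa Y_t}=\infty$ for $\kappa\ge2/(Ct)$. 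So no argument using only these inequalities can reach arbitrary exponents. Both of your proposed mechanisms hit this wall.
\begin{itemize}
\item For $e^{\kappa e^{-\hat ct}Y}$ you need $\hat c\gtrsim\kappa$ to absorb $\frac\kappa2\,\mathrm{d}\skla{M}$. The exponent you actually control, $\kappa e^{-\hat cT}$, therefore stays bounded as $\kappa\to\infty$.
\item Splitting $[0,T]$ and conditioning inflates the exponent on each step, $\kappa\mapsto\kappa/(1-c\kappa h)$. These inflations compound to the same threshold.
\end{itemize}
The obstruction is diffusive spreading, not the Gronwall factor.

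\textbf{The missing ingredient is the confinement you threw away.} For $\delta\le1/8$ and $\tabs{\mathfrak{m}}<1$ one checks $f_\delta(r)(r-\mathfrak{m})\ge\frac1{4\delta}r^2-\frac C\delta$ for all $r\in\mathds{R}$. Hence the drift in the $\mathds{H}^{-1}$ \Ito{} formula obeys
\begin{align*}
-\tfrac2\varepsilon\rkla{f_\delta(u_\delta),u_\delta-\mathfrak{m}}+\tfrac2\varepsilon\norm{u_\delta-\mathfrak{m}}_{\mathds{L}^2}^2+C\le\tfrac{1}{\varepsilon\delta}\rkla{C-cY}\,,
\end{align*}
which is mean reversion at rate $\sim(\varepsilon\delta)^{-1}$. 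For $\delta\le\delta_0(\kappa)$ this dominates $\frac\kappa2\,\mathrm{d}\skla{M}\le\frac\kappa2 CY\dt$. \Ito's formula for $e^{\lambda t}e^{\kappa Y_t}$ with $\lambda\sim(\varepsilon\delta)^{-1}$ (after localization) then gives $\sup_t\mathds{E}e^{\kappa Y_t}\le e^{\kappa Y_0}+K/\lambda$, with $K/\lambda$ independent of $\delta$. Jensen in time then gives the exponential moments of $\int_0^TY\ds$ that your reduction needs. This proves the lemma only for $\delta$ small depending on $\kappa$, which is enough for the limit $\delta\searrow0$.
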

\begin{proof}

Applying \Ito's formula (\cite[Theorem 4.2.5]{LiuRoeckner}) to $H\trkla{t,u_\delta\trkla{t}}:=\norm{u_\delta}_{\mathds{L}^2}^2$, we obtain
\begin{align}
\begin{split}\label{eq:tmp:e:ito}
\norm{u_\delta\trkla{t}}_{\mathds{L}^2}^2 &\,+ 2\varepsilon\int_0^t\norm{\Delta u_\delta}_{\mathds{L}^2}^2\ds +\frac2\varepsilon\int_0^t\rkla{f_\delta^\prime\trkla{u_\delta}\nabla u_\delta,\nabla u_\delta}\ds\\
=&\,\norm{u_\delta\trkla{0}}_{\mathds{L}^2}^2+2\sum_{i=1}^d\sum_{k\in\mathds{Z}}\int_0^t\rkla{u_\delta,\partial_i\rkla{\mysigma\trkla{u_\delta}\lambda_k\g{k}}}\dbeta{k}^i+\frac2\varepsilon\int_0^t\norm{\nabla u_\delta}_{\mathds{L}^2}^2\ds\\
&\,+\sum_{i=1}^d\sum_{k\in\mathds{Z}}\int_0^t\norm{\partial_i\rkla{\mysigma\trkla{u_\delta}\lambda_k\g{k}}}_{\mathds{L}^2}^2\ds
\end{split}
\end{align}
Due to assumption \ref{assumption:W2}, an application of Young's inequality provides
\begin{multline}
\frac2\varepsilon\int_0^t\norm{\nabla u_\delta}_{\mathds{L}^2}^2\ds+\sum_{i=1}^d\sum_{k\in\mathds{Z}}\int_0^t\norm{\partial_i\rkla{\mysigma\trkla{u_\delta}\lambda_k\g{k}}}_{\mathds{L}^2}^2\ds \\
\leq \frac\varepsilon2\int_0^t\norm{\Delta u_\delta}_{\mathds{L}^2}^2\ds+C\varepsilon^{-2}\int_0^t\norm{u_\delta}_{\mathds{L}^2}^2\ds +Ct\,.
\end{multline}
Similarly, we deduce the following estimates for the quadratic variation of the martingale $\sum_{i=1}^d\sum_{k\in\mathds{Z}}\int_0^t\trkla{u_\delta,\partial_i\trkla{\mysigma\trkla{u_\delta}\lambda_k\g{k}}}\dbeta{k}^i$:
\begin{multline}
\sum_{i=1}^d\sum_{d\in\mathds{Z}}\int_0^t\rkla{u_\delta,\partial_i\trkla{\mysigma\trkla{u_\delta}\lambda_k\g{k}}}^2\ds =\sum_{i=1}^d\sum_{d\in\mathds{Z}}\int_0^t\rkla{\partial_iu_\delta,\mysigma\trkla{u_\delta}\lambda_k\g{k}}^2\ds\\
\leq C\int_0^t\norm{\nabla u_\delta}_{\mathds{L}^2}^2\ds\leq C_\alpha\int_0^t\norm{u_\delta}_{\mathds{L}^2}^2\ds + \alpha\int_0^t\norm{\Delta u_\delta}_{\mathds{L}^2}^2\ds\,.
\end{multline}
Hence, adding and subtracting $3\kappa\sum_{i=1}^d\sum_{k\in\mathds{Z}}\int_0^t \rkla{u_\delta,\partial_i\trkla{\mysigma\trkla{u_\delta}\lambda_k\g{k}} }^2\ds$ with arbitrary but fixed $\kappa>0$ to \eqref{eq:tmp:e:ito} and choosing $\alpha=\varepsilon/\trkla{6\kappa}$, we obtain
\begin{align}
\begin{split}
&\norm{u_\delta\trkla{t}}_{\mathds{L}^2}^2+\varepsilon\int_0^t\norm{\Delta u_\delta}_{\mathds{L}^2}^2\ds+\frac2\varepsilon\int_0^t\trkla{f_\delta^\prime\trkla{u_\delta}\nabla u_\delta,\nabla u_\delta}\ds\\
&\quad\leq\norm{u_\delta\trkla{0}}_{\mathds{L}^2}^2 +C\varepsilon^{-2}\trkla{1+\kappa}\int_0^t\norm{u_\delta}_{\mathds{L}^2}^2\ds +C t\\
&\qquad+2\rkla{\sum_{i=1}^d\sum_{k\in\mathds{Z}}\int_0^t\trkla{u_\delta,\partial_i\trkla{\mysigma\trkla{u_\delta}\lambda_k\g{k}}}\dbeta{k}^i -\frac{3\kappa}2 \sum_{i=1}^d\sum_{k\in\mathds{Z}}\int_0^t\trkla{u_\delta,\partial_i\trkla{\mysigma\trkla{u_\delta}\lambda_k\g{k}}}^2\ds}\,.
\end{split}
\end{align}
Multiplying the above inequality by $\kappa$ and taking the exponentials provides
\begin{align}\label{eq:tmp:exponentialestimate}
\begin{split}
&\exp\rkla{\kappa\norm{u_\delta}_{\mathds{L}^2}^2} +\exp\rkla{\varepsilon\kappa\int_0^t\norm{\Delta u_\delta}_{\mathds{L}^2}^2\ds} +\exp\rkla{\frac{2\kappa}\varepsilon \int_0^t\rkla{f_\delta^\prime\trkla{u_\delta}\nabla u_\delta,\nabla u_\delta}\ds}\\
&~\leq C\left(\exp\rkla{3\kappa\norm{u_\delta\trkla{0}}_{\mathds{L}^2}^2} +\exp\rkla{3\kappa\varepsilon^{-2}\trkla{1+\kappa} \int_0^t\norm{u_\delta}_{\mathds{L}^2}^2\ds +3\kappa t} \right.\\
&\quad\left.+\exp\rkla{\sum_{i=1}^d\sum_{k\in\mathds{Z}}\rkla{3\kappa\int_0^t\rkla{u_\delta,\partial_i\trkla{\mysigma\trkla{u_\delta}\lambda_k\g{k}}}\dbeta{k}^i -\frac{9\kappa^2}{2}\int_0^t\trkla{u_\delta,\partial_i\trkla{\mysigma\trkla{u_\delta}\lambda_k\g{k}}}^2\ds}}       \right)\,.
\end{split}
\end{align}
As the last term on the right-hand side in \eqref{eq:tmp:exponentialestimate} is a real supermartingale, its expectation is bounded by the expectation of its initial state.
Hence, taking the expectation and the suprema in time of \eqref{eq:tmp:exponentialestimate} yields
\begin{align}
\begin{split}
\sup_{t\in\tekla{0,T}}&\expected{\exp\rkla{\kappa\norm{u_\delta\trkla{t}}_{\mathds{L}^2}^2}} +\expected{\exp\rkla{\varepsilon\kappa\int_0^T\norm{\Delta u_\delta}_{\mathds{L}^2}^2\ds}} \\
& +\expected{\exp\rkla{\frac{2\kappa}\varepsilon \int_0^T\rkla{f_\delta^\prime\trkla{u_\delta}\nabla u_\delta, u_\delta}\ds}}\\
\leq&\,C+C\exp\rkla{\kappa T}+C\expected{\exp\rkla{3\kappa\norm{u_\delta\trkla{0}}_{\mathds{L}^2}^2}} \\
&\,+ C\expected{\exp\rkla{3\kappa\varepsilon^{-2}\trkla{1+\kappa}\int_0^t\norm{u_\delta}_{\mathds{L}^2}^2\ds}}\,.
\end{split}
\end{align}
Following the arguments presented in \cite[Section 4.3.6]{DiPrimioGrasselliScarpa2024} concludes the proof.
\end{proof}
Having control this exponential version of the $\mathds{L}^2$-estimate at hand, an application of Hölder's inequality provides 
\begin{align}
w_\delta\in&\,L^s\trkla{\Omega;L^2\trkla{0,T;\mathds{H}^1}}\,,\\
u_\delta\in&\,L^s\trkla{\Omega;L^2\trkla{0,T;\mathds{W}^{2,6}}}\,,\\
f_\delta\trkla{u_\delta}\in&\,L^s\trkla{\Omega;L^2\trkla{0,T;\mathds{L}^6}}\label{eq:bound:fdelta}
\end{align}
for any $s\in[1,2)$ uniformly in $\delta$.
Noting the above bounds, Lemma~\ref{lem:H1} follows along the lines of \cite[Section 4.3.8]{DiPrimioGrasselliScarpa2024}.

\subsection{Further estimates}\label{sec:further}
In the next lemma, we establish Hölder continuity of the regularised solution in time:
\begin{lemma}
Let $u_\delta$ be the the family of solutions obtained in Theorem \ref{thm:weakht}, and let the assumptions \ref{assumption:W1}, \ref{assumption:W2}, \ref{assumption:C}, and \ref{assumption:I} hold true.
Then, 
\begin{align*}
S_\delta\trkla{t}:=\sum_{i=1}^d\sum_{k\in\mathds{Z}} \int_0^t\partial_i\rkla{\mysigma\trkla{u_\delta}\lambda_k\g{k}}\dbeta{k}^i
\end{align*}
is uniformly bounded in $L^p\trkla{\Omega;C^{0,\alpha-1/p}\trkla{\tekla{0,T};\mathds{H}^{-1}_\per}}$ for all $p>2$ and $\alpha<1/2$.
\end{lemma}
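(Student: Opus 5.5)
The approach is to prove moment bounds on the increments of $S_\delta$ and then apply the Kolmogorov continuity criterion in its quantitative, Garsia--Rodemich--Rumsey form. The key observation is that the $\mathds{H}^{-1}_\per$-norm of the noise integrand costs no derivative of $u_\delta$. Indeed, by \eqref{eq:strongconv:hminus1}, or simply by duality,
\begin{align*}
\sum_{i=1}^d\sum_{k\in\mathds{Z}}\norm{\partial_i\rkla{\mysigma\trkla{u_\delta}\lambda_k\g{k}}}_{-1}^2\leq C\sum_{k\in\mathds{Z}}\abs{\lambda_k}^2\norm{\mysigma\trkla{u_\delta}\g{k}}_{\mathds{L}^2}^2\leq C\norm{\mysigma}_{L^\infty}^2\sum_{k\in\mathds{Z}}\abs{\lambda_k}^2\norm{\g{k}}_{\mathds{L}^\infty}^2\leq C\,,
\end{align*}
using \ref{assumption:C} and \ref{assumption:W2}. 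This bound holds pointwise in $\Omega\times\trkla{0,T}$ and is independent of $\delta$. So the Hilbert--Schmidt norm (into $\mathds{H}^{-1}_\per$) of the integrand is bounded uniformly, and no a priori estimate on $u_\delta$ is needed.

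With this bound, fix $0\leq r<t\leq T$ and apply the Burkholder--Davis--Gundy inequality in the Hilbert space $\mathds{H}^{-1}_\per$ to the martingale $S_\delta\trkla{t}-S_\delta\trkla{r}$. This gives
\begin{align*}
\expected{\norm{S_\delta\trkla{t}-S_\delta\trkla{r}}_{-1}^p}\leq C_p\expected{\rkla{\int_r^t\sum_{i=1}^d\sum_{k\in\mathds{Z}}\norm{\partial_i\rkla{\mysigma\trkla{u_\delta}\lambda_k\g{k}}}_{-1}^2\ds}^{p/2}}\leq C_p\abs{t-r}^{p/2}\,,
\end{align*}
with $C_p$ independent of $\delta$. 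We also have $S_\delta\trkla{0}=0$.

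Next, invoke the Kolmogorov--Chentsov theorem in the Garsia--Rodemich--Rumsey formulation, which controls the Hölder seminorm's moment by the increment moments. A convenient route is the embedding of the Sobolev--Slobodeckij space $W^{\alpha,p}\trkla{0,T;\mathds{H}^{-1}_\per}$ into $C^{0,\alpha-1/p}\trkla{\tekla{0,T};\mathds{H}^{-1}_\per}$, valid for $\alpha p>1$. By Fubini and the increment bound,
\begin{align*}
\expected{\int_0^T\int_0^T\frac{\norm{S_\delta\trkla{t}-S_\delta\trkla{r}}_{-1}^p}{\abs{t-r}^{1+\alpha p}}\dt\,\mathrm{d}r}\leq C_p\int_0^T\int_0^T\abs{t-r}^{p/2-1-\alpha p}\dt\,\mathrm{d}r<\infty
\end{align*}
whenever $\alpha<1/2$. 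Adding the bound $\expected{\norm{S_\delta}_{L^p\trkla{0,T;\mathds{H}^{-1}}}^p}\leq C$, which follows from the same BDG estimate with $r=0$, yields a $\delta$-uniform bound in $L^p\trkla{\Omega;W^{\alpha,p}\trkla{0,T;\mathds{H}^{-1}_\per}}$. The embedding then gives the claim for $\alpha p>1$, which requires $p>2$.

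The only step needing care is the first: making sure the $\mathds{H}^{-1}$-norm of the divergence-type integrand is controlled purely by $\norm{\mysigma}_{L^\infty}$ and the $\mathds{L}^\infty$-summability of $\lambda_k\g{k}$ in \ref{assumption:W2}, rather than by $\nabla u_\delta$. Using the identity \eqref{eq:strongconv:hminus1}, or the estimate $\norm{\partial_i v}_{-1}\leq\norm{v}_{\mathds{L}^2}$ that follows from the definition of $\norm{\cdot}_{-1}$ (the mean of $\partial_i v$ vanishes), settles this. Everything else is standard.
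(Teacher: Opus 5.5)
Your proposal is correct and follows the paper's route. The paper likewise bounds the Hilbert--Schmidt norm of $\Phi(u_\delta)$ into $\mathds{H}^{-1}_\per$ uniformly, using only the boundedness of $\sigma$ and the summability of the $\lambda_k$; it then obtains a $\delta$-uniform bound in $L^p(\Omega;W^{\alpha,p}(0,T;\mathds{H}^{-1}_\per))$ and concludes with the embedding into $C^{0,\alpha-1/p}$ for $\alpha p>1$. The only difference is that the paper cites \cite[Lemma 2.1]{Flandoli1995} for the $W^{\alpha,p}$ bound, whereas you verify it directly via BDG and Fubini.
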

\begin{proof}
We consider the operator $\Phi\trkla{u_\delta}$ which is defined via
\begin{align}
\Phi\trkla{u_\delta} \bs{g}:=\sum_{i=1}^d\sum_{k\in\mathds{Z}}\partial_i\trkla{\mysigma\trkla{u_\delta} \trkla{\bs{g}_i,\g{k}}\g{k}}
\end{align}
and show that it is uniformly bounded in $L^p\trkla{\Omega;L^p\trkla{0,T;L_2\trkla{\trkla{\mathcal{Q}\mathds{L}^2}^d;\mathds{H}^{-1}_\per}}}$.
Indeed, we have
\begin{multline}
\norm{\Phi\trkla{u_\delta}}_{L_2\trkla{\trkla{\mathcal{Q}\mathds{L}^2}^d;\mathds{H}^{-1}_\per}}^2=\sum_{i=1}^d\sum_{k\in\mathds{Z}}\rkla{\sup_{0\neq\psi\in\mathds{H}^1_\per}\frac{\abs{\iD \mysigma\trkla{u_\delta}\lambda_k\g{k}\partial_i\psi\dx}}{\norm{\psi}_{\mathds{H}^1}}}^2\\
\leq \sum_{i=1}^d\sum_{k\in\mathds{Z}}\abs{\lambda_k}^2\norm{\g{k}}_{\mathds{L}^2}^2\leq C\,.
\end{multline}
Thus, $\Phi\trkla{u_\delta}\in L^p\trkla{\Omega;L^p\trkla{0,T;L_2\trkla{\trkla{\mathcal{Q}\mathds{L}^2}^d;\mathds{H}^{-1}_\per}}}$ for all $p\geq2$. 
By \cite[Lemma 2.1]{Flandoli1995}, we have that $S_\delta$ is bounded in $L^p\trkla{\Omega;W^{\alpha,p}\trkla{0,T;\mathds{H}^{-1}_\per}}$ for any $\alpha<1/2$.
A standard embedding theorem concludes the proof.
\end{proof}
\begin{lemma}
Let $u_\delta$ be the the family of solutions obtained in Theorem \ref{thm:weakht}, and let the assumptions \ref{assumption:W1}, \ref{assumption:W2}, \ref{assumption:C}, and \ref{assumption:I} hold true.
Then, the estimate
\begin{align}
\norm{u_\delta}_{L^s\trkla{\Omega;C^{0,\widetilde{\alpha}}\trkla{\tekla{0,T};\mathds{H}^{-1}_\per}}}\leq C
\end{align}
holds true for all $s\in[1,2)$ with $\widetilde{\alpha}<1/2$.
\end{lemma}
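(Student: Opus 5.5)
We split $u_\delta$ according to \eqref{eq:problemdelta:u} into a deterministic-in-form drift part and the stochastic integral:
\begin{align*}
u_\delta\trkla{t}=u_0+\int_0^t\Delta w_\delta\ds+S_\delta\trkla{t}\,,
\end{align*}
with $S_\delta$ as in the preceding lemma. The Hölder regularity of the two parts in $\mathds{H}^{-1}_\per$ is estimated separately, and the two bounds are then combined.

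For the stochastic part we use the preceding lemma. It gives $S_\delta$ bounded in $L^p\trkla{\Omega;C^{0,\alpha-1/p}\trkla{\tekla{0,T};\mathds{H}^{-1}_\per}}$ for every $p>2$ and $\alpha<1/2$. Given $\widetilde{\alpha}<1/2$, we choose $\alpha<1/2$ and $p$ large enough that $\alpha-1/p\geq\widetilde{\alpha}$. Since $s<2<p$, Hölder's inequality in $\Omega$ yields the $L^s\trkla{\Omega;C^{0,\widetilde{\alpha}}}$-bound for $S_\delta$, uniformly in $\delta$.

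For the drift part, fix $0\leq t_1<t_2\leq T$. For a test function $\psi\in\mathds{H}^1_\per$ we integrate by parts once:
\begin{align*}
\abs{\skla{\int_{t_1}^{t_2}\Delta w_\delta\ds,\psi}}=\abs{\int_{t_1}^{t_2}\rkla{\nabla w_\delta,\nabla\psi}\ds}\leq \trkla{t_2-t_1}^{1/2}\rkla{\int_0^T\norm{\nabla w_\delta}_{\mathds{L}^2}^2\ds}^{1/2}\norm{\psi}_{\mathds{H}^1}\,.
\end{align*}
Hence the drift part is $1/2$-Hölder continuous in $\mathds{H}^{-1}_\per$, with Hölder seminorm bounded by $\rkla{\int_0^T\norm{w_\delta}_{\mathds{H}^1}^2\dt}^{1/2}$. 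Lemma~\ref{lem:H1} bounds the $s$-th moment of this quantity uniformly in $\delta$ for $s\in[1,2)$. On the bounded interval $\tekla{0,T}$, $1/2$-Hölder continuity implies $\widetilde{\alpha}$-Hölder continuity for $\widetilde{\alpha}<1/2$.

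The sup-norm part of the $C^{0,\widetilde{\alpha}}$-norm follows from the Hölder seminorm together with $\norm{u_0}_{-1}\leq C$. Alternatively, it follows from the $\mathds{H}^1$-bound in Lemma~\ref{lem:H1}. Adding the drift and stochastic contributions proves the claim.

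The main obstacle is that the bound on $w_\delta$ is only available for moments $s<2$. This is exactly why the statement is restricted to $s\in[1,2)$. The one technical point to check is that Lemma~\ref{lem:H1} genuinely controls $\nabla w_\delta$ in $L^s\trkla{\Omega;L^2\trkla{0,T;\mathds{L}^2}}$ without weights, i.e.\ that the weights $e^{-\Lambda}$ of Lemma~\ref{lem:H1weighted} have been removed via the exponential estimate. Lemma~\ref{lem:H1} states precisely this, so the plan relies on it directly.
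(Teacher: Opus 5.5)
Your proposal is correct and follows the paper's proof. The paper likewise tests the equation on $[t_1,t_2]$ with $\psi\in\mathds{H}^1_\per$, bounds the drift by $(t_2-t_1)^{1/2}\norm{\nabla w_\delta}_{L^2(t_1,t_2;\mathds{L}^2)}\norm{\psi}_{\mathds{H}^1}$ using Lemma~\ref{lem:H1}, and controls the stochastic integral with the Hölder bound on $S_\delta$ from the preceding lemma. Your extra steps (choosing $\alpha$ and $p$ so that $\alpha-1/p\geq\widetilde{\alpha}$, passing from $L^p(\Omega)$ to $L^s(\Omega)$, and treating the sup-norm part) fill in details that the paper leaves implicit.
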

\begin{proof}
For $\psi\in \mathds{H}^1_\per$ and $0\leq t_1<t_2\leq T$, we have
\begin{multline}
\iD\rkla{u_\delta\trkla{t_2}-u_\delta\trkla{t_1}}\psi\dx+\int_{t_1}^{t_2}\iD\nabla w_\delta\cdot\nabla\psi\dx\dt\\
=\sum_{i=1}^d\sum_{k\in\mathds{Z}}\int_{t_1}^{t_2}\iD\partial_i\rkla{\mysigma\trkla{u_\delta}\lambda_k\g{k}}\psi\dx\!\dbeta{k}^i\,.
\end{multline}
Thus,
\begin{align}
\begin{split}
\abs{\iD\rkla{u_\delta\trkla{t_2}-u_\delta\trkla{t_1}}\psi\dx}\leq&\,\trkla{t_2-t_1}^{1/2}\norm{\nabla w_\delta}_{L^2\trkla{t_1,t_2;\mathds{L}^2}}\norm{\psi}_{\mathds{H}^1}\\
&+ \trkla{t_2-t_1}^{\widetilde{\alpha}}\frac{\norm{S_\delta\trkla{t_2}-S_\delta\trkla{t_1}}_{-1}}{\trkla{t_2-t_1}^{\widetilde{\alpha}}}\norm{\psi}_{\mathds{H}^1}\,.
\end{split}
\end{align}
\end{proof}
Owing to the $\delta$-independent $\mathds{H}^1$-estimate from Lemma \ref{lem:H1}, we show that the limit of $u_\delta$ attains values in the interval $\tekla{-1,1}$ almost surely almost everywhere in $\trkla{0,T}\times\D$.
Using the notation
\begin{align}
\trkla{u}^+:=\max\tgkla{0,u}&&\text{and}&&\trkla{u}^-:=\min\tgkla{0,u}\,,
\end{align}
we deduce the following bounds:
\begin{lemma}\label{lem:bound1}
Let $u_\delta$ be the the family of solutions obtained in Theorem \ref{thm:weakht}, and let the assumptions \ref{assumption:W1}, \ref{assumption:W2}, \ref{assumption:C}, and \ref{assumption:I} hold true.
Then
\begin{subequations}\label{eq:pm1}
\begin{align}
\norm{\trkla{u_\delta-1}^+}_{L^{s}\trkla{\Omega;L^\infty\trkla{0,T;\mathds{L}^2}}}+\norm{\trkla{u_\delta+1}^-}_{L^{s}\trkla{\Omega;L^\infty\trkla{0,T;\mathds{L}^2}}}\leq&\, C\sqrt{\delta}\,,\label{eq:pm1:linfty}\\
\norm{\trkla{u_\delta-1}^+}_{L^s\trkla{\Omega;L^2\trkla{0,T;\mathds{L}^6}}}+\norm{\trkla{u_\delta+1}^-}_{L^s\trkla{\Omega;L^2\trkla{0,T;\mathds{L}^6}}}\leq&\, C\delta\,\label{eq:pm1:l2}
\end{align}
\end{subequations}
holds true for $s\in[1,2)$.
\end{lemma}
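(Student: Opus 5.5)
The plan is to read both bounds off the $\delta$-independent estimates of Lemma \ref{lem:H1} and the bound \eqref{eq:bound:fdelta}, using two pointwise comparisons between $F_\delta$, $f_\delta$ and the excess of $u_\delta$ over the obstacles. By symmetry it suffices to treat $(u_\delta-1)^+$.

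For \eqref{eq:pm1:linfty} I would show $F_\delta(r)\geq c\,\delta^{-1}\trkla{(r-1)^+}^2-C\delta$ for all $r\in\mathds{R}$, with $c,C$ independent of $\delta$. On $r\geq1+\delta$ this follows from $F_\delta(r)=\tfrac1{2\delta}(r-1-\delta/2)^2+\tfrac\delta{24}$ and $r-1-\delta/2\geq\tfrac12(r-1)$. On $1<r<1+\delta$ the excess satisfies $(r-1)^2\leq\delta^2$, so $\delta^{-1}(r-1)^2\leq\delta$ and the additive term $C\delta$ absorbs it. Integrating over $\D$ gives
\begin{align*}
\norm{(u_\delta-1)^+}_{\mathds{L}^2}^2\leq C\delta\norm{F_\delta(u_\delta)}_{\mathds{L}^1}+C\delta^2 .
\end{align*}
I would then take the square root and the essential supremum in time, raise to the power $s$, and take expectations. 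Since $s<2$, the term $\expected{\esssup_t\norm{F_\delta(u_\delta)}_{\mathds{L}^1}^{s/2}}$ is bounded by Lemma \ref{lem:H1}. This yields $\norm{(u_\delta-1)^+}_{L^s(\Omega;L^\infty(0,T;\mathds{L}^2))}\leq C\sqrt\delta$.

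For \eqref{eq:pm1:l2} the analogous pointwise inequality is $(r-1)^+\leq \delta f_\delta(r)+C\delta$. On $r\geq1+\delta$ one has $\delta f_\delta(r)=r-1-\delta/2$, so the inequality holds with constant $\delta/2$. On $1<r<1+\delta$ it is trivial because $(r-1)^+<\delta$. Since $\D$ is bounded, this gives
\begin{align*}
\norm{(u_\delta-1)^+}_{\mathds{L}^6}\leq\delta\norm{f_\delta(u_\delta)}_{\mathds{L}^6}+C\delta .
\end{align*}
Squaring, integrating in time, taking the power $s/2$ and then expectations, the $L^s(\Omega;L^2(0,T;\mathds{L}^6))$ bound on $f_\delta(u_\delta)$ in \eqref{eq:bound:fdelta} gives the bound $C\delta$. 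The negative parts $(u_\delta+1)^-$ are handled identically using the symmetry $F_\delta(-r)=F_\delta(r)$.

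The main obstacle is not the calculus but making sure the inputs are genuinely available uniformly in $\delta$ for all $s\in[1,2)$. In particular, the bound \eqref{eq:bound:fdelta} is obtained by removing the weight $e^{-\Lambda}$ from Lemma \ref{lem:H1weighted} via Hölder's inequality and the exponential estimate. I would verify that step carefully: the product $\expected{(e^{-\Lambda}X)^{s/2}e^{s\Lambda/2}}$ is controlled by Hölder with exponents $2/s$ and $2/(2-s)$, which is exactly why $s<2$ is needed. I would also check that the lower-order contribution $-\tfrac1{2\varepsilon}\norm{u_\delta}^2$ in $H$ does not spoil the $\esssup$ bound on $\norm{F_\delta(u_\delta)}_{\mathds{L}^1}$, using the bound $\trkla{F_\delta(u),1}-2\norm{u}^2\geq-\mathcal{C}$ from Lemma \ref{lem:h-1estimate}.
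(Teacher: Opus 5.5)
Your proposal is correct and follows the paper's proof. The paper uses the same splitting into $\{1<u_\delta<1+\delta\}$ and $\{u_\delta\geq1+\delta\}$, and the same bound $\norm{(u_\delta-1)^+}_{\mathds{L}^2}^2\leq C\delta^2+C\delta\norm{F_\delta(u_\delta)}_{\mathds{L}^1}$ closed by Lemma~\ref{lem:H1}. It also uses the identity $(u_\delta-1)/\delta=f_\delta(u_\delta)+\tfrac12$ on $\{u_\delta\geq1+\delta\}$ closed by \eqref{eq:bound:fdelta}; that it works with sixth powers is only cosmetic.

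One small correction: your pointwise inequality $(r-1)^+\leq\delta f_\delta(r)+C\delta$ fails as $r\to-\infty$, since there $f_\delta(r)\to-\infty$. State it with $\abs{f_\delta(r)}$ instead, which is what your $\mathds{L}^6$ estimate actually uses.
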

\begin{proof}
We start by establishing the first part of the estimate \eqref{eq:pm1:linfty}:
\begin{align}
\begin{split}
\norm{\trkla{u_\delta-1}^+}_{\mathds{L}^2}^2=&\,\int_{\tgkla{1<u_\delta<1+\delta}}\trkla{u_\delta-1}^2\dx+\int_{\tgkla{u_\delta\geq1+\delta}}\trkla{u_\delta-1}^2\dx\\
\leq&\,\int_{\D}\delta^2\dx+\int_{\tgkla{u_\delta\geq1+\delta}}\trkla{u_\delta-\trkla{1+\tfrac\delta2}+\tfrac\delta2}^2\dx\\
\leq&\,C\delta^2+2\int_{\tgkla{u_\delta\geq1+\delta}}\trkla{u_\delta-\trkla{1+\tfrac\delta2}}^2\dx \leq C\delta^2+C\delta\int_\D F_\delta\trkla{u_\delta}\dx\,.
\end{split}
\end{align}
The bound on $F_\delta\trkla{u_\delta}$ derived from Lemma \ref{lem:H1} concludes the argument. 
The second estimate in \eqref{eq:pm1:linfty} can be established analogously.
To establish \eqref{eq:pm1:l2}, we compute
\begin{align}
\begin{split}
\norm{\frac{\trkla{u_\delta-1}^+}{\delta}}_{\mathds{L}^6}^6 =&\, \int_{\tgkla{1<u_\delta <1 +\delta}}\frac{\trkla{u_\delta-1}^6}{\delta^6}\dx +\int_{\tgkla{u_\delta\geq1+\delta}}\frac{\trkla{u_\delta-1}^6}{\delta^6}\dx\\
\leq&\, C\int_{\tgkla{1<u_\delta <1 +\delta}}\abs{f_\delta\trkla{u_\delta}}^3\dx+ \int_{\tgkla{u_\delta\geq1+\delta}}\rkla{f_\delta\trkla{u_\delta}+\tfrac12}^6\dx\\
\leq&\,C\norm{f_\delta\trkla{u_\delta}}_{\mathds{L}^6}^6 +C\,.
\end{split}
\end{align}
Arguing in a similar manner for $\trkla{u_\delta+1}^-/\delta$ and recalling \eqref{eq:bound:fdelta} provides \eqref{eq:pm1:l2}.
\end{proof}

\begin{lemma}
Let $u_\delta$ be the the family of solutions obtained in Theorem \ref{thm:weakht}, and let the assumptions \ref{assumption:W1}, \ref{assumption:W2}, \ref{assumption:C}, and \ref{assumption:I} hold true.
Then, there exists a constant $C>0$, which is independent of $\delta$, such that
\begin{align}
\norm{\partial_t\rkla{u_\delta-S_\delta}}_{L^s\trkla{\Omega;L^2\trkla{0,T;\mathds{H}^{-1}_\per}}}\leq C\,.
\end{align}
\end{lemma}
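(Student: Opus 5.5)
The bound follows directly from the regularised equation \eqref{eq:problemdelta:u}. Subtracting the stochastic integral $S_\delta$ removes the noise term, so pathwise
\begin{align*}
u_\delta(t)-S_\delta(t)=u_\delta(0)+\int_0^t\Delta w_\delta\ds \qquad\text{in }\mathds{H}^{-2}_\per .
\end{align*}
Hence $u_\delta-S_\delta$ is absolutely continuous in time with $\partial_t(u_\delta-S_\delta)=\Delta w_\delta$. Consequently it suffices to show that $\Delta w_\delta$ is uniformly bounded in $L^s(\Omega;L^2(0,T;\mathds{H}^{-1}_\per))$.

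For $\psi\in\mathds{H}^1_\per$ we have
\begin{align*}
\abs{\skla{\Delta w_\delta,\psi}}=\abs{(\nabla w_\delta,\nabla\psi)}\leq\norm{\nabla w_\delta}_{\mathds{L}^2}\norm{\psi}_{\mathds{H}^1},
\end{align*}
so that
\begin{align*}
\norm{\partial_t(u_\delta-S_\delta)}_{\mathds{H}^{-1}_\per}\leq\norm{w_\delta}_{\mathds{H}^1}
\end{align*}
for almost all $t$. Taking the $L^2(0,T)$ norm, raising it to the power $s$ and taking expectations gives
\begin{align*}
\expected{\norm{\partial_t(u_\delta-S_\delta)}_{L^2(0,T;\mathds{H}^{-1}_\per)}^s}\leq\expected{\rkla{\int_0^T\norm{w_\delta}_{\mathds{H}^1}^2\dt}^{s/2}}.
\end{align*}
The right-hand side is bounded independently of $\delta$ for $s\in[1,2)$ by Lemma~\ref{lem:H1}; equivalently, one can invoke the bound $w_\delta\in L^s(\Omega;L^2(0,T;\mathds{H}^1))$ obtained after the exponential estimate.

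The only delicate point is the rigorous identification of the distributional time derivative. A priori \eqref{eq:problemdelta:u} holds in $\mathds{H}^{-2}_\per$. To handle this, I test the integrated equation with $\eta\in\mathds{H}^2_\per$ and integrate by parts once, which gives $(\nabla w_\delta,\nabla\eta)$. By density this extends to $\eta\in\mathds{H}^1_\per$, which shows that the derivative lies in $\mathds{H}^{-1}_\per$ almost everywhere. The mean of $\Delta w_\delta$ vanishes, so the full $\mathds{H}^{-1}$ norm (including the mean part) causes no issue.
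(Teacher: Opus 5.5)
Your proposal is correct and follows essentially the same route as the paper: identify $\partial_t(u_\delta-S_\delta)=\Delta w_\delta$ in $\mathds{H}^{-1}_\per$, bound the pairing with $v\in\mathds{H}^1_\per$ by $\norm{\nabla w_\delta}_{\mathds{L}^2}\norm{\nabla v}_{\mathds{L}^2}$, and invoke the $\delta$-independent $L^s(\Omega;L^2(0,T;\mathds{H}^1))$-bound on $w_\delta$ from Lemma~\ref{lem:H1}. Your density argument, which extends the test functions from $\mathds{H}^2_\per$ to $\mathds{H}^1_\per$, makes explicit what the paper summarizes as the equation holding in $\mathds{H}^{-1}_\per$ thanks to the improved regularity.
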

\begin{proof}
Noting the improved regularity results established in Lemma \ref{lem:H1}, we deduce that \eqref{eq:problemdelta:u} also holds true in $\mathds{H}^{-1}_\per$. Hence, we deduce for $v\in\mathds{H}^1_\per$
\begin{align}
\abs{\skla{\partial_t\rkla{u_\delta- S_\delta},v}}=\abs{\rkla{\nabla w_\delta,\nabla v}}\leq \norm{\nabla w_\delta}_{\mathds{L}^2}\norm{\nabla v}_{\mathds{L}^2}\,,
\end{align}
which provides the result.
\end{proof}

\section{Limit $\delta\searrow0$}\label{sec:limitdelta}
In this section, we pass to the limit $\delta\searrow0$.
As in Section \ref{sec:limitht}, we start by deducing weak(-star) convergence results from the $\delta$-independent bounds established in Section \ref{sec:H1}. 
For the reader's convenience, we collect the most important $\delta$-independent results obtained so far.
In particular, we have
\begin{subequations}\label{eq:bounddelta}
\begin{multline}\label{eq:bounddelta:old}
\norm{u_\delta}_{L^p\trkla{\Omega;L^\infty\trkla{0,T;\mathds{H}^{-1}_\per}}}+\norm{u_\delta}_{L^p\trkla{\Omega;L^4\trkla{0,T;\mathds{L}^2}}} + \norm{u_\delta}_{L^3\trkla{\Omega;L^\infty\trkla{0,T;\mathds{L}^2}}}+\norm{u_\delta}_{L^p\trkla{\Omega;L^2\trkla{0,T;\mathds{H}^1}}}\\
+\norm{\Delta u_\delta}_{L^3\trkla{\Omega;L^2\trkla{0,T;\mathds{L}^2}}} +\norm{\mysigma\trkla{u_\delta}}_{L^\infty\trkla{\Omega;L^\infty\trkla{0,T;\mathds{L^\infty}}}} + \norm{\mysigma\trkla{u_\delta}}_{L^p\trkla{\Omega;L^2\trkla{0,T;\mathds{H}^1}}}\leq C\,,
\end{multline}
\begin{multline}\label{eq:bounddelta:new}
\norm{u_\delta}_{L^s\trkla{\Omega;L^\infty\trkla{0,T;\mathds{H}^{1}}}}+\norm{u_\delta}_{L^s\trkla{\Omega;L^2\trkla{0,T;\mathds{W}^{2,6}}}} +\norm{u_\delta}_{L^s\trkla{\Omega;C^{0,\widetilde{\alpha}}\trkla{\tekla{0,T};\mathds{H}^{-1}_\per}}}\\
+ \norm{f_\delta\trkla{u_\delta}}_{L^s\trkla{\Omega;L^2\trkla{0,T;\mathds{L}^6}}} +\norm{w_\delta}_{L^s\trkla{\Omega;L^2\trkla{0,T;\mathds{H}^1}}} +\norm{\partial_t\trkla{u_\delta-S_\delta}}_{L^s\trkla{\Omega;L^2\trkla{0,T;\mathds{H}^{-1}_\per}}} \\
+\delta^{-1/2}\norm{\trkla{u_\delta-1}^+}_{L^s\trkla{\Omega;L^\infty\trkla{0,T;\mathds{L}^2}}}+\delta^{-1/2}\norm{\trkla{u_\delta+1}^-}_{L^s\trkla{\Omega;L^\infty\trkla{0,T;\mathds{L}^2}}}\\
+\delta^{-1}\norm{\trkla{u_\delta-1}^+}_{L^s\trkla{\Omega;L^2\trkla{0,T;\mathds{L}^6}}}+\delta^{-1}\norm{\trkla{u_\delta+1}^-}_{L^s\trkla{\Omega;L^2\trkla{0,T;\mathds{L}^6}}}\leq C
\end{multline}
for all $p\in[1,\infty)$, $s\in[1,2)$, and $\widetilde{\alpha}<1/2$.
Here, the uniform bounds in \eqref{eq:bounddelta:old} are a result of the $\delta$-independent bounds collected in \eqref{eq:boundhtau:independent} and the estimates collected in \eqref{eq:bounddelta:new} were established in the Section \ref{sec:H1}.
To identify the limit of $f_\delta\trkla{u_\delta}$, we shall split the expression into a positive and a negative part.
Denoting the characteristic function of the set $\tgkla{u_\delta>1}\subset\D$ by $\chi_\delta^+$ and the one of set $\tgkla{u_\delta<-1}\subset\D$ by $\chi_\delta^-$, we obtain the decomposition $f_\delta\trkla{u_\delta}=f_\delta\trkla{u_\delta}\chi_\delta^++f_\delta\trkla{u_\delta}\chi_\delta^-$.
Therefore, we have
\begin{align}
\norm{f_\delta\trkla{u_\delta}\chi_\delta^+}_{L^s\trkla{\Omega;L^2\trkla{0,T;\mathds{L}^6}}}+\norm{f_\delta\trkla{u_\delta}\chi_\delta^-}_{L^s\trkla{\Omega;L^2\trkla{0,T;\mathds{L}^6}}} \leq C
\end{align}
for all $s\in[1,2)$.
\end{subequations}
\begin{theorem}\label{thm:weakdelta}
Let $u_\delta$ and $w_\delta$ be the the family of solutions obtained in Theorem \ref{thm:weakht}, and let the assumptions \ref{assumption:W1}, \ref{assumption:W2}, \ref{assumption:C}, and \ref{assumption:I} hold true.
Then, there exists
\begin{align*}
\begin{split}
u\in&\,L^p_{\operatorname{weak-(*)}}\trkla{\Omega;L^\infty\trkla{0,T;\mathds{H}^{-1}_\per}}\cap L^3_{\operatorname{weak-(*)}}\trkla{\Omega;L^\infty\trkla{0,T;\mathds{L}^{2}}}\cap L^p\trkla{\Omega;L^4\trkla{0,T;\mathds{L}^2}}\\
&\,\cap L^p\trkla{\Omega;L^2\trkla{0,T;\mathds{H}^1}}\cap L^3\trkla{\Omega;L^2\trkla{0,T;\mathds{H}^2}}\cap L^s_{\operatorname{weak-(*)}}\trkla{\Omega;L^\infty\trkla{0,T;\mathds{H}^{1}}}\\
&\,\cap L^s\trkla{\Omega;L^2\trkla{0,T;\mathds{W}^{2,6}}}\cap L^s\trkla{\Omega;C^{0,\widetilde{\alpha}}\trkla{\tekla{0,T};\mathds{H}^{-1}_\per}}\,,
\end{split}\\
w\in&\,L^s\trkla{\Omega;L^2\trkla{0,T;\mathds{H}^1}}\,,\\
\widetilde{p}\in&\,L^s\trkla{\Omega;L^2\trkla{0,T;\mathds{L}^6}}\,,\\
\widetilde{\mysigma}\in&\,L^p\trkla{\Omega;L^2\trkla{0,T;\mathds{H}^1}}
\end{align*}
for all $p\in[1,\infty)$, $s\in[1,2)$, and $\widetilde{\alpha}<1/2$ satisfying
\begin{subequations}\label{eq:limitdelta}
\begin{align}\label{eq:limitdelta:u}
\mathrm{d}u-\Delta w\dt=\div\gkla{\widetilde{\mysigma}\dW}&&&\text{in~}\mathds{H}^{-1}_\per\,,
\end{align}
$\mathds{P}$-almost surely and for all $t\in\trkla{0,T}$.
Here, $w$ satisfies
\begin{align}\label{eq:limitdelta:w}
w=-\varepsilon\Delta u+\varepsilon^{-1}\widetilde{p}-\varepsilon^{-1}u&&&\text{in~}\mathds{L}^2\,
\end{align}
$\mathds{P}$-almost surely and for almost all $t\in\trkla{0,T}$.
\end{subequations}
Further, we have $u\in\tekla{-1,+1}$ $\mathds{P}$-almost surely almost everywhere in $\trkla{0,T}\times\D$.
In addition, for $\delta\searrow0$ we have for (not relabeled) subsequences
\begin{subequations}\label{eq:weakconv:delta}
\begin{align}
u_\delta \stackrel{*}{\rightharpoonup}&\,u&&\text{in~}L^p_{\operatorname{weak-(*)}}\trkla{\Omega;L^\infty\trkla{0,T;\mathds{H}^{-1}_\per}}\cap L^3_{\operatorname{weak-(*)}}\trkla{\Omega;L^\infty\trkla{0,T;\mathds{L}^{2}}}\\
&&&\nonumber\qquad\cap L^s_{\operatorname{weak-(*)}}\trkla{\Omega;L^\infty\trkla{0,T;\mathds{H}^{1}}}\,,\\
u_\delta\rightharpoonup&\,u&&\text{in~} L^p\trkla{\Omega;L^4\trkla{0,T;\mathds{L}^2}}\cap L^p\trkla{\Omega;L^2\trkla{0,T;\mathds{H}^1}}\\
&&&\nonumber\qquad\cap L^3\trkla{\Omega;L^2\trkla{0,T;\mathds{H}^2}}\cap L^s\trkla{\Omega;L^2\trkla{0,T;\mathds{W}^{2,6}}}\,,\\
w_\delta\rightharpoonup&\, w&&\text{in~}L^s\trkla{\Omega;L^2\trkla{0,T;\mathds{H}^1}}\,,\\
f_\delta\trkla{u_\delta}\rightharpoonup&\,\widetilde{p}&&\text{in~}L^s\trkla{\Omega;L^2\trkla{0,T;\mathds{L}^6}}\,,\\
\mysigma\trkla{u_\delta}\rightharpoonup&\,\widetilde{\mysigma}&&\text{in~}L^p\trkla{\Omega;L^2\trkla{0,T;\mathds{H}^1}}\,,\\
f_\delta\trkla{u_\delta}\chi_\delta^-\rightharpoonup&\,\widetilde{p}_-&&\text{in~}L^s\trkla{\Omega;L^2\trkla{0,T;\mathds{L}^6}}\,,\quad\text{with~}\widetilde{p}_-\leq0\,,\\
f_\delta\trkla{u_\delta}\chi_\delta^+\rightharpoonup&\,\widetilde{p}_+&&\text{in~}L^s\trkla{\Omega;L^2\trkla{0,T;\mathds{L}^6}}\,,\quad\text{with~}\widetilde{p}_+\geq0\,,
\end{align}
\end{subequations}
and
\begin{align}\label{eq:weakconv:delta:noise}
u_\delta-S_\delta\rightharpoonup u-\sum_{i=1}^d\sum_{k\in\mathds{Z}}\int_0^{\cdot}\partial_i\rkla{\widetilde{\mysigma}\lambda_k\g{k}}\dbeta{k}^i&&\text{in~}L^s\trkla{\Omega;H^1\trkla{0,T;\mathds{H}^{-1}_\per}}\,.
\end{align}
In addition, we have $\widetilde{p}=\widetilde{p}_-+\widetilde{p}_+$ $\mathds{P}$-almost surely almost everywhere in $\trkla{0,T}\times\D$.
\end{theorem}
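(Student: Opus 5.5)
The plan mirrors the proof of Theorem \ref{thm:weakht}. The weak and weak-$*$ convergences in \eqref{eq:weakconv:delta} follow from the $\delta$-independent bounds \eqref{eq:bounddelta}, after extracting a common subsequence by a diagonal argument. The relevant spaces are reflexive for $s\in(1,2)$, $p\in(1,\infty)$, and weak-$*$ duals in the $L^\infty$ cases. The bound in $L^s(\Omega;C^{0,\widetilde\alpha}([0,T];\mathds{H}^{-1}_\per))$ passes to the limit by lower semicontinuity: weak convergence in $L^s(\Omega;L^2(0,T;\mathds{H}^{-1}_\per))$ together with the uniform Hölder bound implies the limit has the same Hölder bound. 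The limit spaces for all $s<2$ and all $p$ coincide because the limits are unique; the bounds for smaller $s$ follow from those for larger $s$ by Hölder's inequality.

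\textbf{Sign conditions.} The sign conditions $\widetilde p_+\ge0$ and $\widetilde p_-\le0$ hold because $f_\delta(u_\delta)\chi_\delta^\pm$ has a fixed sign, and the cones of nonnegative or nonpositive functions are weakly closed. Linearity of weak limits, applied to $f_\delta(u_\delta)=f_\delta(u_\delta)\chi_\delta^++f_\delta(u_\delta)\chi_\delta^-$, gives $\widetilde p=\widetilde p_++\widetilde p_-$.

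\textbf{Range of $u$.} The constraint $u\in[-1,1]$ comes from Lemma \ref{lem:bound1}. The map $v\mapsto (v-1)^+$ is convex and continuous on $L^s(\Omega;L^2(0,T;\mathds{L}^2))$, so the functional $v\mapsto\mathds{E}[\int_0^T\|(v-1)^+\|_{\mathds{L}^2}^2\dt]^{1/2}$, or its $L^s$-version, is convex and continuous, hence weakly lower semicontinuous. Therefore
\[
\|(u-1)^+\|\le\liminf_{\delta\searrow0}\|(u_\delta-1)^+\|\le\liminf_{\delta\searrow0} C\sqrt\delta=0,
\]
and analogously for $(u+1)^-$.

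\textbf{Noise term.} For \eqref{eq:weakconv:delta:noise}, first note that $u_\delta-S_\delta$ is bounded in $L^s(\Omega;H^1(0,T;\mathds{H}^{-1}_\per))$. This uses the bound on $\partial_t(u_\delta-S_\delta)$ together with the $L^\infty(\mathds{H}^{-1})$-bound on $u_\delta$ and the bound on $S_\delta$. Hence $u_\delta-S_\delta$ converges weakly to some $\zeta$, and it remains to identify $S_\delta$. The map $\sigma\mapsto\sum_{i,k}\int_0^\cdot\partial_i(\sigma\lambda_k\g{k})\dbeta{k}^i$ is linear and, by Itô's isometry, continuous from progressively measurable $L^2(\Omega;L^2(0,T;\mathds{L}^2))$ into $L^2(\Omega;C([0,T];\mathds{H}^{-1}_\per))$, since $\sum_k|\lambda_k|^2\|\g{k}\|_{\mathds{L}^\infty}^2<\infty$. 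A continuous linear map is weakly continuous, so $\sigma(u_\delta)\rightharpoonup\widetilde\sigma$ gives $S_\delta\rightharpoonup\int\partial_i(\widetilde\sigma\lambda_k\g{k})\dbeta{k}^i$. The subspace of progressively measurable processes is closed and convex, hence weakly closed, so $\widetilde\sigma$ is progressively measurable and the limit integral is well defined. This identifies $\zeta$.

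\textbf{Limit equations.} Equation \eqref{eq:limitdelta:w} passes to the limit directly, since \eqref{eq:problemdelta:w} is linear in $(w_\delta,\Delta u_\delta,f_\delta(u_\delta),u_\delta)$. For \eqref{eq:limitdelta:u}, test the integrated form of \eqref{eq:problemdelta:u} with $\eta\psi$, where $\eta\in\mathds{H}^1_\per$ and $\psi\in L^\infty(\Omega\times(0,T))$, to get
\[
\mathds{E}\int_0^T\Big[(u_\delta(t)-S_\delta(t),\eta)+\Big(\int_0^t\nabla w_\delta\ds,\nabla\eta\Big)-(u_0,\eta)\Big]\psi\dt=0 .
\]
All terms are linear in weakly convergent quantities, so the identity holds in the limit. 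This gives the equation in $\mathds{H}^{-1}_\per$ for a.e.\ $t$, and by the time-continuity of $u$ for all $t$.

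\textbf{Main obstacle.} I expect the main difficulty to be the noise term. Its identification relies on the weak continuity of the stochastic integral and on the adaptedness of $\widetilde\sigma$. At this stage the limit $\widetilde\sigma$ cannot be shown to equal $\sigma(u)$, nor $\widetilde p$ to be an element of the subdifferential; both require strong convergence, presumably to be obtained afterwards by a monotonicity argument. So the theorem itself only needs the weak-closure arguments above.
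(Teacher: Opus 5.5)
Your proposal is correct and follows the same route as the paper. It obtains weak(-$*$) compactness from the uniform bounds \eqref{eq:bounddelta}, and it identifies the limit of $u_\delta-S_\delta$ through the bound on $\partial_t(u_\delta-S_\delta)$ together with the linearity and continuity of the stochastic integral. You also spell out steps the paper leaves implicit, and these are sound: the weak-closedness arguments for the signs of $\widetilde p_\pm$, for $\lvert u\rvert\le 1$ via Lemma \ref{lem:bound1}, and for the progressive measurability of $\widetilde\sigma$.
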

\begin{proof}
The convergence results collected in \eqref{eq:weakconv:delta} are a straightforward consequence of the uniform bounds stated in \eqref{eq:bounddelta}.
To establish \eqref{eq:weakconv:delta:noise}, we argue similarly to Theorem \ref{thm:weakht}:
From the uniform bound on $\partial_t\trkla{u_\delta-S_\delta}$ in $L^s\trkla{\Omega;L^2\trkla{0,T;\mathds{H^{-1}_\per}}}$, we deduce the weak convergence towards some limit $\zeta$. Using the linearity and continuity of the stochastic integral, we can pass to the limit in $L^s\trkla{\Omega;L^2\trkla{0,T;\mathds{H}^{-1}_\per}}$ and identify $\zeta$.
\end{proof}
In order to identify the limit process $\widetilde{\mysigma}$ and to verify that $\rkla{\widetilde{p}-u}\in\partial\Psi\trkla{u}$, we establish strong convergence of $u_\delta$ by again applying a monotonicity argument:

\begin{lemma}
Let the assumption of Theorem \ref{thm:weakdelta} hold true. Then, 
\begin{align*}
u_\delta\rightarrow&\, u&&\text{in~}L^{2q}\trkla{\Omega;L^2\trkla{0,T;\mathds{L}^2}}
\end{align*}
for $q<2$.
Furthermore, $\widetilde{p}-u\in\partial\Psi$ and $\widetilde{\mysigma}=\mysigma\trkla{u}$ hold true.
\end{lemma}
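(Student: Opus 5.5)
Rather than compare $u_\delta$ with the limit $u$ directly, I would show that $(u_\delta)_\delta$ is Cauchy in $L^2(\Omega;L^2(0,T;\mathds{H}^{-1}_\per))$. This is the pathwise uniqueness computation of Lemma~\ref{lem:delta:unique}, now applied to two regularisation parameters $\delta,\delta'$. Both solutions start from $u_0$, are driven by the same $\bs{\W}$, and have the same mean $\mathfrak{m}$. I would apply \Ito's formula to $e^{-\Lambda(t)}\norm{u_\delta-u_{\delta'}}_{-1}^2$, regularised via \eqref{eq:jnorm}. Here $\Lambda$ is an adapted weight of the form $\hat c t+\gamma\int_0^t\trkla{\norm{u_\delta}_{\mathds{H}^1}^2+\norm{u_{\delta'}}_{\mathds{H}^1}^2}\ds$, or simply $\hat ct$ if that suffices. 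The gradient term gives $-2\varepsilon\norm{\nabla(u_\delta-u_{\delta'})}^2$. The term $\tfrac1\varepsilon\norm{u_\delta-u_{\delta'}}^2$ is handled by interpolation between $\mathds{H}^{-1}$ and $\mathds{H}^1$. The noise term is Lipschitz and is handled by \eqref{eq:strongconv:hminus1} exactly as in Lemma~\ref{lem:delta:unique}.

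The only new term, and the main obstacle, is the cross term $-\tfrac2\varepsilon\trkla{f_\delta(u_\delta)-f_{\delta'}(u_{\delta'}),u_\delta-u_{\delta'}}$, since $f_\delta\neq f_{\delta'}$ and monotonicity alone does not apply. I would use the Blowey--Elliott trick. Decompose $u_\delta=\mathcal{J}_\delta(u_\delta)+(u_\delta-\mathcal{J}_\delta(u_\delta))$; alternatively, note that $f_\delta(r)\ge0$ only where $r>1$ and that $u_{\delta'}-1\le(u_{\delta'}-1)^+$. This gives
\begin{align*}
-\trkla{f_\delta(u_\delta),u_\delta-u_{\delta'}}\le \trkla{f_\delta(u_\delta)\chi^+_\delta,(u_{\delta'}-1)^+}+\trkla{f_\delta(u_\delta)\chi^-_\delta,-(u_{\delta'}+1)^-}-\trkla{f_\delta(u_\delta),\mathrm{dist}(u_\delta,[-1,1])}.
\end{align*}
The last term is nonpositive. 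The remaining terms are bounded by $\norm{f_\delta(u_\delta)}_{\mathds{L}^6}\norm{(u_{\delta'}\mp1)^\pm}_{\mathds{L}^{6/5}}$. By \eqref{eq:bounddelta:new} and Hölder's inequality in time and $\Omega$ with exponents from $s<2$, their expectation is $\le C\delta'$; the symmetric term is $\le C\delta$. Taking expectations needs care with the integrability: the bound is only in $L^s(\Omega)$ with $s<2$. I therefore plan to drop the weight, or to keep it deterministic ($\Lambda=\hat ct$), and take plain expectations. This yields
\begin{align*}
\sup_t\expected{e^{-\hat ct}\norm{u_\delta-u_{\delta'}}_{-1}^2}+\varepsilon\expected{\int_0^Te^{-\hat cs}\norm{\nabla(u_\delta-u_{\delta'})}^2\ds}\le C(\delta+\delta')\,.
\end{align*}
Hence $u_\delta\to u$ strongly in $L^2(\Omega;L^2(0,T;\mathds{H}^1))$ and a fortiori in $L^2(\Omega;L^2(0,T;\mathds{L}^2))$. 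For the exponent $2q$ with $q<2$, I would interpolate this strong $L^2(\Omega)$ convergence with the uniform $L^p(\Omega;L^4(0,T;\mathds{L}^2))$ bounds in \eqref{eq:bounddelta:old} (Vitali's theorem). The restriction $q<2$ presumably reflects exactly this combination of moments.

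Next I would identify the limits. Strong convergence gives a.e.\ convergence along a subsequence, and since $\mysigma$ is bounded and Lipschitz, Vitali yields $\mysigma(u_\delta)\to\mysigma(u)$; hence $\widetilde\mysigma=\mysigma(u)$ by uniqueness of weak limits. For the subdifferential inclusion $\widetilde p-u\in\partial\Psi(u)$, note that $\partial\Psi(u)=-u+\partial I_{[-1,1]}(u)$, so I need $\widetilde p\in\partial I_{[-1,1]}(u)$. Equivalently, for every $\eta$ with $|\eta|\le1$ one needs $\expected{\int_0^T(\widetilde p,\eta-u)\varphi\dt}\le0$ for nonnegative test weights $\varphi$. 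For fixed $\delta$, the monotonicity of $f_\delta$ and $f_\delta(\eta)=0$ give $(f_\delta(u_\delta),\eta-u_\delta)\le0$. I would pass to the limit using the weak convergence of $f_\delta(u_\delta)$ in $L^s(\Omega;L^2(0,T;\mathds{L}^6))$ together with the strong convergence of $u_\delta$ in the dual space; the exponents are compatible because $2q$ can be taken close to $4$, in particular above $s/(s-1)$ when $s$ is close to $2$.

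Finally, $u\in[-1,1]$ a.e.\ follows from Lemma~\ref{lem:bound1}, by lower semicontinuity of $\norm{(u_\delta-1)^+}$ under strong convergence. Together with the pointwise signs $\widetilde p_\pm\gtrless0$ and $\widetilde p=\widetilde p_++\widetilde p_-$, this completes the characterisation: $\widetilde p_+=0$ where $u<1$ and $\widetilde p_-=0$ where $u>-1$.
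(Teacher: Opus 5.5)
Your route is genuinely different from the paper's and works in structure, but one step is justified incorrectly.

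\textbf{How the two routes differ.} The paper never compares two regularised solutions. It applies It\^{o}'s formula to $e^{-\hat{c}t}\norm{u_\delta-\mathfrak{m}}_{-1}^2$ and, via the limit equation \eqref{eq:limitdelta}, to $e^{-\hat{c}t}\norm{u-\mathfrak{m}}_{-1}^2$, and subtracts the two. It then uses $\abs{u}\le1$, so that $f_\delta(u)=0$ and
$\trkla{f_\delta(u_\delta),u_\delta-\mathfrak{m}}=\trkla{f_\delta(u_\delta)-f_\delta(u),u_\delta-u}+\trkla{f_\delta(u_\delta),u-\mathfrak{m}}$.
The first term is dropped by monotonicity. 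The second converges to $\trkla{\widetilde{p},u-\mathfrak{m}}$ by weak convergence alone, because its partner $u-\mathfrak{m}$ is bounded. A $\limsup$ argument then gives strong convergence in $L^2(\Omega;L^2(0,T;\mathds{H}^{-1}_\per))$, which is upgraded by interpolation. Finally $\widetilde{p}$ is identified through the complementarity relation \eqref{eq:complem} for $\widetilde{p}_\pm$.

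Your Blowey--Elliott Cauchy argument needs neither the limit equation nor a subsequence. Once repaired, it gives convergence of the whole family in $L^2(\Omega;L^2(0,T;\mathds{H}^1))$ with a rate. Your identification of $\widetilde{p}$, by passing to the limit in $\trkla{f_\delta(u_\delta),\eta-u_\delta}\le0$ for measurable $\abs{\eta}\le1$, is a clean alternative to \eqref{eq:complem}.

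\textbf{The gap in the cross term.} You claim that $\expected{\int_0^T\norm{f_\delta(u_\delta)}_{\mathds{L}^6}\norm{(u_{\delta'}-1)^+}_{\mathds{L}^{6/5}}\dt}\le C\delta'$ follows from \eqref{eq:bounddelta:new} and H\"older's inequality. This is false. Both factors are controlled only in $L^s(\Omega)$ with $s<2$, and conjugate exponents cannot both lie below $2$. So these bounds do not even make the product integrable, and making the weight deterministic does not change this.

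The fix uses \eqref{eq:bounddelta:old}:
\begin{itemize}
\item Since $0\le(u_{\delta'}-1)^+\le\abs{u_{\delta'}}$, the quantity $Y:=\norm{(u_{\delta'}-1)^+}_{L^2(0,T;\mathds{L}^{6/5})}$ is bounded in $L^r(\Omega)$ for every $r<\infty$.
\item By \eqref{eq:pm1:l2}, $\norm{Y}_{L^1(\Omega)}\le C\delta'$.
\item Interpolation then gives $\norm{Y}_{L^{s/(s-1)}(\Omega)}\le C(\delta')^{\theta}$ for some $\theta\in(0,1)$.
\end{itemize}
Hence the cross term is $O(\delta^\theta+(\delta')^\theta)$, which still yields the Cauchy property. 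The paper's comparison with $u$ avoids exactly this moment issue in the convergence step.

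\textbf{Sign corrections.} In your pointwise bound, the dropped term should be $-\trkla{\abs{f_\delta(u_\delta)},\mathrm{dist}(u_\delta,[-1,1])}\le0$; with the signed $f_\delta$ it is positive on $\tgkla{u_\delta<-1}$. The $\chi_\delta^-$ contribution should be $\trkla{f_\delta(u_\delta)\chi_\delta^-,(u_{\delta'}+1)^-}\ge0$. Since you estimate these terms in absolute value, nothing else changes.
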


\begin{proof}
Applying \Ito's formula to $F\trkla{s,u_\delta\trkla{s}}:=e^{-\hat{c}s}\norm{u_\delta\trkla{s}-\mathfrak{m}}_{-1}^2$ with a constant $\hat{c}$ specified later (note that due to Lemma \ref{lem:H1} \eqref{eq:problemdelta:u} holds in $\mathds{H}^{-1}_\per$) and recalling \eqref{eq:strongconv:hminus1}, we obtain
\begin{align}\label{eq:tmp:itoud}
\begin{split}
\expected{e^{-\hat{c}t}\norm{u_\delta\trkla{t}-\mathfrak{m}}_{-1}^2}=&\,\expected{\norm{u_\delta\trkla{0}-\mathfrak{m}}_{-1}^2}-\expected{\int_0^t \hat{c}e^{-\hat{c}s}\norm{u_\delta\trkla{s}-\mathfrak{m}}_{-1}^2\ds}\\
&-\,2\expected{\int_0^te^{-\hat{c}s}\rkla{\varepsilon\norm{\nabla u_\delta\trkla{s}}_{\mathds{L}^2}^2+\varepsilon^{-1} \rkla{f_\delta\trkla{u_\delta\trkla{s}},u_\delta\trkla{s}-\mathfrak{m}}}\ds}\\
&\,+2\varepsilon^{-1}\expected{\int_0^te^{-\hat{c}s}\norm{u_\delta\trkla{s}-\mathfrak{m}}_{\mathds{L}^2}^2\ds}\\
&\,+\expected{\int_0^t e^{-\hat{c}s}\sum_{k\in\mathds{Z}}\norm{\mysigma\trkla{u_\delta}\lambda_k\g{k}-\trkla{\mysigma\trkla{u_\delta}\lambda_k\g{k}}_{\D}}_{\mathds{L}^2}^2\ds}\,.
\end{split}
\end{align}
Similarly, we deduce for $F\trkla{s,u\trkla{s}}$
\begin{align}\label{eq:tmp:itou}
\begin{split}
\expected{e^{-\hat{c}t}\norm{u\trkla{t}-\mathfrak{m}}_{-1}^2}=&\,\expected{\norm{u\trkla{0}-\mathfrak{m}}_{-1}^2}-\expected{\int_0^t \hat{c}e^{-\hat{c}s}\norm{u\trkla{s}-\mathfrak{m}}_{-1}^2\ds}\\
&-\,2\expected{\int_0^te^{-\hat{c}s}\rkla{\varepsilon\norm{\nabla u\trkla{s}}_{\mathds{L}^2}^2+\varepsilon^{-1}\trkla{\widetilde{p}\trkla{s},u\trkla{s}-\mathfrak{m}}}\ds}\\
&\,+2\varepsilon^{-1}\expected{\int_0^te^{-\hat{c}s}\norm{u\trkla{s}-\mathfrak{m}}_{\mathds{L}^2}^2\ds}\\
&\,+\expected{\int_0^t e^{-\hat{c}s}\sum_{k\in\mathds{Z}}\norm{\widetilde{\sigma}\lambda_k\g{k}-\trkla{\widetilde{\sigma}\lambda_k\g{k}}_{\D}}_{\mathds{L}^2}^2\ds}\,.
\end{split}
\end{align}
As in the proof of Lemma \ref{lem:strongConvergenceht}, we use
\begin{align}
\norm{u_\delta -\mathfrak{m}}_{-1}^2=&\,\norm{u_\delta-u}_{-1}^2+2\rkla{u_\delta-\mathfrak{m},u-\mathfrak{m}}_{-1}-\norm{u-\mathfrak{m}}_{-1}^2\,,\\
\norm{\nabla u_\delta}_{\mathds{L}^2}^2=&\, \norm{\nabla u_\delta-\nabla u}_{\mathds{L}^2}^2+2\rkla{\nabla u_\delta,\nabla u}-\norm{\nabla u}_{\mathds{L}^2}^2\,,\\
\begin{split}
\norm{u_\delta-\mathfrak{m}}_{\mathds{L}^2}^2\leq &\,\widetilde{\alpha}\norm{\nabla u_\delta-\nabla u}_{\mathds{L}^2}^2+C_{\widetilde{\alpha}}\norm{u_\delta-u}_{-1}^2+2\rkla{u_\delta-\mathfrak{m},u-\mathfrak{m}}-\norm{u-\mathfrak{m}}_{\mathds{L}^2}^2\,,
\end{split}
\end{align}
and
\begin{align}
\begin{split}
&\norm{\mysigma\trkla{u_\delta}\lambda_k\g{k}-\trkla{\mysigma\trkla{u_\delta}\lambda_k\g{k}}_{\D}}_{\mathds{L}^2}^2\\
&\quad=\norm{\ekla{\mysigma\trkla{u_\delta}\lambda_k\g{k}-\trkla{\mysigma\trkla{u_\delta}\lambda_k\g{k}}_{\D}}-\ekla{\mysigma\trkla{u}\lambda_k\g{k}-\trkla{\mysigma\trkla{u}\lambda_k\g{k}}_{\D}}}_{\mathds{L}^2}^2\\
&\qquad+2\rkla{\mysigma\trkla{u_\delta}\lambda_k\g{k}-\trkla{\mysigma\trkla{u_\delta}\lambda_k\g{k}}_{\D},\mysigma\trkla{u}\lambda_k\g{k}-\trkla{\mysigma\trkla{u}\lambda_k\g{k}}_{\D}}\\
&\qquad-\norm{\ekla{\mysigma\trkla{u}\lambda_k\g{k}-\trkla{\mysigma\trkla{u}\lambda_k\g{k}}_{\D}}-\ekla{\widetilde{\sigma}\lambda_k\g{k}-\trkla{\widetilde{\sigma}\lambda_k\g{k}}_{\D}}}_{\mathds{L}^2}^2\\
&\qquad-2\rkla{\mysigma\trkla{u}\lambda_k\g{k}-\trkla{\mysigma\trkla{u}\lambda_k\g{k}}_{\D},\widetilde{\sigma}\lambda_k\g{k}-\trkla{\widetilde{\sigma}\lambda_k\g{k}}_{\D}}+\norm{\widetilde{\sigma}\lambda_k\g{k}-\trkla{\widetilde{\sigma}\lambda_k\g{k}}_{\D}}_{\mathds{L}^2}^2\,.
\end{split}
\end{align}
Using the Lipschitz continuity of $\mysigma$ and Young's inequality, we estimate for the first term on the right-hand side
\begin{align}
\begin{split}
&\norm{\ekla{\mysigma\trkla{u_\delta}\lambda_k\g{k}-\trkla{\mysigma\trkla{u_\delta}\lambda_k\g{k}}_{\D}}-\ekla{\mysigma\trkla{u}\lambda_k\g{k}-\trkla{\mysigma\trkla{u}\lambda_k\g{k}}_{\D}}}_{\mathds{L}^2}^2\\
&\quad\leq C\norm{\rkla{\mysigma\trkla{u_\delta}-\mysigma\trkla{u}}\lambda_k\g{k}}_{\mathds{L}^2}^2\leq C\norm{u_\delta-u}_{\mathds{L}^2}^2\abs{\lambda}^2\norm{\g{k}}_{\mathds{L}^\infty}^2\\
&\quad\leq \rkla{\alpha\norm{\nabla u_\delta-\nabla u}_{\mathds{L}^2}^2+C_\alpha\norm{u_\delta -u}_{-1}^2}\abs{\lambda_k}^2\norm{\g{k}}_{\mathds{L}^\infty}^2\,.
\end{split}
\end{align}
Further, noting that $\abs{u}\leq1$ $\mathds{P}$-almost surely almost everywhere in $\trkla{0,T}\times\D$ due to Lemma \ref{lem:bound1}, we deduce
\begin{multline}
\rkla{f_\delta\trkla{u_\delta},u_\delta-\mathfrak{m}}=\rkla{f_\delta\trkla{u_\delta}-f_\delta\trkla{u},u_\delta-\mathfrak{m}}\\
=\rkla{f_\delta\trkla{u_\delta}-f_\delta\trkla{u},u_\delta-u}+\rkla{f_\delta\trkla{u_\delta},u-\mathfrak{m}}\,,
\end{multline}
where the first term on the right-hand side is nonnegative.
Thus, subtracting \eqref{eq:tmp:itou} from \eqref{eq:tmp:itoud}, integrating with respect to $t$, and neglecting nonnegative terms provides
\begin{align}
\begin{split}
&\int_0^T\expected{e^{-\hat{c}t}\rkla{\norm{u_\delta\trkla{t}-\mathfrak{m}}_{-1}^2-\norm{u\trkla{t}-\mathfrak{m}}_{-1}^2}}\dt\\
&\quad+\rkla{2\varepsilon-\frac{2\widetilde{\alpha}}\varepsilon-\alpha\sum_{k\in\mathds{Z}}\abs{\lambda_k}^2\norm{\g{k}}_{\mathds{L}^\infty}^2}\int_0^T\expected{\int_0^te^{-\hat{c}s}\norm{\nabla u_\delta\trkla{s}-\nabla u\trkla{s}}_{\mathds{L}^2}^2\ds}\dt\\
&\quad+4\varepsilon\int_0^T\expected{\int_0^te^{-\hat{c}s}\rkla{\trkla{\nabla u_\delta\trkla{s},\nabla u\trkla{s}}-\norm{\nabla u\trkla{s}}_{\mathds{L}^2}^2}\ds}\dt\\
&\quad+\frac2\varepsilon\int_0^T\expected{\int_0^te^{-\hat{c}s}\ekla{\rkla{f_\delta\trkla{u_\delta\trkla{s}},u\trkla{s}-\mathfrak{m}} -\rkla{\widetilde{p}\trkla{s},u\trkla{s}-\mathfrak{m}}}\ds}\dt\\
&\quad+\rkla{\hat{c}-\frac2\varepsilon C_{\widetilde{\alpha}}-C_\alpha\sum_{k\in\mathds{Z}}\abs{\lambda_k}^2\norm{\g{k}}_{\mathds{L}^\infty}^2}\int_0^T\expected{\int_0^te^{-\hat{c}s}\norm{u_\delta\trkla{s}-u\trkla{s}}_{-1}^2\ds}\dt\\
&\quad+2\hat{c}\int_0^T\expected{\int_0^te^{-\hat{c}s}\rkla{\trkla{u_\delta\trkla{s}-\mathfrak{m},u\trkla{s}-\mathfrak{m}}_{-1}-\norm{u\trkla{s}-\mathfrak{m}}_{-1}^2}\ds}\dt\\
&\leq \frac4\varepsilon \int_0^T\expected{\int_0^t e^{-\hat{c}s}\rkla{\trkla{u_\delta\trkla{s}-\mathfrak{m},u\trkla{s}-\mathfrak{m}}-\norm{u\trkla{s}-\mathfrak{m}}_{\mathds{L}^2}^2}\ds}\dt\\
&\quad+2\int_0^T\expected{\int_0^te^{-\hat{c}s}\sum_{k\in\mathds{Z}}\rkla{\ekla{\mysigma\trkla{u_\delta\trkla{s}}-\widetilde{\sigma}\trkla{s}}\lambda_k\g{k},\mysigma\trkla{u\trkla{s}}\lambda_k\g{k}-\trkla{\mysigma\trkla{u\trkla{s}}\lambda_k\g{k}}_{\D}}\ds}\dt%\\
%&\quad-2\int_0^T\expected{\int_0^te^{-\hat{c}s}\sum_{k\in\mathds{Z}}\rkla{\trkla{\tekla{\mysigma\trkla{u_\delta\trkla{s}}-\widetilde{\sigma}\trkla{s}}\lambda_k\g{k}}_{\D},\mysigma\trkla{u\trkla{s}}\lambda_k\g{k}-\trkla{\mysigma\trkla{u\trkla{s}}\lambda_k\g{k}}_{\D}}\ds}\dt
\,.
\end{split}
\end{align}
Choosing $\alpha$ and $\widetilde{\alpha}$ sufficiently small and $\hat{c}$ sufficiently large, we can pass to the superior limit and obtain
\begin{align}
\limsup_{\delta\rightarrow0}\int_0^T\expected{e^{-\hat{c}t}\norm{u_\delta-\mathfrak{m}}_{-1}^2}\dt\leq \int_0^T\expected{e^{-\hat{c}t}\norm{u-\mathfrak{m}}_{-1}^2}\dt\,.
\end{align}
Thus, $u_\delta\rightarrow u$ in $L^2\trkla{\Omega;L^2\trkla{0,T;\mathds{H}^{-1}_\per}}$.
For $q<2$ we have
\begin{align*}
\begin{split}
\norm{u_\delta-u}_{L^{2q}\trkla{\Omega;L^2\trkla{0,T;\mathds{L}^2}}}^{2q}\leq&\, \expected{\rkla{\norm{u_\delta-u}_{L^2\trkla{0,T;\mathds{H}^{-1}_\per}}\norm{u_\delta-u}_{L^2\trkla{0,T;\mathds{H}^1}}}^q}\\
\leq&\,\norm{u_\delta-u}_{L^2\trkla{\Omega;L^2\trkla{0,T;\mathds{H}^{-1}_\per}}}^{q/2} \norm{u_\delta-u}_{L^{2q/\trkla{q-2}}\trkla{\Omega;L^2\trkla{0,T;\mathds{H}^1}}}^{\trkla{2-q}/q}\rightarrow0\,.
\end{split}
\end{align*}
This allows us to deduce
\begin{multline}
\expected{\int_0^T\rkla{\widetilde{p}_-+\widetilde{p}_+,u}\dt}=\expected{\int_0^T\rkla{\widetilde{p},u}\dt}\leftarrow \expected{\int_0^T\rkla{f_\delta\trkla{u_\delta},u_\delta}\dt}\\
=\expected{\int_0^T\rkla{f_\delta\trkla{u_\delta}\chi_\delta^-,\trkla{u_\delta+1}^--1}\dt}+\expected{\int_0^T\rkla{f_\delta\trkla{u_\delta}\chi_\delta^+,\trkla{u_\delta-1}^++1}\dt}\\
\rightarrow -\expected{\int_0^T\rkla{\widetilde{p}_-,1}\dt} +\expected{\int_0^T\rkla{\widetilde{p}_+,1}\dt}\,.
\end{multline}
Thus, we have $\mathds{P}$-almost surely, almost everywhere in $\trkla{0,T}\times\D$ that
\begin{align}\label{eq:complem}
\widetilde{p}_+\trkla{u-1}+\widetilde{p}_-\trkla{u+1}=0\,,
\end{align}
i.e. $\widetilde{p}_+=\widetilde{p}_-=\widetilde{p}=0$ if $u\in\trkla{-1,+1}$, $\widetilde{p}_+=0$ and $\widetilde{p}=\widetilde{p}_-\leq0$ for $u=-1$, as well as $\widetilde{p}_-=0$ and $\widetilde{p}=\widetilde{p}_+\geq0$ for $u=+1$.
Hence, $\widetilde{p}\in\partial\Psi+u$.
Having the strong convergence of $u_\delta\rightarrow u$, we can extract a subsequence converging pointwise almost everywhere and identify $\widetilde{\mysigma}$ with $\mysigma\trkla{u}$ using a Vitali argument.

\end{proof}

\begin{lemma}\label{thm:uniqueness}
The solution to problem \eqref{eq:limitdelta} is $\mathbb{P}$-a.s. unique.
%\todo[inline]{In which sense are the Lagrange multipliers $\widetilde{p}$ unique?}
\end{lemma}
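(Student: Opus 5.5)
We mimic the proof of Lemma~\ref{lem:delta:unique}, replacing the monotonicity of $f_\delta$ by the monotonicity of the subdifferential $\partial\Psi$. Let $(u,w,\widetilde{p})$ and $(\widehat{u},\widehat{w},\widehat{p})$ be two solutions of \eqref{eq:limitdelta}. They have the same initial datum $u_0$, are driven by the same $\mathcal{Q}$-Wiener process, and satisfy $\widetilde{\mysigma}=\mysigma(u)$, $\widehat{\mysigma}=\mysigma(\widehat{u})$. Both have the regularity listed in Theorem~\ref{thm:weakdelta}, in particular $w,\widehat{w}\in L^s(\Omega;L^2(0,T;\mathds{H}^1))$. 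Since the noise is of divergence form, $(u-\widehat{u})_{\D}\equiv0$. The difference satisfies
\begin{align*}
\mathrm{d}(u-\widehat{u})-\Delta(w-\widehat{w})\dt=\div\gkla{\rkla{\mysigma(u)-\mysigma(\widehat{u})}\dW}\qquad\text{in }\mathds{H}^{-1}_\per .
\end{align*}
We apply \Ito's formula to $e^{-\hat{c}t}\norm{u-\widehat{u}}_{-1}^2$. This is legitimate because the equation holds in $\mathds{H}^{-1}_\per$ with $w\in L^2(0,T;\mathds{H}^1)$. If needed, we regularise with $\jnorm{\cdot}$ from \eqref{eq:jnorm} and pass to the limit as in Section~\ref{sec:limitht}. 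Only finite first moments are available ($s<2$). We therefore introduce stopping times $\tau_R$, the first time that $\int_0^t(\norm{w}_{\mathds{H}^1}^2+\norm{\widehat{w}}_{\mathds{H}^1}^2)\ds$ exceeds $R$, so that the stochastic integral is a true martingale and expectations may be taken. Using \eqref{eq:strongconv:hminus1} for the \Ito{} correction term, we obtain
\begin{align*}
\expected{e^{-\hat{c}(t\wedge\tau_R)}\norm{u-\widehat{u}}_{-1}^2}
\leq&\,-\hat{c}\,\expected{\int_0^{t\wedge\tau_R}e^{-\hat{c}s}\norm{u-\widehat{u}}_{-1}^2\ds}-2\varepsilon\,\expected{\int_0^{t\wedge\tau_R}e^{-\hat{c}s}\norm{\nabla(u-\widehat{u})}_{\mathds{L}^2}^2\ds}\\
&-\tfrac2\varepsilon\expected{\int_0^{t\wedge\tau_R}e^{-\hat{c}s}\rkla{\widetilde{p}-\widehat{p},u-\widehat{u}}\ds}+\tfrac2\varepsilon\expected{\int_0^{t\wedge\tau_R}e^{-\hat{c}s}\norm{u-\widehat{u}}_{\mathds{L}^2}^2\ds}\\
&+C\,\expected{\int_0^{t\wedge\tau_R}e^{-\hat{c}s}\norm{\mysigma(u)-\mysigma(\widehat{u})}_{\mathds{L}^2}^2\ds}.
\end{align*}

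The key step is the sign of the Lagrange multiplier term. By the previous lemma, $\widetilde{p}-u\in\partial\Psi(u)$. Since $\Psi(s)=\tfrac12(1-s^2)+I_{[-1,1]}(s)$, this is equivalent to $\widetilde{p}\in\partial I_{[-1,1]}(u)$. This is exactly the complementarity relation \eqref{eq:complem}: $\widetilde{p}=0$ where $\abs{u}<1$, $\widetilde{p}\geq0$ where $u=1$, and $\widetilde{p}\leq0$ where $u=-1$, with $\abs{u}\leq1$. The same holds for $\widehat{p}$ and $\widehat{u}$. The subdifferential of the convex indicator is monotone, so $(\widetilde{p}-\widehat{p})(u-\widehat{u})\geq0$ pointwise $\mathds{P}$-a.s.\ a.e. One can also check this directly via $\widetilde{p}(u-\widehat{u})\geq0$, which holds because $\abs{\widehat{u}}\leq1$. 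Hence this term can be dropped.

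The remaining terms are handled exactly as in Lemma~\ref{lem:delta:unique}. By the Lipschitz continuity of $\mysigma$ from \ref{assumption:C}, the noise term is bounded by $C\norm{u-\widehat{u}}_{\mathds{L}^2}^2$. We then use the interpolation inequality $\norm{v}_{\mathds{L}^2}^2\leq\alpha\norm{\nabla v}_{\mathds{L}^2}^2+C_\alpha\norm{v}_{-1}^2$ for mean-free $v$. Choosing $\alpha$ small, the gradient contributions are absorbed into $2\varepsilon\norm{\nabla(u-\widehat{u})}^2$, and choosing $\hat{c}$ large makes the right-hand side nonpositive. Thus $\norm{u(t\wedge\tau_R)-\widehat{u}(t\wedge\tau_R)}_{-1}=0$ a.s. for each $t$. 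Letting $R\to\infty$, where $\tau_R\to T$ a.s. because the integrals are finite a.s., and using the continuity of $u,\widehat{u}$ in $\mathds{H}^{-1}_\per$, we conclude that $u=\widehat{u}$ on $[0,T]$ a.s.

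It then follows that $\Delta(w-\widehat{w})=0$, so $w-\widehat{w}$ is spatially constant. From \eqref{eq:limitdelta:w} we get $\widetilde{p}-\widehat{p}=\varepsilon(w-\widehat{w})$, i.e.\ the multipliers differ by a function of $t$ alone. In general only the non-constant part of $\widetilde{p}$ is unique, and $u$ together with $\nabla w$ are uniquely determined. If $\{\abs{u}<1\}$ has positive measure for a.e.\ $t$, then $\widetilde{p}=\widehat{p}=0$ there forces equality of the multipliers as well.

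I expect the main obstacle to be rigorously justifying \Ito's formula and taking expectations under the limited integrability ($s<2$). The localisation by $\tau_R$ is what I would try first.
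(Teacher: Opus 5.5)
Your argument for $u$ is the paper's argument. You apply It\^o's formula to $e^{-\hat{c}t}\norm{u-\widehat{u}}_{-1}^2$ and drop the multiplier term by monotonicity, via \eqref{eq:complem} and $\abs{u},\abs{\widehat{u}}\leq1$. You then use Lipschitz continuity of $\mysigma$, the interpolation inequality and a large $\hat{c}$, as in Lemma~\ref{lem:delta:unique}. The localisation by $\tau_R$ is harmless but not needed. Since $\abs{u},\abs{\widehat{u}}\leq1$ and $\mysigma$ is bounded, the integrand of the stochastic integral is bounded in Hilbert--Schmidt norm. The drift terms are integrable using only first moments of $\widetilde{p},\widehat{p}$ and second moments of $\nabla u,\nabla\widehat{u}$. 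Concluding directly from $\norm{u-\widehat{u}}_{-1}=0$ (a norm on mean-free elements) plus continuity in $\mathds{H}^{-1}_\per$ is more direct than the interpolation estimate the paper uses at this point.

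The one shortfall is the multiplier. The paper's proof also establishes $\widetilde{p}_1=\widetilde{p}_2$, and hence $w_1=w_2$. You leave this conditional and assert that ``in general only the non-constant part of $\widetilde{p}$ is unique''. That is not the case here, because your condition always holds.

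Since $u\in L^p(\Omega;L^2(0,T;\mathds{H}^1))$, almost surely for a.e.\ $t$ we have $u(t)\in\mathds{H}^1_\per$, $\abs{u(t)}\leq1$ and $(u(t))_{\D}=\mathfrak{m}\in(-1,1)$. Suppose $\abs{u(t)}=1$ a.e. Then $\nabla u(t)=0$ a.e., since the gradient of a Sobolev function vanishes a.e.\ on each of its level sets. So $u(t)$ is constant on the connected torus, i.e.\ $u(t)\equiv1$ or $u(t)\equiv-1$, contradicting $\abs{\mathfrak{m}}<1$. Hence $\{\abs{u(t)}<1\}$ has positive measure.

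On this set $\widetilde{p}=\widehat{p}=0$ by \eqref{eq:complem}. Therefore the spatial constant $\widetilde{p}(t)-\widehat{p}(t)=\varepsilon\rkla{w(t)-\widehat{w}(t)}$ vanishes. This is exactly how the paper concludes $\widetilde{C}=0$; it uses the positive measure of $\{\abs{u(t)}<1\}$ without comment. Adding this step completes your proof.
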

\begin{proof}
We assume that $\trkla{u_1,\widetilde{p}_1}$ and $\trkla{u_2,\widetilde{p}_2}$ are two solutions of \eqref{eq:limitdelta} with respect to the same initial and the same $\mathcal{Q}$-Wiener process $\bs{\W}$.
For each of these solutions, there exists a unique chemical potential denoted by $w_1$ and $w_2$ defined via \eqref{eq:limitdelta:w}.
For a sufficiently large constant $\widehat{c}$, we apply \Ito's formula to $F\trkla{t,u}:=e^{\widehat{c}t}\norm{u}_{-1}^2$ and obtain
\begin{align}
\begin{split}
&\expected{e^{-\widehat{c}t}\norm{u_1-u_2}_{-1}^2}=\expected{\int_0^t-\widehat{c}e^{-\widehat{c}s}\norm{u_1-u_2}_{-1}^2\ds}\\
&\quad-2\expected{\int_0^te^{-\widehat{c}s}\rkla{u_1-u_2,-\varepsilon\Delta\trkla{u_1-u_2}+\tfrac1\varepsilon\trkla{p_1-p_2}-\tfrac1\varepsilon\trkla{u_1-u_2}}\ds}\\
&\quad+\expected{\int_0^te^{-\widehat{c}s}\sum_{i=1}^d\sum_{k\in\mathds{Z}}\norm{\partial_i\ekla{\trkla{\mysigma\trkla{u_1}-\mysigma\trkla{u_2}}\lambda_k\g{k}}}_{-1}^2\ds}\,.
\end{split}
\end{align}
Noting $\trkla{u_1-u_2,p_1-p_2}\geq0$ and arguing as in Lemma \ref{lem:delta:unique}, we obtain
\begin{align}
\expected{e^{-\widehat{c}t}\norm{u_1-u_2}_{-1}^2}\leq 0\,.
\end{align}
The interpolation estimate
\begin{align*}\expected{\norm{u_1\trkla{t}-u_2\trkla{t}}^2}\leq C\expected{e^{-\widehat{c}t}\norm{u_1\trkla{t}-u_2\trkla{t}}_{-1}^2}^{1/2}\expected{e^{\widehat{c}t}\rkla{\norm{u_1\trkla{t}}_{\mathds{H}^1}^2+\norm{u_2\trkla{t}}_{\mathds{H}^1}^2}}^{1/2}
\end{align*}
then implies pathwise uniqueness of $u$. % with respect to the $L^\infty\trkla{0,T;\mathds{L}^2}$-norm.

Hence, from \eqref{eq:limitdelta} we deduce that the chemical potential is unique up to a time-dependent constant, which yields $\widetilde{p}_1(t) - \widetilde{p}_2(t) = \widetilde{C}(t)$ for a.a.~$t\in(0,T)$, $\mathbb{P}$-a.s., where $\widetilde{C}(t)$ is $\mathbb{P}$-a.s.~constant in space.
Property \eqref{eq:complem} implies that $\widetilde{p}_1(t) = \widetilde{p}_2(t) = 0$, a.e.~in $\mathcal{D}$ where $|u(t)| < 1$.
Consequently, $\widetilde{C} = 0$, i.e., $\widetilde{p}_1(t) - \widetilde{p}_2(t) = 0$, a.e. in $(0,T)\times \mathcal{D}$, $\mathbb{P}$-a.s..
\end{proof}

\section{Numerical experiments}\label{sec:numerics}

In this section we present numerical experiments using scheme (\ref{eq:model:disc:inequality})
and its regularised version (\ref{eq:model:disc:regularised}) (where we choose the regularisation parameter $\delta=10^{-3}$, unless mentioned otherwise).
The discrete variational inequality in scheme (\ref{eq:model:disc:inequality}) is solved using an active-set strategy \cite{voids3d}, \cite{gk09}.
The nonlinear system associated with the regularised scheme (\ref{eq:model:disc:regularised}) is solved using the Newton method.

For convenience of implementation we take $\mathcal{D} = (0,1)^2$ along with Neumann boundary conditions on $\partial \mathcal{D}$.
The noise is taken in the form
$$
\big(\Phi\trkla{u}\dW, \varphi\big) = \nu\sum_{k=1}^{\dim \widetilde{\mathds{V}}_h} \big(\max\{0, 1-u^2\} \widetilde{\psi}_{k}, (\dbeta{k}^1,\dbeta{k}^2)\cdot \nabla \varphi \big),
$$
where $\nu>0$ and $\widetilde{\mathds{V}}_h = \mathrm{span}\{\widetilde{\psi}_{k};\, k=1,\dots,\dim\widetilde{\mathds{V}}_h\}$
is the finite element space associated with the partition $\widetilde{\mathcal{T}}_h$ of $\mathcal{D}$ into squares with side $\tilde{h} = 1/L$
where (if not mentioned otherwise) each square is partitioned into four triangles with common vertex at the barycenter of the square,
i.e., $\dim\widetilde{\mathds{V}}_h = (L+1)^2+L^2$ and $\widetilde{\psi}_{k}$ and the nodal basis functions of $\widetilde{\mathds{V}}_h$.
In the experiments below we choose $L=16$.

\subsection{Spinodal decomposition}

We take $T=0.012$, $\varepsilon = 1/(12\pi)$, $\nu=10^{-5}$.
The computations were performed with the time step size $\tau=10^{-6}$ and $\mathbb{V}_h$ the finite element space associated with an equidistant mesh consisting of
squares with mesh size $h=1/256$ where each square is divided into four triangles along its diagonals.
The initial condition is displayed in Figures~\ref{fig_spin_u0} (left), here the coloring scale is adapted to the extremal values $-0.11$ (blue), $0.19$ (red) of the initial condition.

The evolution of the computed solution is displayed in Figures \ref{fig_spin_det} and \ref{fig_spin_sym}.
We only display the stochastic solution computed with the the unregularised scheme (\ref{eq:model:disc:inequality}).
The solution computed with regularised scheme (\ref{eq:model:disc:regularised}), for $\delta=10^{-3}$ was graphically indistinguishable
and remained bounded within the range $(-1.002, 1.002)$.

We observe that already an extremely small amount of noise leads to a completely different evolution of the solution.
In particular, the addition of noise accelerates the early stages of the decomposition process significantly.
{Furthermore, the results seem to be sensitive to the geometry of the triangulation used to construct the space $\widetilde{\mathds{V}}\h$.}
In Figure~\ref{fig_spin_nonsym1} and Figure~\ref{fig_spin_nonsym2} we display the solution computed with noise constructed on partitions consisting of halved squares along their respective diagonals
(the Brownian motions associated with the vertices of the noise partition are the same as in the ''symmetric'' case in Figure~\ref{fig_spin_sym}).
The computed results for all three noises exhibit some similarities (compare in particular the respective solution at time $0.9\times10^{-3}$).
However, the solutions computed with noises on the partitions which consist of halved squares exhibit some anisotropy, i.e., the interfaces
tend to be aligned along the diagonals of the squares employed in the respective partitions.

\begin{figure}
\includegraphics[width=0.32\textwidth]{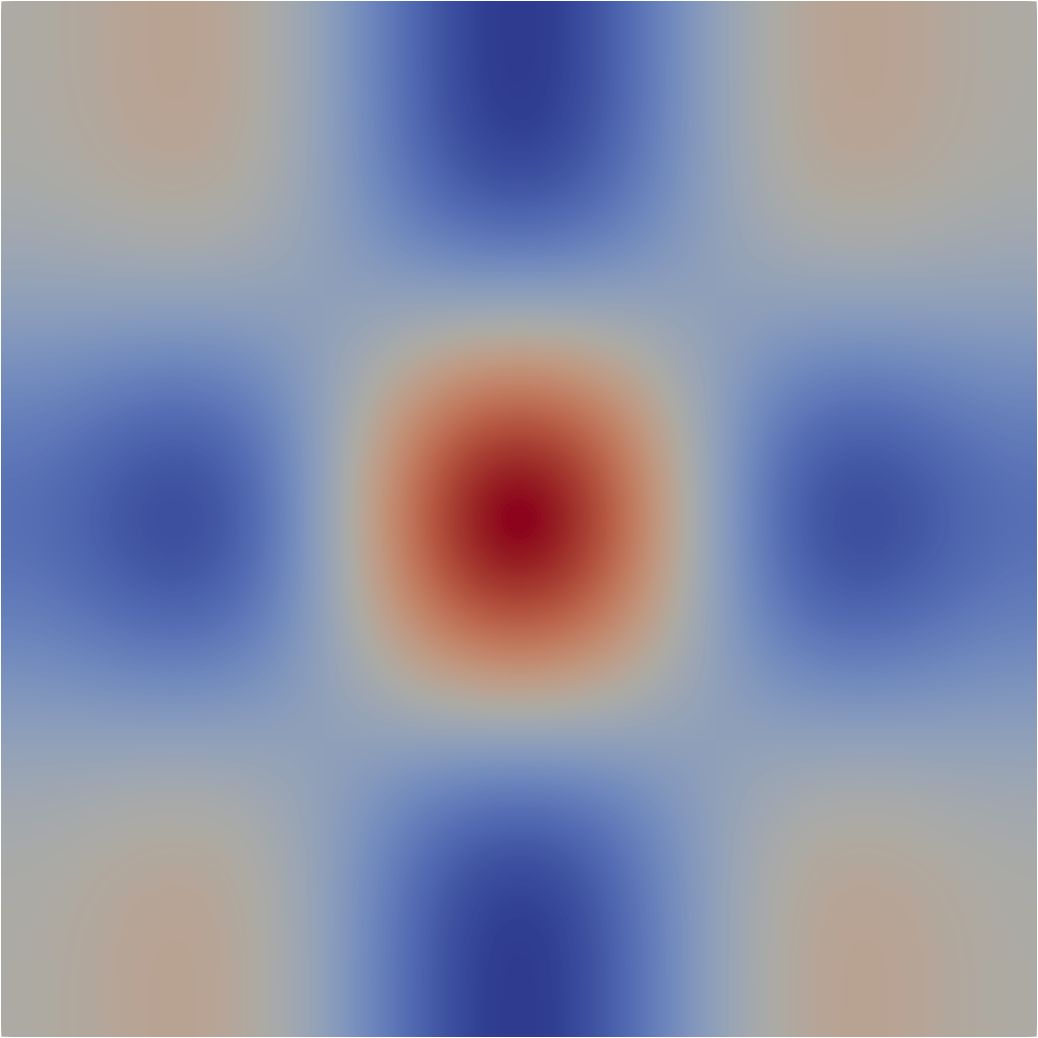}
\includegraphics[width=0.32\textwidth]{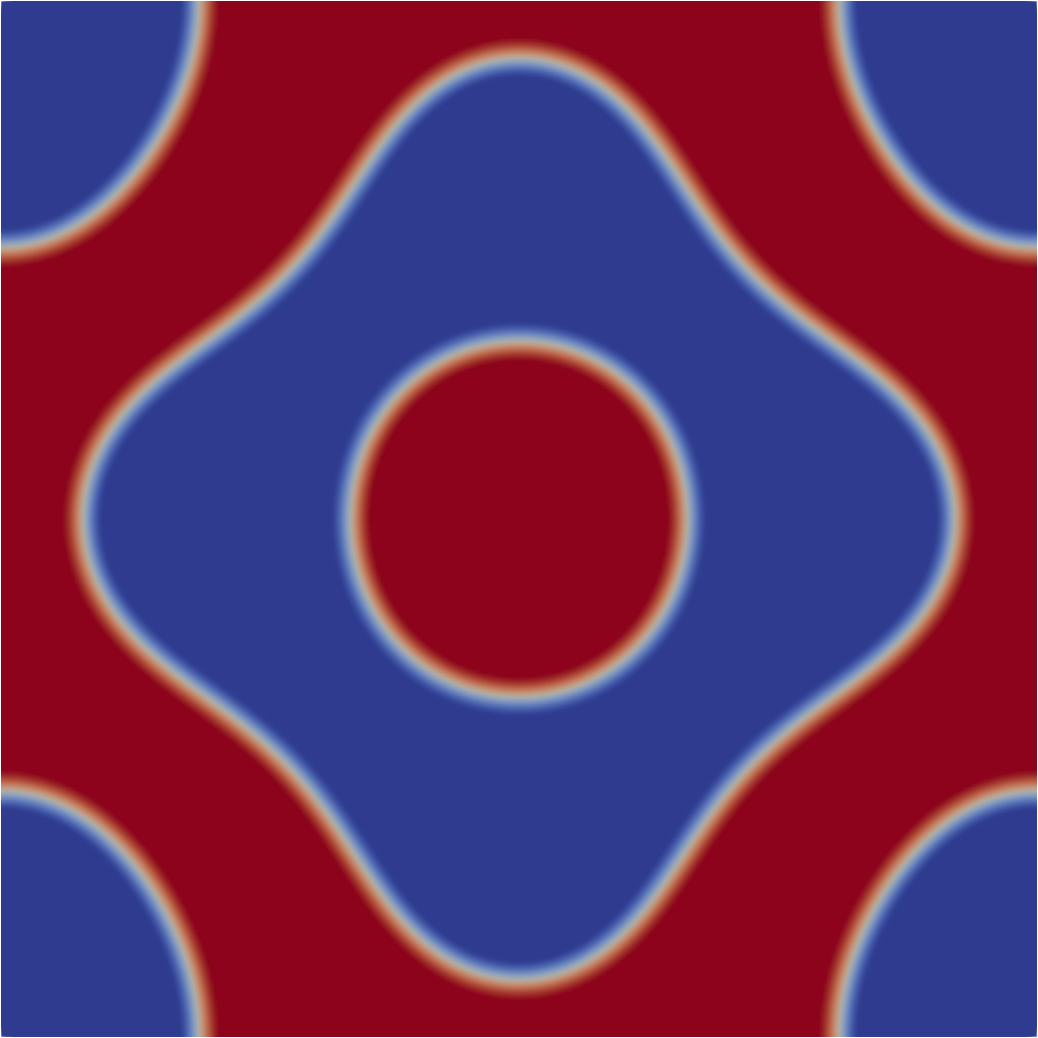}
\includegraphics[width=0.32\textwidth]{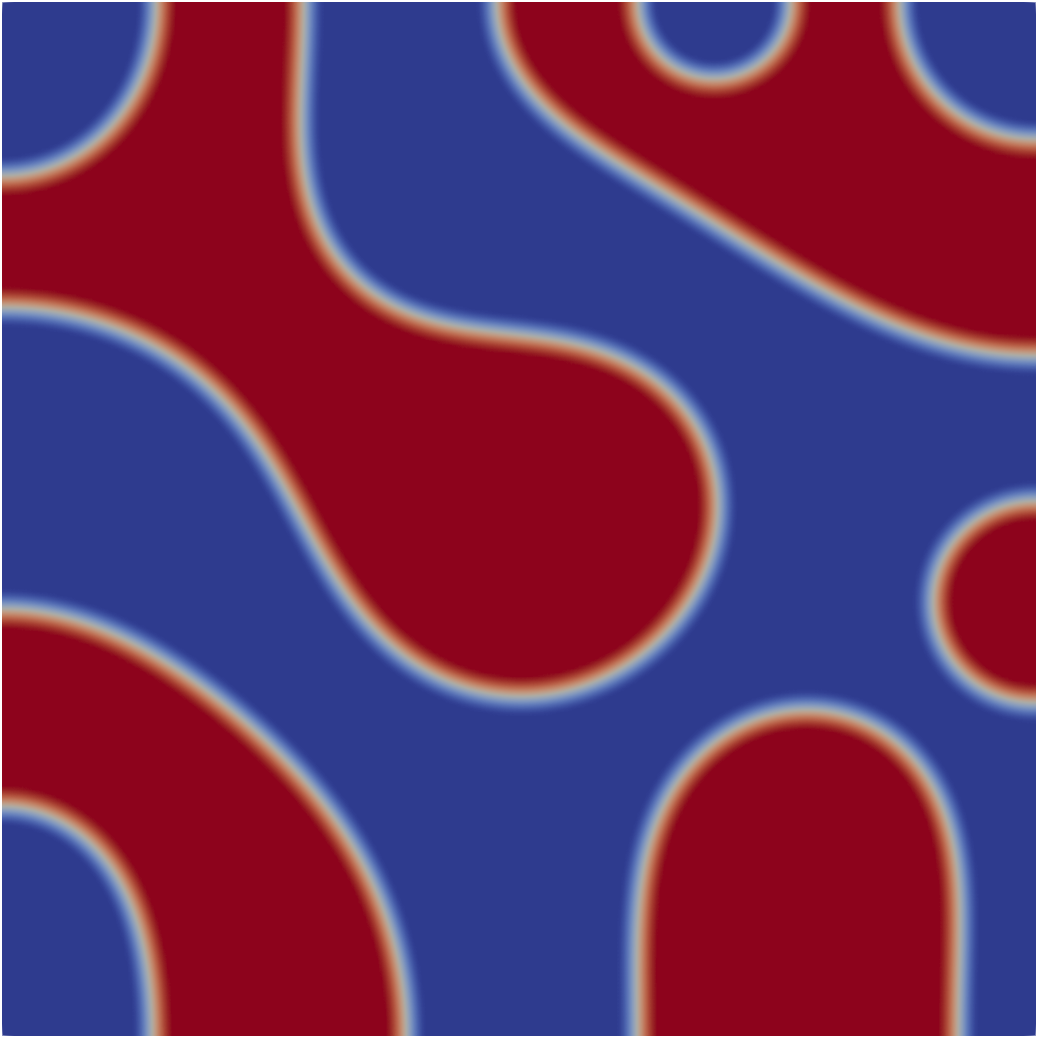}
\caption{Initial condition (left), deterministic numerical solution ($\nu=0$) (middle) and stochastic numerical solution ($\nu=1.6^{-4}$) (right) at time $t=1.2\times 10^{-2}$.}
\label{fig_spin_u0}
\end{figure}

\begin{figure}
\includegraphics[width=0.24\textwidth]{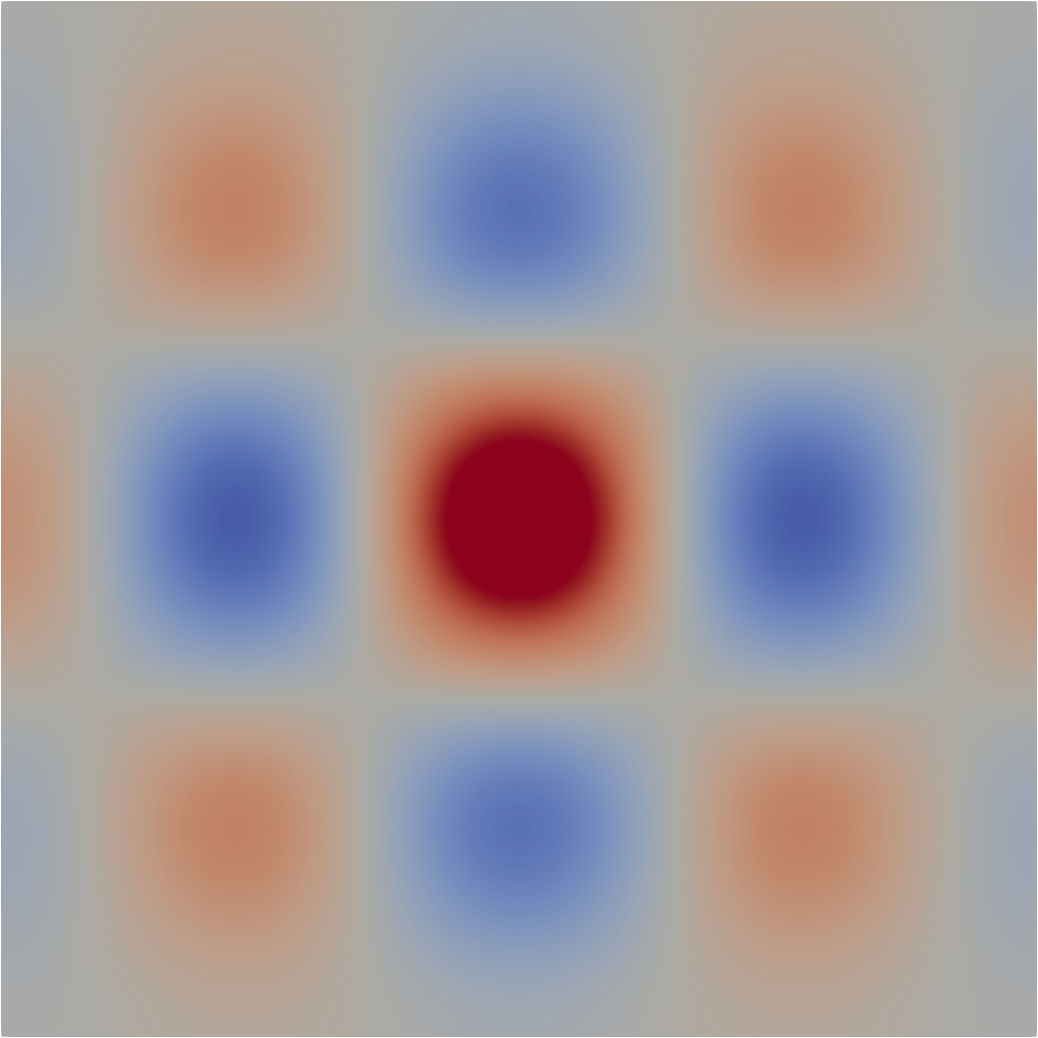}
\includegraphics[width=0.24\textwidth]{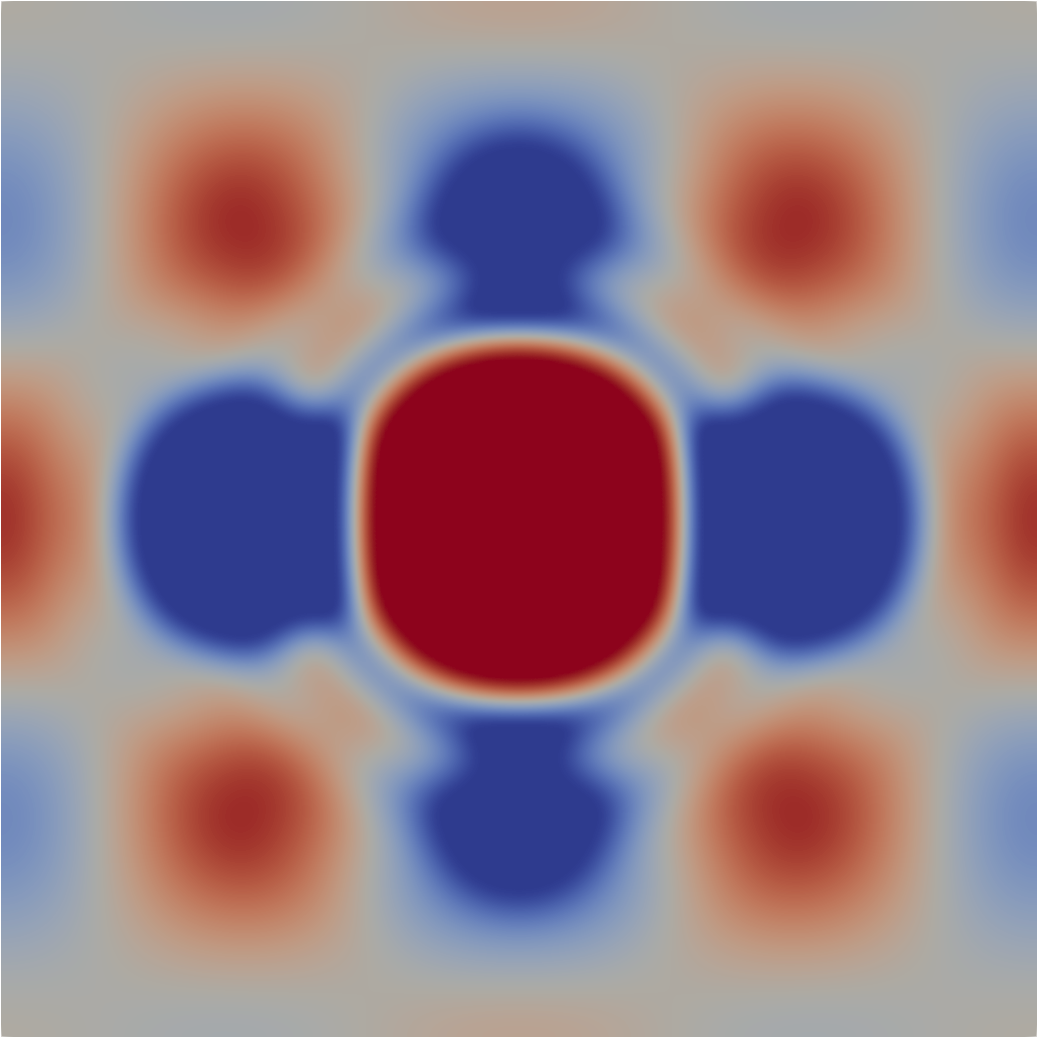}
\includegraphics[width=0.24\textwidth]{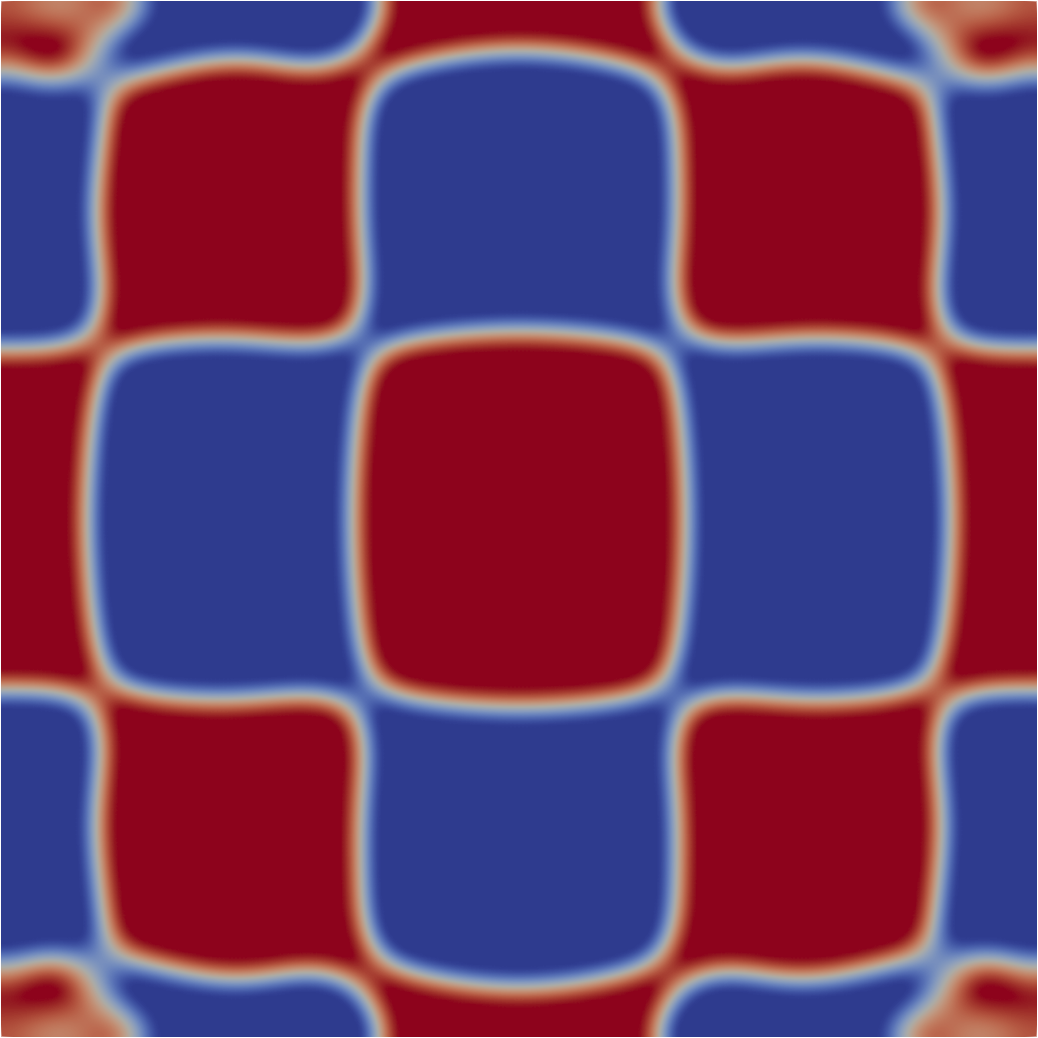}
\includegraphics[width=0.24\textwidth]{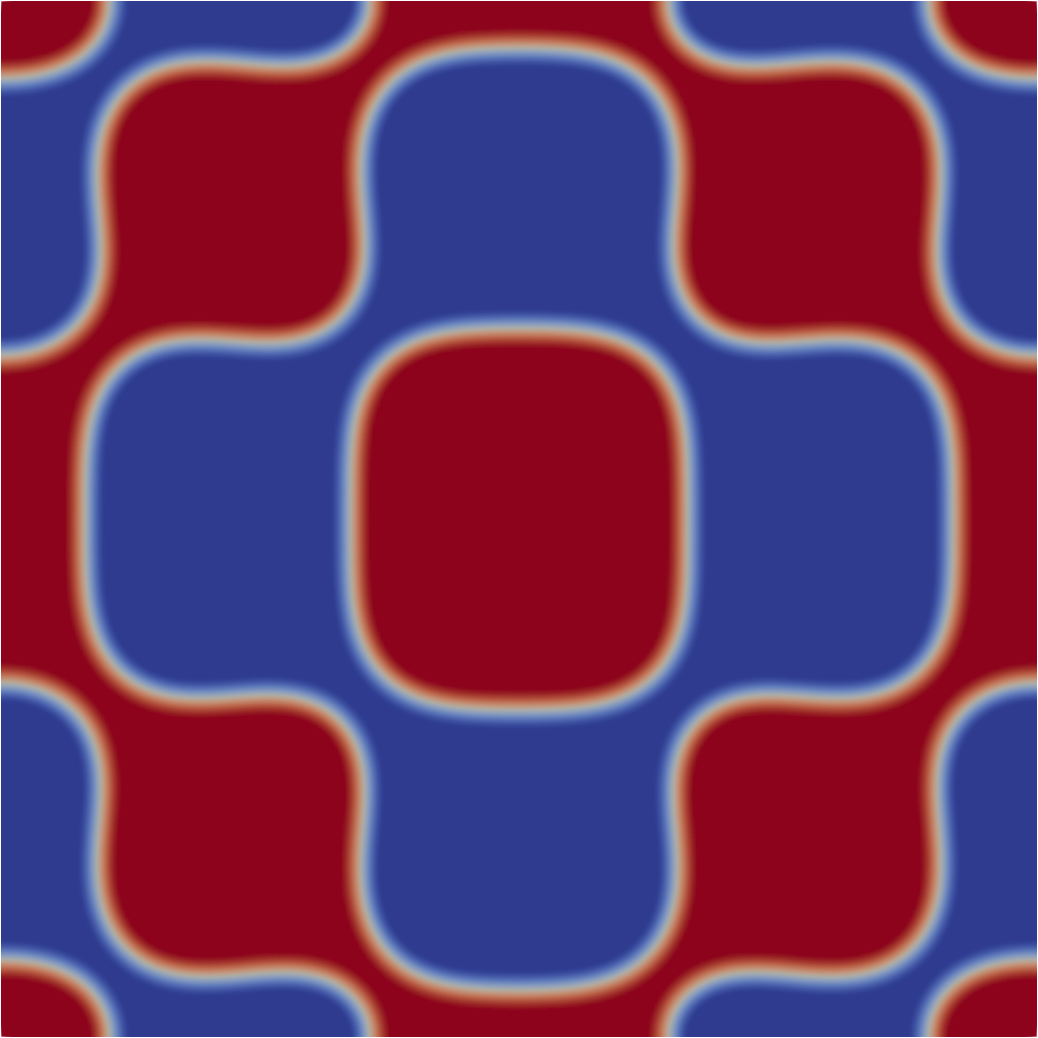}
\caption{Deterministic numerical solution ($\nu=0$) at time $t=1.2,1.5,2,2.5 \times 10^{-3}$.}
\label{fig_spin_det}
\end{figure}

\begin{figure}
\includegraphics[width=0.24\textwidth]{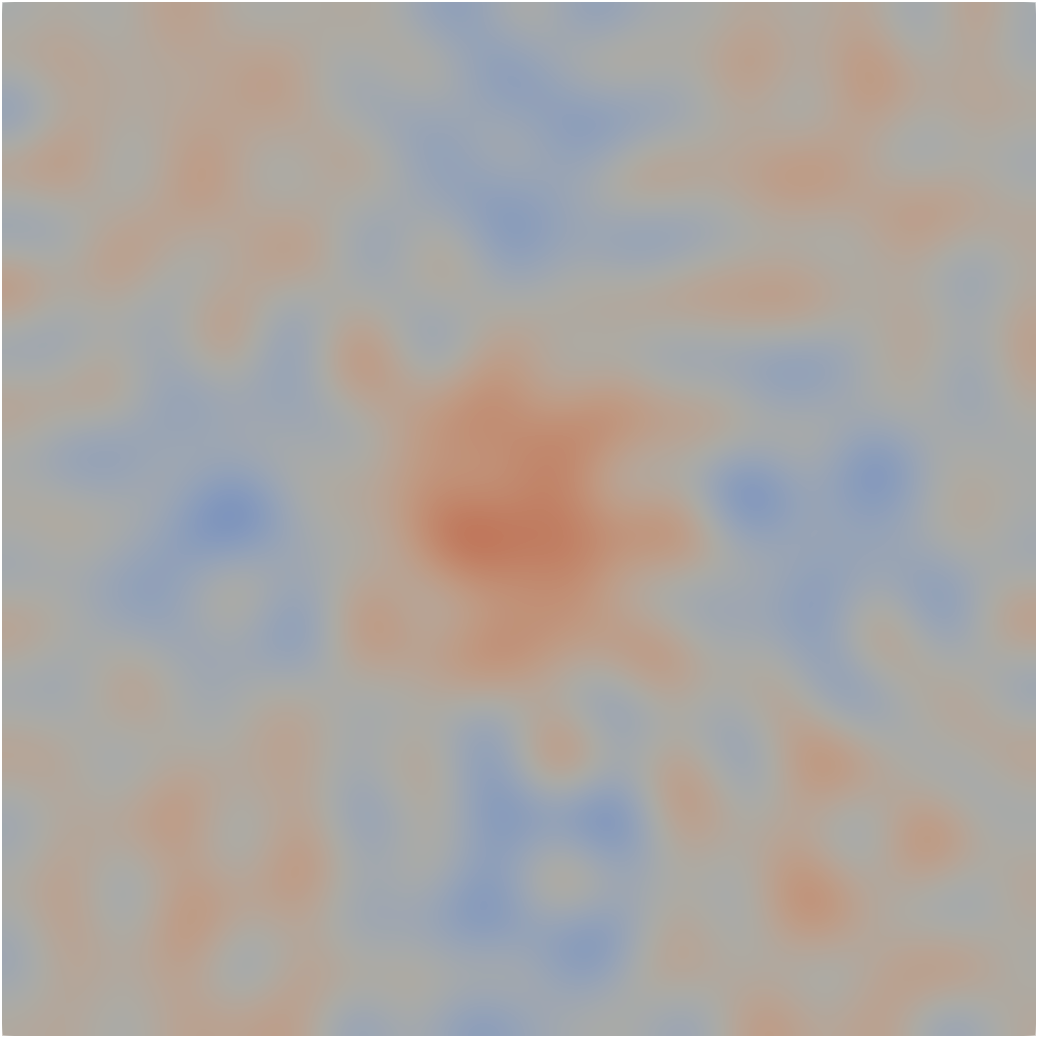}
\includegraphics[width=0.24\textwidth]{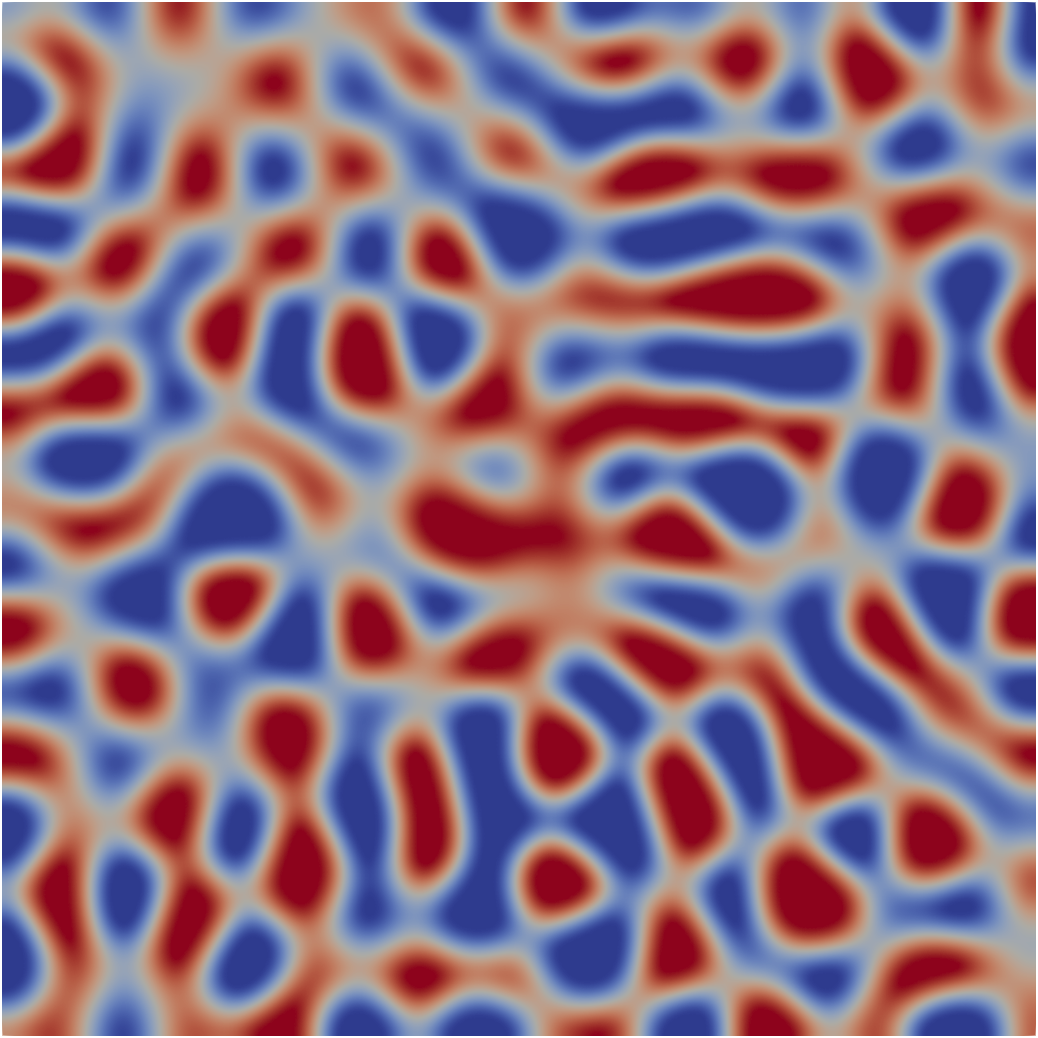}
\includegraphics[width=0.24\textwidth]{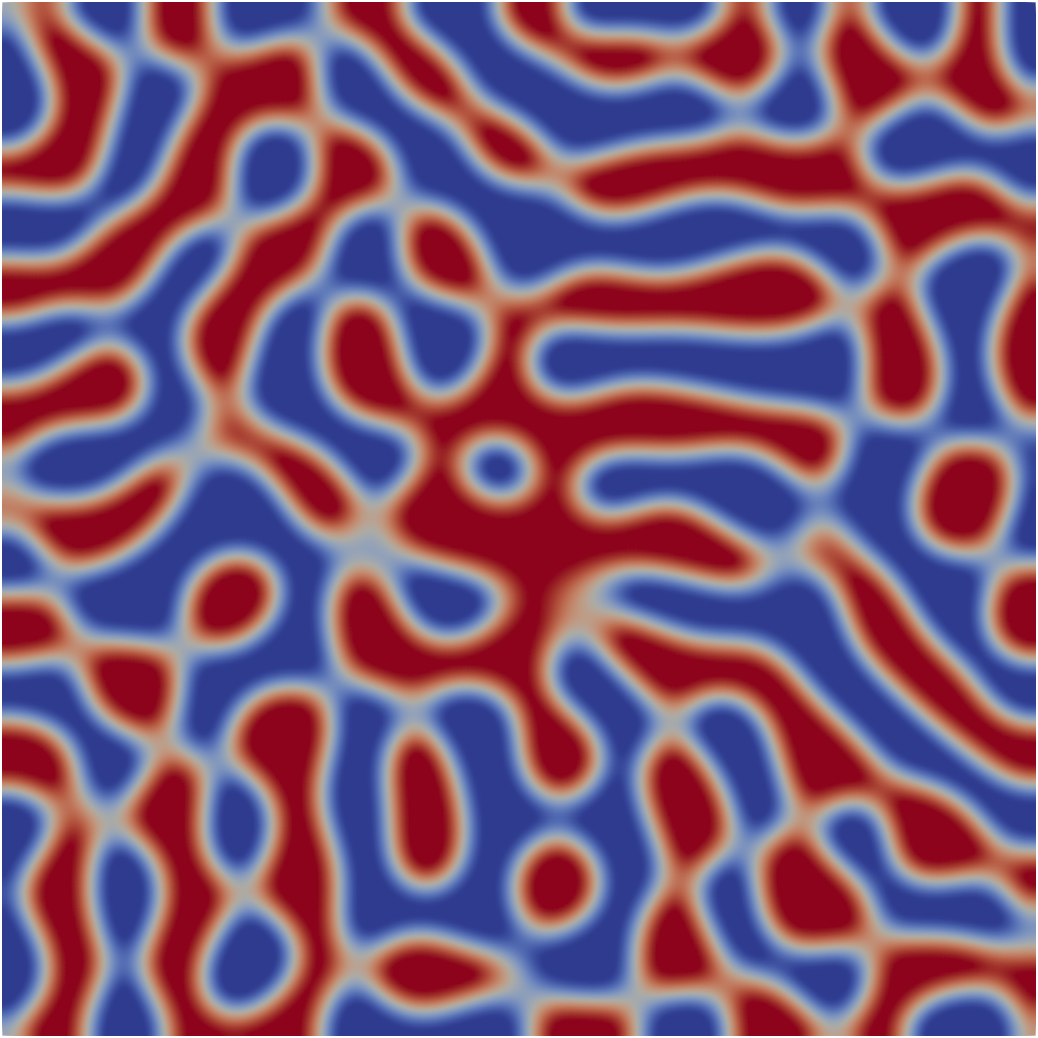}
\includegraphics[width=0.24\textwidth]{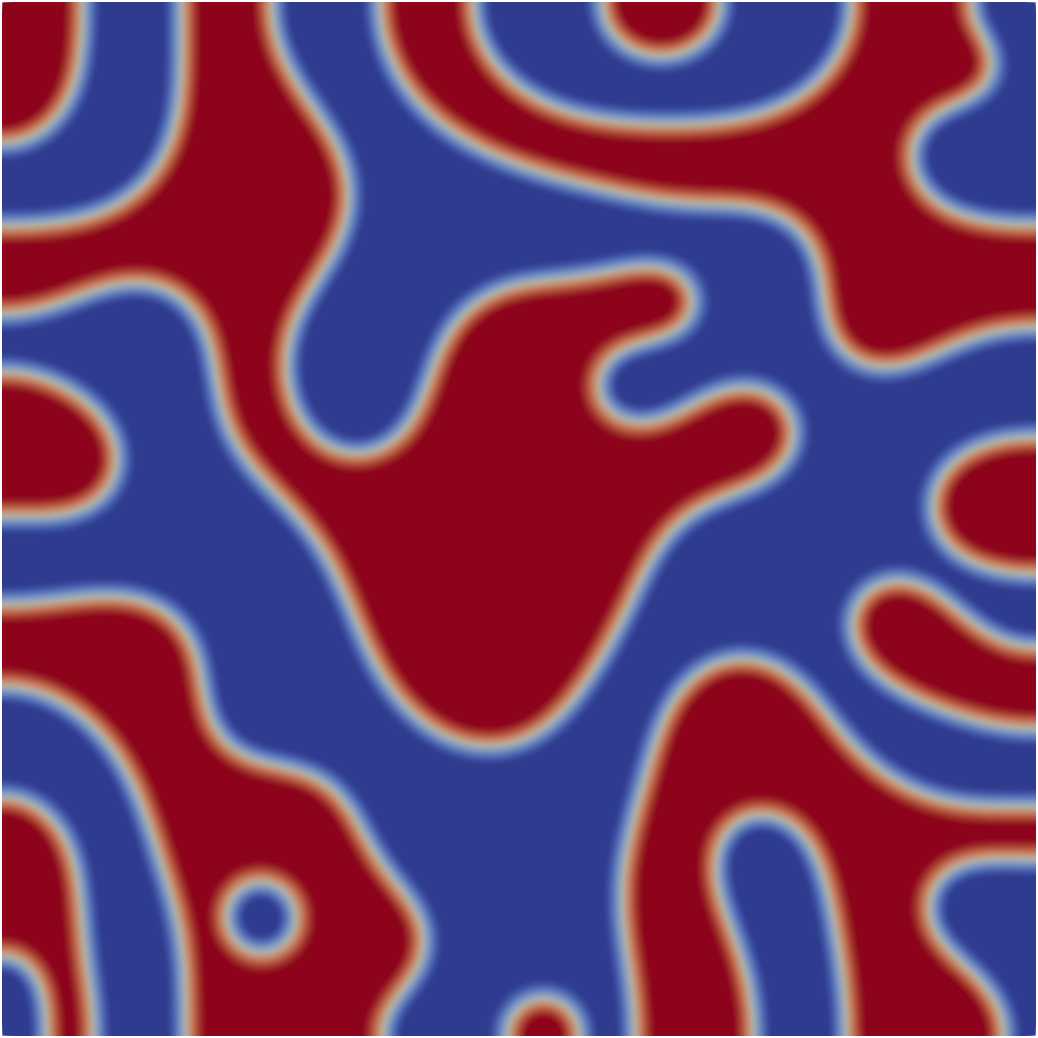}
\caption{One path of the stochastic numerical solution ($\nu=1.6\times10^{-4}$) at time $t=0.6,0.8,0.9,2.5 \times 10^{-3}$.}
\label{fig_spin_sym}
\end{figure}

\begin{figure}
\includegraphics[width=0.24\textwidth]{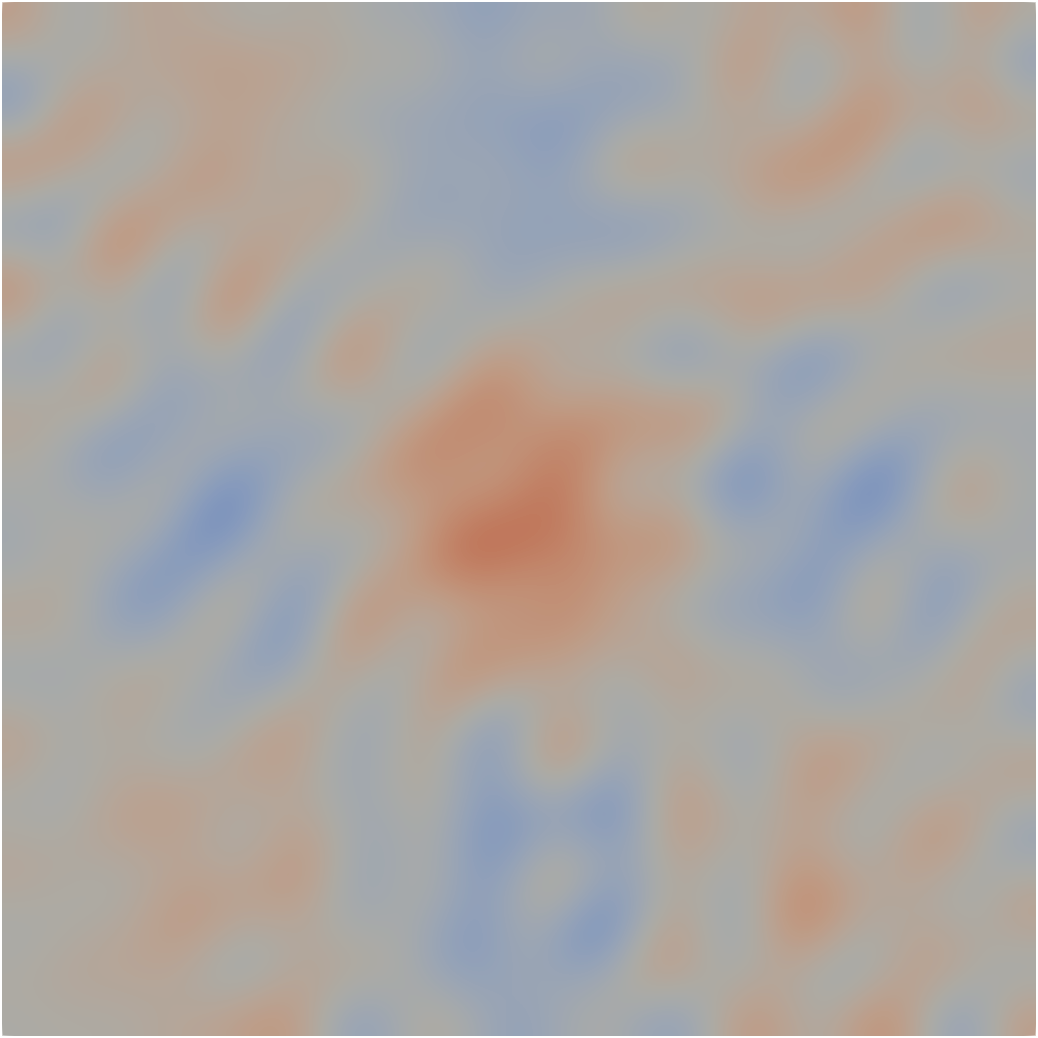}
\includegraphics[width=0.24\textwidth]{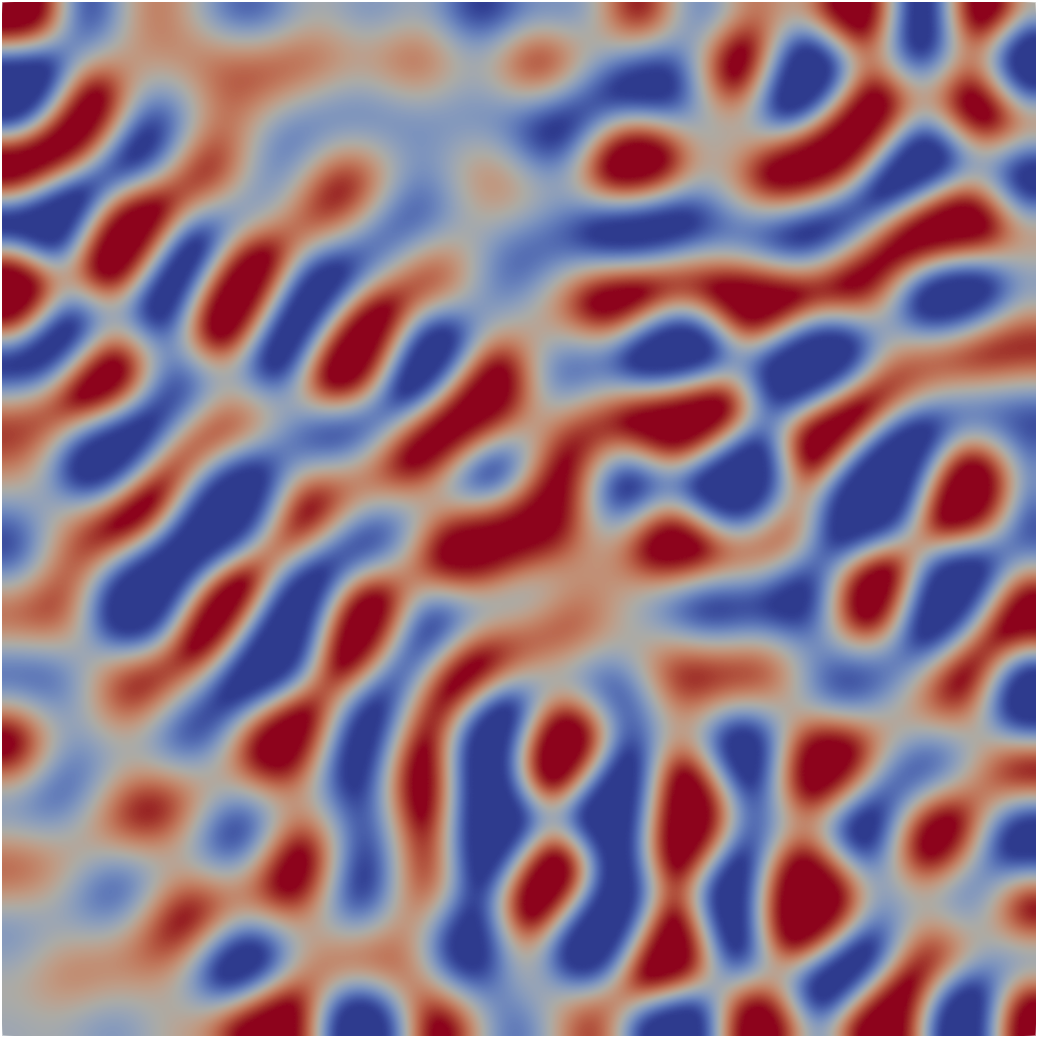}
\includegraphics[width=0.24\textwidth]{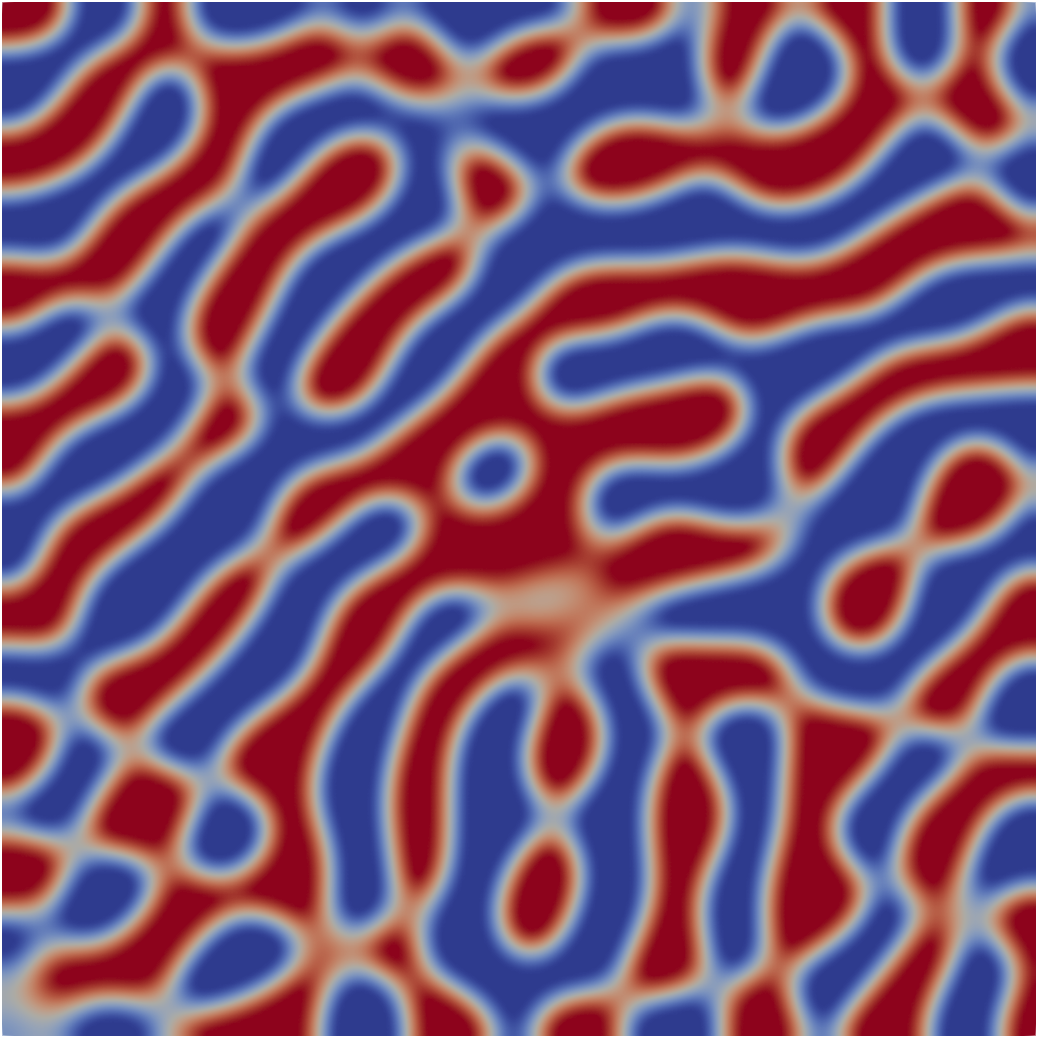}
\includegraphics[width=0.24\textwidth]{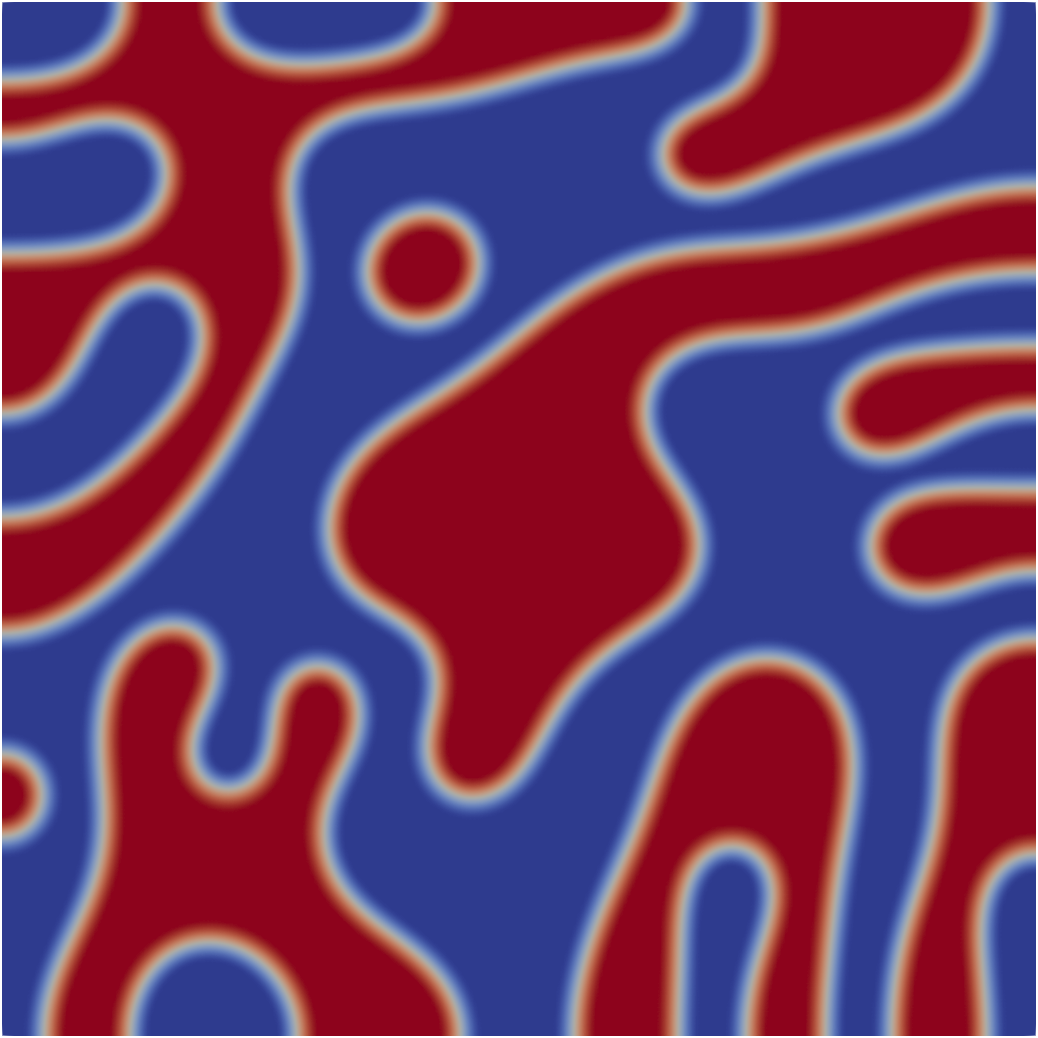}
\caption{One path of the stochastic numerical solution ($\nu=1.6\times10^{-4}$, halved squares, from lower left to upper right corner) at time $t=0.6,0.8,0.9,2.5 \times 10^{-3}$.}
\label{fig_spin_nonsym1}
\end{figure}

\begin{figure}
\includegraphics[width=0.24\textwidth]{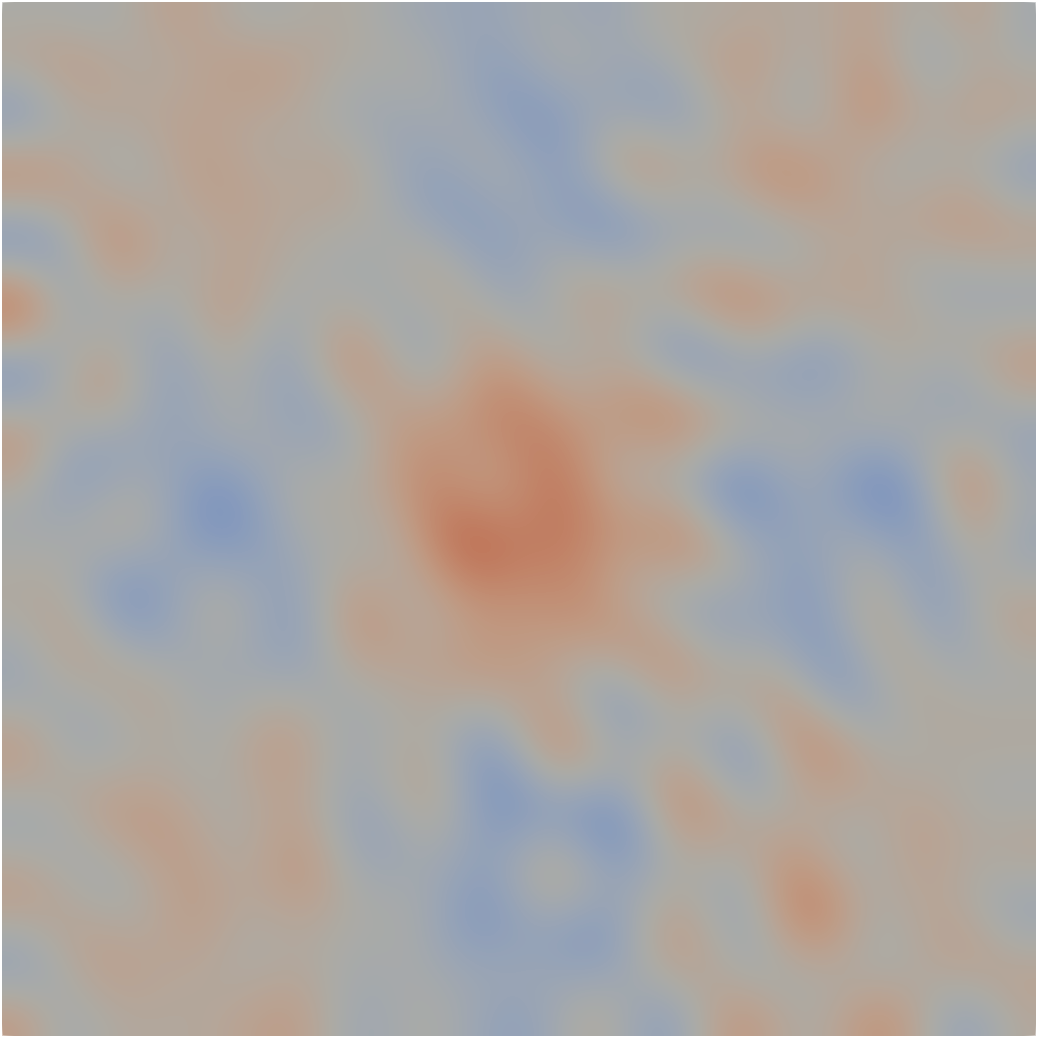}
\includegraphics[width=0.24\textwidth]{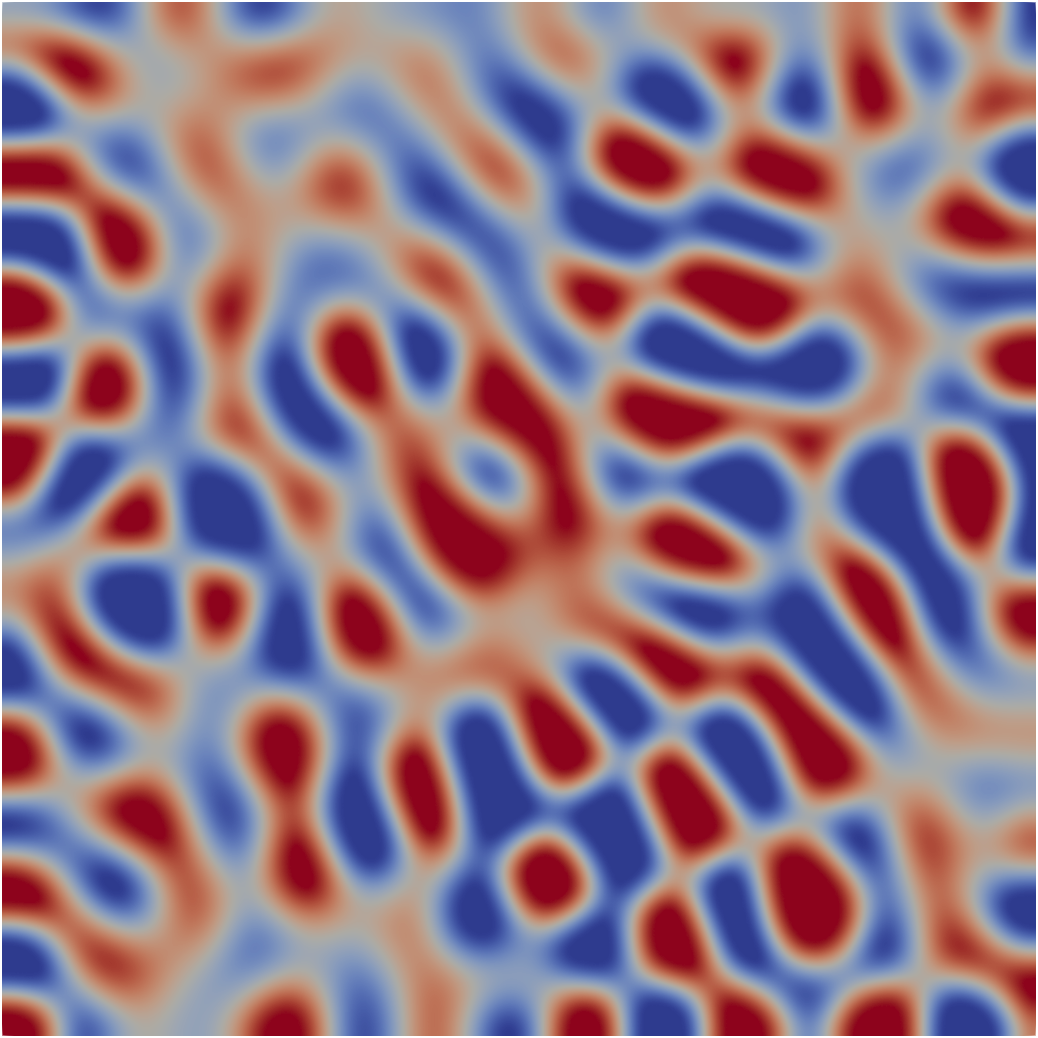}
\includegraphics[width=0.24\textwidth]{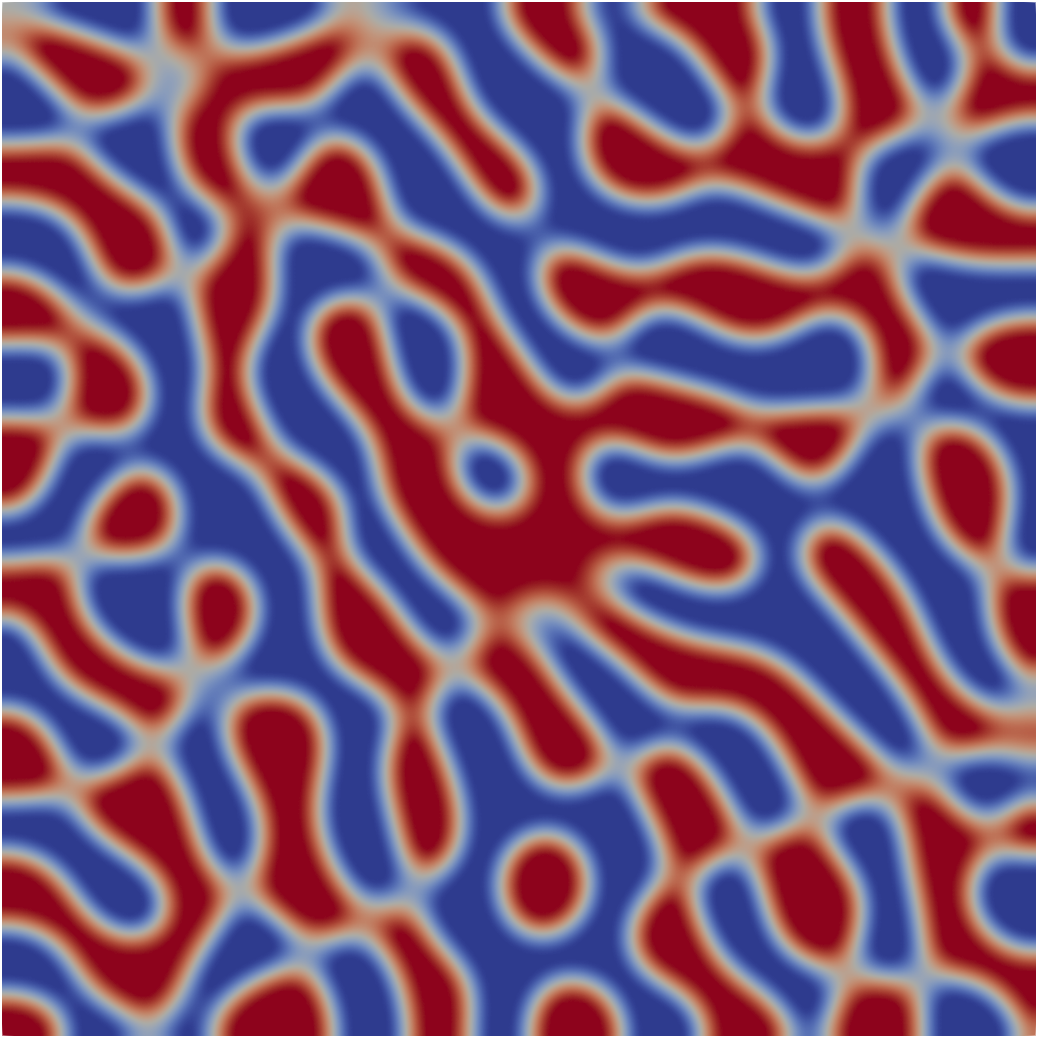}
\includegraphics[width=0.24\textwidth]{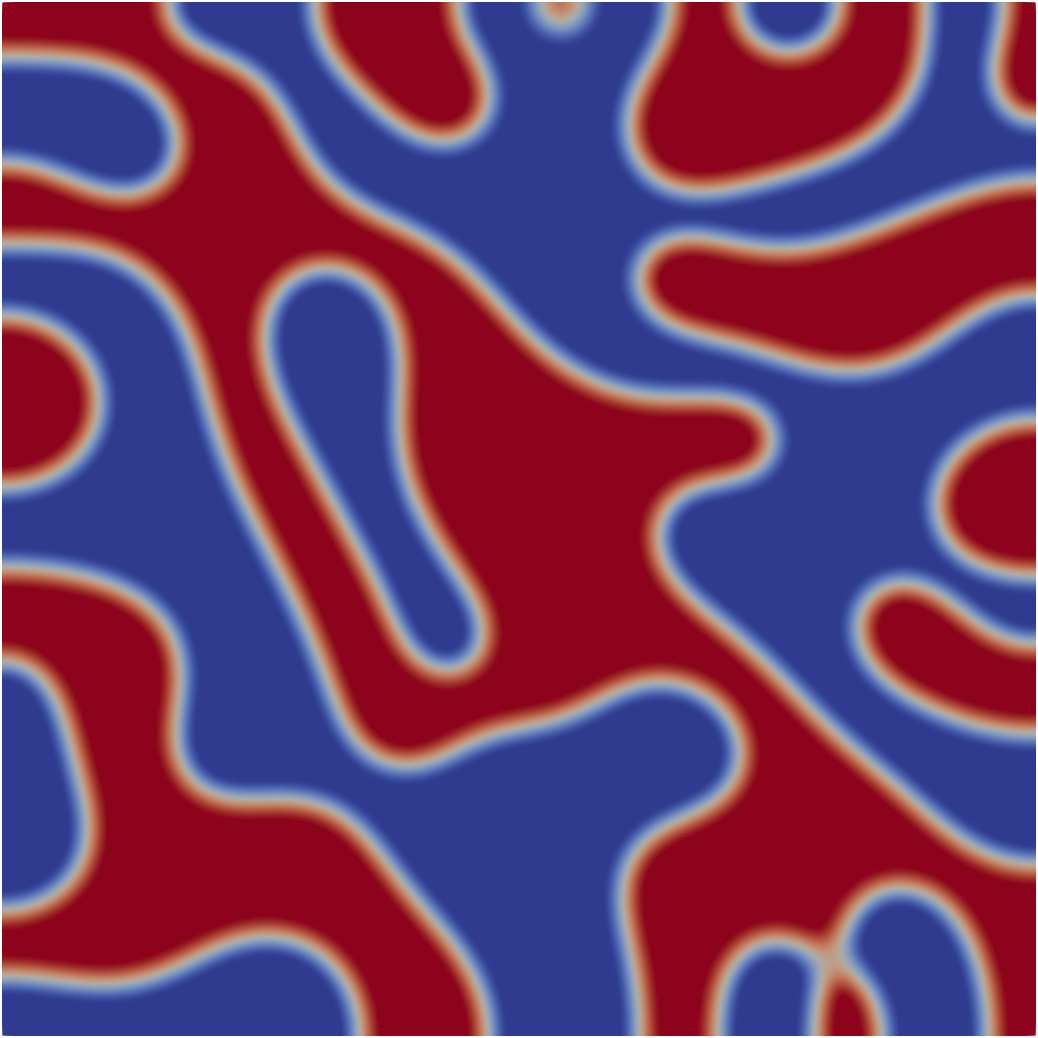}
\caption{One path of the stochastic numerical solution ($\nu=1.6\times10^{-4}$, halved squares, from lower right to upper left corner) at time $t=0.6,0.8,0.9,2.5 \times 10^{-3}$.}
\label{fig_spin_nonsym2}
\end{figure}

\subsection{Closing of a void}

The parameters for the computation were $T=0.012$, $\varepsilon = 1/(12\pi)$, $\tau=10^{-5}$.
We employ an adaptive mesh refinement algorithm from \cite{femtec}, \cite{aposter_ch} with (pathwise) refinement along the free-boundary between the subdomains where the solution $|u|=1$ and $|u|<1$, see Figure~\ref{fig_void_mesh}.

In Figure~\ref{fig_void_det} we display the evolution of the deterministic problem. In Figure~\ref{fig_void_stoch} we display
one path of the stochastic solution for $\nu=1.6$ computed with the scheme (\ref{eq:model:disc:inequality})
and the corresponding adaptive meshes are displayed in Figure~\ref{fig_void_mesh} (the corresponding results for the regularised scheme (\ref{eq:model:disc:regularised}) were similar).

\begin{figure}
\includegraphics[width=0.24\textwidth]{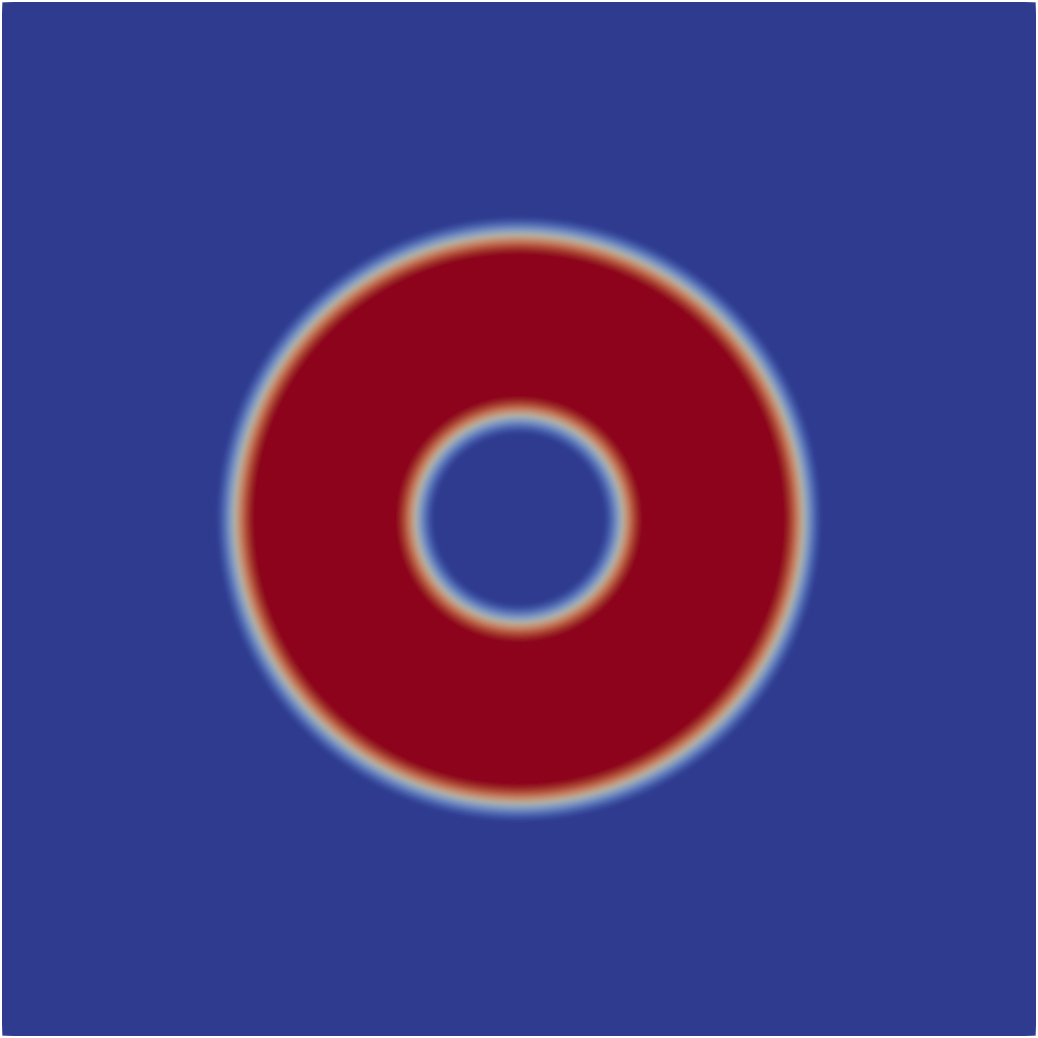}
\includegraphics[width=0.24\textwidth]{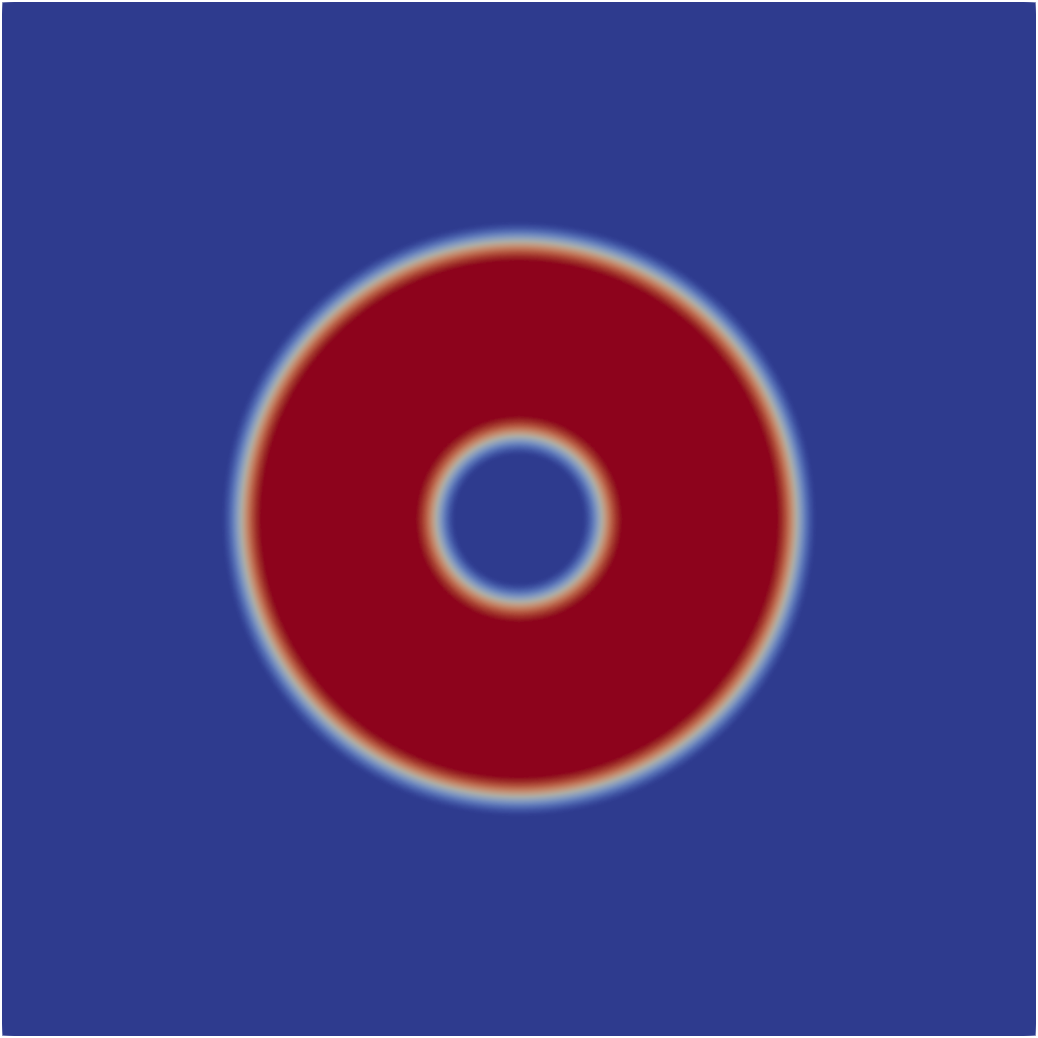}
\includegraphics[width=0.24\textwidth]{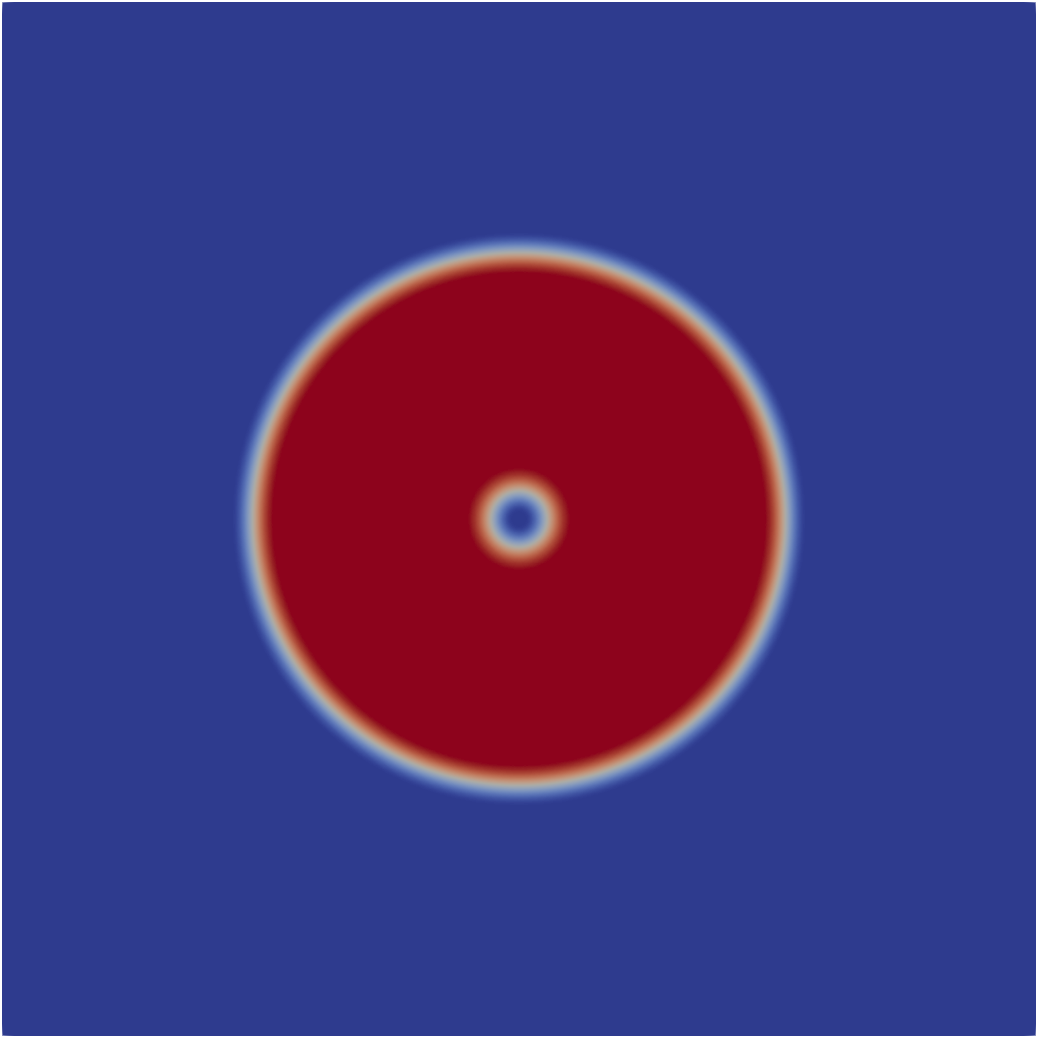}
\includegraphics[width=0.24\textwidth]{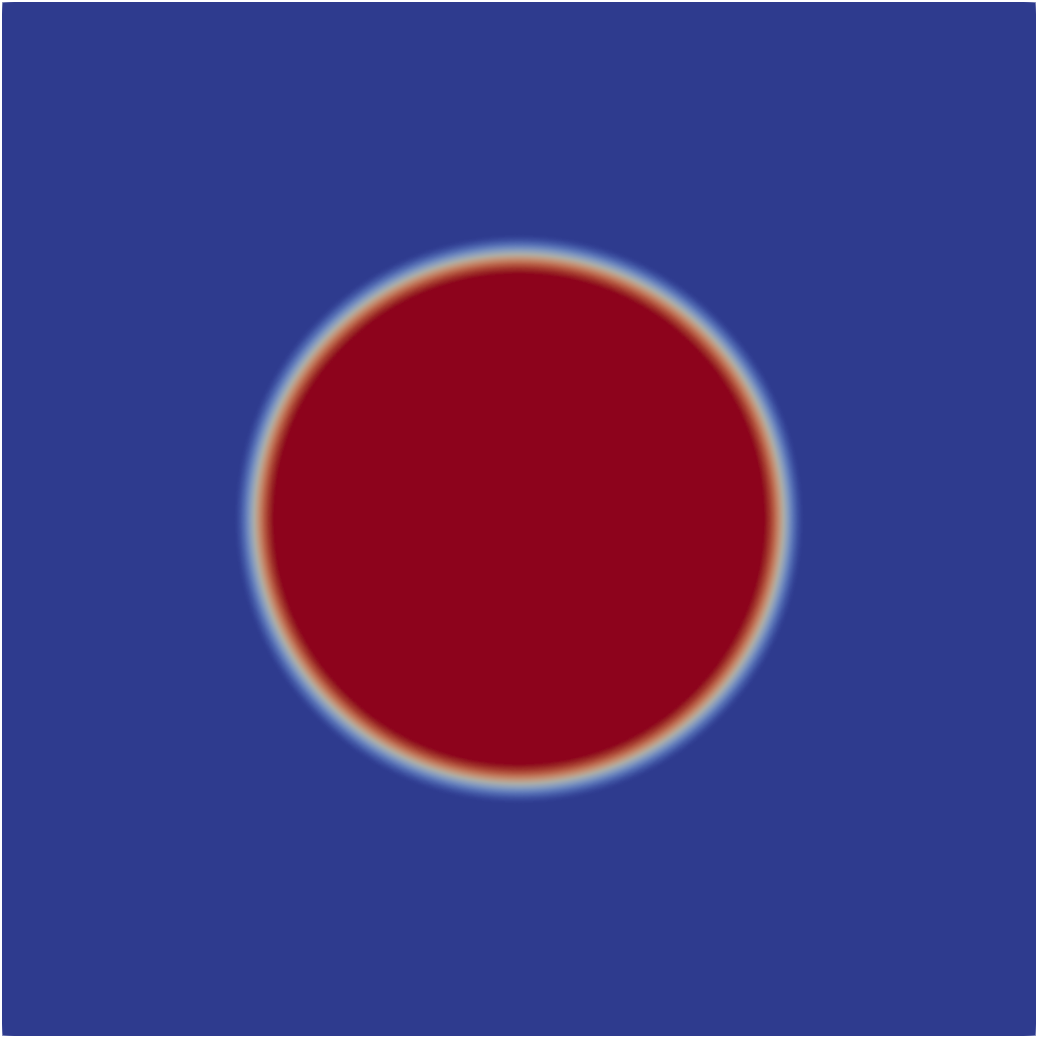}
\caption{Deterministic numerical solution ($\nu=0$) at time $t=0,2.5,6.5,8 \times 10^{-3}$.}
\label{fig_void_det}
\end{figure}

\begin{figure}
\includegraphics[width=0.24\textwidth]{u_void_000}
\includegraphics[width=0.24\textwidth]{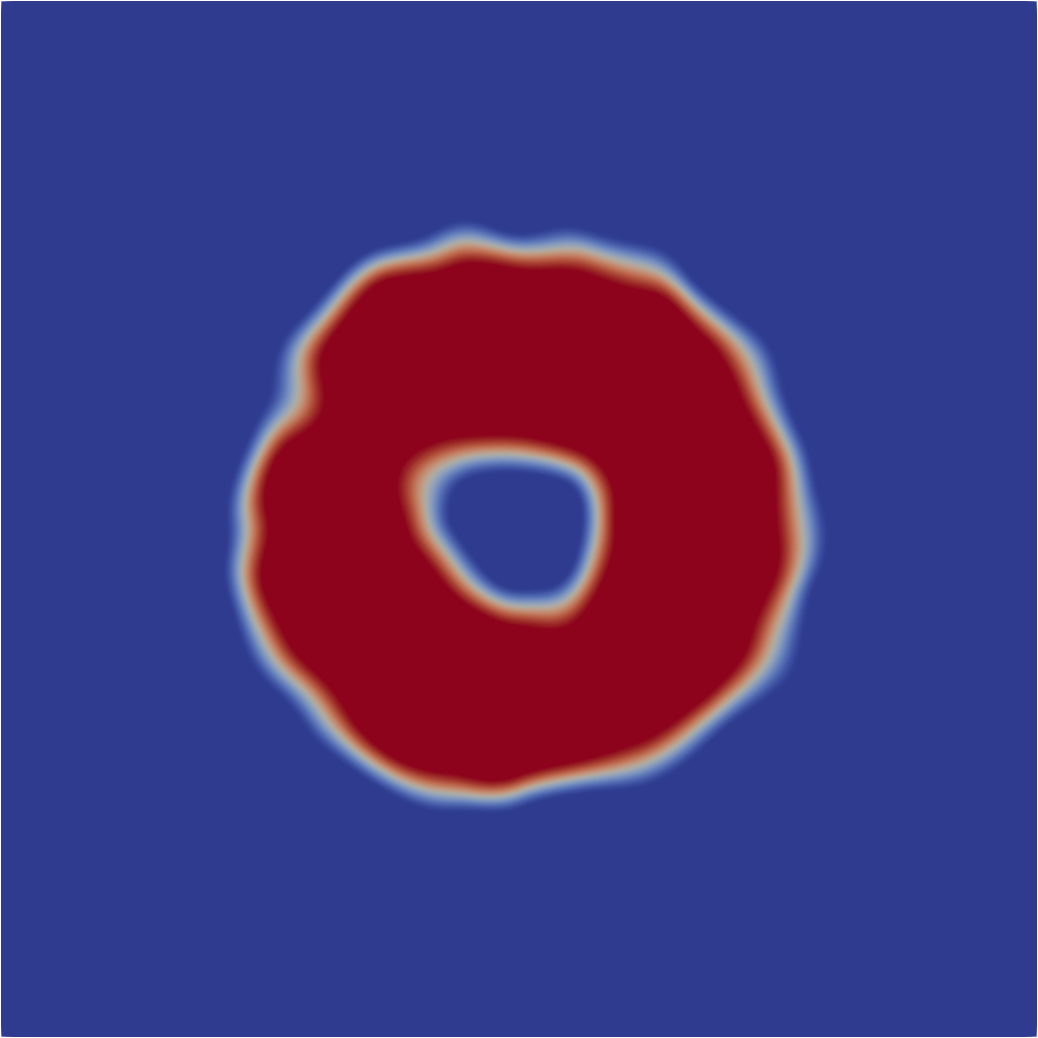}
\includegraphics[width=0.24\textwidth]{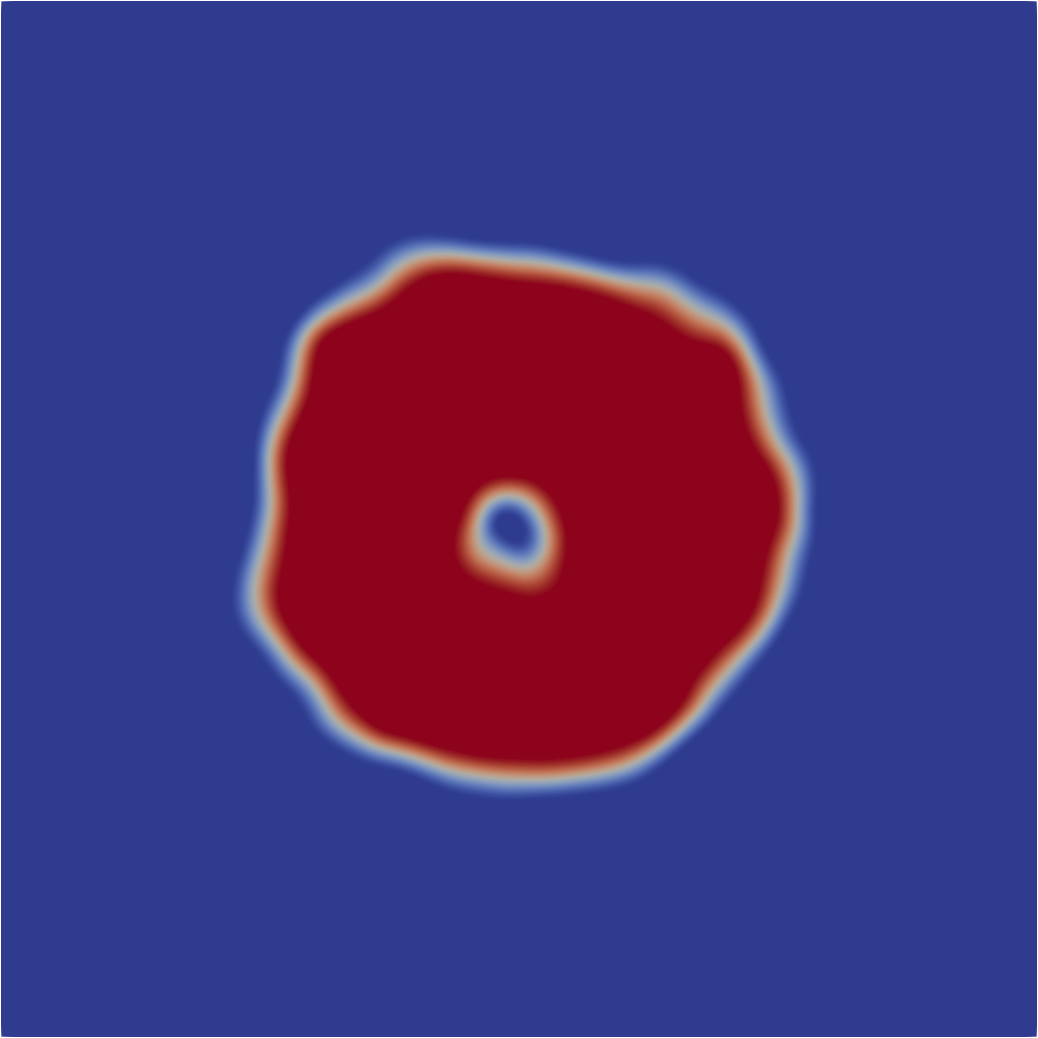}
\includegraphics[width=0.24\textwidth]{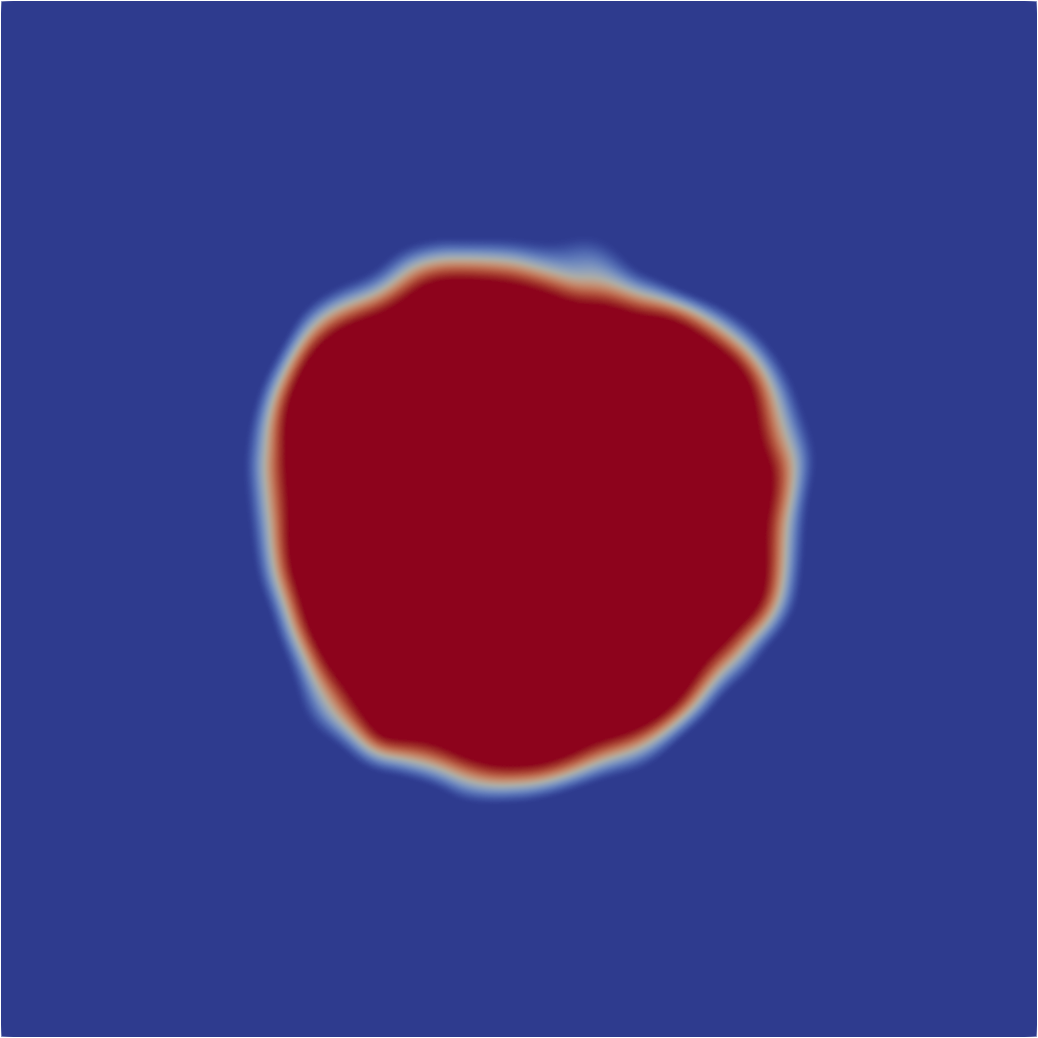}
\caption{Stochastic numerical solution ($\nu=1.6$) at time $t=0,2.5,6.5,8 \times 10^{-3}$.}
\label{fig_void_stoch}
\end{figure}

\begin{figure}
\includegraphics[width=0.24\textwidth]{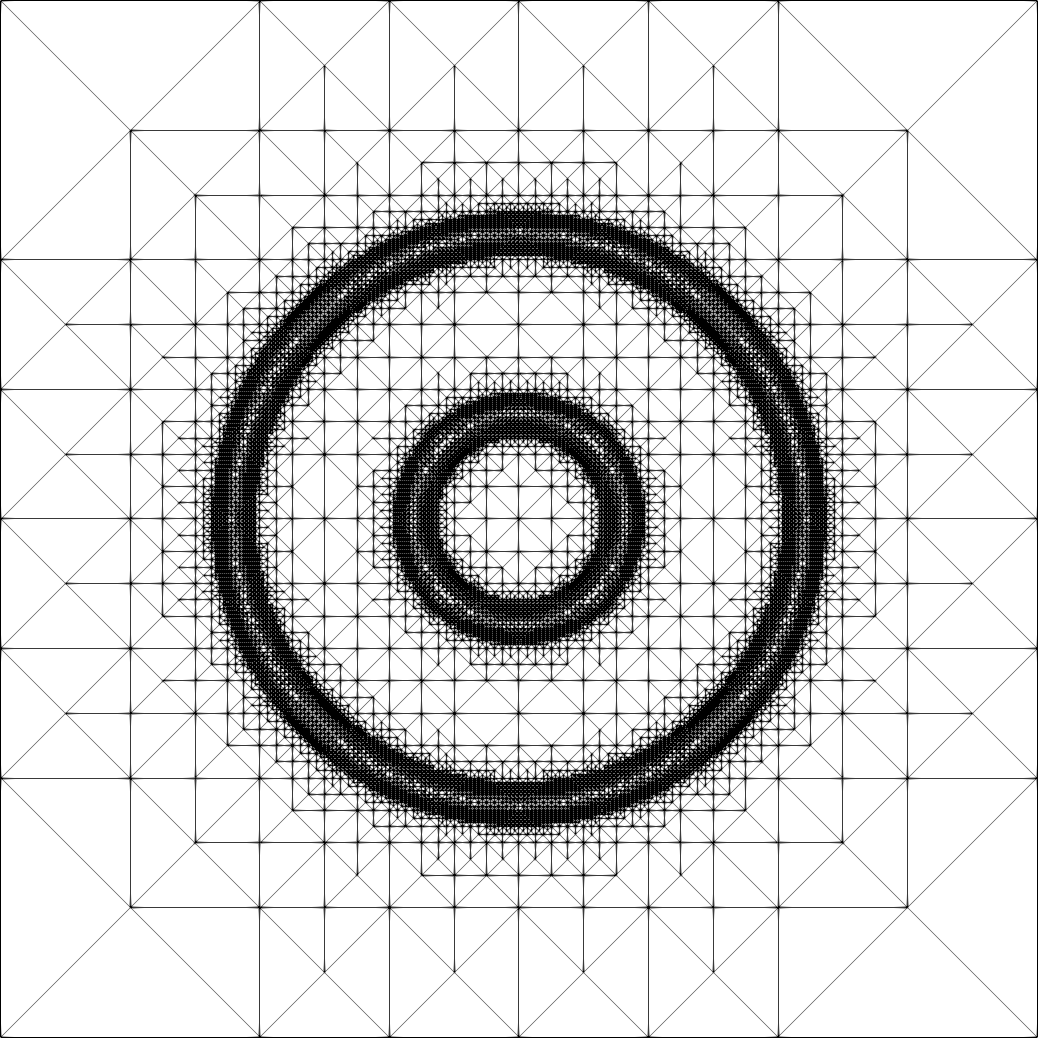}
\includegraphics[width=0.24\textwidth]{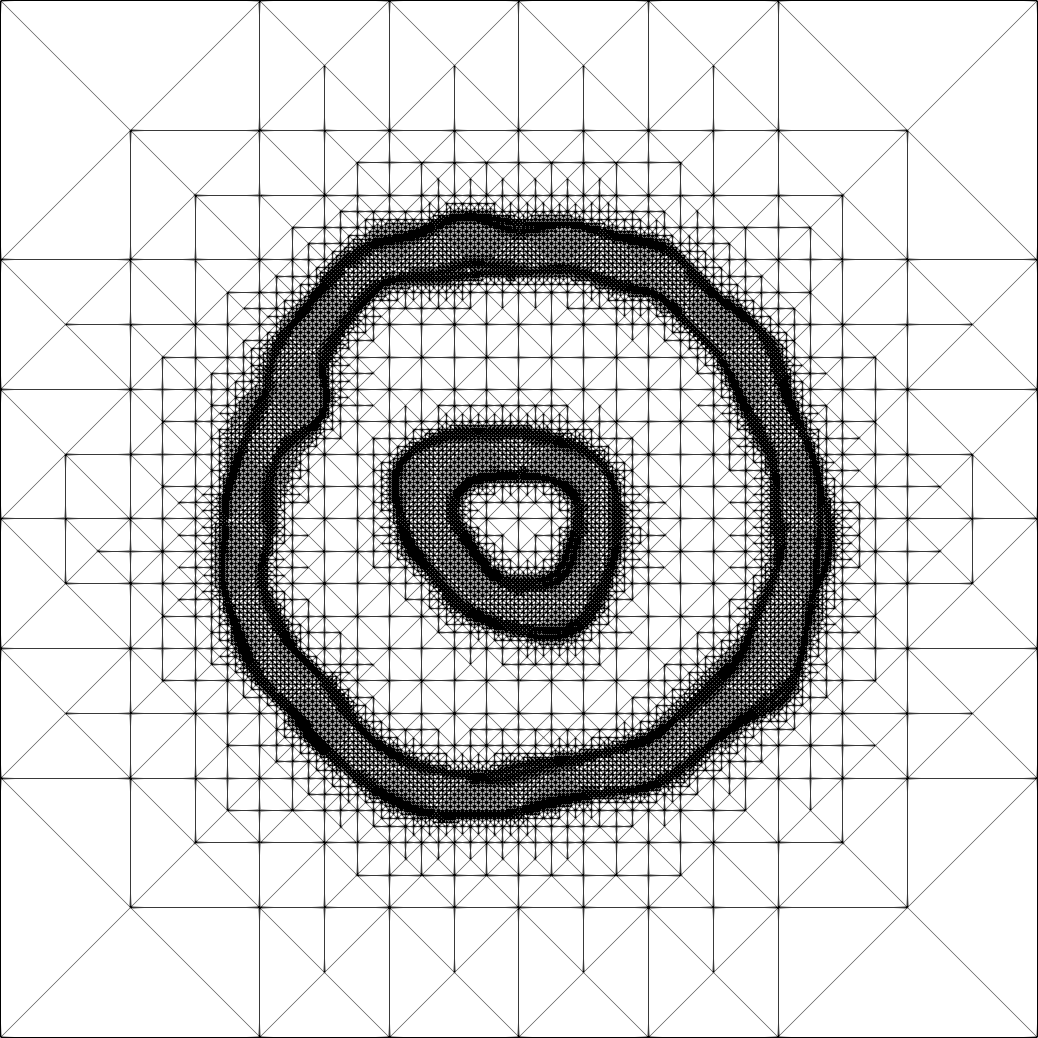}
\includegraphics[width=0.24\textwidth]{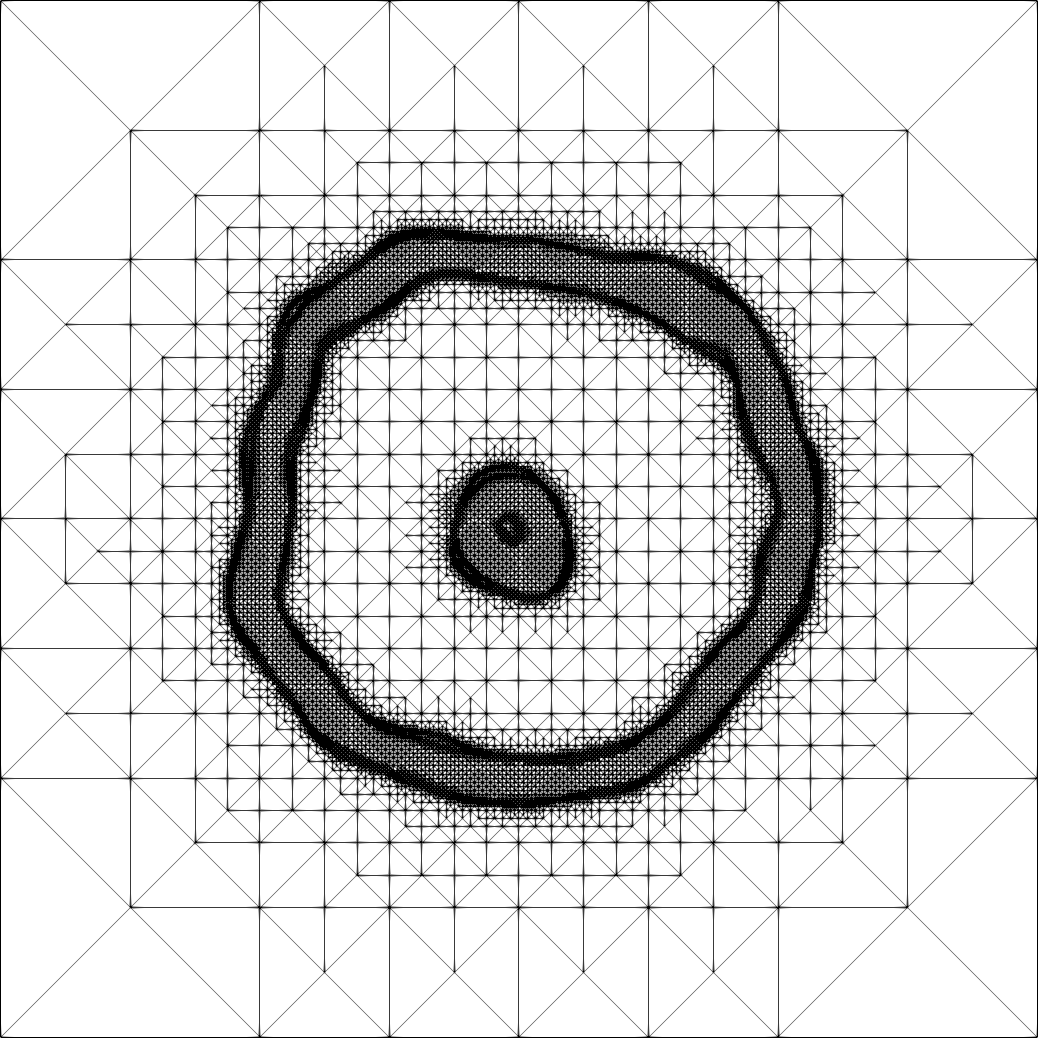}
\includegraphics[width=0.24\textwidth]{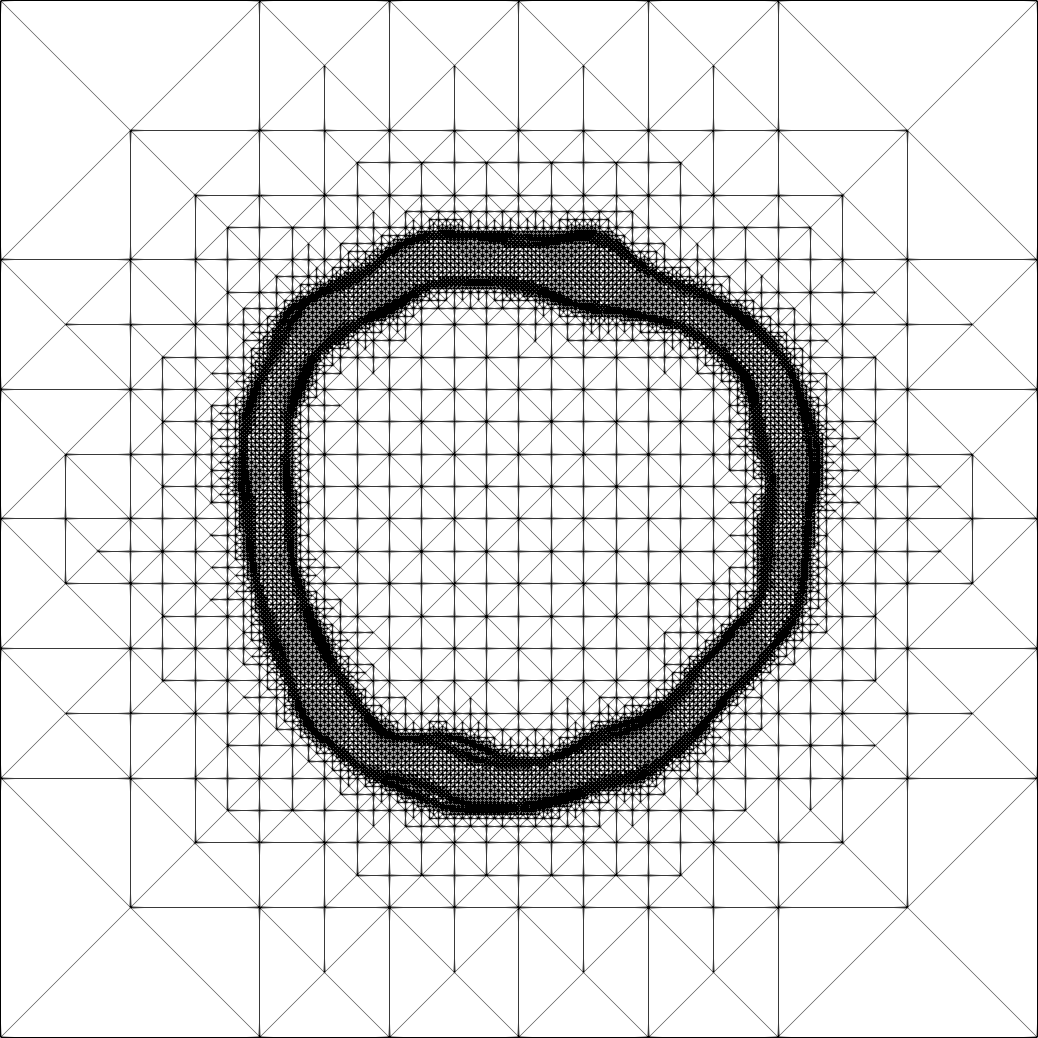}
\caption{Finite element mesh for {one path of} the stochastic numerical solution ($\nu=1.6$) at time $t=0,2.5,6.5,8 \times 10^{-3}$.}
\label{fig_void_mesh}
\end{figure}

The evolution of the energy $\displaystyle\mathcal{E}(u) = \tfrac{\varepsilon}{2} \|\nabla u \|^2 + \tfrac{1}{\varepsilon}\int_{\mathcal{D}}(F_\delta(u)-\tfrac{u^2}{2})\dx$ of the deterministic problem computed with scheme  (\ref{eq:model:disc:inequality}) and the expected values of the energy of the stochastic
problem computed with $500$ samples for schemes (\ref{eq:model:disc:inequality})  and (\ref{eq:model:disc:regularised})  are displayed in Figure~\ref{fig_void_ener} (left).
{Since $F_\delta$ is negative for arguments slightly outside of $\tekla{-1,+1}$, the energy of the regularised problem is smaller than the one of computed with the active-set strategy.}
Furthermore, in Figure~\ref{fig_void_ener} (right) we display the expected value of the area the inner circle (i.e., the sum of areas of triangles of the mesh enclosed by the larger circle where $u \leq 0$)
\begin{figure}
\includegraphics[width=0.48\textwidth]{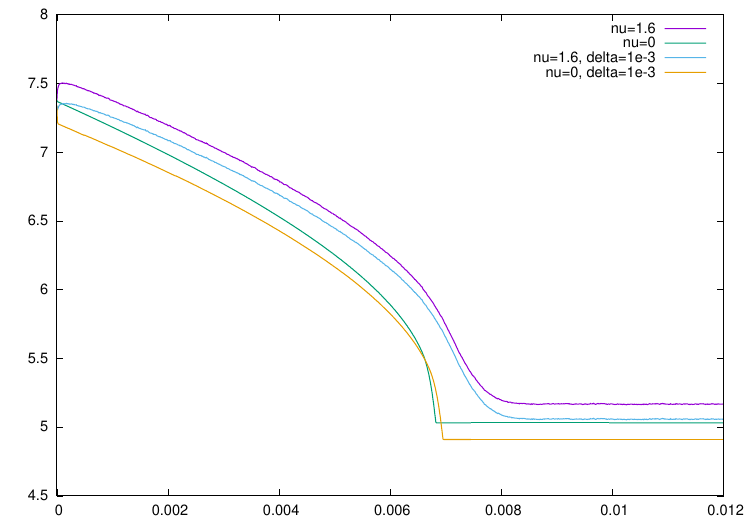}
\includegraphics[width=0.48\textwidth]{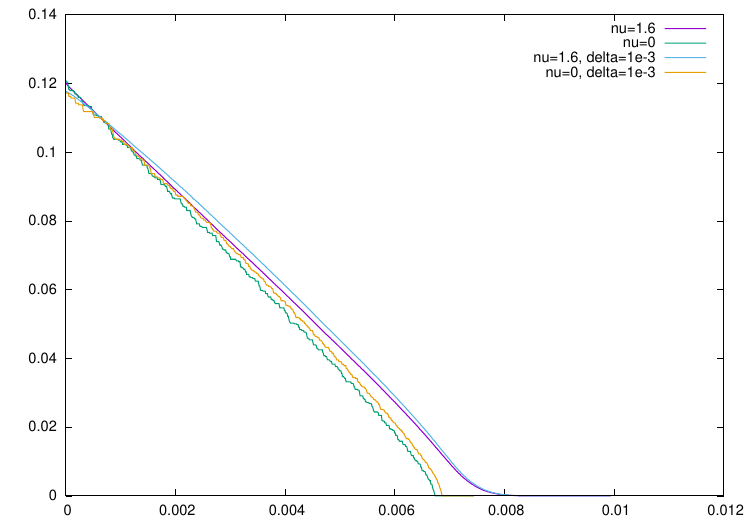}
\caption{Evolution of the expected value of the energy (left) and of the area of the middle circle (right).}
\label{fig_void_ener}
\end{figure}
In the deterministic setting the inner circle disappears at the time $t=0.00674$, the the stochastic setting the occurrence of the topological change is delayed, see {also} Figure~\ref{fig_void_hist}
where we display the histogram of the ''closing times'' of the inner circle for $n=1.6$ (left)
and for $\nu=0.16,0.8,1.6$ (right) (for comparison we also include the evolution of the area of the inner circle for the deterministic problem $\nu=0$).
\begin{figure}
\includegraphics[width=0.48\textwidth]{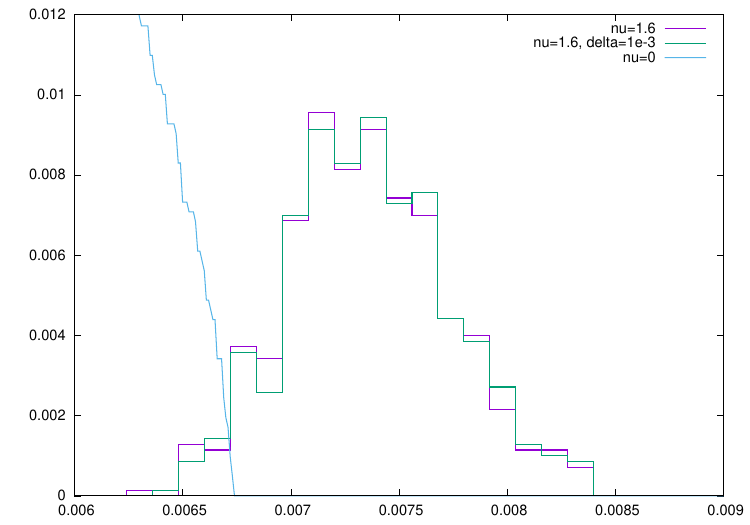}
\includegraphics[width=0.48\textwidth]{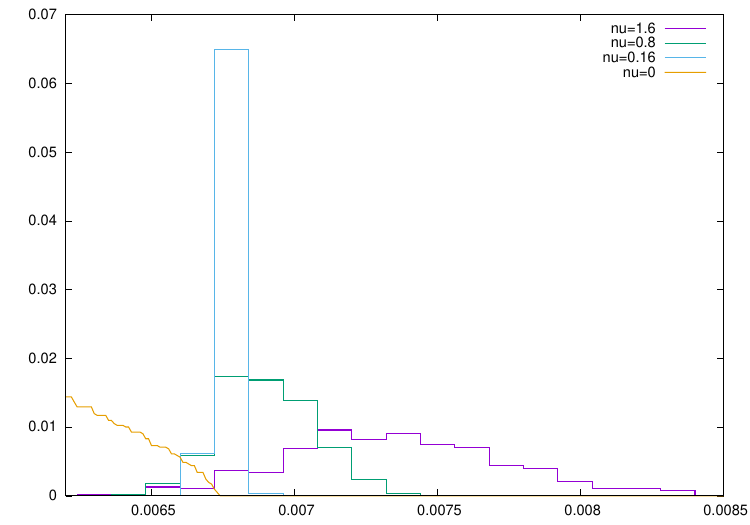}
\caption{Histogram of closing times of the inner circle for $\nu=1.6$ (left) and for $\nu=0.16,0.8,1.6$ (right) along with the area of the inner circle for $\nu=0$.}
\label{fig_void_hist}
\end{figure}

To illustrate the effect of the noise intensity $\nu$ on the evolution we display in {Figure~\ref{fig_void_ener_nu}} the evolution of the energy and the area of the inner circle for $\nu=0, 0.16, 0.8, 1.6$
{computed with the scheme (\ref{eq:model:disc:inequality}).
The results indicate that a larger noise intensity reduces the expected energy decay and delays the closing of the inner circle.}
%  and (\ref{eq:model:disc:regularised}).
\begin{figure}
\includegraphics[width=0.48\textwidth]{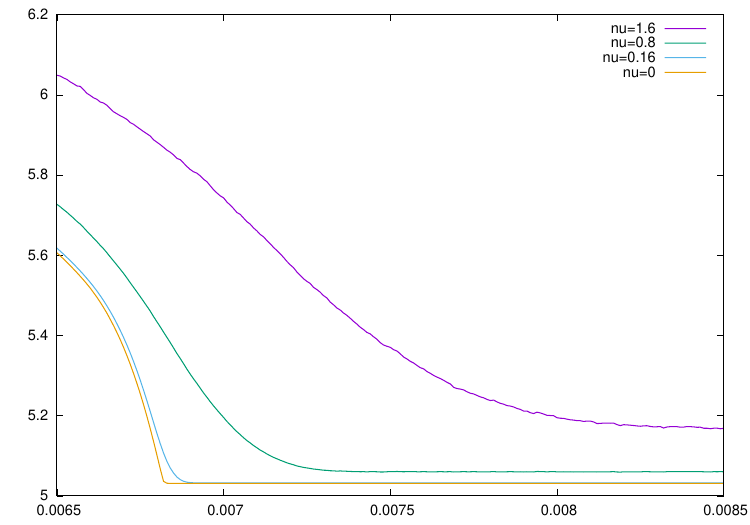}
\includegraphics[width=0.48\textwidth]{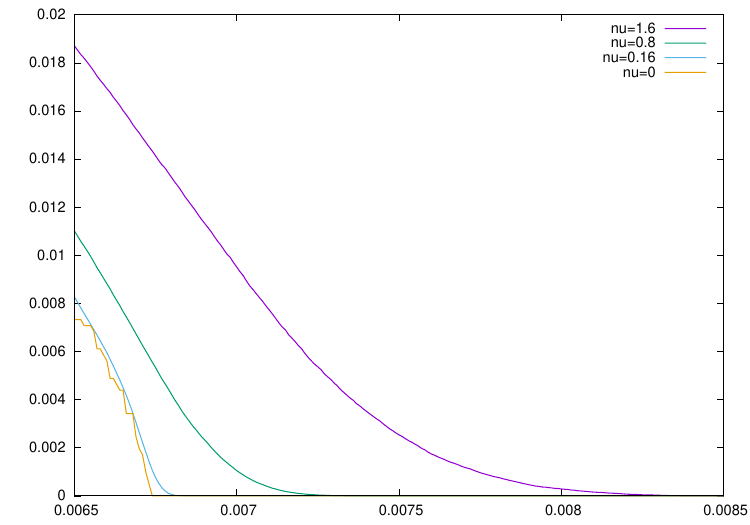}
\caption{Evolution of the expected value of the energy (left) and of the area of the inner circle (right) for $\nu=0, 0.16, 0.8, 1.6$.}
\label{fig_void_ener_nu}
\end{figure}

%\footnote{In the new computation, the convergence with respect to $\delta$ is not observable in the saturation (the reason could rather strong noise), I would not include this result. I could try to recompute with weaker noise.}
%{\color{red}
%To illustrate the effect of the regularisation parameter $\delta$
%we display in Figure~\ref{fig_void_ener_delta} the evolution of the energy and the are of the inner circle for $\delta=0, 10^{-3}, 10^{-2}$ with $\nu=1.6$.
%We note that the deterministic solution was roughly in the range $(-1.015, 1.015)$ and $(-1.0015, 1.0015)$ for $\delta=10^{-2}$ and $\delta=10^{-3}$, respectively,
%i.e., we observed linear dependence of on the regularisation parameter $\delta$ on the violation of the constraint $[-1,1]$ for the considered problem setup.
%\begin{figure}
%\includegraphics[width=0.48\textwidth]{ener_void_delta}
%\includegraphics[width=0.48\textwidth]{satur_void_delta}
%\caption{Evolution of the expected value of the energy (left) and of the area of the inner circle (right) for $\delta=0, 10^{-3}, 10^{-2}$.}
%\label{fig_void_ener_delta}
%\end{figure}
%}

\bibliographystyle{amsplain}
\bibliography{refs}

\end{document}